\documentclass[11pt]{amsart}
\textwidth=14.5cm
\oddsidemargin=1cm
\evensidemargin=1cm

\usepackage[all]{xy}
\usepackage{yfonts}
\usepackage{amsmath}
\usepackage{amsfonts}
\usepackage{amssymb}
\usepackage{amsthm}
\usepackage{mathrsfs}
\usepackage{graphicx}
\usepackage{longtable}
\usepackage{tikz}
\usetikzlibrary{positioning}
\usepackage[
bookmarks=false,
colorlinks=true,
debug=true,
naturalnames=true,
pdfnewwindow=true,
citecolor=blue,
linkcolor=blue,
urlcolor = blue]{hyperref}
\usepackage{mathtools}
\numberwithin{equation}{section}

\newtheorem{Thm}{Theorem}[section]
\newtheorem{Cor}[Thm]{Corollary}
\newtheorem{Prop}[Thm]{Proposition}
\newtheorem{Lem}[Thm]{Lemma}
\newtheorem{Conj}[Thm]{Conjecture}

\theoremstyle{definition}
\newtheorem{Def}[Thm]{Definition}
\newtheorem{Rem}[Thm]{Remark}
\newtheorem{Ex}[Thm]{Example}

\newcommand{\Irr}{\mathop{\mathrm{Irr}}\nolimits}

\newcommand{\Z}{\mathbb{Z}}
\newcommand{\Q}{\mathbb{Q}}
\newcommand{\C}{\mathbb{C}}
\newcommand{\kk}{\Bbbk}



\newcommand{\zero}{\mathrm{zero}}

\newcommand{\Cc}{\mathscr{C}}

\newcommand{\fg}{\mathfrak{g}}

\newcommand{\fd}{\textfrak{d}}

\title[Classification of real and imaginary modules]
{Classification of real and imaginary modules of quantum affine algebras in monoidal categorifications of affine cluster algebras}

\date{\today}

\author[H.~Sakamoto]{Heizo Sakamoto}
\address{Graduate School of Mathematical Sciences, the University of Tokyo, 
3-8-1, Komaba, Meguro-ku, Tokyo, 153-8914, Japan}
\email{sakaheisw0327@g.ecc.u-tokyo.ac.jp}

\begin{document}
\begin{abstract}
Recently, Kashiwara-Kim-Oh-Park introduced a wide family of 
monoidal categories of finite-dimensional representations of 
quantum affine algebras, 
which provide monoidal categorifications of cluster algebras. 
In this paper, we prove that, for types $ADE$, 
some of these categories provide monoidal categorifications of 
cluster algebras of affine type.
Moreover, by means of the combinatorial theory of affine type cluster algebras, 
we give a complete classification of real and imaginary simple modules 
in these categories. 
In particular, we show that, in these cases, 
the conjecture asserting that real simple modules correspond exactly 
to cluster monomials holds. 
\end{abstract}

\maketitle

\tableofcontents

\section{Introduction}\label{sec : intro}
Quantum groups, introduced by Drinfeld ~\cite{Dr85} and Jimbo~\cite{Ji85}, 
are $q$-deformations of universal enveloping algebras of 
symmetrizable Kac-Moody Lie algebras. 
In particular, when the Lie algebra is taken to be an affine Lie algebra, 
the corresponding quantum group is called a \emph{quantum affine algebra}. 
The study of finite-dimensional representations of quantum affine algebras 
has been an active area of research since the 1990s, 
owing to its significant applications not only in mathematics but also in physics, 
particularly in the theory of integrable systems and solvable lattice models
(see, for example, \cite{Baxter, FR92, DFJMN}). 

Let us briefly recall the basics of quantum affine algebras and 
their representation theory. 
Let $q$ be an indeterminate and $\Bbbk$ be the algebraic closure of $\Q(q)$. 
Let $\mathfrak{g}$ be a finite-dimensional simple Lie algebra over $\C$, 
$I$ be the set of vertices of its Dynkin diagram, 
and $\Bbbk$-algebra $U_q(\hat{\mathfrak{g}})$ be the corresponding 
quantum affine algebra. 
Let $\mathscr{C}_{\mathfrak{g}}$ be the monoidal category 
of finite-dimensional representations of $U_q(\hat{\mathfrak{g}})$. 
We note that $\mathscr{C}_{\mathfrak{g}}$ is regarded as 
the monoidal category of finite-dimensional 
representations of the \emph{quantum loop algebra} $U_q(L\mathfrak{g})$. 
The simple objects in $\mathscr{C}_{\mathfrak{g}}$ are parametrized by 
monomials in indeterminates $Y_{i,a}$ for $i \in I$ and $a \in \Bbbk^{\times}$
\cite{CP95}. 
The simple module parametrized by a monomial $m$ is denoted by $L(m)$. 
For the simple modules $L(m_1), L(m_2)$, we say they commute strongly if 
$L(m_1)\otimes L(m_2)$ is simple. 
A simple module $L(m)$ is called \emph{real} if $L(m) \otimes L(m)$ is also simple, 
and called \emph{imaginary} otherwise. 

Determining whether a simple module is real or imaginary is an important problem. 
For example, from the representation–theoretic point of view, if a simple module 
$M$ is real, then for any simple module $N$, 
$M\otimes N$ has a simple head and a simple socle 
(\cite[Conjecture 3]{Le03}, \cite{headsimplicity}), and moreover, 
the irreducibility of $M\otimes N$ can be reduced to the computation of 
the invariant $\textfrak{d}$ \cite{KKOP20}. 
By these rich properties of real modules, 
knowing that a simple module is real brings significant advantages.
Moreover, the fact that $M$ is real can also be characterized by saying that 
the \emph{renormalized $R$-matrix} on $M \otimes M$ is a scalar multiple of the identity
\cite{headsimplicity}, 
so this is also a meaningful problem from the viewpoint of integrable systems.
In addition, due to the deep connection between 
the problem of determining real and imaginary modules 
and the \emph{monoidal categorification of cluster algebras} explained below, 
this problem is now regarded as one of the main topics 
in the representation theory of quantum affine algebras.


The study of the monoidal category $\mathscr{C}_{\mathfrak{g}}$ has made 
remarkable progress due to
the theory of 
monoidal categorification of cluster algebras introduced by 
Hernandez and Leclerc \cite{HL10}. 
A monoidal full subcategory $\mathcal{C} \subset \mathscr{C}_{\mathfrak{g}}$ 
is called a monoidal categorification of a cluster algebra $\mathscr{A}$ 
if there exists an algebra isomorphism $\iota$ between the Grothendieck ring $K(\mathcal{C})$ 
and $\mathscr{A}$ 
such that $\iota^{-1}$ sends cluster monomials to real simple modules.  
(This definition is weaker than the original one and follows \cite{KKOP20}. 
See \S\ref{sec : monoidal categorification}. )

In \cite{HL10}, Hernandez and Leclerc introduced a sequence of monoidal full subcategories
\[
\mathscr{C}_1\subset \mathscr{C}_2\subset \cdots \subset \mathscr{C}_l\subset\cdots
\]
of $\mathscr{C}_{\mathfrak{g}}$,
and proved that $\mathscr{C}_1$ provides a monoidal categorification of a cluster algebra of 
finite type 
when $\mathfrak{g}$ is of type $A_n$ and $D_4$. 
In \cite{HL16}, 
it was shown that there is a natural ring isomorphism 
$\iota_l : K(\mathscr{C}_l)\to \mathscr{A}_l$ for each $l\in \Z_{\geq 1}$ and 
certain cluster algebra $\mathscr{A}_l$. 
In most cases, the cluster algebras $\mathscr{A}_l$ are of infinite type. 
In \cite{Qi17}, Qin showed that when $\mathfrak{g}$ is simply-laced, 
each $\mathscr{C}_l$ provides a monoidal categorification of $\mathscr{A}_l$ by $\iota_l$. 

When the categorified cluster algebra is of finite type, 
every simple object in the category corresponds to a cluster monomial, 
since the cluster monomials form a basis of a finite type cluster algebra. 
However, in general, cluster monomials do not span the entire cluster algebra. 
Thus, it becomes a fundamental problem to determine which simple objects arise from cluster monomials.
It is believed that the notions of real and imaginary modules play a key role in this problem.
Namely, the following important conjecture is widely expected to hold$\colon$
\begin{Conj}[{\cite[Conjecture 5.7]{HL21}}, {\cite[Conjecture 5.2]{HL16}}]
\label{Conj : original real vs monomial intro}
    The isomorphism $\iota_l : K(\mathscr{C}_l)\to\mathscr{A}_l$ induces a bijection 
    between the set of real simple modules and the set of cluster monomials, i.e., 
    \[
    \{ \iota_l([L]) \mid L  \text{ is a real simple module in $\mathscr{C}_l$}\} 
    = \{ \text{cluster monomial in $\mathscr{A}_l$}\}. 
    \]
\end{Conj}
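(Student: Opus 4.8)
To prove (the relevant instance of) this conjecture for the categories under consideration, the plan is to combine the monoidal categorification established earlier in the paper with the combinatorial theory of affine type cluster algebras. Fix one of these categories $\Cc$ together with the isomorphism $\iota\colon K(\Cc)\xrightarrow{\sim}\Aa$ realizing it as a monoidal categorification of the affine cluster algebra $\Aa$. By the very definition of monoidal categorification, $\iota^{-1}$ carries every cluster monomial to a real simple module, so the inclusion
\[
\{\text{cluster monomial in }\Aa\}\subseteq\{\iota([L])\mid L\ \text{real simple in}\ \Cc\}
\]
is immediate. The entire content therefore lies in the reverse inclusion: every real simple module maps to a cluster monomial, equivalently, every simple module whose class is \emph{not} a cluster monomial is imaginary.

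The first substantive step is to identify the image under $\iota$ of the basis $\{[L(m)]\}$ of classes of simple modules. I would argue that this coincides with the canonical (generic) basis of $\Aa$: the cluster monomial part is already matched by the monoidal categorification, and positivity/triangularity results in the spirit of Qin and Kashiwara--Kim--Oh--Park pin down the remaining simple classes as the non-cluster-monomial basis elements. Here the theory of affine cluster algebras enters decisively: since $\Aa$ is of affine (tame) type, its canonical basis is known to consist of the cluster monomials together with an explicit distinguished family of \emph{imaginary} basis elements governed by the imaginary root $\delta$ and the regular (tubular) representations --- indexed, in the relevant corank, by $\Z_{\geq 1}$, in analogy with the Sherman--Zelevinsky elements of the Kronecker case. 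This description isolates precisely the simple modules that are candidates for being imaginary.

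It then remains to prove that each such non-cluster-monomial simple module is genuinely imaginary. I would translate realness into the Grothendieck ring: $L(m)$ is real if and only if $[L(m)]^{2}$ is again a single element of the simple basis, i.e. $\iota([L(m)])^{2}$ is a single canonical basis element of $\Aa$. For the imaginary basis elements of an affine cluster algebra this can be checked purely combinatorially from the known multiplication rules: the square of such an element expands with several distinct canonical basis terms (reflecting the recursions satisfied by the $\delta$-family), so the corresponding tensor square cannot be simple. Alternatively, one can establish imaginariness directly on the module side via the criterion recalled in the introduction --- that $L(m)$ is real exactly when the renormalized $R$-matrix on $L(m)\otimes L(m)$ is a scalar --- and show that this fails for the explicit $\delta$-modules through a computation of the invariant $\fd$.

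The main obstacle is the interplay between these last two steps. One must verify that $\iota$ matches the simple-module basis with the canonical basis of $\Aa$ exactly, and not merely on cluster monomials, and then control the multiplicative structure of the imaginary part well enough to deduce non-simplicity of the tensor squares. The delicate point is that the monoidal categorification hypothesis a priori only governs cluster monomials, so the identification of the imaginary simple modules with the $\delta$-family requires additional input --- positivity of the relevant structure constants, or an explicit construction of the imaginary modules as subquotients of tensor products of Kirillov--Reshetikhin modules --- before the affine cluster combinatorics can be brought fully to bear.
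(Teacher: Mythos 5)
Your first inclusion and your closing diagnosis are both right, but the middle of your argument rests on input that does not exist at the required generality, and that missing input is precisely the hard content of the theorem. You propose to identify $\{\iota([L(m)])\}$ with the canonical (generic) basis of the affine cluster algebra and then to read off imaginariness from ``known multiplication rules'' for the $\delta$-family. Neither half is available here: monoidal categorification only controls cluster monomials (as you yourself observe), Qin's triangularity results concern the Hernandez--Leclerc categories $\mathscr{C}_l$ rather than the interval categories $\mathscr{C}_{\fg}^{[a,b],\mathfrak{s}}$ actually used, and Sherman--Zelevinsky-type multiplication rules for the imaginary part of a canonical basis are established essentially only in the Kronecker/rank-two situation, not for, say, $E_8^{(1)}$ with $n$ frozen variables. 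There is also a circularity risk in your fallback: ``$[L]^2$ is not a single basis element, hence $L\otimes L$ is not simple'' is only valid once the simple classes \emph{are} the basis, i.e.\ once the unproven identification is in place. So the proposal reduces the conjecture to two statements that are themselves open in this setting.

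The paper never invokes a canonical basis. Instead it (i) computes the $\mathbf{d}$-vector of \emph{every} simple module explicitly and shows $G=\mathbf{d}\circ\iota\circ[L(-)]$ (Theorem~\ref{Thm : D_n G=dL} and its analogues), with $G'$ a bijection $\mathcal{M}'\to Q$; (ii) imports Reading--Stella's unique $c$-cluster expansion (Proposition~\ref{Prop : cluster expansion}); (iii) proves that $c$-compatible roots yield strongly commuting simples via the invariant $\fd$ and Proposition~\ref{Prop : delta ineq sum}, giving the tensor factorization $L(m)\cong\bigotimes_{\alpha\in\Phi_c}L(m_\alpha\alpha)$ (Theorem~\ref{Thm : D_n tensor factorization}); and (iv) shows $L(l\delta)$ is imaginary by contradiction (Proposition~\ref{Prop : D_n delta imaginary}): if $L(l\delta)$ were real, repeated application of Proposition~\ref{Prop : delta inequality real} against the family $L(p\delta-\alpha_1)$ would force $\fd\bigl(L(l\delta),L(N\delta-\alpha_1)\bigr)=0$ for some $N$, so $L(l\delta)\otimes L(N\delta-\alpha_1)\cong L((l+N)\delta-\alpha_1)$ would be simple, contradicting that the latter is a \emph{prime} cluster variable (Propositions~\ref{Prop : dvectorbij} and~\ref{Prop : prime module}). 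Realness of $L(m)$ then becomes equivalent to realness of the $c$-cluster expansion of $G(m)$, and Proposition~\ref{Prop : pairwise compatible} upgrades real expansions to actual cluster monomials (Corollary~\ref{Cor : D_n cluster monomial}). Note that even your more promising module-side alternative (a direct $\fd$ or $R$-matrix computation on $L(\delta)\otimes L(\delta)$) is avoided: the paper's descent-plus-primality trick substitutes categorical structure for any explicit computation, and the $\mathbf{d}$-vector machinery of (i)--(iii) is exactly what replaces the basis identification your plan would first have to prove.
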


Since then, the theory of monoidal categorification has been extensively developed.
In particular, in \cite{KKOP24}, for any type $\mathfrak{g}$, 
infinitely many monoidal subcategories giving monoidal categorifications 
of cluster algebras were constructed.
More precisely, for each $\mathfrak{g}$, each pair of integers $-\infty < a \le b < \infty$,
and each \emph{admissible sequence} $\mathfrak{s}$ 
(see \S \ref{sec : monoidal categorification}), 
a monoidal category $\mathscr{C}_{\mathfrak{g}}^{[a,b],\mathfrak{s}}$ was defined 
and shown to provide a monoidal categorification of a cluster algebra.
In particular, if $n$ denotes the number of Dynkin nodes,
then by choosing $\mathfrak{s}$ appropriately,
$\mathscr{C}_{\mathfrak{g}}^{[1, (l+1)n], \mathfrak{s}}$ coincides with $\mathscr{C}_l$ 
for each $l \in \Z_{\geq 1}$. 

Since \cite{KKOP24} has significantly expanded the family of monoidal categorifications 
beyond $\mathscr{C}_l$, 
Conjecture~\ref{Conj : original real vs monomial intro} can be generalized as follows:
\begin{Conj}
\label{Conj : real vs monomial intro}
    Let $\mathcal{C}$ be a monoidal categorification of a cluster algebra $\mathscr{A}$ 
    and let $\iota : K(\mathcal{C}) \overset{\sim}{\to} \mathscr{A}$ be the corresponding 
    isomorphism. 
    Then, there is a bijection between the set of real simple modules 
    and the set of cluster monomials, i.e., 
    \[
    \{ \iota([L]) \mid L  \text{ is a real simple module in $\mathcal{C}$}\} 
    = \{ \text{cluster monomial in $\mathscr{A}$}\}. 
    \]
\end{Conj}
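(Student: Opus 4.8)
The plan is to prove the statement in the setting established earlier in this paper, namely for $\mathcal{C} = \mathscr{C}_{\mathfrak{g}}^{[a,b],\mathfrak{s}}$ with $\mathfrak{g}$ of type $ADE$ and $\mathscr{A}$ the resulting affine type cluster algebra, with $\iota : K(\mathcal{C}) \overset{\sim}{\to} \mathscr{A}$. One inclusion is immediate from the definition of a monoidal categorification: $\iota^{-1}$ already sends every cluster monomial to a real simple module, and since distinct cluster monomials are linearly independent in $\mathscr{A}$ they are sent to distinct real simple classes. Hence the entire content lies in the reverse inclusion, which, in contrapositive form, reads: \emph{every simple module $L$ in $\mathcal{C}$ whose class $\iota([L])$ is not a cluster monomial is imaginary}. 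I would reformulate this in the Grothendieck ring. The classes $\{[L]\}$ of simple modules form a basis of $K(\mathcal{C})$, and since this ring is commutative with $[L\otimes L]=[L]^2$, a simple module $L$ is real if and only if $[L]^2$ is a single basis element $[M]$, and imaginary if and only if $[L]^2$ is a positive sum of at least two simple classes (counted with multiplicity). So it suffices to analyze the squaring operation on the simple-class basis transported through $\iota$.

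First I would pin down the image $\{\iota([L])\}$ inside the affine cluster algebra $\mathscr{A}$. A cluster algebra of affine type carries a well-understood distinguished basis consisting of the cluster monomials together with a complementary family indexed by the minimal imaginary root $\delta$ (unique up to scaling), as in the Sherman--Zelevinsky description in the rank-two Kronecker case and its generic/Caldero--Chapoton generalization in higher rank. The crucial step is to show that the basis $\{\iota([L])\}$ of simple classes coincides with this distinguished basis, so that cluster monomials and the complementary ``imaginary'' elements match up exactly with two disjoint families of simple modules. Using the parametrization of the simple objects of $\mathcal{C}$ by dominant monomials together with the combinatorics of the affine root system, I would identify the finitely many fundamental imaginary generators and the one-parameter families $\{p_n\}_{n\ge 1}$ built on them, where $p_n$ corresponds to the simple module attached to $n\delta$.

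Granting this matching, imaginariness becomes a multiplication computation. The distinguished imaginary elements obey Chebyshev-type recurrences, so that $p_n^2 = p_{2n} + (\text{positive combination of other basis elements})$; in particular $p_n^2$ is never a single basis element. Pulling this back through $\iota^{-1}$, the corresponding simple module $L_{n\delta}$ satisfies that $L_{n\delta}\otimes L_{n\delta}$ has at least two composition factors, hence $L_{n\delta}$ is imaginary. Since every non-cluster-monomial basis element is of this form, combining with the easy inclusion of the first paragraph yields the claimed bijection between real simple modules and cluster monomials.

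The hard part will be the basis identification of the second paragraph: proving that the a priori unspecified basis $\{\iota([L])\}$ is genuinely the distinguished affine-type basis on which the Chebyshev rules hold, rather than merely some basis of $\mathscr{A}$. This is exactly where the type $ADE$ hypothesis, the choice of admissible sequence $\mathfrak{s}$, and the fine structure of the Kashiwara--Kim--Oh--Park categorification must be used; I expect to need the positivity of the simple-class basis together with a triangularity argument (in the spirit of a common triangular basis) to align the two bases. A secondary and more routine obstacle is the base case of the Chebyshev recurrence, i.e. establishing directly that the fundamental imaginary module has reducible tensor square; this reduces to a finite check, for instance via the invariant $\mathfrak{d}$ of Kashiwara--Kim--Oh--Park or an explicit decomposition of $[L]^2$ in $K(\mathcal{C})$.
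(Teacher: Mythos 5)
Your reduction of the problem is fine as far as it goes: the inclusion ``cluster monomials $\subseteq$ real simples'' is immediate, realness of $L$ is equivalent to $[L]^2$ being a single element of the simple-class basis, and (restricting, as you do, to the categories actually treated in this paper) the whole content is to show that simples outside the cluster monomials are imaginary. But your central step --- identifying the basis $\{\iota([L])\}$ with the Sherman--Zelevinsky-type ``canonically positive''/generic basis of the affine cluster algebra and then invoking Chebyshev recurrences $p_n^2 = p_{2n} + (\text{positive terms})$ --- is a genuine gap, and not merely a hard-but-plausible one. It is known already in the Kronecker case that the dual-canonical-type basis formed by simple classes differs from the canonically positive basis precisely at the imaginary elements, so the Chebyshev multiplication rule cannot be assumed to govern the classes $[L(n\delta)]$; a positivity-plus-triangularity argument would at best align $[L(n\delta)]$ with the distinguished imaginary element \emph{up to lower-order corrections}, and those corrections are exactly what your squaring computation would need to control. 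The paper never attempts any such basis comparison. Instead it computes the $\mathbf{d}$-vector of \emph{every} simple explicitly (the maps $G = \mathbf{d}\circ\iota\circ[L(-)]$ of Theorems~\ref{Thm : D_n G=dL}, \ref{Thm : D_4 G=dL}, \ref{Thm : E_n G=dL}, \ref{Thm : A_n G=dL}), shows $G'$ is a bijection onto the root lattice, proves strong commutation for $c$-compatible roots using Reading--Stella's description of $\Lambda_c^{\mathrm{re}}$ together with subadditivity of $\fd$ (Proposition~\ref{Prop : delta ineq sum}), and deduces the tensor factorization of an arbitrary simple along the $c$-cluster expansion of its $\mathbf{d}$-vector (Theorem~\ref{Thm : D_n tensor factorization} and its analogues). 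Realness is then read off from whether the expansion involves $\delta$, with no multiplication rule for the imaginary family ever needed.

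Your ``secondary and more routine obstacle'' --- checking directly that $L(\delta)\otimes L(\delta)$ is reducible --- is also not routine, and the reason is structural: the criterion $\fd(V,W)=0 \Leftrightarrow V,W$ strongly commute (Proposition~\ref{Prop : real delta}) is only valid when one of the two modules is already known to be real, so no finite computation of $\fd(L(\delta),L(\delta))$ can certify imaginarity. The paper's actual argument (Proposition~\ref{Prop : D_n delta imaginary} and its analogues) is an indirect descent: assuming $L(l\delta)$ real, Proposition~\ref{Prop : delta inequality real} forces $\fd(L(l\delta), L(N\delta-\alpha_1))=0$ for some $N$, whence $L(l\delta)\otimes L(N\delta-\alpha_1)\cong L((l+N)\delta-\alpha_1)$; but $(l+N)\delta-\alpha_1\in\Phi_c^{\mathrm{re}}$, so this module is a cluster variable and hence prime (Proposition~\ref{Prop : prime module}), a contradiction. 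This uses the $\mathbf{d}$-vector bijection of Proposition~\ref{Prop : dvectorbij} and irreducibility of cluster variables in an essential way, none of which appears in your proposal. Finally, note that the statement as posed is a conjecture for general monoidal categorifications; the paper proves it only for the specific categories of Theorem~\ref{Thm : affine categorification}, so any writeup must make that restriction explicit, as you implicitly did.
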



By the definition of monoidal categorification, 
it is clear that the modules corresponding to cluster monomials are real. 
However, the converse is wide open. 
When the cluster algebra is of finite type, 
the conjecture holds because the cluster monomials form a basis. 
In particular, every simple module in $\mathcal{C}$ is a real module.
However, the case in which a monoidal category $\mathcal{C}$ contains 
imaginary modules remains fully open. 

In \cite{HL10}, the combinatorial properties of cluster algebras of 
finite type, especially \emph{cluster expansion},  
played a crucial role in analyzing $\mathscr{C}_1$. 
For cluster algebras of finite type, 
the denominator vector (or $\mathbf{d}$-vector) 
gives a bijection between cluster variables and almost positive roots 
(roots that are either positive or the negatives of simple roots)\cite{FZ03}. 
Let $\Phi_{\geq -1}$ denote the set of almost positive roots and 
$x[\alpha]$ denote the cluster variable whose $\mathbf{d}$-vector is $\alpha \in \Phi_{\geq -1}$. 
For $\alpha, \beta\in \Phi_{\geq -1}$, 
we call them \emph{compatible} if $x[\alpha]$ and $x[\beta]$ are in the same cluster. 
Then, for any $\gamma$ in the root lattice, 
there is a unique expression 
\[
\gamma = \sum_{\alpha\in \Phi_{\geq -1}}m_{\alpha}\alpha
\]
where the $m_{\alpha}$ are nonnegative integers with 
$m_{\alpha}m_{\beta}=0$ whenever $\alpha$ and $\beta$ are not compatible
\cite{expansion}. 
This property is called cluster expansion. 

In the case of general cluster algebras, 
combinatorial properties are not as well understood as those in finite types. 
This is considered to be one of the major reasons why 
the conjecture~\ref{Conj : real vs monomial intro} remains unsolved in general. 
However, in recent years, combinatorial structures have also been developed 
for affine type cluster algebras by Reading and Stella in \cite{affine}. 
We briefly recall their results below. 
As an analog of almost positive roots of finite type, 
they introduced the set $\Phi_c^{\text{re}}$ for affine type 
and showed that the $\mathbf{d}$-vector gives a bijection between cluster variables and 
$\Phi_c^{\text{re}}$.
However, in this setting the cluster expansion does not hold: 
the $\mathbf{d}$-vector gives an injection from cluster monomials 
to the root lattice, but not a surjection.
To remedy this, although there is no cluster variable 
whose $\mathbf{d}$-vector equals the null root $\delta$, 
they formally considered the set
$\Phi_c\coloneq \Phi_c^{\text{re}}\cup \{\delta\}$
and by endowing $\Phi_c$ with a combinatorial relation called c-compatibility, 
they made it possible to establish a cluster expansion in the affine case. 
We call the cluster expansion without using $\delta$ the real cluster expansion,
and the one involving $\delta$ the imaginary cluster expansion. 
The real cluster expansion is characterized by the $\mathbf{d}$-vectors 
of cluster monomials,
whereas the relationship between the imaginary cluster expansion 
and the cluster algebra
is not yet understood. 
In \cite[\S 5]{affine}, Reading and Stella remark that 
\begin{quote}
    we expect that $\delta$ is the denominator vector of an important element 
    of the cluster algebra. 
\end{quote}

Our strategy is based on the idea that if we can find a monoidal categorification 
of an affine cluster algebra, 
the combinatorics of the affine cluster algebras would allow us 
to analyze the category in depth and settle the conjecture~\ref{Conj : real vs monomial intro} 
in the categories. 
Furthermore, since there exist ``imaginary'' roots in affine root systems, 
we expected that ``imaginary'' modules should appear in the categorified setting 
as counterparts to these roots. 
In this paper, we prove these expectations.

First, by choosing admissible sequences and intervals appropriately, 
we show that one can construct full subcategories of 
$\mathscr{C}_{\mathfrak{g}}$ that provide 
categorifications of cluster algebras of affine type. 
\begin{Thm}[Theorem \ref{Thm : affine categorification}]
\label{Thm : affine categorification intro}
    Let $\mathfrak{g}$ be a finite-dimensional simple Lie algebra 
    of type $A_n(n\geq2), D_n(n\geq4), E_n(n=6,7,8)$. 
    There exists an admissible sequence $\mathfrak{s}^1$ for 
    $\mathfrak{g}$ of type $A_n(n\geq2), D_n(n\geq4), E_n(n=6,7,8)$ 
    and $\mathfrak{s}^2$ for $\mathfrak{g}$ of type $D_4$ 
    such that the following categories provide monoidal categorifications of 
    the following cluster algebras of affine types : 
    \begin{center}
    \begin{tabular}{c|c}
        category & cluster algebra type \\ \hline
         $\mathscr{C}_{D_n}^{[1, 2n+1], \mathfrak{s}^1}(n\geq 4)$& $D_{n}^{(1)}$\\
         $\mathscr{C}_{D_4}^{[1, 11], \mathfrak{s}^2}$& $E_{6}^{(1)}$\\
         $\mathscr{C}_{E_n}^{[1, 2n+1], \mathfrak{s}^1}(n=6, 7, 8)$& $E_{n}^{(1)}$\\
         $\mathscr{C}_{A_2}^{[1, 11], \mathfrak{s}^1}$& $E_{8}^{(1)}$\\
         $\mathscr{C}_{A_3}^{[1, 10], \mathfrak{s}^1}$& $E_{6}^{(1)}$\\
         $\mathscr{C}_{A_4}^{[1, 13], \mathfrak{s}^1}$& $E_{8}^{(1)}$\\
         $\mathscr{C}_{A_n}^{[1, 2n+2], \mathfrak{s}^1}(n\geq5)$& $D_{n+1}^{(1)}$\\
    \end{tabular}
    \end{center}
\end{Thm}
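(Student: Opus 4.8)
The plan is to reduce the theorem entirely to a quiver-combinatorial statement by invoking the main result of \cite{KKOP24}, which we may assume: for every $\mathfrak{g}$, every interval $[a,b]$, and every admissible sequence $\mathfrak{s}$, the category $\mathscr{C}_{\mathfrak{g}}^{[a,b],\mathfrak{s}}$ is \emph{already} a monoidal categorification of a cluster algebra $\mathscr{A}$, with an explicitly described initial seed; in particular its initial exchange quiver $Q = Q_{\mathfrak{g}}^{[a,b],\mathfrak{s}}$ is determined by a combinatorial recipe from the data $(\mathfrak{g},[a,b],\mathfrak{s})$. Thus the categorification itself is not at issue, and the whole content of the theorem is the \emph{identification of the cluster type} of $\mathscr{A}$. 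Since the type is an invariant of the mutation class of the principal part $\bar Q$ of $Q$ (the subquiver on the mutable, non-frozen vertices), the problem becomes purely combinatorial: for each row of the table, I must show that $\bar Q$ is mutation-equivalent to an orientation of the affine Dynkin diagram listed on the right.

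First I would compute $\bar Q$ explicitly in each case, reading off from $\mathfrak{s}^1$ (resp.\ $\mathfrak{s}^2$) and the interval the initial family of real simple modules and the arrows of the associated quiver via the recipe of \cite{KKOP24}; the sequences $\mathfrak{s}^1,\mathfrak{s}^2$ are chosen precisely so that this is tractable. As a first check one verifies that the number of mutable vertices matches the number of nodes of the target affine diagram — $n+1$ for $D_n^{(1)}$ and $E_n^{(1)}$, and $n+2$ for $D_{n+1}^{(1)}$, with the sporadic rows giving $7$ for $E_6^{(1)}$ and $9$ for $E_8^{(1)}$ — which already pins the rank and excludes every type but the claimed one.

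Next I would identify the mutation type. The affine diagrams occurring here, $\tilde D$ and $\tilde E$, are trees, so all of their acyclic orientations lie in a single mutation class, and by the (affine extension of the) Fomin--Zelevinsky classification a cluster algebra is of type $\tilde X$ exactly when its quiver is mutation-equivalent to an orientation of $\tilde X$. It therefore suffices either to recognize $\bar Q$ directly as such an orientation — which, for a well-chosen $\mathfrak{s}$, I expect to hold on the nose — or else to exhibit a finite sequence of mutations carrying $\bar Q$ to one. For the $E_n^{(1)}$ rows I would additionally use that $\tilde E_6,\tilde E_7,\tilde E_8$ are of finite mutation type, so that this verification is a finite check; the $\tilde D$ and the sporadic low-rank rows ($A_2,A_3,A_4,D_4$, where a type-$A$ or $D_4$ input produces an $E$-type output) I would treat individually.

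The main obstacle will be the explicit determination of $\bar Q$ together with, where it is not already acyclic, the construction of the reducing mutation sequence: translating the \cite{KKOP24} combinatorics into a concrete quiver and then recognizing or mutating it into affine shape requires careful bookkeeping, and the exceptional rows are the least transparent and demand case-by-case analysis. A secondary point to handle with care is the role of the frozen vertices: one must confirm that they do not affect the cluster type — i.e.\ that the principal part $\bar Q$ genuinely governs the classification — and that the isomorphism $\iota$ of \cite{KKOP24} matches the frozen variables correctly, so that the resulting algebra is the affine cluster algebra with its standard coefficient structure.
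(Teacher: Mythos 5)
Your proposal is correct and follows essentially the same route as the paper: the categorification itself is supplied by Theorem~\ref{Thm : KKOP categorification} (from \cite{KKOP24}), and the paper's entire proof consists of exhibiting, for each row of the table, an explicit finite mutation sequence (the circled numbers in Example~\ref{Ex : Cs}) carrying the initial GLS-type exchange quiver to a seed whose principal part is an affine Dynkin quiver --- exactly your fallback branch, since the initial quiver is never affine ``on the nose.'' Note that the paper's definition of affine type (existence of \emph{some} seed whose exchangeable part is an affine Dynkin quiver) makes your appeals to mutation-class invariance and finite mutation type unnecessary, though harmless.
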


Next, for the monoidal categories $\mathcal{C}$ appearing in 
Theorem~\ref{Thm : affine categorification intro} 
except for the cases where $\mathfrak{g}$ is of type $A_2$, $A_3$, and $A_4$, 
we determine whether each simple object in $\mathcal{C}$ is real or imaginary.

We show that, under the identification of $K(\mathcal{C})$ with 
a cluster algebra of affine type via the monoidal categorification, 
the $\mathbf{d}$-vector of each simple module can be described 
combinatorially in an explicit way 
(Theorem ~\ref{Thm : D_n G=dL}, \ref{Thm : D_4 G=dL}, \ref{Thm : E_n G=dL}, 
\ref{Thm : A_n G=dL}). 
Let $\mathcal{M}$ be the set of monomials that parametrizes the simple modules in 
$\mathcal{C}$. 
Let $F_1, \ldots, F_n$ be the monomials in $\mathcal{M}$ such that 
$L(F_i)$ is the simple module corresponding to the frozen variable $f_i$. 
Let $\mathcal{M}^{\prime}$ be the subset of $\mathcal{M}$ 
consisting of the monomials that are not divisible by any of the $F_i$. 
Then we prove that the $\mathbf{d}$-vector provides a bijection between 
the set $\{L(m) \mid m \in \mathcal{M}^{\prime}\}$ and the root lattice. 

Under this bijection, 
we write the simple module corresponding to an element $\gamma$ of the root lattice 
by $L(\gamma)$. 
Then, for the null-root $\delta$, 
we prove that the module $L(\delta)$ is a prime imaginary module
(Proposition \ref{Prop : D_n delta imaginary}, \ref{Prop : D_4 delta imaginary}, 
\ref{Prop : E_n delta imaginary}, \ref{Prop : A_n delta imaginary}). 
%
In other words, the prime imaginary module $L(\delta)$ in $K(\mathcal{C})$ provides 
one possible answer to Reading and Stella's question 
of what the important element is that has $\delta$ as a 
$\mathbf{d}$-vector.

Furthermore, we prove that if $\alpha, \beta \in \Phi^c$ are $c$-compatible,
then $L(\alpha)$ and $L(\beta)$ strongly commute. 
Using the affine-type cluster expansion, we then obtain the following result : 
\begin{Thm}[Corollary \ref{Cor : D_n using delta}, \ref{Cor : D_4 using delta}, 
\ref{Cor : E_n using delta}, \ref{Cor : A_n using delta}]
\label{Thm : using delta intro}
For a simple module $L(m)\in \mathcal{C}$, 
it is a real module if and only if the $\mathbf{d}$-vector of $L(m)$ has a 
real cluster expansion. 
\end{Thm}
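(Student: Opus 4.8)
The plan is to translate the statement into the language of the affine cluster expansion of Reading--Stella and to reduce it to the behaviour of realness under strongly commuting tensor products. After peeling off the frozen factors (the modules $L(F_i)$ are cluster variables, hence real, and strongly commute with every simple object, so they affect neither realness nor the root-lattice part of the $\mathbf{d}$-vector), I may assume $m \in \mathcal{M}^{\prime}$ and set $\gamma := \mathbf{d}(L(m))$, an element of the root lattice. By Reading--Stella, $\gamma$ admits a unique $c$-compatible expansion $\gamma = \sum_{\alpha \in \Phi_c^{\mathrm{re}}} m_\alpha \alpha + m_\delta\, \delta$; it is a real cluster expansion precisely when $m_\delta = 0$, and in that case $\gamma$ is the $\mathbf{d}$-vector of a cluster monomial. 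The whole statement then splits into the two implications ``$m_\delta = 0 \Rightarrow L(m)$ real'' and ``$m_\delta \geq 1 \Rightarrow L(m)$ imaginary''.

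The first implication is the easy one. If $m_\delta = 0$ then $\gamma = \sum_\alpha m_\alpha \alpha$ with the $\alpha$ pairwise $c$-compatible, so the corresponding cluster variables lie in a common cluster and $\prod_\alpha x[\alpha]^{m_\alpha}$ is a cluster monomial. By the definition of monoidal categorification this cluster monomial is the class of a real simple module, whose $\mathbf{d}$-vector equals $\sum_\alpha m_\alpha \alpha = \gamma$ because $\mathbf{d}$-vectors are additive on cluster monomials. Since the $\mathbf{d}$-vector restricts to a bijection between $\{L(m) : m \in \mathcal{M}^{\prime}\}$ and the root lattice, this real module must be $L(m)$ itself.

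For the second implication I would first establish a tensor factorization of $L(\gamma)$ along its cluster expansion. The real roots occurring in the support are pairwise $c$-compatible and each is $c$-compatible with $\delta$, so by the strong-commutation result the modules $L(\alpha)$ commute strongly with one another and with the imaginary part $L(m_\delta\delta)$; hence $B := \bigotimes_{\alpha} L(\alpha)^{\otimes m_\alpha}$ is a cluster monomial module (thus real by the first implication), $A := L(m_\delta\delta)$ is simple, and $A \otimes B$ is simple. Using additivity of $\mathbf{d}$-vectors on strongly commuting products one gets $\mathbf{d}(A \otimes B) = m_\delta\delta + \sum_\alpha m_\alpha \alpha = \gamma$, and injectivity of the $\mathbf{d}$-vector then identifies $A \otimes B$ with $L(\gamma) = L(m)$. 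Thus $L(m) = A \otimes B$ with $A$ an imaginary factor. It remains to show that a strongly commuting tensor product with an imaginary factor is imaginary. Here I would invoke the KKOP invariants: for strongly commuting $A, B$ with $B$ real one has $\Lambda(A,B) = \Lambda(B,A) = 0$, whence by additivity of $\Lambda$
\[
\Lambda(A \otimes B,\, A \otimes B) = \Lambda(A,A) + \Lambda(A,B) + \Lambda(B,A) + \Lambda(B,B) = \Lambda(A,A) + \Lambda(B,B).
\]
Since a simple module is real if and only if this self-pairing vanishes (equivalently, the renormalized $R$-matrix on it is a scalar), and since $A = L(m_\delta\delta)$ is imaginary so that $\Lambda(A,A) > 0$, we conclude $\Lambda(A\otimes B, A \otimes B) > 0$, i.e.\ $L(m)$ is imaginary.

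The main obstacle is precisely the second implication, and within it two points require genuine work. First, the factorization $L(\gamma) \cong L(m_\delta\delta) \otimes \bigotimes_\alpha L(\alpha)^{\otimes m_\alpha}$ must be justified: this rests on the additivity of $\mathbf{d}$-vectors over strongly commuting products restricted to $c$-compatible supports, and on the correct bookkeeping of the multiplicity $m_\delta$ of the self-incompatible root $\delta$, so that the imaginary contribution is the single simple $L(m_\delta\delta)$ rather than a tensor power of $L(\delta)$ (which would fail to be simple). Second, one needs $L(m_\delta\delta)$ to be imaginary for every $m_\delta \geq 1$; the case $m_\delta = 1$ is the prime imaginary module $L(\delta)$ already produced, while for $m_\delta \geq 2$ I would either extend that argument directly to $L(m_\delta\delta)$ or deduce $\Lambda(L(m_\delta\delta), L(m_\delta\delta)) > 0$ from the structure of $L(\delta)$. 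Everything else --- the easy implication together with the $\Lambda$-computation above --- is formal once these two inputs are in place.
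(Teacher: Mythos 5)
Your overall architecture is the same as the paper's: peel off the frozen factors, factor $L(m)$ along the unique $c$-cluster expansion of its $\mathbf{d}$-vector (Theorem~\ref{Thm : D_n tensor factorization} and its analogues), handle the real case by Proposition~\ref{Prop : commuting family}, and handle the imaginary case through the factor $L(m_{\delta}\delta)$. The two inputs you flag as ``genuine work'' are exactly the paper's Propositions~\ref{Prop : D_n compatible} and~\ref{Prop : D_n delta imaginary}: the strong commutation of $L(m_1\delta)$ with $L(\gamma_2)$ for $\gamma_2\in\Lambda_c^{\mathrm{re}}$ is proved by realizing ${G^{\prime}}^{-1}(m_1\delta)$ as ${G^{\prime}}^{-1}(\beta)^{m_1}{G^{\prime}}^{-1}(\delta-\beta)^{m_1}$ for a root $\beta$ in a component of $\Lambda_c^{\mathrm{re}}$ not containing $\gamma_2$, and then bounding $\fd$ by subadditivity (Proposition~\ref{Prop : delta ineq sum}); and the imaginarity of $L(l\delta)$ is proved uniformly for all $l\geq 1$ by the $\fd$-descent of Proposition~\ref{Prop : delta inequality real} against $L(p\delta-\alpha_1)$ together with primeness of the cluster-variable module $L((l+N)\delta-\alpha_1)$, so ``extend that argument directly'' is indeed how the paper does it. One bookkeeping point you skip: before invoking injectivity of the $\mathbf{d}$-vector to identify the tensor product with $L(m)$, you must check that the highest weight monomial of the product lies in $\mathcal{M}^{\prime}$; the paper verifies this using that each $[L(F_i)]$ is a prime element of the polynomial ring $K(\mathcal{C})$ (Propositions~\ref{Prop : polynomial}, \ref{Prop : frozenKR}, \ref{Prop : cl.var is irr}).

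The genuine gap is your final step. The claim ``$A=L(m_{\delta}\delta)$ is imaginary, so $\Lambda(A,A)>0$'' is the \emph{converse} of the known vanishing criterion and is not available: since $\fd(A,A)=\tfrac12\bigl(\Lambda(A,A)+\Lambda(A,A)\bigr)=\Lambda(A,A)$, your claim amounts to ``$\fd(A,A)=0$ implies $A$ real,'' whereas Proposition~\ref{Prop : real delta} requires one of the two factors to be real and hence only gives the direction ``real implies $\fd(M,M)=0$.'' (If $\fd(A,A)=0$ the renormalized $R$-matrix on $A\otimes A$ specializes to an \emph{automorphism}, but on a non-simple module an automorphism need not be a scalar, which is the actual criterion for realness.) So as written your computation only yields $\Lambda(A\otimes B, A\otimes B)=\Lambda(A,A)\geq 0$ and proves nothing when $\Lambda(A,A)=0$ cannot be excluded. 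The paper sidesteps $\Lambda$ entirely: from the factorization one has $[L(m)]=[L(m_{\delta}\delta)]\cdot c$ in the commutative ring $K(\mathcal{C})$ with $c$ a product of simple classes, so $[L(m)]^{\otimes 2}$ contains $[L(m_{\delta}\delta)\otimes L(m_{\delta}\delta)]\cdot c^2$; since $L(m_{\delta}\delta)$ is imaginary, $[L(m_{\delta}\delta)]^2$ has at least two simple constituents, and positivity of composition multiplicities then forces $L(m)\otimes L(m)$ to be non-simple. Replacing your $\Lambda$-computation by this elementary divisibility argument closes the gap; everything else in your plan matches the paper's proof.
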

Since the $\mathbf{d}$-vector of $L(m)$ can be described explicitly 
as above, 
this theorem provides a criterion to determine whether $L(m)$ is real or imaginary
for each $m \in \mathcal{M}$.
As a result, we obtain the following corollary : 
\begin{Cor}[Corollary \ref{Cor : D_n cluster monomial}, 
\ref{Cor : D_4 cluster monomial}, 
\ref{Cor : E_n clyster monomial}, 
\ref{Cor : A_n clyster monomial}]
\label{Cor : cluster monomial intro}
    For these cases, Conjecture ~\ref{Conj : real vs monomial intro} is true. 
\end{Cor}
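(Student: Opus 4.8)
The plan is to deduce the corollary from Theorem~\ref{Thm : using delta intro} together with the affine-type cluster combinatorics of Reading and Stella, by checking the two inclusions of Conjecture~\ref{Conj : real vs monomial intro} separately. One inclusion is essentially free: by the very definition of a monoidal categorification, $\iota^{-1}$ sends every cluster monomial to a real simple module, so the set of cluster monomials is already contained in $\{\iota([L]) \mid L \text{ real}\}$. All the content lies in the reverse inclusion, namely that if $L(m)$ is real then $\iota([L(m)])$ is a cluster monomial.

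For the reverse inclusion I would first reduce to the case $m \in \mathcal{M}^{\prime}$. Writing $m = m^{\prime}\prod_i F_i^{a_i}$ with $m^{\prime} \in \mathcal{M}^{\prime}$ and $a_i \ge 0$, the frozen modules $L(F_i)$ are real and strongly commute with every simple module, so $[L(m)] = [L(m^{\prime})]\prod_i [L(F_i)]^{a_i}$ in $K(\mathcal{C})$ and $L(m)$ is real exactly when $L(m^{\prime})$ is. Applying $\iota$ and using that the product of a cluster monomial with the frozen variables $\prod_i f_i^{a_i}$ is again a cluster monomial, one sees that $\iota([L(m)])$ is a cluster monomial if and only if $\iota([L(m^{\prime})])$ is. It therefore suffices to treat modules indexed by $\mathcal{M}^{\prime}$, where the $\mathbf{d}$-vector furnishes a bijection onto the root lattice.

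Now suppose $m \in \mathcal{M}^{\prime}$ and $L(m)$ is real. By Theorem~\ref{Thm : using delta intro} the $\mathbf{d}$-vector of $L(m)$ admits a real cluster expansion, and by Reading and Stella's characterization a vector of the root lattice admits a real cluster expansion precisely when it is the $\mathbf{d}$-vector of a cluster monomial $z$, which is moreover unique since the $\mathbf{d}$-vector is injective on cluster monomials. Because cluster monomials are real, $\iota^{-1}(z) = [L(m^{\prime\prime})]$ for some $m^{\prime\prime} \in \mathcal{M}^{\prime}$, and by the explicit $\mathbf{d}$-vector computations (Theorems~\ref{Thm : D_n G=dL}, \ref{Thm : D_4 G=dL}, \ref{Thm : E_n G=dL}, \ref{Thm : A_n G=dL}) the $\mathbf{d}$-vector of $L(m^{\prime\prime})$ agrees with that of $z$, hence with that of $L(m)$. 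Since the $\mathbf{d}$-vector is injective on $\{L(p) \mid p \in \mathcal{M}^{\prime}\}$, this forces $L(m) = L(m^{\prime\prime})$ and therefore $\iota([L(m)]) = z$ is a cluster monomial, completing the reverse inclusion.

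The genuinely substantial ingredient is Theorem~\ref{Thm : using delta intro} itself, which is assumed here; granting it, the only points that require care are the two compatibilities used above. First, one must know that the combinatorially defined $\mathbf{d}$-vector of a simple module coincides with the cluster-algebraic denominator vector of its image under $\iota$, so that the injectivity statements on the module side and the cluster-monomial side can be matched; this is exactly what the explicit $\mathbf{d}$-vector theorems provide. Second, the frozen reduction relies on each $L(F_i)$ strongly commuting with all simple objects and factoring off cleanly in the Grothendieck ring. I expect this frozen-variable bookkeeping to be the main fiddly point, since it is where the discrepancy between monomials in $\mathcal{M}$ and cluster monomials carrying frozen factors must be reconciled; once it is settled, the corollary is a formal consequence of the bijection between $\mathcal{M}^{\prime}$-modules and the root lattice and of Reading--Stella's identification of real cluster expansions with $\mathbf{d}$-vectors of cluster monomials.
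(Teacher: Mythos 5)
Your proof is correct, but its endgame is genuinely different from the paper's. The paper's proof of Corollaries \ref{Cor : D_n cluster monomial}, \ref{Cor : D_4 cluster monomial}, \ref{Cor : E_n clyster monomial}, \ref{Cor : A_n clyster monomial} never invokes injectivity of the $\mathbf{d}$-vector: once $L(m)$ is real, the real $c$-cluster expansion feeds directly into the factorization already proved (Corollary \ref{Cor : D_n factorization} and its analogues), giving $L(m)\cong\bigotimes_i L(F_i)^{\otimes a_i}\otimes\bigotimes_{\alpha}L(\alpha)^{\otimes m_{\alpha}}$, and then Proposition \ref{Prop : pairwise compatible} places all the $x[\alpha]$ with $m_{\alpha}\neq 0$ (together with the frozen variables) in a single cluster, so $[L(m)]$ is literally a product of cluster variables of one seed. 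You instead take Theorem \ref{Thm : using delta intro} as a black box, construct the candidate cluster monomial $z$ purely on the combinatorial side (this still uses Proposition \ref{Prop : pairwise compatible}, plus additivity of denominator vectors under products, which holds here because classes of simples are Laurent polynomials with nonnegative coefficients, Lemma \ref{Lem : positivity}), and then pin down $\iota^{-1}(z)=[L(m)]$ via the bijection $G'$ (Proposition \ref{Prop : D_n bij}) and the identification $G=\mathbf{d}\circ\iota\circ[L(-)]$ (Theorem \ref{Thm : D_n G=dL}). What your route buys is modularity — it uses only the real/imaginary criterion plus $\mathbf{d}$-vector combinatorics — at the cost of hiding the structural factorization that the paper exhibits explicitly; note also that you do not actually need uniqueness of $z$ (and indeed the $\mathbf{d}$-vector is not injective on cluster monomials carrying frozen factors, since $\mathbf{d}(f_j)=0$): injectivity of $G'$ on $\mathcal{M}'$ alone forces $m=m''$.

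One step you assert without justification is that $\iota^{-1}(z)=[L(m'')]$ with $m''\in\mathcal{M}'$. Realness of cluster monomials gives simplicity, hence $m''\in\mathcal{M}$, but ruling out frozen divisors requires the argument from the proof of Theorem \ref{Thm : D_n tensor factorization}: each $f_i$ is a prime element of the polynomial ring $K(\mathcal{C})$ (Proposition \ref{Prop : polynomial} together with Proposition \ref{Prop : cl.var is irr}), $f_i$ divides no frozen-free cluster monomial, and $F_i\mid m''$ would force $f_i\mid[L(m'')]$ by Proposition \ref{Prop : frozenKR} — a contradiction. Your frozen-variable reduction at the start is sound for the same reasons (strong commutation with all simples plus exactness of the tensor product), and with the divisibility step supplied, your argument closes correctly.
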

As far as the author knows, this is the first example
in which the conjecture~\ref{Conj : real vs monomial intro} 
is proved within a monoidal categorification
whose module category contains imaginary modules. 

This paper is organized as follows : 
In Section \ref{Sec : qloop}, we review the basic facts about quantum loop algebras and recall the invariant $\fd$ introduced in \cite{KKOP20}.
In Section \ref{sec : monoidal categorification}, we summarize known results concerning the monoidal categorification of cluster algebras and prove Theorem~\ref{Thm : affine categorification intro}.
In Section \ref{Sec : affinecluster}, we recall the results of \cite{affine} on cluster algebras of affine type.
In Section \ref{sec : classification}, we prove 
Theorem~\ref{Thm : using delta intro} and 
Corollary~\ref{Cor : cluster monomial intro} for each type.

The monoidal categories treated up to this point are constructed from specific choices of admissible sequences.
By employing the theory of dual canonical bases, we can identify, 
for each imaginary module obtained so far, 
a corresponding imaginary module arising from monoidal categories 
associated with different admissible sequences.
This enables us to discover new imaginary modules.
In Section \ref{Sec : Qdatum}, we illustrate this phenomenon in the case of type $D_4$, 
and also discuss the relationship between the imaginary vectors 
discovered by Leclerc in \cite{Le03} and the imaginary modules studied 
in the present paper.

\subsection*{Acknowledgement}
I would like to express my sincere gratitude to my supervisor, 
Noriyuki Abe, for many valuable discussions and his continuous support. 
I am also deeply grateful to Hironori Oya for numerous helpful comments, 
including his advice on the discussion in Section ~\ref{Sec : Qdatum}.
This work was supported by the FoPM program at the University of Tokyo.

\section{Quantum loop algebras and their finite-dimensional representations}
\label{Sec : qloop}
\subsection{Quantum loop algebras}
Let $\fg$ be a complex finite-dimensional simple Lie algebra 
with the set of Dynkin indices $I = \{1, \ldots , n\}$
and Cartan matrix $C = (c_{ij})_{i, j \in I}$. 
Let $r$ be the maximum number of edges in the Dynkin diagram 
and $D = \mathrm{diag}(d_i \mid i \in I)$ be the unique diagonal matrix 
such that $d_{i} \in \{1,r\}$ for each $i \in I$ 
and $DC = (d_i c_{ij})_{i,j \in I}$ is symmetric. 

Let $q$ be an indeterminate and $\kk$
the algebraic closure of the rational function field $\Q(q)$. 
Set $q_i \coloneq q^{d_i}$. 
For integers $k \geq l \geq 0$, we set 
\[
[ k ]_{q_i} \coloneq \frac{q_i^k - q_i^{-k}}{q_i - q_i^{-1}}, \quad
[k]_{q_i}! \coloneq [ k ]_{q_i}[ k - 1 ]_{q_i}\cdots [ 2 ]_{q_i}[ 1 ]_{q_i}, \quad
\left[ \begin{matrix} k \\ l \end{matrix} \right]_{q_i} 
\coloneq \frac{[k]_{q_i}!}{[k-l]_{q_i}! [l]_{q_i}!}.
\]

\begin{Def}
\emph{The quantum loop algebra} $U_{q}(L\fg)$
is the $\kk$-algebra given by the following generators and relations : 

generators : \quad
$\quad\quad x_{i, k}^{\pm} \ (i \in I, k \in \Z), \quad
k_i^{\pm 1} \ (i \in I), \quad
h_{i, l} \ (i \in I, l \in \Z \setminus \{0\})$

relations : 
\begin{itemize}
    \item  $k_{i} k_{i}^{-1} = k_{i}^{-1} k_{i} = 1, 
    k_{i} k_{j} = k_{j} k_{i}$ for $i,j \in I$,
    \item $k_{i} x_{j, k}^{\pm} k_{i}^{-1} = q_{i}^{\pm c_{ij}} x_{j,k}^{\pm}$ 
    for $i,j \in I$ and $k \in \Z$,
    \item $[k_{i}, h_{j,l}] = [h_{i,l}, h_{j,m}] = 0$ 
    for $i,j \in I$ and $l,m \in \Z \setminus\{0\}$,
    \item $[h_{i, l}, x_{j, m}^{\pm}] = \pm \frac{1}{l}[l c_{ij}]_{q_i}x_{j, l + m}^{\pm}$
     for $i, j \in I, l \in \Z \setminus \{0\}$ and $m \in \Z$, 
    \item $x_{i, k+1}^{\pm} x_{j, l}^{\pm} - q_i^{\pm c_{ij}}x_{j, l}^{\pm} x_{i, k+1}^{\pm}
     = q_i^{\pm c_{ij}} x_{i, k}^{\pm}x_{j, l + 1}^{\pm} - x_{j, l + 1}^{\pm}x_{i, k}^{\pm}
    $ for $i, j \in I$ and $k, l \in \Z$, 
    \item $[x_{i, k}^{+}, x_{j, l}^{-}] 
    = \displaystyle \delta_{i, j}\frac{\phi_{i, k + l}^{+} - \phi_{i, k + l}^{-}}{q_i - q_i^{-1}}$
     for $i, j \in I$ and $k, l \in \Z$,
     \item $\displaystyle \sum_{\sigma \in \mathfrak{S}_{b}} \sum_{k=0}^{b}
(-1)^{k} \left[\begin{matrix} b \\ k\end{matrix} \right]_{q_i}
x_{i, r_{\sigma(1)}}^{\pm} \cdots x_{i, r_{\sigma(k)}}^{\pm} 
x_{j, s}^{\pm} x_{i, r_{\sigma(k + 1)}}^{\pm} \cdots x_{i, r_{\sigma(b)}}^{\pm} = 0
$, where $b = 1-c_{ij}$, for $i, j \in I$ with $i \neq j$ and $s, r_1, \ldots, r_b \in \Z$, 

\end{itemize}
where $\displaystyle\phi_{i}^{\pm}(z) \coloneq 
 \sum_{r = 0}^{\infty}  \phi_{i, r}^{\pm}z^{\pm r} \coloneq 
 k_{i}^{\pm 1}
\exp \left( \pm (q_{i}-q_{i}^{-1})
\sum_{l=1}^{\infty}h_{i, \pm l} z^{\pm l} \right)$.

\subsection{Finite-dimensional representations. }

A finite-dimensional $U_{q}(L\fg)$-module is said to be of type $\mathbf{1}$
if the element $k_{i}$ acts as a diagonal $\kk$-linear operator whose eigenvalues 
belong to the set $\{ q^{k} \mid k \in \Z\}$ for each $i \in I$. 
It is well-known that 
the study of finite-dimensional representations of $U_q(L\fg)$ reduces essentially to the study of the category $\Cc_{\fg}$ of type $\mathbf{1}$ finite-dimensional $U_{q}(L\fg)$-modules.
It is also well known that $U_q(L\fg)$ is a subquotient of the corresponding 
quantum affine algebra and the study of type $\mathbf{1}$ finite-dimensional modules 
of quantum affine algebra reduces to the study of $\Cc_{\fg}$. 
Since $U_{q}(L\fg)$ has a Hopf algebra structure, 
$\Cc_{\fg}$ is a monoidal category. 

Let $V \in \Cc_{\fg}$. Since the elements $\{ k_{i}^{\pm 1}, h_{i, l} \mid i \in I, l \in \Z \setminus\{0\}\}$
mutually commute, we have a decomposition 
\[
V = \bigoplus_{\gamma \in (\kk [\![ z ]\!] \times \kk[\![ z^{-1} ]\!])^{I}} V_{\gamma} 
\]
where, for each $\gamma = (\gamma_{i}^{+}(z), \gamma_{i}^{-}(z))_{i \in I}$, we define
$V_{\gamma}$ as the subspace of $V$ on which each coefficient of the series 
$\phi_{i}^{\pm}(z) - \gamma_{i}^{\pm}(z)$ acts nilpotently for any $i \in I$.
If $V_{\gamma} \neq 0$, it is called \emph{an $\ell$-weight space} and $\gamma$ is called 
the corresponding \emph{$\ell$-weight}.

For a simple module $L$ in  $\Cc_{\fg}$, there uniquely exists an $\ell$-weight space
$L_{\gamma_0}$ such that $x^{+}_{i,k} L_{\gamma_0}=0$ for all
$i \in I$ and $k \in \Z$. 
Such $\ell$-weight $\gamma_0$ is called \emph{the $\ell$-highest weight} of $L$ 
and we can classify the isomorphism classes of simple objects in $\Cc_{\fg}$
 by using the $\ell$-highest weights : 
 
\begin{Thm}[{\cite[Theorem 3.3]{CP95}, \cite[Theorem 12.2.6]{CP94}}] \label{Thm:CP}
If $\gamma= (\gamma_{i}^{+}(z), \gamma_{i}^{-}(z))_{i \in I}$ is an $\ell$-highest weight 
of a simple $U_{q}(L\fg)$-module in $\mathscr{C}_{\mathfrak{g}}$, for each $i\in I$ there exists a unique polynomial $P_{i}(z) \in 1+z\kk[z]$ for $i \in I$ such that
\begin{equation} \label{eq:ellhwt}
\gamma_{i}^{\pm}(z) = q_{i}^{\deg(P_i)} \frac{P_{i}(zq_{i}^{-1})}{P_{i}(zq_{i})}.
\end{equation}
Conversely, for any $(P_{i}(z))_{i \in I} \in (1+z\kk[z])^{I}$, we have a simple $U_{q}(L\fg)$-module 
in $\mathscr{C}_{\mathfrak{g}}$ whose $\ell$-highest weight $\gamma$ is given by {\rm (\ref{eq:ellhwt})}. 
\end{Thm}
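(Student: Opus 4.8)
The plan is to prove the two assertions separately: necessity (that the $\ell$-highest weight of a simple module must have the displayed rational shape) by reducing to the rank-one algebra $U_{q_i}(L\mathfrak{sl}_2)$ for each $i$, and sufficiency (that every tuple $(P_i)_{i\in I}$ is realized) by building the module from fundamental modules via tensor products. Throughout I may assume, as stated just before the theorem, that every simple module carries a unique $\ell$-highest weight vector.

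For necessity, I would first observe that for each fixed $i \in I$ the Drinfeld generators $x_{i,k}^{\pm}, h_{i,l}, k_i^{\pm 1}$ generate a subalgebra which is a homomorphic image of $U_{q_i}(L\mathfrak{sl}_2)$: the defining relations restrict to exactly the $\mathfrak{sl}_2$-relations with parameter $q_i$, using $c_{ii}=2$ and the absence of Serre relations for a single node. If $v$ is the $\ell$-highest weight vector of the simple module $L$, then $x_{i,k}^+v=0$ and $\phi_i^{\pm}(z)v=\gamma_i^{\pm}(z)v$, so $v$ generates a finite-dimensional $\ell$-highest weight module over this rank-one algebra, and it suffices to treat $\mathfrak{sl}_2$. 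There, applying the relation $[x_{i,0}^{+}, x_{i,k}^{-}] = (\phi_{i,k}^{+}-\phi_{i,k}^{-})/(q_i-q_i^{-1})$ to $v$ and invoking finite-dimensionality forces the classical weight $N$ to be a nonnegative integer and forces $\gamma_i^{+}(z)$ (a power series in $z$) and $\gamma_i^{-}(z)$ (a power series in $z^{-1}$) to be the expansions at $z=0$ and at $z=\infty$ of a single rational function. A generating-function manipulation of the relations $[h_{i,l}, x_{i,m}^{-}]=-\tfrac{1}{l}[2l]_{q_i}x_{i,l+m}^{-}$ then identifies this rational function as $q_i^{N}P_i(zq_i^{-1})/P_i(zq_i)$ for a unique $P_i\in 1+z\kk[z]$ with $\deg P_i=N$; uniqueness is pinned down by the normalization $P_i(0)=1$ together with $\deg P_i=N$.

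For sufficiency, given an arbitrary tuple $(P_i)_{i\in I}$ I would factor each polynomial over $\kk$ as $P_i(z)=\prod_j(1-a_{i,j}z)$. The essential input is the existence, for each $i\in I$ and $a\in\kk^{\times}$, of a finite-dimensional \emph{fundamental module} $V_i(a):=L(\gamma^{(i,a)})$ whose Drinfeld data is $P_i(z)=1-az$ and $P_{i'}=1$ for $i'\neq i$; these arise from evaluation homomorphisms in type $A$ and from the general constructions of Chari--Pressley otherwise. Since the $\ell$-highest weight of a tensor product is the product of the $\ell$-weights of its factors, the tensor product $\bigotimes_{i,j}V_i(a_{i,j})$ carries an $\ell$-highest weight vector whose Drinfeld data is exactly the prescribed $(P_i)$, and its simple head is then the required module $L(\gamma)$.

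The step I expect to be the genuine obstacle is the finite-dimensionality underlying sufficiency: equivalently, the existence of the fundamental modules and the finite-dimensionality of the universal (Weyl) module $W(\gamma)$, which is what guarantees that $L(\gamma)$ is a nonzero finite-dimensional quotient. The $\mathfrak{sl}_2$ generating-function bookkeeping in the necessity step is technical but routine, whereas controlling the dimension of $W(\gamma)$ is the delicate part, historically requiring explicit evaluation representations in type $A$ and more global arguments for the remaining types.
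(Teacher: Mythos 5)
The paper does not prove this statement at all: Theorem~\ref{Thm:CP} is quoted as a known classification theorem of Chari--Pressley, so there is no in-paper argument to compare against; your proposal should be measured against the standard proof in \cite{CP94, CP95}. Measured that way, your outline is faithful to it: necessity via restriction to the rank-one subalgebra generated by $x_{i,k}^{\pm}, h_{i,l}, k_i^{\pm1}$ (a quotient of $U_{q_i}(L\mathfrak{sl}_2)$), with the generating-function analysis on the cyclic span of $x_{i,k}^-v$ producing rationality and the form $q_i^{N}P_i(zq_i^{-1})/P_i(zq_i)$ with $\deg P_i = N = \langle h_i, \mathrm{wt}(v)\rangle$; sufficiency via factoring the $P_i$ and tensoring fundamental modules. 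You also correctly identify where the real content lies, namely the existence of the fundamental modules themselves, which is exactly the step Chari--Pressley treat by evaluation representations in type $A$ (and for $\mathfrak{sl}_2$) and by separate constructions in the remaining types.

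Two steps in your sufficiency argument need sharpening. First, the claim that the $\ell$-highest weight of a tensor product is the product of those of the factors is not formal: the Hopf coproduct of $U_q(L\mathfrak{g})$ is \emph{not} group-like on the Drinfeld generators $\phi_i^{\pm}(z)$, and one must invoke the triangularity estimate $\Delta(\phi_{i,r}^{\pm}) \equiv \phi_{i,r}^{\pm}\otimes \phi_{i,r}^{\pm}$ modulo terms that annihilate the tensor product of $\ell$-highest weight vectors; this is a genuine lemma in \cite{CP94}. Second, ``its simple head'' is ambiguous as written: for arbitrary spectral parameters $a_{i,j}$ the full tensor product $\bigotimes_{i,j} V_i(a_{i,j})$ need not be cyclic, so you should instead pass to the $U_q(L\mathfrak{g})$-submodule generated by the tensor product of the top vectors --- a highest $\ell$-weight module with the prescribed Drinfeld data --- and take \emph{its} simple head (this is in effect what Proposition~\ref{Prop : subquot} encodes). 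With those two amendments your outline is correct and coincides with the cited proof.
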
 
\end{Def}
This $I$-tuple of polynomials $(P_{i}(z))_{i \in I}$ associated with a simple module $L$ 
is called \emph{the Drinfeld polynomials of $L$}. 
A non-zero vector in the $\ell$-highest weight space $L_{\gamma}$ 
is called \emph{an $\ell$-highest weight vector of $L$}. Since $\dim_\kk L_\gamma=1$,
it is unique up to a scalar. 

Let $\Irr \Cc_{\fg}$ denote the classes of simple objects of $\Cc_{\fg}$. 
By theorem \ref{Thm:CP}, $\Irr \Cc_{\fg}$ is parametrized by $(1+z\kk[z])^{I}$. 

Let $Y_{i, a} (i \in I_0, a \in \kk^{\times})$ be an indeterminate and 
\[\mathcal{M}_{+} \coloneqq \biggl\{ m = \displaystyle\prod_{i \in I_0, a \in \kk^{\times}}
Y_{i, a}^{u_{i, a}}  \bigg |
\ u_{i, a} \in \Z_{\geq 0}, u_{i, a} = 0 \ \text{except for finitely many }(i, a)\biggr\}. \]
Then, there is a bijection 
\[\displaystyle\mathcal{M}_{+} \to (1 + z \Bbbk\lbrack z\rbrack)^I ; 
\prod_{i \in I_0, a \in \Bbbk^{\times}}
Y_{i, a}^{u_{i, a}} \mapsto (\prod_{a \in \Bbbk^{\times}}(1 - az)^{u_{i, a}})_{i \in I}. \] 
Using this bijection, $\Irr \Cc_{\fg}$ is also parametrized by $\mathcal{M}_{+}$, 
 and let $L(m)$ denote the simple module corresponding to the monomial $m$. 
 The monomial $m$ is called the \emph{highest weight monomial} 
 of the simple module $L(m)$. 
For $i \in I, a \in \kk^{\times}$ and $p \in \Z_{\geq 1}$, 
a simple module $L(\prod_{k = 0}^{p-1}Y_{i, aq_i^{2k}})$ is called 
a \emph{KR-module}, 
and in particular, $L(Y_{i, a})$ is called a \emph{fundamental module}.

 \begin{Prop}[{\cite[Corollary 3.5]{CP95}}] \label{Prop : subquot}
Let $m_1, \ldots, m_k \in \mathcal{M}_{+}$. 
Then, $L(m_1\cdots m_k)$ is isomorphic to a subquotient of $\bigotimes _{i=1}^{k}L(m_i)$. 
\end{Prop}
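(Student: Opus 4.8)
The plan is to realize $L(m_1\cdots m_k)$ as the irreducible quotient of the submodule of $V := \bigotimes_{j=1}^{k} L(m_j)$ generated by the tensor product of the individual $\ell$-highest weight vectors. For each $j$ I would choose an $\ell$-highest weight vector $v_j \in L(m_j)$; by Theorem~\ref{Thm:CP} its $\ell$-highest weight is described through (\ref{eq:ellhwt}) by the Drinfeld polynomials $(P_i^{(j)}(z))_{i\in I}$ attached to $m_j$. Put $v := v_1\otimes\cdots\otimes v_k\in V$. The heart of the argument is to show that $v$ is an $\ell$-highest weight vector of $V$ whose associated $I$-tuple of polynomials is $(\prod_{j} P_i^{(j)}(z))_{i\in I}$, i.e. whose highest weight monomial is $m_1\cdots m_k$. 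Granting this, the submodule $W := U_q(L\fg)\cdot v$ is generated by an $\ell$-highest weight vector, its $\ell$-weight space $W_\gamma$ is one-dimensional and equal to $\kk v$, so the sum $W'$ of all proper submodules of $W$ still satisfies $W'_\gamma = 0$ (no proper submodule can meet $\kk v = W_\gamma$) and is therefore the unique maximal proper submodule; then $W/W'\cong L(m_1\cdots m_k)$ by Theorem~\ref{Thm:CP}, and since $W/W'$ is a subquotient of $V$ the proposition follows.

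To see that $v$ is $\ell$-highest weight I would separate the two defining conditions and, for the annihilation, avoid explicit coproduct formulas by arguing with classical ($\fg$-)weights. Grading $U_q(L\fg)$ and each $L(m_j)$ by the weight lattice via the $k_i$-eigenvalues, the triangular decomposition $U_q(L\fg)=U^-U^0U^+$ (with $U^\pm$ generated by the $x_{i,k}^\pm$ and $U^0$ by the $k_i^{\pm1}$ and $h_{i,l}$) together with $U^+v_j=0$ and $U^0v_j=\kk v_j$ gives $L(m_j)=U^-v_j$. Hence $\mathrm{wt}(m_j)$ is the highest classical weight of $L(m_j)$ and occurs with multiplicity one, so $\sum_j\mathrm{wt}(m_j)$ is the highest classical weight of $V$ and is realized exactly on $\kk v$. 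As each $x_{i,k}^+$ raises the classical weight by $\alpha_i$ regardless of $k$, the vector $x_{i,k}^+v$ lies in a classical weight space above the top and therefore vanishes.

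The remaining, and genuinely delicate, point is the computation of the action of $\phi_i^\pm(z)$ on $v$, which requires control of the coproduct on the loop-Cartan generators. Since each $\phi_{i,r}^\pm$ has classical weight $0$, its iterated coproduct lies in $\bigoplus_{\mu_1+\cdots+\mu_k=0}U_{\mu_1}\otimes\cdots\otimes U_{\mu_k}$; a simple tensor $u_1\otimes\cdots\otimes u_k$ with $u_j\in U_{\mu_j}$ kills $v$ unless every $\mu_j$ lies in the negative cone $Q_-$ (otherwise $u_jv_j$ would have a classical weight outside the support of $L(m_j)$), and $\sum_j\mu_j=0$ with all $\mu_j\in Q_-$ forces $\mu_j=0$ for every $j$. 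Thus only the total-weight-zero part of the coproduct acts nontrivially on $v$, and the key structural input I would invoke — part of the standard theory of quantum affine algebras — is that this part equals $\phi_i^\pm(z)^{\otimes k}$, i.e. that the series $\phi_i^\pm(z)$ is grouplike modulo nonzero-weight terms. Granting it, $\phi_i^\pm(z)$ acts on $v$ by $\prod_j\gamma_i^{(j),\pm}(z)$, which by (\ref{eq:ellhwt}) is precisely the $\ell$-weight attached to $\prod_j P_i^{(j)}(z)$, hence to $m_1\cdots m_k$. I expect this control of $\Delta$ on the $\phi_{i,r}^\pm$ (equivalently on the $h_{i,l}$), whose coproducts admit no simple closed form, to be the main obstacle; once it is in hand, the rest reduces to the classical-weight bookkeeping above.
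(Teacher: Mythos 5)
Your proposal is correct, and it is essentially the proof of the cited result itself: the paper offers no argument here, quoting \cite[Corollary 3.5]{CP95}, and what you have written is precisely the standard Chari--Pressley argument (pass to the submodule generated by the tensor product of $\ell$-highest weight vectors, identify its simple head via the classification by Drinfeld polynomials). The two inputs you isolate are handled exactly as in the literature: the vanishing $x_{i,k}^{+}v=0$ follows from the classical-weight bookkeeping just as you say, and the one fact you invoke without proof --- that $\Delta(\phi_i^{\pm}(z))\equiv \phi_i^{\pm}(z)\otimes\phi_i^{\pm}(z)$ modulo terms of nonzero classical weight, which kill $v$ by your cone argument --- is a genuinely standard structural lemma on the coproduct in the Drinfeld realization (due to Damiani, and used in this exact way in \cite{CP95} and \cite{FR98}), so citing it rather than reproving it is legitimate and leaves no gap.
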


For each $a \in \kk$, we consider a $\kk$-algebra automorphism $\tau_a$ 
of $U_q(L_{\fg})$ given by 
\[
\tau_a (k_i) = k_i, \tau_a(x_{i, k}^{\pm}) = a^k x_{i, k}^{\pm}, 
\tau_a(h_{i, l}) = a^l h_{i, l}, 
\]
for $i \in I, k \in \Z, l \in \Z\setminus\{0\}$. 
This automorphism $\tau_a$ is a $q$-analog of the loop rotation $z \mapsto az$ for
the loop algebra $L\fg = \fg \otimes_\C \C[z^{\pm}]$.
For a finite-dimensional representation $V$ in $\mathscr{C}_{\mathfrak{g}}$, 
the twisted representation $\tau_a^*V$ by $\tau_a$ again belongs to $\Cc$.
Thus the assignment $V \mapsto \tau_a^*V$ defines a monoidal auto-equivalence $T_{a}$ of $\mathscr{C}_{\mathfrak{g}}$. 
If we denote by $m \mapsto m_a$ the monoid automorphism of $\mathcal{M}_{+}$ 
given by $Y_{i, b} \mapsto Y_{i, ab}$, 
we have $T_a(L(m)) \cong L(m_a)$ for any simple module $L(m) \in \mathscr{C_{\fg}}$.

As a monoidal category, 
 $\Cc_{\fg}$ is not braided i.e., 
 $V \otimes W$ is not isomorphic to $W \otimes V$ for general $V, W \in \Cc_{\fg}$. 
However, using the $q$-character, 
which is a type of character in the quantum loop algebra, we have 
the following fact. 

\begin{Prop}[{\cite[Theorem 3]{FR98}}] \label{q-ch}
Let $K(\Cc_{\fg})$ be the Grothendieck ring of the monoidal category $\Cc_{\fg}$. 
Then, there is an injective algebra homomorphism 
$\chi_{q} \colon K(\Cc_{\fg}) \to \Z[Y_{i,a}^{\pm 1} \mid i \in I, a \in \kk^\times ]$. 
In particular, although $\Cc_{\fg}$ is not braided, 
$K(\Cc_{\fg})$ is commutative. 
\end{Prop}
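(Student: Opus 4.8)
The plan is to build $\chi_q$ directly from the $\ell$-weight space decomposition recalled above, verify additivity and multiplicativity so that it is a ring homomorphism, and then prove injectivity by a triangularity argument, from which commutativity follows formally.

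First I would make precise that the $\ell$-weights occurring in any $V\in\Cc_\fg$ are encoded by Laurent monomials in the $Y_{i,a}$. On each $\ell$-weight space the series $\phi_i^\pm(z)$ act, up to their nilpotent parts, by a scalar rational function $\gamma_i^\pm(z)$ which is the expansion at $z=0$ (resp.\ $z=\infty$) of a fixed rational function of the shape $q_i^{\deg Q_i-\deg R_i}\,Q_i(zq_i^{-1})R_i(zq_i)\big/\bigl(Q_i(zq_i)R_i(zq_i^{-1})\bigr)$ with $Q_i,R_i\in 1+z\kk[z]$; this is the $\ell$-weight analogue of Theorem~\ref{Thm:CP}, proved by applying the $\mathfrak{sl}_2$-reduction to each Drinfeld $\mathfrak{sl}_2$-triple. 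Factoring $Q_i(z)=\prod_a(1-az)$ and $R_i(z)=\prod_b(1-bz)$ attaches to $\gamma$ the monomial $m_\gamma=\prod_{i,a}Y_{i,a}^{u_{i,a}}\prod_{i,b}Y_{i,b}^{-v_{i,b}}$, and I set $\chi_q(V)\coloneq\sum_\gamma(\dim_\kk V_\gamma)\,m_\gamma$.

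Next I would check that $\chi_q$ is a ring homomorphism. Additivity on short exact sequences is immediate since $\dim_\kk V_\gamma$ is additive in $V$, so $\chi_q$ descends to $K(\Cc_\fg)$. For multiplicativity I would analyze the coproduct of the Cartan-type generators: modulo the part of $U_q(L\fg)$ that strictly raises or lowers $\ell$-weight, one has $\Delta(\phi_i^\pm(z))\equiv\phi_i^\pm(z)\otimes\phi_i^\pm(z)$, so on a suitable filtration of $V\otimes W$ the $\ell$-weight spaces are $\bigoplus_{\gamma'\gamma''=\gamma}V_{\gamma'}\otimes W_{\gamma''}$. Since $m_{\gamma'\gamma''}=m_{\gamma'}m_{\gamma''}$, this gives $\chi_q(V\otimes W)=\chi_q(V)\chi_q(W)$.

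Finally, injectivity. By Theorem~\ref{Thm:CP} the classes $\{[L(m)]\mid m\in\mathcal{M}_+\}$ form a $\Z$-basis of $K(\Cc_\fg)$, so it suffices to show the $\chi_q(L(m))$ are $\Z$-linearly independent. I would introduce the partial order in which $m'\leq m$ precisely when $m/m'$ is a product of the distinguished monomials $A_{i,a}\coloneq Y_{i,aq_i^{-1}}Y_{i,aq_i}\prod_{j\neq i}Y_{j,a}^{-c_{ji}}$, and prove that $\chi_q(L(m))=m+(\text{strictly lower terms})$ with leading coefficient $1$. This is the genuinely substantial step: it rests on the facts that the $\ell$-highest weight space of $L(m)$ is one-dimensional with monomial exactly $m$ and that the lowering operators $x_{j,k}^-$ produce only $\ell$-weights of the form $m\cdot\prod A_{j,a}^{-1}$, so that $\chi_q(L(m))\in m\cdot\Z[A_{i,a}^{-1}]$. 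The transition matrix between $\{\chi_q(L(m))\}$ and the monomial basis is then unitriangular for this order, hence invertible over $\Z$, giving injectivity. Commutativity of $K(\Cc_\fg)$ follows at once: for classes $[V],[W]$ one has $\chi_q([V][W])=\chi_q([V])\chi_q([W])=\chi_q([W])\chi_q([V])=\chi_q([W][V])$ inside the commutative ring $\Z[Y_{i,a}^{\pm1}]$, so injectivity of $\chi_q$ forces $[V][W]=[W][V]$. The main obstacle is exactly the triangularity estimate of this last paragraph, where the representation-theoretic content of Frenkel-Reshetikhin's theorem resides.
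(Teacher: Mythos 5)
This proposition is quoted by the paper from \cite[Theorem~3]{FR98} with no proof of its own, so there is no internal argument to compare against; your proposal is, in effect, a reconstruction of the published proof, and it is essentially the standard one: $\chi_q$ from the $\ell$-weight decomposition, additivity from additivity of $\dim_\kk V_\gamma$ on exact sequences, multiplicativity from the leading-term property of the coproduct ($\Delta(h_{i,l})\equiv h_{i,l}\otimes 1+1\otimes h_{i,l}$ modulo mixed raising/lowering terms, which upon ordering the ordinary weight spaces of $V\otimes W$ yields the product formula for generalized $\phi_i^{\pm}$-eigenvalues), injectivity by unitriangularity over the basis $\{[L(m)]\}$ of $K(\Cc_\fg)$, and commutativity as a formal consequence. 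Two caveats are worth recording. First, your injectivity step invokes the containment $\chi_q(L(m))\in m\cdot\Z[A_{i,a}^{-1}]$, which is the cone theorem of Frenkel and Mukhin: it was only \emph{conjectured} in \cite{FR98}, and its proof (via the lemma that $x_{j,k}^{\pm}$ maps $V_\gamma$ into $\bigoplus_a V_{\gamma A_{j,a}^{\pm1}}$, established by reduction to $U_q(L\mathfrak{sl}_2)$) is genuinely harder than what injectivity requires. The original, cheaper argument uses only ordinary weights: sending $Y_{i,a}\mapsto \omega_i$, highest $\ell$-weight theory gives that $L(m)$ is generated from its one-dimensional $\ell$-highest weight space by the operators $x_{j,k}^-$, each of which lowers the ordinary weight by $\alpha_j$; hence $m$ occurs in $\chi_q(L(m))$ with coefficient $1$ and every other monomial has strictly smaller ordinary weight, which already makes the transition matrix unitriangular (pick, in any vanishing finite combination, the monomials of maximal ordinary weight and peel them off). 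Your stronger route is valid, but then the ``substantial step'' you flag is not something to be proved in passing--it is a theorem in its own right. Second, a sign slip: in simply-laced type $A_{i,a}=Y_{i,aq^{-1}}Y_{i,aq}\prod_{j\neq i}Y_{j,a}^{c_{ji}}$, so neighboring variables enter with exponent $c_{ji}=-1$; as written, your $\prod_{j\neq i}Y_{j,a}^{-c_{ji}}$ gives them positive exponents, and in non-simply-laced type the neighboring factors moreover spread over shifted spectral parameters, so the cone $m\cdot\Z[A_{i,a}^{-1}]$ as you defined it would be wrong. Finally, note that \cite{FR98} originally obtained multiplicativity via the universal $R$-matrix/transfer-matrix construction; your coproduct argument is the later, equally standard alternative.
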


 For simple modules $L, M$ in $\Cc_\fg$, 
we say that $L$ and $M$ \emph{commute} if $L \otimes M \simeq M \otimes L$. 
Also, we say $L$ and $M$ \emph{strongly commute} if $L \otimes M$ is simple. 
If $L$ and $M$ strongly commute, 
then $[L \otimes M] = [M \otimes L ]$ in $K(\Cc_{\fg})$ and they are both simple, 
so $L$ and $M$ commute. 

We say that a simple module $L$ in $\Cc_\fg$ is \emph{real} if $L$ strongly commutes with itself, 
i.e., if $L \otimes L$ is simple. 
When a simple module $L$ is not real, then we say that $L$ is \emph{imaginary}. 
For example, it is known that KR-modules, in particular fundamental modules, 
are real modules
(see~e.g.~{\cite[Proposition 6.4]{FHOO22}}). 

We say that a simple module $L$ in $\Cc_\fg$ is \emph{prime} if 
there exists no nontrivial factorization $L \cong L_1\otimes L_2$ 
in $\mathscr{C}_{\fg}$.

\subsection{R-matrices and invariants}
Let $z$ be an indeterminate.
For any $U_{q}(L\fg)$-module $V \in \Cc_{\fg}$, we can consider its \emph{affinization} $V_{z}$.
This is the $\kk[z^{\pm 1}]$-module $V_{z} \coloneq V \otimes_{\kk} \kk[z^{\pm1}]$ endowed 
with the structure of 
a left $U_{q}(L\fg)$-module by
\[
k_{i}(v \otimes \varphi) = (k_{i}v) \otimes \varphi, \
x^{\pm}_{i, k} (v \otimes \varphi) = (x^{\pm}_{i,k}v)\otimes z^{k} \varphi, \
h_{i,l}(v \otimes \varphi) = (h_{i,l}v) \otimes z^{l}\varphi, 
\]
for $v \in V$ and $\varphi \in \kk[z^{\pm 1}]$.

Let $V, W \in \Cc_{\fg}$ be simple modules. We fix
their $\ell$-highest weight vectors
$v \in V, w \in W$.
It is known that there exists a unique isomorphism of $U_{q}(L\fg) \otimes_{\kk} \kk(z)$-modules
\[
R_{V, W}(z) \colon (V_z \otimes W) \otimes_{\kk[z^{\pm 1}]} \kk(z) \to (W \otimes V_z) \otimes_{\kk[z^{\pm 1}]} \kk(z) 
\]
satisfying $R_{V, W}(z) (v_z \otimes w) = w \otimes v_z$, where $v_z \coloneq v \otimes 1 \in V_z$ (see~\cite[Corollary 2.5]{AK97}). 
We call this isomorphism $R_{V,W}(z)$ \emph{the normalized R-matrix} between $V$ and $W$.
Let $d_{V, W}(z) \in \kk[z]$ denote the monic polynomial of the smallest degree
such that the image of $d_{V, W}(z) R_{V, W}(z)$ is contained in 
$W \otimes V_z \subset (W \otimes V_z) \otimes_{\kk[z^{\pm 1}]} \kk(z) $.
This polynomial $d_{V,W}(z)$ is called  \emph{the denominator} of $R_{V,W}(z)$.
Note that $d_{V,W}(z)$ does not depend on the choice of $\ell$-highest weight vectors $v$ and $w$.
%


\begin{Def}[cf.~{\cite[Section~3]{KKOP20}}] \label{Def:fd}
For each pair $(V, W)$
of simple modules in $\Cc_{\fg}$, we define the $\Z_{\geq 0}$-valued invariant $\fd(V, W)$ by  
\[
\fd(V, W) \coloneq \zero_{z=1}(d_{V, W}(z)) + \zero_{z=1}(d_{W,V}(z)),
\]
where $\zero_{z=a}(f(z))$ denotes the order of zeros of $f(z)$ at $z=a$
for $f(z) \in \kk[z]$ and $a \in \kk$. 
By the definition, we have $\fd(V, W) = \fd(W,V)$.
\end{Def} 

The invariant $\fd$ is related to the notion of strongly commuting.
\begin{Prop}[{\cite[Corollary 3.17]{KKOP20}}]\label{Prop : real delta}
Let $V$ and $W$ be simple modules in $\Cc_{\fg}$. 
Assume that one of them is real. 
Then $V$ and $W$ strongly commute if and only if $\fd(V, W) = 0$. 
\end{Prop}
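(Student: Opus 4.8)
The plan is to reduce the statement to a property of the renormalized R-matrix and then to invoke the simplicity of heads and socles of tensor products involving a real module. First I would introduce the \emph{renormalized R-matrix}. Since $d_{V,W}(z)$ is by definition the minimal monic polynomial for which $d_{V,W}(z)R_{V,W}(z)$ restricts to a genuine homomorphism $V_z\otimes W \to W\otimes V_z$, its specialization at $z=1$,
\[
\mathbf{r}_{V,W}\coloneq \bigl(d_{V,W}(z)R_{V,W}(z)\bigr)\big|_{z=1}\colon V\otimes W\to W\otimes V,
\]
is a \emph{nonzero} homomorphism: were it zero we could factor out $(z-1)$ and contradict the minimality of $d_{V,W}$. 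The key structural input is the unitarity of the normalized R-matrices, which over $\kk(z)$ reads $R_{W,V}(z^{-1})\circ R_{V,W}(z)=\id_{V_z\otimes W}$. Multiplying through by $d_{V,W}(z)\,d_{W,V}(z^{-1})$ turns both composing factors into the genuine renormalized R-matrices, and specializing at $z=1$ (using that $z=1$ is fixed by $z\mapsto z^{-1}$, so $\zero_{z=1}(d_{W,V}(z^{-1}))=\zero_{z=1}(d_{W,V}(z))$) yields the scalar identity
\[
\mathbf{r}_{W,V}\circ\mathbf{r}_{V,W}=d_{V,W}(1)\,d_{W,V}(1)\cdot\id_{V\otimes W}.
\]

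Next I would extract the equivalence with $\fd$. Since $d_{V,W}(1)\neq 0$ holds exactly when $\zero_{z=1}(d_{V,W}(z))=0$, the scalar $d_{V,W}(1)\,d_{W,V}(1)$ is nonzero precisely when $\fd(V,W)=0$. Because $V\otimes W$ and $W\otimes V$ have the same (finite) dimension, the scalar identity shows that $\fd(V,W)=0$ if and only if $\mathbf{r}_{V,W}$ (equivalently $\mathbf{r}_{W,V}$) is an isomorphism. This already settles one implication \emph{without} using reality: if $V\otimes W$ is simple, then by the commutativity of $K(\Cc_{\fg})$ (Proposition \ref{q-ch}) the class $[W\otimes V]=[V\otimes W]$ is that of a single simple module, so $W\otimes V$ has a unique composition factor of full dimension and is itself simple; hence the nonzero map $\mathbf{r}_{V,W}$ between two simple modules is an isomorphism, giving $\fd(V,W)=0$.

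For the converse I would assume $\fd(V,W)=0$, so that $\mathbf{r}_{V,W}\colon V\otimes W\to W\otimes V$ is an isomorphism, and here the hypothesis that one of $V,W$ is real becomes essential; by $\fd(V,W)=\fd(W,V)$ and the symmetry of the roles I may assume $V$ is real. The decisive ingredient is the simplicity of heads and socles of tensor products with a real module: when $V$ is real, $\mathrm{Im}(\mathbf{r}_{V,W})$ is a \emph{simple} module, coinciding with the head of $V\otimes W$ and the socle of $W\otimes V$. Granting this, the surjectivity of the isomorphism $\mathbf{r}_{V,W}$ gives $W\otimes V=\mathrm{Im}(\mathbf{r}_{V,W})$ simple, whence $V\otimes W$ is simple as well. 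The main obstacle is precisely this simplicity-of-image statement: its proof (in the quantum affine setting, following Kang--Kashiwara--Kim--Oh) uses reality of $V$ in an essential way, for instance through the fact that $\mathbf{r}_{V,V}$ is a nonzero scalar on the simple module $V\otimes V$, together with the delicate compatibilities of R-matrices under composition. Once that theorem is available, everything else reduces to the elementary scalar identity established above.
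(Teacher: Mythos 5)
Your proof is correct, and there is no in-paper argument to compare it with: the paper imports this statement without proof, citing \cite[Corollary 3.17]{KKOP20}. Your reconstruction follows essentially the same route as that source: unitarity of the normalized $R$-matrix yields $\mathbf{r}_{W,V}\circ\mathbf{r}_{V,W}=d_{V,W}(1)\,d_{W,V}(1)\,\id_{V\otimes W}$, so that $\fd(V,W)=0$ holds if and only if the renormalized $R$-matrices are isomorphisms (your observation that the ``only if'' direction needs no reality hypothesis is also accurate), and reality enters only through the Kang--Kashiwara--Kim--Oh theorem that $\mathrm{Im}(\mathbf{r}_{V,W})$ is the simple head of $V\otimes W$ and the simple socle of $W\otimes V$, which you correctly invoke as a black box, exactly as the cited proof does.
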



\begin{Prop}[{\cite[Proposition 4.2]{KKOP20}}]\label{Prop : delta ineq sum}
    Let $L, M$ and $N$ be simple modules. 
    Then we have 
    \[
    \fd(S, L) \leq \fd(M, L) + \fd(N, L)
    \]
    for any simple subquotient $S$ of $M\otimes N$. 
\end{Prop}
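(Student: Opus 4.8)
The plan is to bound each of the two summands of $\fd(S,L)$ by the corresponding summands of $\fd(M,L)+\fd(N,L)$, using two structural properties of normalized R-matrices: their multiplicativity across a tensor factor, and their naturality with respect to module homomorphisms. Since $\fd(S,L)=\zero_{z=1}(d_{S,L}(z))+\zero_{z=1}(d_{L,S}(z))$ and the right-hand side decomposes in the same way, it suffices to prove
\[
\zero_{z=1}(d_{S,L}(z)) \le \zero_{z=1}(d_{M,L}(z)) + \zero_{z=1}(d_{N,L}(z))
\]
together with the identical statement in which every denominator has its two arguments swapped; the desired inequality then follows by adding the two.

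First I would treat the tensor product $M\otimes N$ itself, using the $\ell$-highest weight vector $v_M\otimes v_N$ as the normalizing reference. Affinizing with a single variable gives $(M\otimes N)_z\cong M_z\otimes N_z$, and I claim the normalized R-matrix factors as
\[
M_z\otimes N_z\otimes L \xrightarrow{\ \id_{M_z}\otimes R_{N,L}(z)\ } M_z\otimes L\otimes N_z \xrightarrow{\ R_{M,L}(z)\otimes \id_{N_z}\ } L\otimes M_z\otimes N_z .
\]
Indeed the composite carries $(v_M\otimes v_N)\otimes v_L$ to $v_L\otimes(v_M\otimes v_N)$, which is exactly the normalization condition characterizing $R_{M\otimes N,L}(z)$. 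Because the poles of $R_{N,L}(z)$ and of $R_{M,L}(z)$ occur only at the zeros of $d_{N,L}(z)$ and $d_{M,L}(z)$ respectively, the product $d_{M,L}(z)\,d_{N,L}(z)$ clears every pole of the composite; hence $d_{M\otimes N,L}(z)\mid d_{M,L}(z)\,d_{N,L}(z)$, and in particular $\zero_{z=1}(d_{M\otimes N,L})\le \zero_{z=1}(d_{M,L})+\zero_{z=1}(d_{N,L})$.

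It then remains to pass from $M\otimes N$ to its simple subquotient $S$, i.e.\ to prove $\zero_{z=1}(d_{S,L})\le \zero_{z=1}(d_{M\otimes N,L})$. Here I would invoke the naturality of R-matrices, which ultimately comes from the functoriality of the universal R-matrix in each variable: writing $S=U/U'$ with $U'\subset U\subset M\otimes N$, the renormalized R-matrix $R^{\mathrm{ren}}_{M\otimes N,L}(z):=d_{M\otimes N,L}(z)\,R_{M\otimes N,L}(z)$ is a genuine $U_q(L\fg)\otimes\kk[z^{\pm1}]$-homomorphism whose image lies in $L\otimes(M\otimes N)_z$, and naturality along the inclusion $U_z\hookrightarrow (M\otimes N)_z$ and the projection onto the quotient by $U'_z$ makes it restrict to $U_z\otimes L$ and descend to a nonzero homomorphism $S_z\otimes L\to L\otimes S_z$. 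Identifying this descended map, up to the scalar $d_{M\otimes N,L}(z)$, with $R_{S,L}(z)$ and using the minimality of the denominator $d_{S,L}(z)$ yields $d_{S,L}(z)\mid d_{M\otimes N,L}(z)$, hence the zero-order inequality. Combining with the previous paragraph gives the one-sided bound, and the swapped bound for $d_{L,S}$ follows symmetrically.

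The main obstacle is precisely this last step. Two points require care: defining the normalized R-matrix and its denominator for the non-simple module $M\otimes N$ (the existence–uniqueness statement quoted earlier is phrased for simple modules, so I would either reconstruct $R_{M\otimes N,L}(z)$ directly as the composite above or extend the argument of \cite{AK97} to the cyclic module generated by $v_M\otimes v_N$), and — more seriously — checking that the descended map on $S$ is a \emph{nonzero} multiple of $R_{S,L}(z)$ rather than being killed by the passage to the subquotient. The latter forces one to compare the normalizing scalar attached to the highest weight vector of $S$ with the one induced from $M\otimes N$, and to verify that their ratio is regular and nonvanishing at $z=1$, so that the order of the zero there is genuinely nonincreasing under the subquotient operation.
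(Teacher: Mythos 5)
The paper does not prove this proposition at all: it is quoted from \cite{KKOP20}*Proposition 4.2, so your attempt has to be measured against the argument there. Your first half is exactly the standard one and is fine: the factorization $R_{M\otimes N,L}(z)=(R_{M,L}(z)\otimes\id)\circ(\id\otimes R_{N,L}(z))$, the resulting bound $d_{M\otimes N,L}(z)\mid d_{M,L}(z)d_{N,L}(z)$ (taking the composite as the \emph{definition} of the normalized R-matrix on the non-simple module is the usual device), and the descent of the renormalized map $r(z)=d_{M,L}(z)d_{N,L}(z)R_{M\otimes N,L}(z)$ to a map $\bar r\colon S_z\otimes L\to L\otimes S_z$ via naturality of the universal R-matrix.

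The genuine gap is your reduction to two one-sided inequalities, and it sits precisely at the point you flag but do not resolve. Uniqueness of R-matrices only gives $\bar r=c(z)R_{S,L}(z)$ for some $c(z)\in\kk(z)$, and the minimality of $d_{S,L}$ (together with the standard fact that $d_{S,L}(z)R_{S,L}(z)$ has nonzero specialization at every point, so that a rational multiple of it preserving the lattices must have Laurent-polynomial coefficient) yields only the divisibility $d_{S,L}(z)\mid c(z)$ --- a \emph{lower} bound on $\zero_{z=1}c$, whereas your inequality $\zero_{z=1}d_{S,L}\le\zero_{z=1}(d_{M,L}d_{N,L})$ needs the \emph{upper} bound $\zero_{z=1}c\le\zero_{z=1}(d_{M,L}d_{N,L})$. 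That upper bound can be read off from the normalization only when the $\ell$-highest weight vector of $S$ is the image of $v_M\otimes v_N$ (the constituent $L(m_Mm_N)$, where indeed $c=d_{M,L}d_{N,L}$); for an arbitrary simple subquotient the highest weight vector of $S$ bears no such relation to $v_M\otimes v_N$, the restriction-and-quotient operation can create extra vanishing of $\bar r$ at $z=1$, and nothing in your argument rules this out --- you do not even establish $\bar r\neq 0$. The one-sided denominator inequality is not what \cite{KKOP20} proves, and the proposition is deliberately stated for the symmetric quantity $\fd$. The repair is the two-sided composition trick: descend as well the oppositely-directed renormalized map $r'(z)$ built from $R_{L,M\otimes N}$ and $d_{L,M}(z)d_{L,N}(z)$ (with the spectral parameter placed so that it composes with $r$; the vanishing orders at $z=1$ are unaffected by this placement). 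Since $R_{L,M}(z)\circ R_{M,L}(z)=\id$ and likewise for $N$ --- both composites are endomorphisms of the generically simple modules $(M_z\otimes L)\otimes_{\kk[z^{\pm1}]}\kk(z)$, $(N_z\otimes L)\otimes_{\kk[z^{\pm1}]}\kk(z)$ of \cite{AK97} fixing the highest weight vector --- one gets the exact scalar identity $r'\circ r=d_{M,L}d_{N,L}d_{L,M}d_{L,N}\cdot\id$, which survives descent to $S$. Writing $\bar r=c(z)R_{S,L}(z)$ and $\bar r'=c'(z)R_{L,S}(z)$, this forces $c(z)c'(z)=d_{M,L}d_{N,L}d_{L,M}d_{L,N}$ (so in particular $\bar r\neq0$), and combining with $d_{S,L}\mid c$, $d_{L,S}\mid c'$ and taking $\zero_{z=1}$ gives $\fd(S,L)\le\fd(M,L)+\fd(N,L)$ in one stroke, with no comparison of normalizing scalars at highest weight vectors and no control of either denominator separately.
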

\begin{Prop}[{\cite{H10, headsimplicity}}]\label{Prop : commuting family}
    Let $\{L_1, \ldots, L_n\}$ be a pairwise strongly commuting family of 
    real simple modules. 
    Then, $\bigotimes_{i=1}^n L_i^{\otimes m_i}$ is a simple module for 
    any $m_i \in \Z_{\geq 0}$. 
\end{Prop}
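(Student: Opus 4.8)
The plan is to prove the statement by induction on the total tensor length $N = \sum_{i=1}^n m_i$, using the invariant $\fd$ as the main bookkeeping device. The base cases $N \le 1$ are immediate (the empty product is the trivial module and a single $L_i$ is simple by hypothesis), so assume $N \ge 2$ and that the assertion holds for all smaller totals. First I would record the numerical input: for \emph{every} pair of indices, including $i = j$, one has $\fd(L_i, L_j) = 0$. Indeed each $L_i$ is real, and either the hypothesis (for $i \neq j$) or reality itself (for $i = j$, since $L_i \otimes L_i$ is simple) says that $L_i$ and $L_j$ strongly commute; Proposition \ref{Prop : real delta} then forces $\fd(L_i, L_j) = 0$.

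Second, I would upgrade Proposition \ref{Prop : delta ineq sum} to an arbitrary number of factors: if $S$ is a simple subquotient of $M_1 \otimes \cdots \otimes M_k$ with all $M_a$ simple, then $\fd(S, L) \le \sum_{a=1}^k \fd(M_a, L)$ for every simple $L$. This follows by induction on $k$: writing $T = M_2 \otimes \cdots \otimes M_k$ and expanding $[M_1 \otimes T] = \sum_\alpha [M_1 \otimes T_\alpha]$ over the composition factors $T_\alpha$ of $T$ in the commutative ring $K(\Cc_\fg)$ (Proposition \ref{q-ch}), the factor $S$ must occur as a composition factor of some $M_1 \otimes T_\alpha$. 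Proposition \ref{Prop : delta ineq sum} applied to the simple pair $(M_1, T_\alpha)$, together with the inductive hypothesis applied to the simple subquotient $T_\alpha$ of $M_2 \otimes \cdots \otimes M_k$, then combine to give the claim.

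For the inductive step, pick an index $j$ with $m_j \ge 1$ and set $P = \bigotimes_i L_i^{\otimes m_i'}$, where $m_j' = m_j - 1$ and $m_i' = m_i$ otherwise; by the inductive hypothesis $P$ is simple. Applying the iterated inequality with $S = P$ and $L = L_j$, the $M_a$ being the $N-1$ tensor factors that make up $P$, gives $\fd(P, L_j) \le \sum_i m_i'\,\fd(L_i, L_j) = 0$. Since $L_j$ is real, Proposition \ref{Prop : real delta} now yields that $L_j \otimes P$ is simple. Finally, commutativity of $K(\Cc_\fg)$ gives $[M] = \prod_i [L_i]^{m_i} = [L_j]\,[P] = [L_j \otimes P]$, so the class of $M$ equals that of a single simple module with multiplicity one; as the classes of the simple objects are linearly independent in the Grothendieck group, $M$ has a one-term composition series and is therefore simple.

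The step I expect to be the only real subtlety is the second one: Proposition \ref{Prop : delta ineq sum} as stated requires both tensor factors to be simple, whereas the natural induction produces non-simple intermediate products. The device that removes this obstacle is to pass through the Grothendieck ring and reduce to composition factors, which is legitimate precisely because $K(\Cc_\fg)$ is commutative; the remainder is then routine $\fd$-accounting together with the final passage from ``the class is a single simple module'' to ``the module is simple'', which also relies on commutativity of $K(\Cc_\fg)$ and thereby frees the argument from any dependence on the order of the tensor factors.
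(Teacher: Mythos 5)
Your proof is correct, and it is worth noting that the paper offers no argument of its own for this statement: it is imported wholesale from \cite{H10}, where the underlying theorem --- a tensor product $S_1\otimes\cdots\otimes S_N$ of simple modules is simple as soon as every pairwise product $S_i\otimes S_j$ ($i\neq j$) is simple --- is proved by $q$-character methods (analysis of dominant monomials), with no reality hypothesis and no use of the invariant $\fd$, which postdates that work; the proposition as stated follows from it because reality supplies simplicity of the products $L_i\otimes L_i$ hidden in the repeated factors. Your route is genuinely different: you trade Hernandez's stronger pairwise criterion for the KKOP machinery already recalled in the paper, namely Proposition~\ref{Prop : real delta} (vanishing of $\fd$ characterizes strong commutation when one factor is real) and Proposition~\ref{Prop : delta ineq sum} (subadditivity of $\fd$ over subquotients). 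The delicate points all check out: the extension of subadditivity to $k$ factors is legitimate because the tensor product is biexact, so $[M_1\otimes T]=[M_1][T]$ in $K(\Cc_{\fg})$ and composition multiplicities add, forcing $S$ into some $M_1\otimes T_\alpha$; the case $i=j$ of $\fd(L_i,L_j)=0$ is exactly where reality enters and is indispensable since $P$ contains repeated factors; and the final passage from $[M]=[L_j\otimes P]$ to simplicity of $M$ via linear independence of simple classes in $K(\Cc_{\fg})$ (Proposition~\ref{q-ch}) correctly disposes of the order-of-factors issue in this non-braided category. What each approach buys: the cited $q$-character proof yields the stronger, reality-free pairwise criterion, whereas your argument is self-contained within the invariants the paper has already set up and makes transparent that reality is precisely the hypothesis allowing repeated factors to ride along.
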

\begin{Prop}[{\cite[Theorem 4.11]{KKOP20}}]\label{Prop : delta inequality real}
    Let $V$ be a real module, $W$ be a simple module, 
    and $S$ be a simple subquotient of $V \otimes W$. 
    If $\fd(V, W) > 0$, 
    then $\fd(V, S) < \fd(V, W)$. 
\end{Prop}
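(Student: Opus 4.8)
The plan is to split the statement into the non-strict bound $\fd(V,S)\le\fd(V,W)$ and the strict drop $\fd(V,S)\neq\fd(V,W)$; the first is immediate from the invariants already at hand, while the second carries the genuine content. For the non-strict bound, I would first record that $\fd(V,V)=0$: since $V$ is real, $V\otimes V$ is simple, so $V$ strongly commutes with itself, and Proposition~\ref{Prop : real delta} gives $\fd(V,V)=0$. Applying Proposition~\ref{Prop : delta ineq sum} to the simple subquotient $S$ of $V\otimes W$ with the auxiliary module taken to be $V$ then yields
\[
\fd(V,S)=\fd(S,V)\le \fd(V,V)+\fd(W,V)=\fd(V,W),
\]
using the symmetry $\fd(X,Y)=\fd(Y,X)$ from Definition~\ref{Def:fd}. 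Thus everything reduces to ruling out equality.

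For strictness I would pass to renormalized R-matrices and track orders of vanishing at $z=1$. Write $\underline{R}_{V,W}(z)=d_{V,W}(z)R_{V,W}(z)\colon V_z\otimes W\to W\otimes V_z$ and let $\mathbf r_{V,W}\colon V\otimes W\to W\otimes V$ be its specialization at $z=1$, a nonzero homomorphism. Because $V$ is real, $\mathrm{hd}(V\otimes W)$ is simple and equals $\mathrm{Im}(\mathbf r_{V,W})$, and by Proposition~\ref{Prop : real delta} the hypothesis $\fd(V,W)>0$ is exactly the statement that $V\otimes W$ is reducible, so $\mathbf r_{V,W}$ is not an isomorphism. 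The bridge between $\fd$ and these maps is the unitarity of normalized R-matrices, $R_{W,V}(z^{-1})\circ R_{V,W}(z)=\mathrm{id}$, which after renormalization reads
\[
\underline{R}_{W,V}(z^{-1})\circ \underline{R}_{V,W}(z)=d_{V,W}(z)\,d_{W,V}(z^{-1})\cdot\mathrm{id};
\]
since $z\mapsto z^{-1}$ is a local isomorphism at $z=1$, the order $\zero_{z=1}$ of the scalar on the right is precisely $\fd(V,W)$ by Definition~\ref{Def:fd}. Hence $\fd(V,W)$ measures the total degeneration of this pair of renormalized R-matrices at $z=1$.

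The strict drop would then be extracted as follows. Since $V$ is real, $\mathbf r_{V,V}$ is a nonzero scalar, so $\mathbf r_{V,\,V\otimes W}$ factors, up to a nonzero scalar, as $\mathrm{id}_V\otimes\mathbf r_{V,W}$; because $S$ is a composition factor of $V\otimes W$, the module $V\otimes S$ is a composition factor of $V\otimes V\otimes W=(V\otimes V)\otimes W$, and the renormalized R-matrix computing $\fd(V,S)$ is induced, via this factorization, from the one computing $\fd(V,W)$. I would then argue that the total order $\fd(V,W)$ of the zero at $z=1$ distributes over the composition factors of $V\otimes W$, and that the proper factor $S$ carries strictly less than the whole: were $\fd(V,S)=\fd(V,W)$, the subquotient $S$ would absorb the entire degeneration recorded above, which is incompatible with $V\otimes W$ being reducible (equivalently $\fd(V,W)>0$). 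This gives $\fd(V,S)\le\fd(V,W)-1$, as desired.

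The main obstacle is precisely this last bookkeeping: making rigorous how the order of the zero of the composite R-matrix at $z=1$ is distributed among the composition factors of $V\otimes W$, and proving that passage to the proper subquotient $S$ lowers it by at least one. This requires the finer normalization and non-vanishing properties of R-matrices on affinizations --- the analytic core of the invariant-$\fd$ machinery of \cite{KKOP20} --- rather than only the propositions quoted above. The real-ness of $V$ enters crucially throughout, since it forces $\mathbf r_{V,V}$ to be scalar and keeps the heads and socles simple, which is what makes the distribution of zeros controllable.
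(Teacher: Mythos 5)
The paper itself offers no proof of this proposition: it is imported verbatim from \cite[Theorem 4.11]{KKOP20}, so your attempt must be measured against the argument there. Your first half is correct and is exactly the expected reduction: $V$ real gives $\fd(V,V)=0$ by Proposition~\ref{Prop : real delta}, and Proposition~\ref{Prop : delta ineq sum} applied to $S\subset V\otimes W$ with third module $V$ gives $\fd(V,S)\le \fd(V,V)+\fd(W,V)=\fd(V,W)$. The unitarity computation identifying $\fd(V,W)$ with $\zero_{z=1}\bigl(d_{V,W}(z)\,d_{W,V}(z^{-1})\bigr)$ is also fine. But the strict inequality is the entire content of the theorem, and there your proposal has a genuine gap: the mechanism you invoke --- that the order of vanishing $\fd(V,W)$ ``distributes over the composition factors'' of $V\otimes W$, so a proper factor must carry strictly less --- is not merely unproven, it is false. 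Take $\mathfrak{g}=\mathfrak{sl}_2$, $V=L(Y_{1,0})$, $W=L(Y_{1,2})$. Then $\fd(V,W)=1>0$, while the composition factors of $V\otimes W$ are the KR module $L(Y_{1,0}Y_{1,2})$ and the trivial module, and both satisfy $\fd(V,\,\cdot\,)=0$ (the KR factor strongly commutes with $V$, both being real, by Proposition~\ref{Prop : real delta}). So the ``total degeneration'' carried by the factors is $0$, not $1$: there is no conservation law of the kind your last paragraph assumes, and no amount of bookkeeping with the composite $\mathrm{id}_V\otimes \mathbf{r}_{V,W}$ will produce one.

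What the proof in \cite{KKOP20} actually does is refine $\fd$ into the two degree invariants $\Lambda(V,S)$ and $\Lambda(S,V)$ (with $2\fd=\Lambda(V,S)+\Lambda(S,V)$), prove the one-sided subquotient inequalities $\Lambda(V,S)\le \Lambda(V,V)+\Lambda(V,W)$ and $\Lambda(S,V)\le \Lambda(V,V)+\Lambda(W,V)$, and --- this is the missing idea --- characterize the equality cases: equality in the first forces $S\cong \mathrm{hd}(V\otimes W)=\operatorname{Im}\mathbf{r}_{V,W}$, and equality in the second forces $S\cong \operatorname{soc}(V\otimes W)$. If $\fd(V,S)=\fd(V,W)$, both equalities must hold simultaneously, so $\mathrm{hd}(V\otimes W)\cong \operatorname{soc}(V\otimes W)$; but for real $V$ the head occurs with multiplicity one in $V\otimes W$ (\cite{headsimplicity}), so head $\cong$ socle forces $V\otimes W$ to be simple, i.e.\ $\fd(V,W)=0$, contradicting the hypothesis. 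Your sketch never reaches this equality-case analysis, and since it is carried by the finer $\Lambda$-invariants rather than by $\fd$ alone, it cannot be recovered from the propositions quoted in this paper; the strict drop genuinely requires that extra layer of the $R$-matrix degree machinery.
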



\section{Monoidal categorification of cluster algebras}
\label{sec : monoidal categorification}
\subsection{Cluster algebras}\label{ssec : cluster}
In this subsection, we briefly recall the definition of cluster algebras. 
\begin{Def}[{\cite[Definition 7.1]{KKOP20}}]\label{Def : seed}
Let $K = K^{\text{ex}} \sqcup K^{\text{fr}}$ be a countable set 
which decomposes into subsets $K^{\text{ex}}$ of exchangeable indices 
and a subset $K^{\text{fr}}$ of frozen indices. 

    For a commutative ring $A$, 
    we say that a pair 
    $\mathcal{S} = 
    (\{x_i\in A\mid i \in K\}, \tilde{B}= (b_{ij})_{(i, j) \in K \times K^{\text{ex}}})$ 
    is a \emph{seed} in $A$ 
    if it satisfies the following conditions$\colon$ 
    \begin{itemize}
        \item $\{x_i\}_{i \in K}$ are algebraically independent over $\Z$ in $A$, 
        \item $\tilde{B} = (b_{ij})_{(i, j) \in K \times K^{\text{ex}}}$ is an integer-valued
 matrix and its principal part $B \coloneq (b_{ij})_{i, j \in K^{\text{ex}}}$ is skew-symmetric, 
 \item for each $j \in K^{\text{ex}}$, there exist finitely many $i \in K$
      such that $b_{ij} \neq 0$.
    \end{itemize}
    
    For a seed $\mathcal{S}$, we call 
    the matrix $\tilde{B}$ the \emph{exchange matrix} of $\mathcal{S}$, 
    the set $\{x_i\}_{i \in K}$ the \emph{cluster} of 
    $\mathcal{S}$ and its elements the \emph{cluster variables}. 
    An element of the form $x^{\bf{a}}$ for ${\bf{a}} \in \Z_{\geq 0}^{\oplus K}$ 
    is called a \emph{cluster monomial}, where 
    \[x^{\bf{c}} \coloneq \prod_{i \in K}x_i^{c_i} \quad \text{ for }  
    {\bf{c}} = (c_i)_{i \in K} \in \Z_{\geq 0}^{\oplus K}. \]
\end{Def}

Let $\mathcal{S} = (\{x_i\}_{i \in K}, \tilde{B})$ be a seed 
in a field $\mathfrak{K}$
 of characteristic $0$. 
 For each $k \in K^{\text{ex}}$, we define$\colon$
 \begin{itemize}
     \item $\mu_k(\tilde{B})_{ij} =
\begin{cases}
  -b_{ij} & \text{if}  \ i=k \ \text{or} \ j=k, \\
  b_{ij} + (-1)^{\delta(b_{ik} < 0)} \max(b_{ik} b_{kj}, 0) & \text{otherwise,}
\end{cases}
$
\item $\mu_k (x)_i  =
\begin{cases}
\displaystyle\frac{\prod_{j, b_{jk} > 0}x_j^{b_{jk}} 
+ \prod_{j, b_{jk} < 0}x_j^{-b_{jk}}}{x_k}, & \text{if} \ i=k, \\
x_i & \text{if} \ i\neq k. 
\end{cases}
$
 \end{itemize}
Then, the pair $\mu_k (\mathcal{S}) \coloneq 
(\{\mu_k (x)_i\}_{i \in K}, \mu_k (\tilde{B}))$
 is a new seed in $\mathfrak{K}$ and we call it the \emph{mutation} of $\mathcal{S}$ at $k$. 

The \emph{cluster algebra $\mathscr{A}(\mathcal{S})$ associated with a seed $\mathcal{S}$}
is the $\Z$-subalgebra of the field $\mathfrak{K}$ 
generated by all the cluster variables in the seeds obtained from $\mathcal{S}$
 by all possible successive mutations. 
 For a cluster algebra $\mathscr{A}(\mathcal{S})$, 
 the seed $\mathcal{S}$ is called the \emph{initial seed}.

\begin{Prop}[{\cite[Theorem 3.1]{FZ02}}]\label{Prop : Laurent}
    Let $\mathscr{A}(\mathcal{S})$ be a cluster algebra with 
    an initial seed 
    $\mathcal{S}
    = \bigl(\{x_i\}_{i \in K^{\mathrm{ex}}} \cup \{f_j\}_{j \in K^{\mathrm{fr}}}, \tilde{B}\bigr)$. 
    
    Then, $\mathscr{A}(\mathcal{S})$ is a subalgebra of 
    $\Z[x_i^{\pm 1}, f_j \mid i \in K^{\mathrm{ex}}, j \in K^{\mathrm{fr}}]$. 
\end{Prop}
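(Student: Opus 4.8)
The plan is to prove the stronger \emph{Laurent phenomenon}: every cluster variable appearing in any seed obtained from $\mathcal{S}$ by successive mutations already lies in $R = \Z[x_i^{\pm 1}, f_j \mid i \in K^{\mathrm{ex}}, j \in K^{\mathrm{fr}}]$. Since $\mathscr{A}(\mathcal{S})$ is, by definition, the $\Z$-subalgebra generated by all these cluster variables and $R$ is a ring, this statement yields the proposition at once. Because frozen indices are never mutated, at every seed the $f_j$ remain the same and enter the exchange relations of Definition~\ref{Def : seed} only through the numerators $\prod_{b_{jk}>0}x_j^{b_{jk}}$ with non-negative exponents; they are never divided by. I would therefore absorb them into the ground ring and work over $\Z[f_j \mid j \in K^{\mathrm{fr}}]$, reducing the task to showing that each cluster variable is a genuine Laurent polynomial in the exchangeable initial variables $\{x_i \mid i \in K^{\mathrm{ex}}\}$. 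First I would set up an induction on the mutation distance $d$ of a seed from $\mathcal{S}$. The case $d=0$ is immediate, and $d=1$ follows directly from the exchange formula for $\mu_k(x)_k$, whose right-hand side is a polynomial numerator divided by the single variable $x_k$, hence visibly Laurent.

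The obstacle is the inductive step. A seed at distance $d$ arises from a seed $\mathcal{S}'$ at distance $d-1$ by a single mutation $\mu_k$, and the new cluster variable has the form $y = P/x'_k$, where $x'_k$ is the cluster variable of $\mathcal{S}'$ being replaced and $P = \prod_{j:\,b'_{jk}>0}(x'_j)^{b'_{jk}} + \prod_{j:\,b'_{jk}<0}(x'_j)^{-b'_{jk}}$ is a polynomial in the remaining cluster variables of $\mathcal{S}'$ (note $b'_{kk}=0$ by skew-symmetry of the principal part, so $P$ does not involve $x'_k$). By the induction hypothesis $x'_k$ and every $x'_j$ are already Laurent polynomials in $\{x_i\}$, so $P$ is Laurent; but $x'_k$ is then itself a bona fide Laurent polynomial, and dividing one Laurent polynomial by another need not stay Laurent. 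Thus the naive induction fails, and securing this divisibility is exactly the crux.

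To overcome this I would run the argument behind Fomin--Zelevinsky's \emph{Caterpillar Lemma}. Rather than propagating the Laurent property by itself, one carries through the induction a strengthened hypothesis that simultaneously records coprimality relations between the Laurent expansions of neighbouring cluster variables (no common factor after clearing denominators in the polynomial ring). The relevant seeds are organized along a caterpillar-shaped subtree of the regular tree of seeds, and the engine of the proof is the local analysis of a three-step pattern $\mu_k\mu_l\mu_k$. There one exploits the binomial exchange relation $x_k\,\mu_k(x)_k = \prod_{j:\,b_{jk}>0}x_j^{b_{jk}} + \prod_{j:\,b_{jk}<0}x_j^{-b_{jk}}$, which does not involve $x_k$ itself, together with the propagated coprimality, to force the replaced variable $x'_k$ to divide $P$ in $R$. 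This both yields $y \in R$ and re-establishes the coprimality condition one step further out along the caterpillar, closing the induction.

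The main obstacle I expect is precisely this propagation: verifying that the coprimality hypothesis is preserved under mutation and that, in the $\mu_k\mu_l\mu_k$ pattern, the exchange polynomials factor so as to guarantee the required divisibility. This is a purely combinatorial--algebraic computation tracking the transformed exchange matrices $\mu_k(\tilde B)$ against the binomial exchange relations; the skew-symmetric hypothesis on $B$ keeps the bookkeeping of signs manageable, and the frozen variables, sitting inertly in the ground ring $\Z[f_j]$, never interfere and in particular are never inverted, which is what gives the sharper containment in $R$ rather than in the full Laurent ring.
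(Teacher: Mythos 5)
The paper gives no argument of its own here---it simply cites \cite[Theorem 3.1]{FZ02}---and your proposal reconstructs exactly the Fomin--Zelevinsky caterpillar-lemma proof behind that citation, correctly isolating the divisibility of the exchange numerator by the replaced variable as the crux and resolving it via coprimality propagated along the $\mu_k\mu_l\mu_k$ pattern. One point to make explicit in a full write-up: once you absorb the frozen variables into the ground ring $\Z[f_j \mid j \in K^{\mathrm{fr}}]$, monomials in the $f_j$ are no longer units, so the caterpillar coprimality conditions must be verified in this smaller ring; this does work because the two monomials of any exchange binomial $\prod_{b_{jk}>0}x_j^{b_{jk}}+\prod_{b_{jk}<0}x_j^{-b_{jk}}$ involve disjoint sets of variables, hence every exchange binomial is primitive over $\Z[f_j]$ and never divisible by a frozen monomial---which is precisely what secures the sharper containment in $\Z[x_i^{\pm 1}, f_j]$ rather than merely $\Z[x_i^{\pm 1}, f_j^{\pm 1}]$.
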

 \begin{Prop}[{\cite[Theorem 3.1]{GLS13}}]\label{Prop : cl.var is irr}
     Let $\mathscr{A}$ be a cluster algebra. 
     Then, any cluster variable is an irreducible element in $\mathscr{A}$. 
 \end{Prop}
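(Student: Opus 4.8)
The plan is to exploit the Laurent phenomenon (Proposition~\ref{Prop : Laurent}) in two clusters at once, combined with a degree count. First I would reduce to an initial cluster variable: since any seed reachable by mutation generates the same cluster algebra, a given cluster variable $x$ may be taken to be a member $x_k$ ($k\in K^{\mathrm{ex}}$) of the initial cluster $\{x_i\}_{i\in K^{\mathrm{ex}}}\cup\{f_j\}_{j\in K^{\mathrm{fr}}}$. Note $\mathscr{A}$ is a domain, being a subring of a Laurent polynomial ring over $\Z$. Suppose $x_k=ab$ with $a,b\in\mathscr{A}$. By Proposition~\ref{Prop : Laurent}, $a$ and $b$ are Laurent polynomials in $x_k$ over the domain $D\coloneq\Z[x_i^{\pm1},f_j\mid i\in K^{\mathrm{ex}}\setminus\{k\},\,j\in K^{\mathrm{fr}}]$. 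Writing $\deg_{x_k}$ and $\mathrm{ord}_{x_k}$ for the top and bottom degrees in $x_k$, the fact that $D$ is a domain means no leading or trailing cancellation occurs in the product, so $\deg_{x_k}a+\deg_{x_k}b=1$ and $\mathrm{ord}_{x_k}a+\mathrm{ord}_{x_k}b=1$. Subtracting forces $a$ and $b$ to be single monomials in $x_k$, say $a=\alpha\,x_k^{s}$ and $b=\beta\,x_k^{t}$ with $\alpha,\beta\in D$; then $\alpha\beta=1$ and $s+t=1$. Since the units of $D$ are $\pm$ Laurent monomials in the exchangeable variables $x_i$ ($i\neq k$), both $a$ and $b$ are $\pm$ Laurent monomials in the exchangeable variables alone.

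The crux, which I view as the main obstacle, is the following claim: every Laurent monomial in the exchangeable variables that lies in $\mathscr{A}$ has all exponents nonnegative. To prove it I would argue one variable at a time. If $m=\pm\prod_i x_i^{a_i}\in\mathscr{A}$ and some $a_j<0$, mutate the initial seed at $j$. In the mutated cluster $x_j=(M_++M_-)/x_j'$, where $M_{\pm}$ are the two monomials of the exchange relation $\mu_j$ and do not involve $x_j,x_j'$; substituting produces a factor $(M_++M_-)^{a_j}$. Because $M_+$ and $M_-$ are products over the indices with $b_{\cdot j}>0$ and $b_{\cdot j}<0$ respectively, they involve disjoint variables, so $M_++M_-$ is a genuine binomial, hence never a unit times a monomial in the Laurent ring of the mutated cluster. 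Thus $(M_++M_-)^{a_j}$ is not a Laurent polynomial for $a_j<0$, contradicting Proposition~\ref{Prop : Laurent} applied to the mutated cluster. This yields $a_j\ge 0$ for every $j$ and proves the claim.

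Finally I would combine the two ingredients. Applying the claim to $a$ and to $b$, all their exponents are nonnegative; since their product is $x_k$, the two exponent vectors are nonnegative and sum to the unit vector $e_k$, so one of them is the zero vector and the corresponding factor is $\pm1$, a unit of $\mathscr{A}$. The same claim shows $x_k$ itself is not a unit, as $x_k^{-1}$ would be a Laurent monomial with a negative exponent and could not lie in $\mathscr{A}$. Therefore $x_k$ is irreducible. The only point needing care beyond the routine degree bookkeeping is the non-invertibility of the exchange binomial $M_++M_-$, including the degenerate case in which a column of $\tilde{B}$ vanishes; this I would dispatch by noting that a sum of two positive monomials can equal a unit multiple of a single monomial only as the integer $2$, whose negative powers already fail to have integer coefficients, so the contradiction persists.
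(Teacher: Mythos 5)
The paper contains no proof of this proposition: it is imported verbatim from \cite[Theorem~3.1]{GLS13}, so the only meaningful comparison is with the Geiss--Leclerc--Schr\"{o}er argument. Your proposal is correct, and it is in essence a reconstruction of that proof. Its two pillars are exactly the GLS devices: (i) the Laurent phenomenon in the home cluster together with a leading/trailing degree count over the domain $D=\Z[x_i^{\pm1},f_j\mid i\in K^{\mathrm{ex}}\setminus\{k\},\ j\in K^{\mathrm{fr}}]$, forcing each factor to be $\pm$ a Laurent monomial in the exchangeable variables --- your observation that $D^{\times}$ contains no frozen variables (since they are not inverted) is the point that eliminates the $f_j$ from the factors, and your two-sided degree count is a mild variant of GLS's use of the primality of $x_k$ in the Laurent ring; and (ii) one mutation per variable, using that the exchange binomial $M_++M_-$ has disjointly supported summands (by the sign decomposition of the $j$-th column of $\tilde{B}$), hence is a sum of two distinct monomials, or the non-unit $2$ in the degenerate case --- and you correctly invoke Proposition~\ref{Prop : Laurent} for the mutated seed, which is legitimate because $\mathscr{A}(\mathcal{S})=\mathscr{A}(\mu_j(\mathcal{S}))$. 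The step you phrase loosely (``$(M_++M_-)^{a_j}$ is not a Laurent polynomial'') is harmless: since $(M_++M_-)^{|a_j|}$ and $(M_++M_-)^{-|a_j|}$ would both lie in the Laurent ring of the mutated cluster, $M_++M_-$ would be a unit there, contradicting (ii).

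One scope caveat is worth recording. Under Definition~\ref{Def : seed} the frozen variables belong to the cluster and therefore count as cluster variables, and the paper genuinely needs their irreducibility later (it is what makes $f_i$ a prime element in the proof of Theorem~\ref{Thm : D_n tensor factorization}); your reduction ``take $x=x_k$ with $k\in K^{\mathrm{ex}}$'' silently excludes this case, while \cite[Theorem~3.1]{GLS13} covers it. Your machinery does extend, but with one extra verification: if $f_j=ab$, then since $f_j$ is prime in the Laurent ring, one factor is a Laurent-ring unit $\pm x^{\mathbf{c}}$, and your nonnegativity claim gives $\mathbf{c}\geq 0$; to force $\mathbf{c}=0$ one runs the mutation substitution on the complementary factor $\pm f_j x^{-\mathbf{c}}$, which additionally requires that $M_++M_-$ be coprime to every $f_l$ --- true because at most one of the disjointly supported monomials $M_{\pm}$ involves $f_l$, so $M_++M_-\not\equiv 0 \pmod{f_l}$ --- whence $(M_++M_-)^{c_i}$ cannot divide $f_j$ times a unit. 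With that addendum your proof covers the proposition in the generality in which the paper uses it.
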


In the above definition, we can use a quiver instead of the matrix $\tilde{B}$. 
Let $\mathcal{Q}$ be a quiver satisfying the following conditions : 
\begin{enumerate}
    \item The set of vertices of $\mathcal{Q}$ is 
    the countable set $K = K^{\text{ex}} \sqcup K^{\text{fr}}$, 
    \item the quiver $\mathcal{Q}$ does not have any loops nor any $2$-cycles, 
    \item for each exchangeable vertex $v$ of $\mathcal{Q}$, 
    the number of arrows incident with $v$ is finite, 
    \item for each pair of frozen vertices of $\mathcal{Q}$, 
    there are no edges between them. 
\end{enumerate}
For a quiver $\mathcal{Q}$ satisfying the above conditions, 
let $\tilde{B}_{\mathcal{Q}} = (b_{ij})_{(i, j)\in K\times K^{\mathrm{ex}}}$ denote the matrix 
defined by 
\[
b_{ij} \coloneq (\text{the number of arrows from $i$ to $j$}) 
- (\text{the number of arrows from $j$ to $i$}). 
\]
Then, there is a bijection between the set of quivers satisfying the above conditions and 
the set of matrices satisfying the conditions of exchange matrices,  
via the correspondence $\mathcal{Q} \mapsto \tilde{B}_{\mathcal{Q}}$. 
By this bijection, we sometimes identify exchange matrices with quivers, 
and we call the quivers exchange quivers. 
A cluster algebra $\mathscr{A}$ is said to be of finite type if, for some seed, 
the full subquiver of its exchange quiver consisting of exchangeable nodes 
is a Dynkin quiver of symmetric finite type. 
Also, a cluster algebra $\mathscr{A}$ is said to be of affine type if, for some seed, 
the full subquiver of its exchange quiver consisting of exchangeable nodes 
is a Dynkin quiver of symmetric affine type. 

\subsection{Monoidal categorification}\label{ssec : categorification}
Let $\mathcal{C}$ be a 
full subcategory of $\Cc_{\fg}$ containing the trivial module and 
closed under taking tensor products, 
subquotients, and extensions. 

\begin{Def}[{\cite[Definition 7.2]{KKOP24}}]\label{Def : monoidal seed}
A \emph{monoidal seed} in $\mathcal{C}$ is a quadruple 
$\mathscr{S} = (\{M_i\}_{i \in K}, \tilde{B} ; K, K^{\text{ex}})$ consisting of a 
countable set $K$,  a subset $K^{\text{ex}} \subset K$, 
a pairwise strongly commuting family $\{M_i\}_{i \in K}$ of real simple modules in $\mathcal{C}$ 
and an integer-valued matrix $\tilde{B} = (b_{ij})_{(i, j) \in K \times K^{\text{ex}}}$
satisfying the conditions of an exchange matrix. 

For $i \in K$, we call $M_i$ the $i$-th \emph{cluster variable module} of $\mathscr{S}$. 
\end{Def}


\begin{Def}[{\cite[Definition 6.2, 6.3, 6.5, Proposition 6.4]{KKOP20}}, {\cite[Definition 7.6]{KKOP24}}]\label{Def : admissible}
    Let $\mathscr{S} = (\{M_i\}_{i \in K}, \tilde{B} ; K, K^{\text{ex}})$ be a monoidal seed. 
    We call $\mathscr{S}$ \emph{admissible} if it satisfies the following : 
    \begin{itemize}
        \item For each $k \in K^{\text{ex}}$, there exists a simple module $M_k^{\prime}$ with
        an exact sequence 
        \[
        0 \to \bigotimes_{b_{ik} > 0}M_i^{\otimes b_{ik}}
        \to M_k \otimes M_k^{\prime} 
        \to \bigotimes_{b_{ik} < 0}M_i^{\otimes (-b_{ik})} \to 0, 
        \]
        \item For each $k \in K^{\text{ex}}$, the quadruple 
        \[
        \mu_k(\mathscr{S}) \coloneq 
        (\{M_i\}_{i \in K \setminus \{k\}} \cup \{M_k^{\prime}\}, \mu_k(\tilde{B}) ; K, K^{\text{ex}})
        \]
        is a monoidal seed in $\mathcal{C}$, i.e., 
        $M_k^{\prime}$ is real and strongly commutes with every $M_i$ for $i \neq k$, 
    \end{itemize}

    If $\mathscr{S}$ is admissible, we say that the mutation $\mu_k(\mathscr{S})$ 
    of $\mathscr{S}$ at $k \in K^{\text{ex}}$ is a \emph{mutation}. 
    We say that a monoidal seed $\mathscr{S}$ is \emph{completely admissible} 
    if $\mathscr{S}$ admits successive mutations in all possible directions. 
\end{Def}
For a monoidal seed $\mathscr{S}$, we define
\[
[ \mathscr{S}] \coloneq (\{[M_i]\}_{i \in K}, \tilde{B}), 
\]
where $[M_i]$ is an isomorphism class of $M_i$ in $K(\mathcal{C})$. 

\begin{Def}[{\cite[Definition 6.7]{KKOP20}}]\label{Def : categorification}
    Let $\mathscr{A}$ be a cluster algebra and $\mathcal{C}$ be 
    a full subcategory of $\Cc_{\fg}$ containing the trivial module and 
    stable under taking tensor products, subquotients, and extensions. 
    $\mathcal{C}$ is called a \emph{monoidal categorification} 
    of $\mathscr{A}$ if the following holds : 
    \begin{itemize}
        \item There exists an isomorphism 
        $\iota : K(\mathcal{C}) \overset{\sim}{\to} \mathscr{A}$, 
        
        \item there exists a completely admissible monoidal seed 
        $\mathscr{S} = (\{M_i\}_{i \in K}, \tilde{B} ; K, K^{\text{ex}})$ in $\mathcal{C}$
        such that 
        $\iota ([\mathscr{S}]) = (\{\iota ([M_i])\}_{i \in K}, \tilde{B} ; K, K^{\text{ex}})$
        is an initial seed of $\mathscr{A}$. 
    \end{itemize}
\end{Def}
If $\mathcal{C}$ is a monoidal categorification of a cluster algebra $\mathscr{A}$ 
and $\iota : K(\mathcal{C}) \overset{\sim}{\to} \mathscr{A}$ is the isomorphism, 
then the inverse images of cluster monomials are real modules by the strongly commutative condition
of monoidal seeds. 
In what follows, when discussing the setting of monoidal categorifications,
we sometimes identify 
$K(\mathcal{C})$ with $\mathscr{A}$ via $\iota$,
and regard elements of $\mathscr{A}$ as elements of $K(\mathcal{C})$.

\begin{Rem}
    The original definition of monoidal categorification in \cite{HL10} is stronger than 
    this definition. See \cite[Introduction]{KKOP20}. 
    In \cite{KKOP20}, they introduce an invariant $\Lambda$ and 
    the notion of $\Lambda$-monoidal categorification. 
\end{Rem}


\subsection{Monoidal categories
$\mathscr{C}_{\fg}^0$ and 
$\mathscr{C}_{\fg}^{[a, b], \mathfrak{s}}$}\label{ssec : newcategory}
In this subsection, we explain the monoidal category 
$\mathscr{C}_{\fg}^0$ introduced in \cite{HL10} and 
$\mathscr{C}_{\fg}^{[a, b], \mathfrak{s}}$ 
introduced in \cite{KKOP24}. 
In this paper, we consider 
only the case of $\fg$ being simply-laced. 
In the general case, we refer the reader to 
\cite{KKO19} for $\mathscr{C}_{\fg}^0$ and to \cite{KKOP24} for 
$\mathscr{C}_{\fg}^{[a, b], \mathfrak{s}}$. 

In this subsection, we set $\fg$ to be a complex finite-dimensional simple Lie algebra of type ADE. 
We fix the label of Dynkin nodes of $A_n, D_n, E_n$ as follows$\colon$

\begin{tikzpicture}[every node/.style={inner sep=1.5pt}]
\node (0) at (-32,0){$A_n\ (n\geq1)$};
  \node (1) at (-30, 0){1};
  \node (2) [right=of 1] {2};
  \node (3) [right=of 2] {3};
  \node (4) [right=of 3] {$\cdots$};
  \node (5) [right=of 4] {$n\!-\!1$};
  \node (6) [right=of 5]{$n$};
  
  \draw (1)--(2)--(3)--(4)--(5)--(6);
\end{tikzpicture}

\begin{tikzpicture}[every node/.style={inner sep=1.5pt}]
\node (0) at (-32,0){$D_n\ (n\geq4)$};
  \node (1) at (-30, 0){1};
  \node (3) [right=of 1] {3};
  \node (2) [above=of 3] {2};
  \node (4) [right=of 3] {4};
  \node (5) [right=of 4] {$\cdots$};
  \node (6) [right=of 5] {$n\!-\!1$};
  \node (7) [right=of 6]{$n$};
  
  \draw (1)--(3)--(4)--(5)--(6)--(7);
  \draw (2)--(3);
\end{tikzpicture}

\begin{tikzpicture}[every node/.style={inner sep=1.5pt}]
\node (0) at (-32,0){$E_6$};
  \node (1) at (-30, 0){1};
  \node (2) [right=of 1] {2};
  \node (3) [right=of 2] {3};
  \node (4) [above=of 3] {4};
  \node (5) [right=of 3] {5};
  \node (6) [right=of 5] {6};

  \draw (1)--(2)--(3)--(5)--(6);
  \draw (3)--(4);
\end{tikzpicture}

\begin{tikzpicture}[every node/.style={inner sep=1.5pt}]
\node (0) at (-32,0){$E_7$};
  \node (1) at (-30, 0){1};
  \node (2) [right=of 1] {2};
  \node (3) [right=of 2] {3};
  \node (4) [above=of 3] {4};
  \node (5) [right=of 3] {5};
  \node (6) [right=of 5] {6};
  \node (7) [right=of 6] {7};

  \draw (1)--(2)--(3)--(5)--(6)--(7);
  \draw (3)--(4);
\end{tikzpicture}

\begin{tikzpicture}[every node/.style={inner sep=1.5pt}]
\node (0) at (-32,0){$E_8$};
  \node (1) at (-30, 0){1};
  \node (2) [right=of 1] {2};
  \node (3) [right=of 2] {3};
  \node (4) [above=of 3] {4};
  \node (5) [right=of 3] {5};
  \node (6) [right=of 5] {6};
  \node (7) [right=of 6] {7};
  \node (8) [right=of 7] {8};

  \draw (1)--(2)--(3)--(5)--(6)--(7)--(8);
  \draw (3)--(4);
\end{tikzpicture}

A function $\epsilon : I \to \{0, 1\}$ is called a \emph{parity function} 
if it satisfies the condition 
\[
\epsilon_i \equiv \epsilon_j + 1 \pmod{2} \quad \text{for any $i, j \in I$ with $c_{ij} = -1$. }
\]
For a parity function $\epsilon$, 
we define $\widehat{I} \coloneq \{ (i, p) \in I \times \Z \mid p - \epsilon_i \in 2\Z\}$. 
\begin{Def}[{\cite[3.7]{HL10}}]\label{Def : C0}
    We define the monoidal category $\mathscr{C}_{\fg}^0$ 
    as the smallest full subcategory of $\mathscr{C}_{\mathfrak{g}}$ 
    satisfying the following conditions : 
    \begin{itemize}
       \item it is closed under tensor products, subquotients, and extensions, 
       \item it contains the fundamental modules 
       $L(Y_{i, q^p})$ for all $(i, p) \in \widehat{I}$. 
   \end{itemize}
\end{Def}
In the study of the category $\Cc_{\fg}$, 
the subcategory $\mathscr{C}_{\fg}^0$ plays a crucial role thanks to the following fact. 
\begin{Prop}[{\cite{C02}}]\label{Prop : C0}
    Any simple module $V$ in $\Cc_{\fg}$ can be factorized into a commutative tensor product 
    of the form 
    \[
    V \cong T_{a_1}(V_1) \otimes T_{a_2}(V_2) \otimes \cdots \otimes T_{a_d}(V_d)
    \]
    with $V_k \in \Cc_{\fg}^0$ and $a_k \in \kk^{\times}$ for $1 \leq k \leq d$ 
    such that $a_k / a_l \not\in q^{2\Z}$ for all $k \neq l$. 
\end{Prop}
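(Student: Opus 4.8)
The plan is to decompose the $\ell$-highest weight monomial of $V$ according to the ``linkage'' of its spectral parameters, and to realize each piece, after a loop rotation $T_{a_k}$, as an object of $\mathscr{C}_{\fg}^0$. Write $V\cong L(m)$ with $m=\prod_{i,a}Y_{i,a}^{u_{i,a}}\in\mathcal{M}_{+}$, and let $S\subset\kk^{\times}$ be the finite set of spectral parameters occurring in $m$. First I would introduce on $S$ the equivalence relation generated by $a\sim b$ whenever $L(Y_{i,a})\otimes L(Y_{j,b})$ is reducible for some $i,j\in I$. By the pole structure of the normalized $R$-matrices between fundamental modules in type $ADE$, such a reducibility can occur only when $a/b$ lies in $q^{\Z}$ (and only in $q^{2\Z}$ when $i=j$); hence every equivalence class is contained in a single coset of the relevant linkage subgroup $\Gamma$, with $q^{2\Z}\subseteq\Gamma\subseteq q^{\Z}$. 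Grouping the factors of $m$ by these classes yields $m=m_1\cdots m_d$, and choosing a representative $a_k$ of the $k$-th coset (adjusted, if necessary, by a power of $q$ so as to satisfy the parity condition defining $\widehat I$) I set $V_k\coloneq L\big((m_k)_{a_k^{-1}}\big)$. By construction $V_k\in\mathscr{C}_{\fg}^0$ and $T_{a_k}(V_k)\cong L(m_k)$ (using $T_{a}(L(m'))\cong L(m'_a)$), while distinct classes give $a_k/a_l\notin\Gamma$, and since $q^{2\Z}\subseteq\Gamma$ this forces $a_k/a_l\notin q^{2\Z}$ for $k\neq l$, as required.

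The crux is then to prove that $L(m_k)$ and $L(m_l)$ strongly commute for $k\neq l$: granting this, Proposition~\ref{Prop : subquot} exhibits $L(m)$ as a simple subquotient of $\bigotimes_k L(m_k)$, and the pairwise strong commutation forces this tensor product to be simple, whence $V\cong L(m)\cong\bigotimes_k T_{a_k}(V_k)$. To approach strong commutation I would first control the denominators. Each $L(m_k)$ is a subquotient of a tensor product of the fundamental modules $L(Y_{i,a})$ with $a$ in the $k$-th coset, and $d_{L(m_k),L(m_l)}(z)$ divides the product of the elementary denominators $d_{L(Y_{i,a}),L(Y_{j,b})}(z)$ over these constituents. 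Since $a/b\notin\Gamma$ for parameters in distinct cosets, each elementary denominator is nonvanishing at $z=1$; therefore $d_{L(m_k),L(m_l)}(1)\neq0$ and likewise in the opposite order, so $\fd(L(m_k),L(m_l))=0$ by Definition~\ref{Def:fd}. (Equivalently, one checks via subadditivity, Proposition~\ref{Prop : delta ineq sum}, that $\fd(L(m_k),L(m_l))\le\sum_{i,j}\fd(L(Y_{i,a}),L(Y_{j,b}))=0$, each term vanishing because the real fundamental modules across distinct cosets strongly commute by Proposition~\ref{Prop : real delta}.)

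The step I expect to be the main obstacle is upgrading the equality $\fd(L(m_k),L(m_l))=0$ to genuine strong commutation of the \emph{composite} modules $L(m_k),L(m_l)$: these need not be real, so Proposition~\ref{Prop : real delta} does not apply directly, and this is precisely the delicate point since the whole paper is concerned with the presence of imaginary modules. I would resolve it by the two mutually reinforcing routes that the vanishing denominators already make available. The first is the $R$-matrix route: $d_{L(m_k),L(m_l)}(1)\neq0\neq d_{L(m_l),L(m_k)}(1)$ means the normalized $R$-matrices specialize at $z=1$ to nonzero homomorphisms in both directions whose composites are scalars, and nonvanishing of the denominators forces these to be mutually inverse isomorphisms, giving $L(m_k)\otimes L(m_l)\cong L(m_l)\otimes L(m_k)$ together with simplicity of the tensor product. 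The second, which is cleaner for the global conclusion, is the $q$-character route: by Proposition~\ref{q-ch} the map $\chi_q$ is an injective ring homomorphism, so $\chi_q\big(\bigotimes_k L(m_k)\big)=\prod_k\chi_q(L(m_k))$, and because all monomials of $\chi_q(L(m_k))$ involve only variables $Y_{i,a}$ with $a$ in the $k$-th coset, the variable supports are disjoint across $k$; consequently every dominant monomial of the product factors uniquely as a product of dominant monomials of the factors, $m=\prod_k m_k$ occurs with multiplicity one, and one concludes that $\bigotimes_k L(m_k)$ admits no simple subquotient other than $L(m)$. Either way the tensor product is simple, completing the factorization; once this is in place the representatives satisfy $a_k/a_l\notin q^{2\Z}$ by the first paragraph, and uniqueness of the $V_k$ and of the cosets follows from the injectivity of $\chi_q$ and the same coset-separation of supports.
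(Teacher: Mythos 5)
You should first note that the paper does not prove this proposition at all: it is imported verbatim from \cite{C02}, so your proposal has to be judged against the standard argument in the literature rather than an in-paper proof. Your overall architecture (group the spectral parameters, twist each group into $\Cc_{\fg}^0$, reduce everything to strong commutation across groups via Proposition~\ref{Prop : subquot}) is the right one, but there are two problems. The first is small: grouping by the equivalence relation \emph{generated by reducibility} produces classes that can be strictly finer than the parity-adjusted $q^{2\Z}$-cosets, so your final claim ``distinct classes give $a_k/a_l\notin\Gamma$'' is false. For instance, in type $A_1$ with $m=Y_{1,1}Y_{1,q^{100}}$ the two parameters are unlinked (the only zero of $d_{1,1}$ is at $q^{\pm2}$), so your construction outputs $d=2$ with $a_1/a_2=q^{-100}\in q^{2\Z}$, violating the conclusion. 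The fix is to group by the cosets themselves (i.e.\ $(i,a)$ and $(j,b)$ together iff $aq^{-\epsilon_i}$ and $bq^{-\epsilon_j}$ lie in the same $q^{2\Z}$-coset), which your linkage analysis shows refines correctly.

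The serious gap is exactly at the step you flag: upgrading $\fd(L(m_k),L(m_l))=0$ to simplicity of $L(m_k)\otimes L(m_l)$, and neither of your two routes closes it. In Route 1, the mutually inverse specializations of the two normalized $R$-matrices (the composite is the identity as a rational function, hence after pole-free specialization) give you an isomorphism $L(m_k)\otimes L(m_l)\cong L(m_l)\otimes L(m_k)$ --- that is, \emph{commutation}, not simplicity; commutation holds trivially for $V\otimes V$ with $V$ imaginary, which is not simple, so no argument of this shape alone can conclude. In Route 2, disjointness of the variable supports does show that every dominant monomial of $\prod_k\chi_q(L(m_k))$ factors as $\prod_k n_k$ with each $n_k$ dominant in $\chi_q(L(m_k))$, and that $m$ has multiplicity one; but simple modules generally carry \emph{many} dominant monomials in their $q$-characters, so nothing you have said excludes composition factors $L(\prod_k n_k)$ with some $n_k\neq m_k$. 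The standard repair, and essentially the content of \cite{C02} (alternatively Kashiwara's cyclicity theorem for pole-free ordered tensor products of fundamental modules), is a cyclicity statement: realize each $L(m_k)$ as a quotient of a suitably ordered tensor product of fundamental modules, use the pole-freeness of all cross-class $R$-matrices in \emph{both} directions to conclude that $\bigotimes_k L(m_k)$ and its reversal are both generated by their $\ell$-highest weight vectors, hence have simple head $L(m)$; dualizing gives simple socle $L(m)$ as well, and then your multiplicity-one computation (which is correct and is exactly where it is needed) forces head $=$ socle and the module to be simple. Without this cyclicity input --- which is a genuine theorem, not derivable from $\fd$-vanishing or support disjointness --- the proof is incomplete.
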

In this paper, we always work in the subcategory $\Cc_{\fg}^0 \subset \Cc_{\fg}$. 
From now on, for each $(i, p) \in I \times \Z$, we write $Y_{i, p} \coloneq Y_{i, q^p}$ 
for simplicity. 
Moreover, when there is no risk of confusion, we abbreviate $Y_{i, p}$ as $i_p$. 
For example, we write $L(Y_{1, 0}Y_{1, 2}Y_{2, 3})$ or 
$L(1_0 1_2 2_3)$ instead of 
$L(Y_{1, q^0}Y_{1, q^2}Y_{2, q^3})$.

\begin{Def}[{\cite[Definition 6.9]{KKOP24}}]\label{Def : admissible sequence}
    We say that an infinite sequence 
    $\mathfrak{s} \coloneq \bigl((\iota_k, p_k)\bigr)_{k \in \Z}$ in $I \times \Z$ 
    is \emph{an admissible sequence} if the following conditions are satisfied$\colon$
    \begin{enumerate}
        \item If $s^+<\infty$ then $p_{s^+} = p_s + 2$, 
        \item $p_t = p_s + 1$ if $c_{\iota_s \iota_t} = -1$ and $t^- < s < t < s^+$, 
        \item for every $k \in \Z$, 
        the product $s_{\iota_k}s_{\iota_{k + 1}} \cdots s_{\iota_{k + l - 1}}$ is
        equal to $w_0$ 
        where $l$ denotes the length of the longest element $w_0$ of the Weyl group. 
    \end{enumerate}
    Here $s^+ \coloneq 
    \min\bigl(\{s^{\prime} \mid  s < s^{\prime}, \iota_s = \iota_{s^{\prime}}\} 
    \cup \{ \infty \}\bigr)$ 
    and $s^- \coloneq 
    \max\bigl(\{s^{\prime} \mid  s > s^{\prime}, \iota_s = \iota_{s^{\prime}}\} 
    \cup \{ -\infty \}\bigr)$. 
\end{Def}

The admissible sequence can equivalently be described 
in terms of the $Q$-data and the $Q$-adapted reduced word defined below. 
For the definitions of the non-simply-laced case, 
see \cite[Definition 3.5]{FO21}. 

\begin{Def}\label{Def : Qdatum}
    Let $\Delta$ be the Dynkin diagram for $\mathfrak{g}$. 
    A function $\xi : I \to \Z$ is called a \emph{height  function} 
     on $\Delta$ if $|\xi(i)-\xi(j)|=1$ holds for any 
     $i, j\in I$ such that $c_{ij}=-1$. 
     We call the pair $\mathscr{Q}=(\Delta, \xi)$ a 
     \emph{Q-datum} for $\mathfrak{g}$. 
     \end{Def}

     For a Q-datum $\mathscr{Q}$, we call a vertex $i \in I$ a \emph{sink}
      of $\mathscr{Q}$ if $\xi(i)<\xi(j)$ for all $j\in I$ such that 
      $c_{ij}=-1$. 
      If $i\in I$ is a sink of $\mathscr{Q}$, 
      then we can construct a new height function $s_i\xi$ by 
      \[
      (s_i\xi)(j)= \xi(j)+2\delta_{ij}. 
      \]

     Let $w_0$ be the longest element of the Weyl group of $\mathfrak{g}$. 
     For a reduced expression $\underline{w_0} = s_{i_1}s_{i_2}\cdots s_{i_l}$, 
     we say that $\underline{w_0}$ is \emph{$\mathscr{Q}$-adapted} 
     if 
     \[
     \text{$i_k$ is a sink of }s_{i_{k-1}}s_{i_{k-2}}\cdots s_{i_1}\mathscr{Q} 
     \text{ for all } 1\leq k < l. 
     \]

\begin{Prop}[{\cite[Proposition 6.11]{KKOP24}}]
    Let $X$ be the set of admissible sequences $\mathfrak{s}$, and 
    $Y$ be the set of pairs $(\mathscr{Q}, \underline{w_0})$ of a 
    Q-datum $\mathscr{Q}$ and a $\mathscr{Q}$-adapted reduced expression 
    $\underline{w_0}$ of $w_0$. 
    Then there exists a bijection $\rho : X \to Y$ as follows : 

    For an admissible sequence $\mathfrak{s} = (i_s, p_s)_{s\in \Z}$, 
    the corresponding pair $\rho(\mathfrak{s}) = (\mathscr{Q}, \underline{w_0})$ 
    is given by 
    \[
    \xi : I \to \Z ; \quad 
    i\mapsto p_{{\vartheta}_i}
    \]
    where $\vartheta_i = \text{min}\{k\in \Z_{\geq 1}\mid i_k=i\}$ and 
    \[
    \underline{w_0} = s_{i_1}\cdots s_{i_l}. 
    \]
\end{Prop}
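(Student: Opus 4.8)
The plan is to prove $\rho$ is well defined and to construct an explicit two-sided inverse, the whole argument being governed by the sink-reflection dynamics on Q-data. First I would check that for an admissible sequence $\mathfrak{s} = (\iota_k, p_k)_{k \in \Z}$ the pair $\rho(\mathfrak{s})$ really lies in $Y$. Condition (3) with $k = 1$ gives that $s_{\iota_1}\cdots s_{\iota_l} = w_0$ is a reduced expression, so each simple reflection occurs among $\iota_1, \dots, \iota_l$; hence $\vartheta_i \in \{1, \dots, l\}$ for every $i$ and $\xi$ is defined on all of $I$. To see that $\xi$ is a height function, fix $i, j$ with $c_{ij} = -1$ and assume $\vartheta_i < \vartheta_j$. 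I would verify the hypotheses $\vartheta_j^- < \vartheta_i < \vartheta_j < \vartheta_i^+$ of condition (2): the outer inequalities hold because $\vartheta_i, \vartheta_j$ are first occurrences (so no earlier occurrence of $j$ can sit at a positive index below $\vartheta_i$), while $\vartheta_j < \vartheta_i^+$ follows from the rigidity of the length-$l$ windows, which forces the neighbor $j$ of $i$ to reappear before $i$ itself recurs. Condition (2) then gives $\xi(j) = p_{\vartheta_j} = p_{\vartheta_i} + 1 = \xi(i) + 1$.

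Next I would show $\underline{w_0} = s_{\iota_1}\cdots s_{\iota_l}$ is $\mathscr{Q}$-adapted by induction along the sequence. The key lemma is that the one-step shift $(\iota_{k+1}, p_{k+1})_{k \in \Z}$ is again an admissible sequence whose associated height function is $s_{\iota_1}\xi$. Granting this, $\iota_1$ is a sink of $\mathscr{Q}$ because every neighbor of $\iota_1$ has strictly larger height by the previous step, and applying the induction hypothesis to the shifted sequence produces the sink condition at each later position.

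Finally I would construct the inverse. Given $(\mathscr{Q}, \underline{w_0})$ with $\underline{w_0} = s_{i_1}\cdots s_{i_l}$, set $\iota_k = i_k$ for $1 \le k \le l$, extend the letters to all of $\Z$ by the quasi-periodicity $\iota_{k+l} = \iota_k^{*}$, where $s_{i^{*}} = w_0 s_i w_0$, and define the heights by $p_{\vartheta_i} = \xi(i)$ propagated through all indices via condition (1), $p_{s^+} = p_s + 2$. Condition (1) then holds by construction; condition (3) follows because $s_{i_1}\cdots s_{i_l} = w_0$ together with $\iota_{k+l} = \iota_k^{*}$ forces every length-$l$ window to equal $w_0$, since sliding a window by one position conjugates the product by $w_0$ and reinserts exactly the twisted letter; and condition (2) is precisely the reformulation of $\mathscr{Q}$-adaptedness. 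Injectivity of $\rho$ follows because $(\mathscr{Q}, \underline{w_0})$ determines every letter (through the twist) and every height (through condition (1), starting from the first-occurrence values recorded by $\xi$), while surjectivity is exactly this construction. The hard part will be the global consistency across a full period: establishing the quasi-periodicity $\iota_{k+l} = \iota_k^{*}$ for admissible sequences and, dually, checking that after a complete pass of the adapted word the height function returns to $\xi$ up to the shift by $2$ and the involution $*$. This is where the Coxeter identity $w_0 s_i w_0 = s_{i^{*}}$ must be reconciled with the height bookkeeping of conditions (1)--(2), and where one must take care that the bi-infinite extension in both directions is unambiguous.
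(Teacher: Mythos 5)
The paper does not actually prove this proposition: it is imported verbatim from \cite[Proposition~6.11]{KKOP24}, so there is no in-paper argument to compare against; your skeleton (well-definedness of $\xi$ via condition (2), adaptedness via a one-step shift lemma, inverse by extending $(\mathscr{Q},\underline{w_0})$ through the twist $\iota_{k+l}=\iota_k^{*}$) does match the standard route of the cited proof. However, there is a genuine gap at the single most important step. You justify the inequality $\vartheta_j<\vartheta_i^{+}$ by ``the rigidity of the length-$l$ windows,'' i.e.\ by condition (3) alone, and that justification is false. In type $A_3$ the word $s_2s_1s_2s_3s_2s_1$ is a reduced expression of $w_0$ in which the two occurrences of $s_2$ (positions $1$ and $3$) are separated only by $s_1$, while the neighbour $3$ of $2$ first occurs at position $4$: a window being a reduced word for $w_0$ simply does not force a neighbour $j$ to appear before $i$ recurs. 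What actually excludes such windows from admissible sequences is the interplay of the height conditions (1) and (2) across the bi-infinite sequence: extending the letters of $212321$ (they are uniquely determined in both directions by (3), giving the $4$-periodic pattern $\ldots,3,2,1,2,3,2,1,2,\ldots$), condition (1) forces $p_6=p_2+2=p_1+3$ while condition (2) forces $p_6=p_5+1=p_1+5$, a contradiction. Your argument never invokes this mechanism, so the lemma on which your entire well-definedness step rests --- and with it the sink statement inside your shift lemma, hence adaptedness --- is unproven, and the reason you give for it is demonstrably insufficient.

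A second, smaller point: the quasi-periodicity $\iota_{k+l}=\iota_k^{*}$ for an arbitrary admissible sequence, which you correctly identify as needed for injectivity but defer as ``the hard part,'' is in fact a two-line consequence of condition (3): writing $W=s_{\iota_{k+1}}\cdots s_{\iota_{k+l-1}}$, the windows at $k$ and $k+1$ give $s_{\iota_k}W=w_0=Ws_{\iota_{k+l}}$, hence $s_{\iota_{k+l}}=w_0^{-1}s_{\iota_k}w_0=s_{\iota_k^{*}}$, and the heights then propagate by repeated use of (1). So the part you flag as difficult is routine, while the part you wave through (each neighbour of $i$ occurs exactly once, with height exactly one greater, between consecutive occurrences of $i$) is precisely where the real work lies --- in \cite{KKOP24} it is carried by the identification of $\{(\iota_k,p_k)\}$ with the repetition quiver and the reflection calculus of Q-data. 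Until that lemma is established from (1)--(3) jointly, both the well-definedness of $\rho$ and the verification of condition (2) for your inverse construction remain open.
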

\begin{Def}[{\cite[Definition 6.16]{KKOP24}}]\label{Def : category}
    Let $- \infty \leq a \leq b \leq \infty$ be integers or $\pm \infty$ and 
    $\mathfrak{s} = \bigl((\iota_k, p_k)\bigr)_{k \in \Z}$ be an admissible sequence. 
    We define the monoidal category $\mathscr{C}_{\fg}^{[a, b], \mathfrak{s}}$ 
    as the smallest full subcategory of $\mathscr{C}_{\fg}$ 
    satisfying the following conditions : 
    \begin{itemize}
        \item it is closed under tensor products, subquotients, and extensions, 
        \item it contains the fundamental modules 
        $L(Y_{\iota_k, p_k})$ for all $a \leq k \leq b$ 
        and the trivial module. 
    \end{itemize}
\end{Def}

For example, for any admissible sequence $\mathfrak{s}$, 
the category $\mathscr{C}_{\fg}^{[-\infty, \infty], \mathfrak{s}}$ coincides with  
$\Cc_{\fg}^0$ with respect to one of the parity functions. 

\begin{Prop}[{\cite[$\S$6.3]{PBW24}}]\label{Prop : polynomial}
    For an admissible sequence $\mathfrak{s} = \bigl((\iota_k, p_k)\bigr)_{k \in \Z}$, 
    an interval $[a, b]$ and a monomial $m \in \mathcal{M}^+$, 
    a simple module $L(m)$ is contained in $\mathscr{C}_{\fg}^{[a, b], \mathfrak{s}}$ 
    if and only if $m$ is a monomial of $\{Y_{\iota_k, p_k} \mid a \leq k \leq b\}$. 
    Moreover, the Grothendieck ring $K(\mathscr{C}_{\fg}^{[a, b], \mathfrak{s}})$ is 
    the polynomial algebra generated by $\{[L(Y_{\iota_k, p_k})] \mid a \leq k \leq b\}$. 
\end{Prop}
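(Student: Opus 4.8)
The plan is to prove the two assertions in order: first the characterization of the simple objects, and then to read off the ring structure from it. Throughout, write $J \coloneq \{(\iota_k,p_k) \mid a \le k \le b\}$ and let $\mathcal{M}^{[a,b]}$ denote the set of monomials $\prod_{a\le k\le b} Y_{\iota_k,p_k}^{u_k}$ with $u_k \in \Z_{\ge 0}$; by condition (1) of Definition \ref{Def : admissible sequence} the pairs $(\iota_k,p_k)$ are pairwise distinct, so these are genuinely $|J|$ independent variables. The easy inclusion is ``$m \in \mathcal{M}^{[a,b]} \Rightarrow L(m) \in \Cc_\fg^{[a,b],\mathfrak{s}}$'': writing $m = \prod_k Y_{\iota_k,p_k}^{u_k}$, Proposition \ref{Prop : subquot} shows $L(m)$ is a subquotient of $\bigotimes_k L(Y_{\iota_k,p_k})^{\otimes u_k}$, which lies in $\Cc_\fg^{[a,b],\mathfrak{s}}$ since the category contains each generating fundamental module and is closed under tensor products; closure under subquotients then places $L(m)$ in the category.

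For the converse I would introduce the auxiliary full subcategory $\cD \subseteq \Cc_\fg^0$ whose objects are the modules all of whose composition factors are of the form $L(m)$ with $m \in \mathcal{M}^{[a,b]}$. By construction $\cD$ contains the trivial module and every generator $L(Y_{\iota_k,p_k})$, $a\le k\le b$, and is tautologically closed under subquotients and extensions. Hence, by the minimality in Definition \ref{Def : category}, one gets $\Cc_\fg^{[a,b],\mathfrak{s}} \subseteq \cD$ -- which is exactly the remaining inclusion -- as soon as $\cD$ is shown to be closed under tensor products. This closure is the crux: one must check that for $m,m' \in \mathcal{M}^{[a,b]}$ every composition factor $L(m'')$ of $L(m)\otimes L(m')$ again has $m'' \in \mathcal{M}^{[a,b]}$. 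Such an $m''$ is obtained from $mm'$ by multiplying by a product of inverse $\ell$-root monomials $A_{i,c}^{-1}$ while staying dominant, so the statement amounts to showing that no such lowering can leave the sub-monoid generated by $\{Y_{\iota_k,p_k}\}_{a\le k\le b}$ while remaining in $\mathcal{M}_+$. I expect this to be the main obstacle; it is precisely the point at which the fine combinatorics of the admissible sequence $\mathfrak{s}$ (equivalently its $Q$-datum and $\mathscr{Q}$-adapted reduced word) must be used to confine the ``interval'' $J$, and in \cite{PBW24} it is obtained through the theory of PBW/standard modules attached to $\mathfrak{s}$.

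Granting the characterization, the ``moreover'' part follows by a unitriangularity argument inside $K(\Cc_\fg^0)$. By the first part, $\{[L(m)] \mid m \in \mathcal{M}^{[a,b]}\}$ is a $\Z$-basis of $K(\Cc_\fg^{[a,b],\mathfrak{s}})$. For a tuple $(u_k)$ put $m \coloneq \prod_k Y_{\iota_k,p_k}^{u_k}$; expanding the tensor product into composition factors and using that $m$ is the unique maximal monomial of $\chi_q\bigl(\bigotimes_k L(Y_{\iota_k,p_k})^{\otimes u_k}\bigr)$, occurring with multiplicity one, gives
\[
\prod_{a\le k\le b}[L(Y_{\iota_k,p_k})]^{u_k} = [L(m)] + \sum_{m' \prec m} c_{m'}\,[L(m')],
\]
where, by the characterization just proved, all the $m'$ lie in $\mathcal{M}^{[a,b]}$. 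Since $\prec$ is the standard locally finite partial order on monomials, this is a unitriangular change of basis, so the monomials $\prod_k [L(Y_{\iota_k,p_k})]^{u_k}$ also form a $\Z$-basis of $K(\Cc_\fg^{[a,b],\mathfrak{s}})$.

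Finally, distinct tuples give distinct leading monomials $m$, and $\chi_q$ is injective by Proposition \ref{q-ch}; hence the monomials in the generators are $\Z$-linearly independent, i.e. the classes $[L(Y_{\iota_k,p_k})]$, $a \le k \le b$, are algebraically independent. A free $\Z$-module on the set of all monomials in algebraically independent generators is exactly the polynomial ring on those generators, which is the assertion. The whole argument thus reduces, modulo standard $q$-character bookkeeping, to the single nontrivial input of closure of $\cD$ under tensor products.
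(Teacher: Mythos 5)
First, a point of comparison that is out of your control: the paper does not prove this proposition at all. It is imported verbatim from \cite[\S 6.3]{PBW24} as a black box, so there is no internal proof to measure your attempt against. Judged on its own terms, your proposal is a correct \emph{reduction} plus one unproven core claim. The easy inclusion via Proposition~\ref{Prop : subquot} is fine, and the entire second half is complete and correct once the characterization of simples is granted: the unitriangular expansion $\prod_k [L(Y_{\iota_k,p_k})]^{u_k} = [L(m)] + \sum_{m'\prec m} c_{m'}[L(m')]$ with lower terms again indexed by the interval monoid, the finiteness of down-sets needed to invert it, and the leading-monomial argument (with injectivity of $\chi_q$ from Proposition~\ref{q-ch}) for algebraic independence are all standard and correctly assembled.

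The genuine gap is exactly where you flagged it: closure of your auxiliary category $\cD$ under tensor products, i.e.\ the claim that every dominant monomial of $\chi_q(L(m))\chi_q(L(m'))$, for $m, m'$ in the monoid generated by $\{Y_{\iota_k,p_k} \mid a\le k\le b\}$, again lies in that monoid. You state this, correctly identify it as the crux, and then defer it to \cite{PBW24} — but this step is not routine bookkeeping; it is the entire mathematical content of the proposition, and it is \emph{false} for a general finite subset of $\widehat{I}$. For instance, in type $A_2$ take the two variables $Y_{1,0}, Y_{1,2}$: since $\chi_q(L(Y_{1,0})) = Y_{1,0} + Y_{1,2}^{-1}Y_{2,1} + Y_{2,3}^{-1}$, the product $\chi_q(L(Y_{1,0}))\,\chi_q(L(Y_{1,2}))$ contains the dominant monomial $Y_{2,1} = (Y_{1,2}^{-1}Y_{2,1})\cdot Y_{1,2}$, and indeed $[L(Y_{1,0})][L(Y_{1,2})] = [L(Y_{1,0}Y_{1,2})] + [L(Y_{2,1})]$, so the composition factor $L(Y_{2,1})$ escapes the monoid generated by $\{Y_{1,0},Y_{1,2}\}$. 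The claim survives for the categories $\mathscr{C}_{\fg}^{[a,b],\mathfrak{s}}$ only because condition (2) of Definition~\ref{Def : admissible sequence} forces any interval containing both occurrences $(1,0)$ and $(1,2)$ to contain $(2,1)$ as well; a genuine proof must show that the admissibility conditions make the set $\{(\iota_k,p_k) \mid a\le k\le b\}$ suitably convex with respect to lowering by the $A^{-1}$-monomials, which is what \cite{PBW24} establishes through its PBW theory. Since the paper itself cites the result rather than proving it, deferring this step is defensible scholarship — but your text should then be presented as a reduction to that confinement lemma, not as a proof.
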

For an admissible sequence $\mathfrak{s} = \bigl((\iota_k, p_k)\bigr)_{k \in \Z}$ 
and an interval $[a, b]$, 
let $\mathcal{M}_{+}^{[a, b], \mathfrak{s}}$ be a set of monomials 
generated by $\{Y_{\iota_k, p_k} \mid a \leq k \leq b\}$. 
Then, $\Irr \mathscr{C}_{\fg}^{[a, b], \mathfrak{s}}$ is parametrized 
by $\mathcal{M}_{+}^{[a, b], \mathfrak{s}}$. 

Let $\mathfrak{s} = (i_k, p_k)_{k\in\Z}$ be an admissible sequence and 
$\rho(\mathfrak{s}) = (\mathscr{Q}, \underline{w_0})$. 
The Q-datum $\mathscr{Q}$ can be regarded as a Dynkin quiver by orienting 
the edges between adjacent nodes in the direction of decreasing height. 
Let $\mathrm{Rep}(\mathscr{Q})$ denote the category of finite-dimensional representations 
of this Dynkin quiver over $\mathbb{C}$.
Then, we can naturally identify $\{(i_k, p_k)\}_{1\leq k\leq l}$ as 
the set of nodes of the Auslander-Reiten quiver of $\mathrm{Rep}(\mathscr{Q})$, 
where $l$ is the length of $w_0$. 
(see \cite{FO21}). 

In particular, for a Q-datum $\mathscr{Q}$, 
the set $\{(i_k, p_k)\}_{1\leq k\leq l}$ is independent of 
the choice of $\mathscr{Q}$-adapted reduced word $\underline{w_0}$. 
Therefore, the following definition is well-defined. 

\begin{Def}[\cite{HL15}]\label{Def : C_Q}
    Let $\mathscr{Q}$ be a Q-datum. 
    Let $\underline{w_0}$ be a $\mathscr{Q}$-adapted reduced word 
    and $\mathfrak{s} = \rho^{-1}(\mathscr{Q}, \underline{w_0})$. 
    We define the category $\mathscr{C}_{\mathscr{Q}}$ by 
    $\mathscr{C}_{\mathfrak{g}}^{[1, l], \mathfrak{s}}$. 
\end{Def}

\begin{Prop}[{\cite{F22}}]\label{Prop : C_Q equiv}
    Let $\mathscr{Q}_1, \mathscr{Q}_2$ be Q-data. 
    Then, $\mathscr{C}_{\mathscr{Q}_1}$ and $\mathscr{C}_{\mathscr{Q}_2}$ 
    are equivalent as monoidal categories. (see also \S~\ref{Sec : Qdatum}. )
\end{Prop}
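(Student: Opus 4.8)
The plan is to reduce an arbitrary change of $Q$-datum to elementary moves and to treat each separately. A Dynkin diagram is a tree, so a height function is determined by its induced orientation up to a global additive constant, and any two acyclic orientations of a tree are linked by a sequence of reflections at sinks. Consequently any two $Q$-data are connected by a chain of sink reflections $\xi\mapsto s_i\xi$ (and their inverses) together with global shifts $\xi\mapsto\xi+2$, so it suffices to exhibit a monoidal equivalence $\mathscr{C}_{\mathscr{Q}}\xrightarrow{\sim}\mathscr{C}_{\mathscr{Q}'}$ when $\mathscr{Q}'$ differs from $\mathscr{Q}$ by one such move.

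First I would dispatch the global shift using the loop-rotation auto-equivalence. If $\mathscr{Q}'=(\Delta,\xi+2)$, then by Proposition~\ref{Prop : polynomial} the generating set $\{(\iota_k,p_k)\}$ of $\mathscr{C}_{\mathscr{Q}'}$ is obtained from that of $\mathscr{C}_{\mathscr{Q}}$ by $p_k\mapsto p_k+2$. Since we write $Y_{i,p}=Y_{i,q^p}$, this is exactly the monoid automorphism $m\mapsto m_{q^2}$, so the monoidal auto-equivalence $T_{q^2}$ of $\mathscr{C}_{\fg}^0$ with $T_{q^2}(L(m))\cong L(m_{q^2})$ carries the generating fundamental modules of $\mathscr{C}_{\mathscr{Q}}$ bijectively onto those of $\mathscr{C}_{\mathscr{Q}'}$. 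Being monoidal and simplicity-preserving, $T_{q^2}$ restricts to the required equivalence.

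The sink reflection is the heart of the matter. When $i$ is a sink of $\mathscr{Q}$, passing to $s_i\mathscr{Q}$ changes the generating set only by replacing the extremal fundamental module $L(Y_{i,\xi(i)})$ (the projective-simple at the corner $(i,\xi(i))$ of the Auslander--Reiten slice $\{(\iota_k,p_k)\}$) with a new extremal fundamental module at the opposite end, every other generator being shared. To upgrade this combinatorial coincidence to a monoidal equivalence I would invoke the generalized quantum affine Schur--Weyl duality of Kang--Kashiwara--Kim in the form established in \cite{F22}: for each $Q$-datum $\mathscr{Q}$ there is a monoidal functor $F_{\mathscr{Q}}$ from the category $R\text{-gmod}$ of finite-dimensional graded modules over the symmetric KLR algebra $R$ of type $\fg$ into $\mathscr{C}_{\fg}^0$, and $F_{\mathscr{Q}}$ restricts to a monoidal equivalence $R\text{-gmod}\xrightarrow{\sim}\mathscr{C}_{\mathscr{Q}}$. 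Because the source $R\text{-gmod}$ does not depend on $\mathscr{Q}$, the composite $F_{\mathscr{Q}_2}\circ F_{\mathscr{Q}_1}^{-1}$ is the desired monoidal equivalence $\mathscr{C}_{\mathscr{Q}_1}\xrightarrow{\sim}\mathscr{C}_{\mathscr{Q}_2}$; this single argument in fact already subsumes the shift case above.

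The main obstacle is establishing that $F_{\mathscr{Q}}$ is genuinely a monoidal equivalence onto $\mathscr{C}_{\mathscr{Q}}$, i.e.\ that it sends simple $R$-modules to simple objects and respects tensor products. This compatibility is controlled by the denominators of the normalized $R$-matrices among the fundamental modules $L(Y_{\iota_k,p_k})$ --- precisely the data recorded by the invariant $\fd$ of Definition~\ref{Def:fd} --- and verifying the required vanishing and degree conditions is the delicate technical core. A purely representation-theoretic alternative would be to construct directly a reflection functor on $\mathscr{C}_{\fg}^0$ categorifying the BGP reflection at the sink $i$ and to check that it is monoidal; but such functors are not transparently exact monoidal functors on $\mathscr{C}_{\fg}^0$, so the same $R$-matrix and $\fd$ bookkeeping reappears. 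Either way, matching the tensor structures across a single reflection is where the real work lies.
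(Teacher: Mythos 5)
Your proposal follows essentially the same route as the paper: the paper gives no proof of its own but cites \cite{F22}, whose argument is exactly the one you identify, namely composing the generalized quantum affine Schur--Weyl duality functors through the common KLR source as $F_{\mathscr{Q}_2}\circ F_{\mathscr{Q}_1}^{-1}$ (this is also the mechanism the paper itself uses in \S\ref{Sec : Qdatum} via $\overline{\mathscr{F}_{\mathscr{Q}^{\prime}}}\circ\overline{\mathscr{F}_{\mathscr{Q}}}^{-1}$), and your preliminary reduction to sink reflections and even shifts is, as you yourself note, subsumed and hence redundant. One wording correction: $\mathscr{F}_{\mathscr{Q}}$ cannot be an equivalence out of $R\text{-gmod}$ itself, since it collapses grading shifts --- the induced ring isomorphism is $K(R\text{-gmod})/(q-1)K(R\text{-gmod})\simeq K(\mathscr{C}_{\mathscr{Q}})$, not one of $\Z[q^{\pm1}]$-algebras --- so the equivalence in \cite{F22} is from the degrading (ungraded finite-dimensional $R$-modules) onto $\mathscr{C}_{\mathscr{Q}}$, through which your composite still goes through verbatim.
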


\subsection{Examples of monoidal categorification}\label{ssec : ex of mon cat}
In this subsection, we suppose that $\fg$ is of type ADE. 
\begin{Def}\label{Def : GLS}
    Let $\mathfrak{s} = \bigl( (\iota_k, p_k)\bigr)_{k \in \Z}$ be an admissible sequence. 
    For $-\infty \leq a \leq b \leq \infty$, 
    we define the quiver $Q_{\text{GLS}}^{[a, b], \mathfrak{s}}$ 
    as follows : 

    Edges : $\{s \in \Z \mid a \leq s \leq b\}$

    Arrows : \begin{itemize}
        \item $s \longrightarrow t$ \quad if $t^- < s < t < s^+$ and $c_{\iota_s \iota_t} = -1$
        \item $s \longrightarrow s^-$
    \end{itemize}
\end{Def}

\begin{Thm}[{\cite[Theorem 8.1]{KKOP24}}]\label{Thm : KKOP categorification}
    Let $\mathfrak{s} = \bigl((\iota_k, p_k)\bigr)_{k \in \Z}$ be an admissible sequence and 
    $-\infty \leq a \leq b < \infty$. 
    We define a monoidal seed 
    $\mathscr{S} = (\{M_i\}_{i \in K}, \tilde{B} ; K, K^{\text{ex}})$ 
    in $\mathscr{C}_{\fg}^{[a, b], \mathfrak{s}}$ by 
    \begin{itemize}
        \item $K \coloneq [a, b]$, \quad
    $K^{\text{ex}} \coloneq \{s \in [a, b] \mid a \leq s^-\}, \quad
    K^{\text{fr}} \coloneq \{s \in [a, b] \mid s^- < a\}$, 
    \item $\tilde{B} \coloneq 
    \tilde{B}_{Q_{\text{GLS}}^{[a, b], \mathfrak{s}}} = (b_{ij})_{(i, j) \in K \times K^{ex}}$
     \quad (see \S\ref{ssec : cluster}), 
     \item $M_k \coloneq L(Y_{\iota_k, p_k}Y_{\iota_{k^+}, p_{k^+}}\cdots 
     Y_{\iota_{(b(\iota_k)^-)^-}, p_{(b(\iota_k)^-)^-}}Y_{\iota_{b(\iota_k)^-}, p_{b(\iota_k)^-}})$ 
     \quad for $k \in [a, b]$, 
    \end{itemize}
    where $b(i)^- = \max(\{s \leq b\mid \iota_s = i\} \cup \{-\infty\})$. 
    Then, $\mathscr{S}$ is a completely admissible monoidal seed. 
    
    Let $\mathcal{S} = 
    \bigl(\{x_i\}_{i \in K^{ex}}\cup \{f_i\}_{i \in K^{fr}}, 
    \tilde{B}_{Q_{\text{GLS}}^{[a, b], \mathfrak{s}}}\bigr)$ 
    be a seed in a rational function field 
    $\Q(x_i, f_j \mid i \in K^{ex}, j \in K^{fr})$, 
    and let $\mathscr{A}(\mathcal{S})$ 
    denote the associated cluster algebra. 
    
    Then, $\mathscr{C}_{\fg}^{[a, b], \mathfrak{s}}$ is a monoidal categorification 
    of $\mathscr{A}(\mathcal{S})$. 
\end{Thm}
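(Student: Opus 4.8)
The plan is to verify the two conditions of Definition~\ref{Def : categorification}: that $\mathscr{S}$ is a completely admissible monoidal seed in $\mathscr{C}_{\fg}^{[a,b],\mathfrak{s}}$, and that there is a ring isomorphism $\iota\colon K(\mathscr{C}_{\fg}^{[a,b],\mathfrak{s}})\xrightarrow{\sim}\mathscr{A}(\mathcal{S})$ carrying $[\mathscr{S}]$ to the initial seed $\mathcal{S}$. The starting observation is that, by the admissibility condition $p_{k^+}=p_k+2$, each $M_k$ is a Kirillov--Reshetikhin module $L\bigl(\prod_{j\ge 0}Y_{\iota_k,\,p_k+2j}\bigr)$, hence a real simple module.

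First I would check that $\mathscr{S}$ is a monoidal seed (Definition~\ref{Def : monoidal seed}), i.e.\ that $\{M_k\}_{k\in K}$ is a pairwise strongly commuting family. Since each $M_k$ is real, Proposition~\ref{Prop : real delta} reduces the strong commutation of $M_k$ and $M_l$ to the vanishing $\fd(M_k,M_l)=0$. I would deduce this from the denominators $d_{M_k,M_l}(z)$ of the normalized $R$-matrices: via $\rho(\mathfrak{s})=(\mathscr{Q},\underline{w_0})$ the pairs $(\iota_k,p_k)$ are the vertices of the Auslander--Reiten quiver of $\mathrm{Rep}(\mathscr{Q})$, and the modules $M_k$ attached to the resulting segments are arranged so that no such denominator acquires a zero at $z=1$, forcing $\fd=0$. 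I would then produce the initial exchange data of Definition~\ref{Def : admissible}: for each $k\in K^{\mathrm{ex}}$ a real simple $M_k'$ fitting into
\[
0\to \bigotimes_{b_{ik}>0}M_i^{\otimes b_{ik}}\to M_k\otimes M_k'\to \bigotimes_{b_{ik}<0}M_i^{\otimes(-b_{ik})}\to 0,
\]
these being the (generalized) $T$-system short exact sequences among Kirillov--Reshetikhin modules, whose two outer terms are dictated exactly by the incoming and outgoing arrows at $k$ in $Q_{\mathrm{GLS}}^{[a,b],\mathfrak{s}}$ (Definition~\ref{Def : GLS}).

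The main obstacle is complete admissibility, since the existence of mutated seeds cannot be verified by hand over the infinitely many mutation sequences. Here the plan is to use the $\Lambda$-invariant framework of \cite{KKOP20}: I would record the skew-symmetric matrix $\Lambda=(\Lambda(M_i,M_j))$ and verify that $(\tilde B,\Lambda)$ satisfies the compatibility condition of a $\Lambda$-seed. The master theorem of \cite{KKOP20,KKOP24} then guarantees that each mutation $\mu_k(\mathscr{S})$ is again a monoidal seed whose cluster variable modules are real and pairwise strongly commuting and for which the compatibility persists; an induction over the mutation sequences yields complete admissibility in all directions. The technical inputs controlling the behaviour of the invariants under mutation are Propositions~\ref{Prop : delta ineq sum} and~\ref{Prop : delta inequality real}, while Proposition~\ref{Prop : commuting family} ensures that cluster monomials are carried to genuine simple modules.

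Finally I would construct $\iota$. By Proposition~\ref{Prop : polynomial}, $K(\mathscr{C}_{\fg}^{[a,b],\mathfrak{s}})$ is the polynomial ring on the fundamental classes $[L(Y_{\iota_k,p_k})]$ for $a\le k\le b$; since the $q$-character of the Kirillov--Reshetikhin module $M_k$ has dominant monomial $\prod_j Y_{\iota_k,\,p_k+2j}$, the classes $\{[M_k]\}_{k\in K}$ are related to the fundamental classes by a unitriangular change of variables and hence also form a system of polynomial generators. I would then define $\iota^{-1}\colon \mathscr{A}(\mathcal{S})\to K(\mathscr{C}_{\fg}^{[a,b],\mathfrak{s}})$ by $x_i\mapsto[M_i]$ and $f_j\mapsto[M_j]$: the short exact sequences above show that every cluster exchange relation maps to a true identity in the commutative ring $K(\mathscr{C}_{\fg}^{[a,b],\mathfrak{s}})$, so by complete admissibility the assignment extends consistently to all cluster variables. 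This map is surjective because the $[M_k]$ generate, and injective by a comparison of transcendence degrees together with the injectivity of $\chi_q$ (Proposition~\ref{q-ch}); taking $\iota$ to be its inverse gives $\iota([\mathscr{S}])=\mathcal{S}$, so Definition~\ref{Def : categorification} is satisfied.
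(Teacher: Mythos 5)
This statement is imported: the paper states it as \cite[Theorem 8.1]{KKOP24} and gives no proof of its own, so there is no internal argument to compare against; the right benchmark is the proof in \cite{KKOP24} (together with the machinery of \cite{KKOP20}), and your sketch does reconstruct that proof's actual skeleton. Your opening observation is correct and is the same starting point as the source: condition (1) of Definition~\ref{Def : admissible sequence} forces the spectral parameters along the chain $k, k^+, (k^+)^+, \dots, b(\iota_k)^-$ to step by $2$, so each $M_k$ is a KR module, hence real. Likewise, verifying one-step admissibility at the initial seed via (generalized) T-system exact sequences whose outer terms match the arrows of $Q_{\mathrm{GLS}}^{[a,b],\mathfrak{s}}$, upgrading to complete admissibility via the $\Lambda$-seed machinery of \cite{KKOP20}, and building $\iota$ from Proposition~\ref{Prop : polynomial} plus a unitriangular change of generators and the Laurent phenomenon is exactly how the cited proof is organized. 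Your injectivity argument (a nonzero kernel would drop transcendence degree, while the quotient is a polynomial ring of full rank) is sound, and it even survives the case $a=-\infty$, since any kernel element involves only finitely many variables.

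That said, be aware that the two steps you dispatch in a sentence each are the bulk of \cite{KKOP24}. First, the pairwise strong commutation of $\{M_k\}_{k\in K}$: Proposition~\ref{Prop : real delta} reduces it to $\fd(M_k,M_l)=0$, but ``arranged so that no denominator acquires a zero at $z=1$'' is not something one reads off the AR-quiver picture for free --- it requires the explicit denominator formulas for pairs of KR modules (resp.\ the block/duality-datum analysis in the source), and this occupies a substantial part of the cited paper. Second, the compatibility of the pair $(\tilde B,\Lambda)$: computing $\Lambda(M_i,M_j)$ for all pairs of KR modules in the seed is a genuine computation (via degrees of $R$-matrices), not a formal verification, and the hypothesis of the ``master theorem'' you invoke is exactly this computation. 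As a sketch your proposal is faithful to the source and contains no wrong turns; as a proof it would need these two inputs supplied, and within the present paper one cannot supply them from the quoted propositions alone (Propositions~\ref{Prop : delta ineq sum}, \ref{Prop : delta inequality real}, and \ref{Prop : commuting family} control mutation behaviour but do not by themselves yield the initial $\fd$-vanishing or the $\Lambda$-compatibility).
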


\begin{Ex}\label{Ex : Cs}
We set admissible sequences $\mathfrak{s}_{D_n}^1$ for $n\geq 4$, $\mathfrak{s}_{D_4}^2$, 
$\mathfrak{s}_{E_n}^1$ for $n=6, 7, 8$, $\mathfrak{s}_{A_n}^1$ for $n\geq 2$ 
as follows. 
Note that an admissible sequence $\mathfrak{s}$ is uniquely determined 
by the entries $\mathfrak{s}_0, \dots, \mathfrak{s}_{l-1}$, so we describe only these terms here.
\begin{itemize}
    \item $D_{2n}\ (n\geq 2) : l = 2n(2n-1)$
    \begin{align*}
        (\mathfrak{s}_{D_{2n}}^1)_{1+k+2np}\coloneq
        \begin{cases}
            (2n-1-2k, 2p) \quad&\text{ if } 0\leq k\leq n-2, \\
            (4n-2-2k, 2p+1) \quad &\text{ if } n-1\leq k \leq 2n-2, \\
            (1, 2p+1) &\text{ if } k=2n-1, 
        \end{cases}
    \end{align*}
    for $0 \leq k < 2n, 0 \leq p < 2n-1$. 
    \item $D_{2n-1}\ (n\geq 3) : l = (2n-1)(2n-2)$
    \begin{align*}
        (\mathfrak{s}_{D_{2n-1}}^1)_{1+k+(2n-1)p}\coloneq
        \begin{cases}
            (2n-2-2k, 2p) \quad &\text{ if } 0 \leq k \leq n-2, \\
            (1, 2p) \quad &\text{ if } k=n-1, \\
            (4n-1-2k, 2p+1)\quad &\text{ if } n\leq k\leq 2n-2, 
        \end{cases}
    \end{align*}
    for $0 \leq k <2n-1, 0 \leq p<2n-2$. 
    \item $D_4 : l=12$
    \begin{align*}
        (\mathfrak{s}_{D_4}^2)_{1+k+4p}\coloneq
        \begin{cases}
            (\iota_{1+k}, 2p+1) \quad &\text{ if } k = 0, 1, 2, \\
            (\iota_4, 2p+2) \quad &\text{ if } k=3, 
        \end{cases}
    \end{align*}
    for $0 \leq k <4, 0\leq p < 3$, where 
    $\iota_1, \iota_2, \iota_3, \iota_4 = 1, 2, 4, 3$. 
    \item $E_n\ (n=6, 7, 8)$
    \begin{align*}
        (\mathfrak{s}_{E_n}^1)_{1+k+np}\coloneq
        \begin{cases}
            (\iota_{1+k}, 2p) \quad &\text{ if } 0 \leq k\leq [\frac{n}{2}]-1, \\
            (\iota_{1+k}, 2p+1) \quad &\text{ if } [\frac{n}{2}]\leq k \leq n-1, 
        \end{cases}
    \end{align*}
    for $0 \leq k <  n$. Here, 
    \[
    \begin{cases}
        n=6 : l=36, 0\leq p < 6, \iota_1, \ldots, \iota_6 = 4, 2, 5, 1, 3, 6, \\
        n=7 : l=63, 0\leq p<9, \iota_1, \ldots, \iota_7 = 1, 3, 6, 2, 4, 5, 7, \\
        n=8 : l=120, 0\leq p<15, \iota_1, \ldots, \iota_8 = 8, 1, 3, 6, 2, 4, 5, 7. 
    \end{cases}
    \]
    \item $A_2 : l=3$
    \[
    (\mathfrak{s}_{A_2}^1)_1, (\mathfrak{s}_{A_2}^1)_2, (\mathfrak{s}_{A_2}^1)_3  
    \coloneq (2, 0), (1, 1), (2, 2). 
    \]
    \item $A_3 : l=6$
    \[
    (\mathfrak{s}_{A_3}^1)_1, \ldots, (\mathfrak{s}_{A_3}^1)_6 \coloneq 
    (2, 0), (1, 1), (3, 1), (2, 2), (1, 3), (3, 3). 
    \]
    \item $A_4 : l=10$
    \[
    (\mathfrak{s}_{A_4}^1)_1, \ldots, 
    (\mathfrak{s}_{A_3}^1)_{10}\coloneq 
    (1, 0), (2, 1), (3, 2), (1, 2), (4, 3), (2, 3), (3, 4), (1, 4), (2, 5), (1, 6). 
    \]
    \item $A_{2n-1}\ (n\geq 3) : l=n(2n-1)$
    \begin{align*}
        (\mathfrak{s}_{A_{2n-1}}^1)_{1+k+(2n-1)p} \coloneq
        \begin{cases}
        (2n-2, 2p+1) \quad &\text{ if } k=0, \\
        (2k, 2p+1) \quad &\text{ if } 1 \leq k \leq n-2, \\
        (2k-2n+3, 2p+2) \quad &\text{ if }n-1\leq k \leq 2n-2, 
        \end{cases}
    \end{align*}
    for $0\leq k<2n-1, 0\leq p<n$. 
    \item $A_{2n} : l=n(2n+1)$
    \begin{align*}
        (\mathfrak{s}_{A_{2n-1}}^1)_{1+k+2np} \coloneq
        \begin{cases}
            (2, 2p) \quad &\text{ if } k=0, 0\leq p\leq n, \\
            (2n+1-2k, 2p+1)\quad &\text{ if } 1\leq k\leq n, 0\leq p\leq n-1, \\
            (4n+2-2k, 2p+2)\quad &\text{ if } n+1\leq k\leq 2n-1, 0\leq p\leq n-1, \\
            (2k-1, 2p+1) \quad &\text{ if } 1\leq k\leq n-1, p=n
        \end{cases}
    \end{align*}
    for $0\leq k<2n, 0\leq p\leq n$ and $1+k+2np\leq n(2n+1)$. 
\end{itemize}

For the above admissible sequences, 
when we set intervals as follows, 
the corresponding categories are categorified by the following initial monoidal seeds. 
In the figures below, the cluster variable modules corresponding to the frozen variables 
are enclosed in squares. 
The meaning of the numbers enclosed in circles in the figures below will be explained later in 
Theorem~\ref{Thm : affine categorification}.

\begin{align*}
 &\mathscr{C}_{D_{2n}}^{[1, 4n+1], \mathfrak{s}_{D_{2n}}^1} (n\geq 2): \\
&\raisebox{2.3em}{\scalebox{0.7}{\xymatrix@C=0.7ex@R=1ex{
 && \boxed{L(1_1 1_3)} \ar[ddr] 
 && L(1_3),\tikz[baseline=(c.base)] \node[draw,circle,inner sep=1pt, scale=0.9] (c) {n\!+\!2};\ar[ll]\\
 && \boxed{L(2_1 2_3)} \ar[dr] 
 && L(2_3),\tikz[baseline=(c.base)] \node[draw,circle,inner sep=1pt, scale=0.9] 
 (c) {n\!+\!1};\ar[ll]\\
 &\boxed{L(3_0 3_2)}  && L(3_2) \ar[ll] \ar[uur] \ar[ur]\ar[dr]\\
 && \boxed{L(4_1 4_3)} \ar[ur]\ar[dr] 
 && L(4_3),\tikz[baseline=(c.base)] \node[draw,circle,inner sep=1pt, scale=1.2] (c) {n};\ar[ll]\\
 &\boxed{L(5_0 5_2)}  && L(5_2) \ar[ll] \ar[ur]\\
 & \vdots & \vdots &\vdots&\vdots \\
 && \boxed{L\bigl((2n-4)_1 (2n-4)_3\bigr)} \ar[dr] 
 && L\bigl((2n-4)_3\bigr)
 ,\tikz[baseline=(c.base)] \node[draw,circle,inner sep=1pt] (c) {4};\ar[ll]\\
 &\boxed{L\bigl((2n-3)_0 (2n-3)_2 \bigr)} 
&& L\bigl((2n-3)_2\bigr) \ar[ll] \ar[ur]\ar[dr]\\
 && \boxed{L\bigl((2n-2)_1 (2n-2)_3\bigr)} \ar[ur]\ar[dr] 
 && L\bigl((2n-2)_3\bigr)
 ,\tikz[baseline=(c.base)] \node[draw,circle,inner sep=1pt] (c) {3};\ar[ll]\ar[dr]\\
 &\boxed{L\bigl((2n-1)_0 (2n-1)_2 (2n-1)_4\bigr)} 
&& L\bigl( (2n-1)_2 (2n-1)_4\bigr) \ar[ll] \ar[ur]\ar[dr] 
&&L( (2n-1)_{4}),\tikz[baseline=(c.base)] \node[draw,circle,inner sep=1pt] (c) {1};\ar[ll] \\
 && \boxed{L\bigl((2n)_1 (2n)_3\bigr)} \ar[ur] && L\bigl((2n)_3\bigr)
 ,\tikz[baseline=(c.base)] \node[draw,circle,inner sep=1pt] (c) {2};\ar[ll]\ar[ur]\\
}}}
\end{align*}
\begin{align*}
&\mathscr{C}_{D_{2n-1}}^{[1, 4n-1], \mathfrak{s}_{D_{2n-1}}^1} (n\geq 3):\\
&\raisebox{2.3em}{\scalebox{0.7}{\xymatrix@C=0.7ex@R=1ex{
 & \boxed{L(1_0 1_2)}  && L(1_2)\ar[ll]\ar[ddr]\\
 & \boxed{L(2_0 2_2)}  && L(2_2)\ar[ll]\ar[dr]\\
 &&\boxed{L(3_1 3_3)} \ar[uur] \ar[ur] \ar[dr] && L(3_3)
 ,\tikz[baseline=(c.base)] \node[draw,circle,inner sep=1pt,scale=1.2] (c) {n};\ar[ll] \\
 & \boxed{L(4_0 4_2)} && L(4_2)\ar[ll]\ar[ur]\ar[dr]\\
 &&\boxed{L(5_1 5_3)} \ar[ur] && L(5_3)
 ,\tikz[baseline=(c.base)] \node[draw,circle,inner sep=1pt,scale=0.9] (c) {n-1};\ar[ll]\\
 & \vdots & \vdots &\vdots&\vdots \\
 && \boxed{L\bigl((2n-5)_1 (2n-5)_3\bigr)} \ar[dr] && L\bigl((2n-5)_3\bigr)
 ,\tikz[baseline=(c.base)] \node[draw,circle,inner sep=1pt] (c) {4};\ar[ll]\\
 &\boxed{L\bigl((2n-4)_0 (2n-4)_2 \bigr)} 
&& L\bigl((2n-4)_2\bigr) \ar[ll] \ar[ur]\ar[dr]\\
 && \boxed{L\bigl((2n-3)_1 (2n-3)_3\bigr)} \ar[ur]\ar[dr] 
 && L\bigl((2n-3)_3\bigr)
 ,\tikz[baseline=(c.base)] \node[draw,circle,inner sep=1pt] (c) {3};\ar[ll]\ar[dr]\\
 &\boxed{L\bigl((2n-2)_0 (2n-2)_2 (2n-2)_4\bigr)}
&& L\bigl( (2n-2)_2 (2n-2)_4\bigr) \ar[ll] \ar[ur]\ar[dr] 
&&L( (2n-2)_{4}),\tikz[baseline=(c.base)] \node[draw,circle,inner sep=1pt] (c) {1};\ar[ll] \\
 && \boxed{L\bigl((2n-1)_1 (2n-1)_3\bigr)} \ar[ur] && L\bigl((2n-1)_3\bigr)
 ,\tikz[baseline=(c.base)] \node[draw,circle,inner sep=1pt] (c) {2};\ar[ll]\ar[ur]\\
}}}
\end{align*}
\begin{align*}
&\mathscr{C}_{D_4}^{[1, 11], \mathfrak{s}_{D_4}^2}:
&\raisebox{2.3em}{\scalebox{0.7}{\xymatrix@C=0.7ex@R=1ex{
 & \boxed{L(1_1 1_3 1_5)} && L(1_3 1_5)\ar[ll]\ar[ddr]
 &&L(1_5)
 ,\tikz[baseline=(c.base)] \node[draw,circle,inner sep=1pt] (c) {1};\ar[ll]\\
 & \boxed{L(2_1 2_3 2_5)}  && L(2_3 2_5)\ar[ll]\ar[dr]
 &&L(2_5)
 ,\tikz[baseline=(c.base)] \node[draw,circle,inner sep=1pt] (c) {2};\ar[ll]\\
 &&\boxed{L(3_2 3_4)} \ar[uur] \ar[ur] \ar[dr] 
 && L(3_4)
 ,\tikz[baseline=(c.base)] \node[draw,circle,inner sep=1pt] (c) {4};\ar[ll]\ar[uur]\ar[ur]\ar[dr]\\
 & \boxed{L(4_1 4_3 4_5)}  && L(4_3 4_5)\ar[ll]\ar[ur]
 &&L(4_5)
 ,\tikz[baseline=(c.base)] \node[draw,circle,inner sep=1pt] (c) {3};\ar[ll]\\
 }}}
&\mathscr{C}_{E_6}^{[1, 13], \mathfrak{s}_{E_6}^1}:
&\raisebox{2.3em}{\scalebox{0.68}{\xymatrix@C=0.7ex@R=1ex{
 && \boxed{L(1_1 1_3)} \ar[dr] && L(1_3)
 ,\tikz[baseline=(c.base)] \node[draw,circle,inner sep=1pt] (c) {3};\ar[ll]\\
 & \boxed{L(2_0 2_2)}  && L(2_2)\ar[ll]\ar[ur]\ar[dr]\\
 && \boxed{L(3_1 3_3)} \ar[ur]\ar[dr]\ar[ddr] && L(3_3)
 ,\tikz[baseline=(c.base)] \node[draw,circle,inner sep=1pt] (c) {2};\ar[ll]\ar[dr]\\
 &\boxed{L\bigl(4_0 4_2 4_4\bigr)} 
&& L\bigl( 4_2 4_4\bigr) \ar[ll] \ar[ur]
&&L( 4_{4})
,\tikz[baseline=(c.base)] \node[draw,circle,inner sep=1pt] (c) {1};\ar[ll] \\
& \boxed{L(5_0 5_2)}  && L(5_2)\ar[ll]\ar[uur]\ar[dr]\\
&& \boxed{L\bigl(6_1 6_3\bigr)} \ar[ur] && L\bigl(6_3\bigr)
,\tikz[baseline=(c.base)] \node[draw,circle,inner sep=1pt] (c) {4};\ar[ll]\\
}}}
\end{align*}
\begin{align*}
&\mathscr{C}_{E_7}^{[1, 15], \mathfrak{s}_{E_7}^1}:
&\raisebox{2.3em}{\scalebox{0.7}{\xymatrix@C=0.7ex@R=1ex{
 & \boxed{L(1_0 1_2 1_4)}  && L(1_2 1_4)\ar[ll]\ar[dr]
 &&L(1_4)
 ,\tikz[baseline=(c.base)] \node[draw,circle,inner sep=1pt] (c) {1};\ar[ll]\\
 && \boxed{L(2_1 2_3)} \ar[ur]\ar[dr] && L(2_3)
 ,\tikz[baseline=(c.base)] \node[draw,circle,inner sep=1pt] (c) {2};\ar[ll]\ar[ur]\\
 & \boxed{L(3_0 3_2)}  && L(3_2)\ar[ll]\ar[ur]\ar[dr]\ar[ddr]\\
 &&\boxed{L\bigl(4_1 4_3\bigr)} \ar[ur] 
&& L\bigl( 4_3\bigr)
,\tikz[baseline=(c.base)] \node[draw,circle,inner sep=1pt] (c) {3};\ar[ll]\\
&& \boxed{L(5_1 5_3)} \ar[uur]\ar[dr] && L(5_3)
,\tikz[baseline=(c.base)] \node[draw,circle,inner sep=1pt] (c) {4};\ar[ll]\\
& \boxed{L\bigl(6_0 6_2\bigr)}  && L\bigl(6_2\bigr)\ar[ll]\ar[ur]\ar[dr]\\
&&\boxed{L(7_1 7_3)} \ar[ur]&& L(7_3)
,\tikz[baseline=(c.base)] \node[draw,circle,inner sep=1pt] (c) {5};\ar[ll]\\
}}}
&\mathscr{C}_{E_8}^{[1, 17], \mathfrak{s}_{E_8}^1}:
&\raisebox{2.3em}{\scalebox{0.68}{\xymatrix@C=0.7ex@R=1ex{
 & \boxed{L(1_0 1_2)}  && L(1_2)\ar[ll]\ar[dr]\\
 && \boxed{L(2_1 2_3)} \ar[ur]\ar[dr] && L(2_3)
 ,\tikz[baseline=(c.base)] \node[draw,circle,inner sep=1pt] (c) {3};\ar[ll]\\
 & \boxed{L(3_0 3_2)} && L(3_2)\ar[ll]\ar[ur]\ar[dr]\ar[ddr]\\
 &&\boxed{L\bigl(4_1 4_3\bigr)} \ar[ur] 
&& L\bigl( 4_3\bigr)
,\tikz[baseline=(c.base)] \node[draw,circle,inner sep=1pt] (c) {4};\ar[ll]\\
&& \boxed{L(5_1 5_3)} \ar[uur]\ar[dr] && L(5_3)
,\tikz[baseline=(c.base)] \node[draw,circle,inner sep=1pt] (c) {5};\ar[ll]\\
& \boxed{L\bigl(6_0 6_2\bigr)}  && L\bigl(6_2\bigr)\ar[ll]\ar[ur]\ar[dr]\\
&&\boxed{L(7_1 7_3)} \ar[ur]\ar[dr]&& L(7_3)
,\tikz[baseline=(c.base)] \node[draw,circle,inner sep=1pt] (c) {2};\ar[ll]\ar[dr]\\
&\boxed{L(8_0 8_2 8_4)} &&L(8_2 8_4) \ar[ll]\ar[ur] &&L(8_4)
,\tikz[baseline=(c.base)] \node[draw,circle,inner sep=1pt] (c) {1};\ar[ll]\\
}}}
\end{align*}
\begin{align*}
 &\mathscr{C}_{A_2}^{[1, 11], \mathfrak{s}_{A_2}^1}:\\
&\raisebox{1em}{\scalebox{0.625}{\xymatrix@C=0.01ex@R=1ex{
 && \boxed{L(1_1 1_3 1_5 1_7 1_9)}\ar[dr] 
 && L(1_3 1_5 1_7 1_9)
 ,\tikz[baseline=(c.base)] \node[draw,circle,inner sep=1pt] (c) {5};\ar[ll]\ar[dr]
 && L(1_5 1_7 1_9)
 ,\tikz[baseline=(c.base)] \node[draw,circle,inner sep=1pt] (c) {6};\ar[ll]\ar[dr]
 && L(1_7 1_9) \ar[ll]\ar[dr]
 && L(1_9) \ar[ll]\ar[dr]\\
 & \boxed{L(2_0 2_2 2_4 2_6 2_8 2_{10})} 
 && L(2_2 2_4 2_6 2_8 2_{10})
 ,\tikz[baseline=(c.base)] \node[draw,circle,inner sep=1pt] (c) {4};\ar[ll]\ar[ur]
 && L(2_4 2_6 2_8 2_{10})
 ,\tikz[baseline=(c.base)] \node[draw,circle,inner sep=1pt] (c) {3};\ar[ll]\ar[ur]
 && L(2_6 2_8 2_{10})
 ,\tikz[baseline=(c.base)] \node[draw,circle,inner sep=1pt] (c) {2};\ar[ll]\ar[ur]
 && L(2_8 2_{10})
 ,\tikz[baseline=(c.base)] \node[draw,circle,inner sep=1pt] (c) {1};
 ,\tikz[baseline=(c.base)] \node[draw,circle,inner sep=1pt] (c) {7};\ar[ll]\ar[ur]
 && L(2_{10})
 ,\tikz[baseline=(c.base)] \node[draw,circle,inner sep=1pt] (c) {8};\ar[ll]\\
}}}
\end{align*}
\begin{itemize}
    \item $A_3 : l=6, (\mathfrak{s}_{A_3}^1)_1, \ldots, (\mathfrak{s}_{A_3}^1)_l = 
(2, 0), (1, 1), (3, 1), (2, 2), (1, 3), (3, 3), $
\end{itemize}
\begin{align*}
&\mathscr{C}_{A_3}^{[1, 10], \mathfrak{s}_{A_3}^1}:
&\raisebox{2.3em}{\scalebox{0.7}{\xymatrix@C=0.7ex@R=1ex{
 && \boxed{L(1_1 1_3 1_5)} \ar[dr] && L(1_3 1_5)\ar[ll]\ar[dr]
 &&L(1_5)
 ,\tikz[baseline=(c.base)] \node[draw,circle,inner sep=1pt] (c) {2};\ar[ll]\ar[dr]\\
 & \boxed{L(2_0 2_2 2_4 2_6)} && L(2_2 2_4 2_6)\ar[ll]\ar[ur]\ar[dr]
 &&L(2_4 2_6)
 ,\tikz[baseline=(c.base)] \node[draw,circle,inner sep=1pt] (c) {1};\ar[ll]\ar[ur]\ar[dr] &&L(2_6)
 ,\tikz[baseline=(c.base)] \node[draw,circle,inner sep=1pt] (c) {3};\ar[ll]\\
 && \boxed{L(3_1 3_3 3_5)} \ar[ur] && L(3_3 3_5)\ar[ll]\ar[ur]
 &&L(3_5)
 ,\tikz[baseline=(c.base)] \node[draw,circle,inner sep=1pt] (c) {4};\ar[ll]\ar[ur]\\
}}}
\end{align*}
\begin{align*}
&\mathscr{C}_{A_4}^{[1, 13], \mathfrak{s}_{A_4}^1}:
&\raisebox{2.3em}{\scalebox{0.7}{\xymatrix@C=0.7ex@R=1ex{
 & \boxed{L(1_0 1_2 1_4 1_6)} 
 && L(1_2 1_4 1_6)
 ,\tikz[baseline=(c.base)] \node[draw,circle,inner sep=1pt] (c) {3};\ar[ll]\ar[dr]
 && L(1_4 1_6)
 ,\tikz[baseline=(c.base)] \node[draw,circle,inner sep=1pt] (c) {4};\ar[ll]\ar[dr]
 && L(1_6 1_8)
 ,\tikz[baseline=(c.base)] \node[draw,circle,inner sep=1pt] (c) {8};\ar[ll]\ar[dr]\\
 && \boxed{L(2_1 2_3 2_5 2_7)} \ar[ur]
 && L(2_3 2_5 2_7)
 ,\tikz[baseline=(c.base)] \node[draw,circle,inner sep=1pt] (c) {2};\ar[ll]\ar[ur]\ar[dr]
 && L(2_5 2_7)
 ,\tikz[baseline=(c.base)] \node[draw,circle,inner sep=1pt] (c) {1};\ar[ll]\ar[ur]\ar[dr] 
 &&L(2_7)\ar[ll]\\
 &&& \boxed{L(3_2 3_4 3_6)} \ar[ur]
 && L(3_4 3_6)
 ,\tikz[baseline=(c.base)] \node[draw,circle,inner sep=1pt] (c) {7};\ar[ll]\ar[ur]\ar[dr]
 &&L(3_6)
 ,\tikz[baseline=(c.base)] \node[draw,circle,inner sep=1pt] (c) {5};\ar[ll]\ar[ur]\\
 &&&& \boxed{L\bigl(4_3 4_5\bigr)} \ar[ur] && L\bigl(4_5\bigr)
 ,\tikz[baseline=(c.base)] \node[draw,circle,inner sep=1pt] (c) {6};\ar[ll]\ar[ur]\\
}}}
\end{align*}
\begin{align*}
 &\mathscr{C}_{A_{2n-1}}^{[1, 4n], \mathfrak{s}_{A_{2n-1}}^1}(n\geq3):\\
&\raisebox{2.3em}{\scalebox{0.7}{\xymatrix@C=0.7ex@R=1ex{
 && \boxed{L(1_2 1_4)} \ar[dr] && L(1_4)
 ,\tikz[baseline=(c.base)] \node[draw,circle,inner sep=1pt] (c) {2};\ar[ll]\ar[dr]\\
 &\boxed{L(2_1 2_3 2_5)}  && L(2_3 2_5) \ar[ll] \ar[ur]\ar[dr]
 &&L(2_5)
 ,\tikz[baseline=(c.base)] \node[draw,circle,inner sep=1pt] (c) {1};\ar[ll]\\
 && \boxed{L(3_2 3_4)} \ar[ur]\ar[dr] && L(3_4)
 ,\tikz[baseline=(c.base)] \node[draw,circle,inner sep=1pt] (c) {3};\ar[ll]\ar[ur]\\
 &\boxed{L(4_1 4_3)}  && L(4_3) \ar[ll] \ar[ur]\\
 & \vdots & \vdots &\vdots&\vdots \\
 && \boxed{L\bigl((2n-5)_2 (2n-5)_4\bigr)} \ar[dr] && L\bigl((2n-5)_4\bigr)
 ,\tikz[baseline=(c.base)] \node[draw,circle,inner sep=1pt] (c) {n-1};\ar[ll]\\
 &\boxed{L\bigl((2n-4)_1 (2n-4)_3 \bigr)} 
&& L\bigl((2n-4)_3\bigr) \ar[ll] \ar[ur]\ar[dr]\\
 && \boxed{L\bigl((2n-3)_2 (2n-3)_4\bigr)} \ar[ur]\ar[dr] 
 && L\bigl((2n-3)_4\bigr)
 ,\tikz[baseline=(c.base)] \node[draw,circle,inner sep=1pt] (c) {n+1};\ar[ll]\ar[dr]\\
 &\boxed{L\bigl((2n-2)_1 (2n-2)_3 (2n-2)_5\bigr)} 
&& L\bigl( (2n-2)_3 (2n-2)_5\bigr) \ar[ll] \ar[ur]\ar[dr] 
&&L( (2n-2)_5)
,\tikz[baseline=(c.base)] \node[draw,circle,inner sep=1pt] (c) {n};\ar[ll] \\
 && \boxed{L\bigl((2n-1)_3 (2n-1)_4\bigr)} \ar[ur] && L\bigl((2n-1)_4\bigr)
 ,\tikz[baseline=(c.base)] \node[draw,circle,inner sep=1pt] (c) {n+2};\ar[ll]\ar[ur]\\
}}}
\end{align*}
\begin{align*}
 &\mathscr{C}_{A_{2n}}^{[1, 4n+2], \mathfrak{s}_{A_{2n}}^1}(n\geq3):\\
&\raisebox{2.3em}{\scalebox{0.68}{\xymatrix@C=0.01ex@R=0.9ex{
 && \boxed{L(1_1 1_3)} \ar[dr] && L(1_3)\ar[ll]\ar[dr]\\
 &\boxed{L(2_0 2_2 2_4)}  && L(2_2 2_4),\tikz[baseline=(c.base)] \node[draw,circle,inner sep=1pt] (c) {2};
 \ar[ll] \ar[ur]\ar[dr]
 &&L(2_4),\tikz[baseline=(c.base)] \node[draw,circle,inner sep=1pt] (c) {1};
 \ar[ll]\\
 && \boxed{L(3_1 3_3)} \ar[ur] && L(3_3)\ar[ll]\ar[ur]\ar[dr]\\
 &&&\boxed{L(4_2 4_4)} \ar[ur] && L(4_4)
 ,\tikz[baseline=(c.base)] \node[draw,circle,inner sep=1pt] (c) {3};
 \ar[ll] \\
 &  & \vdots &\vdots&\vdots &\vdots\\
 &&& \boxed{L\bigl((2n-4)_2 (2n-4)_4\bigr)} \ar[dr] && L\bigl((2n-4)_4\bigr)
 ,\tikz[baseline=(c.base)] \node[draw,circle,inner sep=1pt] (c) {n-1};\ar[ll]\\
 &&\boxed{L\bigl((2n-3)_1 (2n-3)_3 \bigr)} 
&& L\bigl((2n-3)_3\bigr) \ar[ll] \ar[ur]\ar[dr]\\
 &&& \boxed{L\bigl((2n-2)_2 (2n-2)_4\bigr)} \ar[ur]\ar[dr] 
 && L\bigl((2n-2)_4\bigr),
 \tikz[baseline=(c.base)] \node[draw,circle,inner sep=1pt] (c) {n+1};
 \ar[ll]\ar[dr]\\
 &&\boxed{L\bigl((2n-1)_1 (2n-1)_3 (2n-1)_5\bigr)} 
&& L\bigl( (2n-1)_3 (2n-1)_5\bigr) \ar[ll] \ar[ur]\ar[dr] 
&&L( (2n-1)_{5}),\tikz[baseline=(c.base)] \node[draw,circle,inner sep=1pt] (c) {n};
\ar[ll] \\
 &&& \boxed{L\bigl((2n)_2 (2n)_4\bigr)} \ar[ur] && L\bigl((2n)_4\bigr)
 ,\tikz[baseline=(c.base)] \node[draw,circle,inner sep=1pt] (c) {n+2};
 \ar[ll]\ar[ur]\\
}}}
\end{align*}
If there is no confusion, 
we simply write $\mathfrak{s}^1, \mathfrak{s}^2$ instead of 
$\mathfrak{s}_{\mathfrak{g}}^1, \mathfrak{s}^2_{D_4}$. 
\end{Ex}
\begin{Prop}[{\cite[Theorem 4.21]{KKOP24}}]\label{Prop : frozenKR}
    Let $\mathfrak{s} = \bigl((\iota_k, p_k)\bigr)_{k \in \Z}$ be an admissible sequence and 
    $-\infty < a \leq b < \infty$. 
    Let  $\mathscr{S} = (\{M_i\}_{i \in K}, \tilde{B} ; K, K^{\text{ex}})$ 
    be the monoidal seed of $\mathscr{C}_{\fg}^{[a, b], \mathfrak{s}}$ 
    defined in \ref{Thm : KKOP categorification}. 
    Then, for every $s \in K^{\text{fr}}$, 
    the KR module $M_s$ strongly commutes with 
    every simple module in $\mathscr{C}_{\fg}^{[a, b], \mathfrak{s}}$. 
\end{Prop}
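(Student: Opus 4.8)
The plan is to reduce the claim to the case of fundamental modules and then to settle that case by a denominator computation.

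Since $M_s$ is a KR module it is real, so by Proposition~\ref{Prop : real delta} it strongly commutes with a simple module $L$ precisely when $\fd(M_s, L)=0$; hence it suffices to show $\fd(M_s, L(m))=0$ for every $L(m)\in\mathscr{C}_{\fg}^{[a,b],\mathfrak{s}}$. By Proposition~\ref{Prop : polynomial} such an $m$ is a product of the $Y_{\iota_k,p_k}$ with $a\le k\le b$, so by Proposition~\ref{Prop : subquot} the module $L(m)$ is a simple subquotient of a tensor product $\bigotimes_{k}L(Y_{\iota_k,p_k})^{\otimes u_k}$ of in-window fundamental modules. Applying the subadditivity of $\fd$ (Proposition~\ref{Prop : delta ineq sum}) repeatedly, keeping $M_s$ fixed in one argument, yields
\[
\fd(M_s,L(m))\ \le\ \sum_{a\le k\le b}u_k\,\fd\bigl(M_s,L(Y_{\iota_k,p_k})\bigr).
\]
Since $\fd\ge 0$, it is enough to prove $\fd\bigl(M_s,L(Y_{\iota_k,p_k})\bigr)=0$ for every fundamental module occurring in the category, i.e.\ for every $k\in[a,b]$.

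For the fundamental case I would compute the two denominators $d_{M_s,L(Y_{\iota_k,p_k})}(z)$ and $d_{L(Y_{\iota_k,p_k}),M_s}(z)$ explicitly. Writing $i=\iota_s$, the hypothesis $s^-<a$ means that $M_s=L\bigl(Y_{i,p_s}Y_{i,p_s+2}\cdots Y_{i,p_{b(i)^-}}\bigr)$ is the \emph{complete} $i$-column of the window: its highest weight is the product of $Y_{i,p'}$ over all occurrences of color $i$ in $[a,b]$. Realizing $M_s$ as the head of the fusion product $L(Y_{i,p_s})\otimes\cdots\otimes L(Y_{i,p_{b(i)^-}})$ and using the known denominators between fundamental modules of type $ADE$ (Akasaka--Kashiwara, encoded by the inverse quantum Cartan matrix), one sees that a fundamental $L(Y_{\iota_k,p_k})$ can contribute a zero at $z=1$ to either denominator only through a ``link'' with the first or the last box of this column. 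The decisive point is that, because the column is complete inside $[a,b]$, every such boundary link forces the datum $(\iota_k,p_k)$ to lie strictly outside the window, which is impossible for an in-window fundamental, while the interior links cancel in the fusion. Hence both denominators are nonzero at $z=1$ and $\fd\bigl(M_s,L(Y_{\iota_k,p_k})\bigr)=0$.

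The main obstacle is precisely this last denominator analysis: one must pin down, for each type, the exact linking distances governed by the inverse quantum Cartan matrix and verify the cancellation of the interior contributions in the fusion product, together with the claim that a surviving boundary link always pushes $(\iota_k,p_k)$ out of $[a,b]$. The signs and parities entering the $ADE$ denominators, already visible in the shift by the Coxeter-type exponent in type $A$ (where adjacent colors link at $p$-distance $|i-j|+2$ rather than $|i-j|$), make this bookkeeping delicate and genuinely type-dependent. An alternative, more structural route would be to run an induction along the column using Proposition~\ref{Prop : delta inequality real}, starting from the fact that the cluster-variable modules $M_k$ of Theorem~\ref{Thm : KKOP categorification} already form a pairwise strongly commuting family, so that $\fd(M_s,M_k)=0$ for all $k$; the difficulty there is to exhibit each fundamental $L(Y_{\iota_k,p_k})$ as a subquotient of a tensor product of modules known to commute with $M_s$, which again reduces to understanding the relation between the tail-column modules $M_k$ and the single boxes.
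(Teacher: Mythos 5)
You should first be aware that the paper contains no proof of this proposition to compare against: it is imported verbatim from \cite[Theorem 4.21]{KKOP24}, so your attempt has to stand as a self-contained argument. Its first half does: since $M_s$ is a KR module it is real, so Proposition~\ref{Prop : real delta} converts strong commutation into $\fd(M_s,L(m))=0$; by Proposition~\ref{Prop : polynomial}, Proposition~\ref{Prop : subquot}, exactness of the tensor product, and iterated application of Proposition~\ref{Prop : delta ineq sum} you correctly obtain $\fd(M_s,L(m))\le\sum_k u_k\,\fd\bigl(M_s,L(Y_{\iota_k,p_k})\bigr)$, reducing the whole statement to the vanishing of $\fd$ against each in-window fundamental. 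This reduction is sound and is indeed the natural skeleton for a proof.

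The genuine gap is that the fundamental case — the entire content of the theorem after your reduction — is asserted rather than proven. Your two key claims, that the interior links cancel in the fusion product realizing $M_s$, and that any surviving boundary link forces $(\iota_k,p_k)$ strictly outside $[a,b]$, are exactly what must be established, and your sketch gives no argument for either. In particular, claim (ii) cannot even be checked without invoking the defining conditions of an admissible sequence (Definition~\ref{Def : admissible sequence}: $p_{s^+}=p_s+2$ within a color, and $p_t=p_s+1$ for interleaved adjacent colors), since these are precisely what relate the spectral parameters of the other colors in the window to the two ends of the complete $\iota_s$-column; your proposal never uses them. Likewise, the denominator between a KR module and a fundamental is not the naive product of box-by-box fundamental denominators — the cancellation must be extracted from the known KR denominator formulas and then combined with the window combinatorics, a type-dependent verification you explicitly defer ("the main obstacle"). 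The alternative inductive route via Proposition~\ref{Prop : delta inequality real} is only gestured at and faces the same missing input. As written, then, you have a correct reduction wrapped around an unproven core, whereas the paper discharges exactly this core by citation to \cite[Theorem 4.21]{KKOP24}; to make your argument complete you would need to carry out the denominator analysis (or quote the KR-versus-fundamental denominator formulas from the literature) and prove the out-of-window claim from the admissibility conditions.
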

\begin{Prop}\label{Prop : prime module}
    Let $\mathfrak{s} = \bigl((\iota_k, p_k)\bigr)_{k \in \Z}$ be an admissible sequence and 
    $-\infty < a \leq b < \infty$. 
    If a simple module $L \in\Irr \mathscr{C}_{\fg}^{[a, b], \mathfrak{s}}$ 
corresponds to a cluster variable, 
    then it is a prime module. 
\end{Prop}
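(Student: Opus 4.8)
The plan is to argue by contradiction: a hypothetical non-trivial tensor factorization of $L$ in $\mathscr{C}_{\fg}$ would descend to a factorization inside $\mathcal{C} \coloneq \mathscr{C}_{\fg}^{[a,b],\mathfrak{s}}$, hence to a non-trivial factorization of the corresponding cluster variable in $\mathscr{A}$, and this will contradict the irreducibility of cluster variables (Proposition \ref{Prop : cl.var is irr}) once the units of $\mathscr{A}$ have been pinned down via the polynomiality of $K(\mathcal{C})$ (Proposition \ref{Prop : polynomial}).

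First I would write $L = L(m)$ with $m \in \mathcal{M}_+^{[a,b],\mathfrak{s}}$, and suppose for contradiction that there is a non-trivial factorization $L \cong L_1 \otimes L_2$ in $\mathscr{C}_{\fg}$, say $L_i = L(m_i)$ with $m_i \in \mathcal{M}_+$ both non-trivial. Since $L_1 \otimes L_2 \cong L$ is simple, its highest weight monomial is $m_1 m_2$, so $m = m_1 m_2$. The crucial first step is to observe that this factorization already lives in $\mathcal{C}$: all exponents occurring in $m$, $m_1$, $m_2$ are non-negative, and $m = m_1 m_2$ only involves the variables $Y_{\iota_k, p_k}$ with $a \le k \le b$; therefore no variable outside this set can occur with a positive exponent in $m_1$ or $m_2$, and by Proposition \ref{Prop : polynomial} we conclude $L_1, L_2 \in \mathcal{C}$.

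Next, passing to the Grothendieck ring and using $[L_1 \otimes L_2] = [L_1]\,[L_2]$, we obtain $[L] = [L_1]\,[L_2]$ in $K(\mathcal{C})$. Applying the isomorphism $\iota : K(\mathcal{C}) \overset{\sim}{\to} \mathscr{A}$ and writing $x = \iota([L])$ for the cluster variable to which $L$ corresponds, this yields a factorization $x = \iota([L_1])\,\iota([L_2])$ in $\mathscr{A}$. By Proposition \ref{Prop : cl.var is irr} the element $x$ is irreducible in $\mathscr{A}$, so one of the two factors, say $\iota([L_1])$, must be a unit.

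Finally I would identify the units. By Proposition \ref{Prop : polynomial} the ring $K(\mathcal{C})$ is a polynomial algebra over $\Z$ in finitely many variables, so through the ring isomorphism $\iota$ the units of $\mathscr{A}$ are exactly $\pm 1$. Hence $\iota([L_1]) = \pm 1$, giving $[L_1] = \pm 1$ in $K(\mathcal{C})$; since $[L_1]$ is one of the simple classes forming a $\Z$-basis of $K(\mathcal{C})$, it cannot equal $-1$, so $[L_1] = 1$ and $L_1$ is the trivial module, contradicting the non-triviality of the factorization. This shows $L$ is prime. The only genuinely non-formal point, and the step I would treat most carefully, is the descent of the factorization from $\mathscr{C}_{\fg}$ to $\mathcal{C}$ via the monomial support characterization of Proposition \ref{Prop : polynomial}; everything afterward follows formally from the irreducibility of cluster variables and the polynomiality of $K(\mathcal{C})$.
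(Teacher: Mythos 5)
Your proof is correct and follows essentially the same route as the paper: descend the hypothetical tensor factorization to $\mathscr{C}_{\fg}^{[a,b],\mathfrak{s}}$ using the monomial-support characterization of Proposition~\ref{Prop : polynomial}, pass to $K(\mathscr{C}_{\fg}^{[a,b],\mathfrak{s}})$, and conclude via the irreducibility of cluster variables (Proposition~\ref{Prop : cl.var is irr}). The only difference is that you make explicit the unit argument (units of the polynomial ring are $\pm 1$, and a simple class in the basis of the Grothendieck ring cannot be $-1$), a step the paper leaves implicit.
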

\begin{proof}
Let $L(m)$ be a simple module in 
$\mathscr{C}_{\fg}^{[a, b], \mathfrak{s}}$ corresponding to a cluster variable, 
where $m \in \mathcal{M}_{+}^{[a, b], \mathfrak{s}}$. 
    Let $m_1, m_2 \in \mathcal{M}_+$ be monomials satisfying $L(m) \cong L(m_1)\otimes L(m_2)$. 
    Then, by Proposition~\ref{Prop : polynomial}, 
    $m_1$ and $m_2$ are in $\mathcal{M}_{+}^{[a, b], \mathfrak{s}}$. 
    So $[L(m)]$ has a factorization $[L(m_1)][L(m_2)]$ in 
    $K(\mathscr{C}_{\fg}^{[a, b], \mathfrak{s}})$. 
    Since $[L(m)]$ is an irreducible element in $K(\mathscr{C}_{\fg}^{[a, b], \mathfrak{s}})$ 
    by Proposition~\ref{Prop : cl.var is irr}, 
    $[L(m_1)]$ or $[L(m_2)]$ is the trivial module. 
\end{proof}

\subsection{Monoidal categorifications of cluster algebras of affine type}

\begin{Thm}\label{Thm : affine categorification}
    The categories listed in Example~\ref{Ex : Cs} are monoidal categorifications of 
    cluster algebras of affine type. 
    \begin{center}
    \begin{longtable}{c|c}
        category & cluster algebra type \\ \hline
         $\mathscr{C}_{D_n}^{[1, 2n+1], \mathfrak{s}^1}(n\geq 4)$& $D_{n}^{(1)}$\\
         $\mathscr{C}_{D_4}^{[1, 11], \mathfrak{s}^2}$& $E_{6}^{(1)}$\\
         $\mathscr{C}_{E_n}^{[1, 2n+1], \mathfrak{s}^1}(n=6, 7, 8)$& $E_{n}^{(1)}$\\
         $\mathscr{C}_{A_2}^{[1, 11], \mathfrak{s}^1}$& $E_{8}^{(1)}$\\
         $\mathscr{C}_{A_3}^{[1, 10], \mathfrak{s}^1}$& $E_{6}^{(1)}$\\
         $\mathscr{C}_{A_4}^{[1, 13], \mathfrak{s}^1}$& $E_{8}^{(1)}$\\
         $\mathscr{C}_{A_n}^{[1, 2n+2], \mathfrak{s}^1}(n\geq5)$& $D_{n+1}^{(1)}$\\
    \end{longtable}
    \end{center}
\end{Thm}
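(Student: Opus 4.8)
The plan is to deduce the statement from Theorem~\ref{Thm : KKOP categorification} and then reduce everything to a finite combinatorial check on quivers. Indeed, for every pair $([a,b],\mathfrak{s})$ occurring in the table, Theorem~\ref{Thm : KKOP categorification} already provides a completely admissible monoidal seed together with an isomorphism $\iota\colon K(\mathscr{C}_{\fg}^{[a,b],\mathfrak{s}})\xrightarrow{\sim}\mathscr{A}(\mathcal{S})$, so each listed category is a monoidal categorification of the cluster algebra $\mathscr{A}(\mathcal{S})$ whose initial exchange matrix is $\tilde{B}_{Q_{\text{GLS}}^{[a,b],\mathfrak{s}}}$. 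Thus the only remaining content is to identify the \emph{type} of $\mathscr{A}(\mathcal{S})$. By the definition of affine type recalled in \S\ref{ssec : cluster}, it suffices to produce one seed whose exchange quiver, after erasing the frozen vertices, is an orientation of the affine Dynkin diagram named in the corresponding row.

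First I would write out $Q_{\text{GLS}}^{[a,b],\mathfrak{s}}$ explicitly in each case from Definition~\ref{Def : GLS} and the admissible sequences of Example~\ref{Ex : Cs}; these are precisely the quivers drawn there, the boxed vertices being the frozen ones (the indices $s$ with $s^-<a$) and the rest exchangeable. Erasing the boxed vertices and all incident arrows produces the full subquiver $Q^{\text{ex}}$ on $K^{\text{ex}}$, whose isomorphism type must be determined. A preliminary bookkeeping step is to confirm that $\#K^{\text{ex}}$ equals the number of nodes of the target extended Dynkin diagram; this follows by counting the slots $s\in[a,b]$ whose same-index predecessor $s^-$ again lies in $[a,b]$, and in each row the count matches (namely $n+1$ for the rows giving $D_n^{(1)}$ and $E_n^{(1)}$, and $9,7,9,n+2$ for the type-$A$ rows giving $E_8^{(1)},E_6^{(1)},E_8^{(1)},D_{n+1}^{(1)}$).

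The core of the proof is then to recognize $Q^{\text{ex}}$ as a Dynkin quiver of the stated affine type. The circled labels placed on the exchangeable vertices in Example~\ref{Ex : Cs} encode this identification: reading them off, and, where the GLS orientation is not already in standard form, mutating at the labelled vertices in the indicated order, carries $Q^{\text{ex}}$ to a standard orientation of the extended Dynkin diagram. Since mutation leaves the cluster algebra unchanged and, for the tree diagrams $D_m^{(1)}$ and $E_m^{(1)}$, the orientation is irrelevant to the cluster type, this identifies $\mathscr{A}(\mathcal{S})$ with the affine cluster algebra of that type. For the rows where $\fg$ and the target share the same Cartan type (the $D_n^{(1)}$ and $E_n^{(1)}$ rows) this step is essentially direct: once the frozen KR-modules are removed, $Q^{\text{ex}}$ already displays the extended Dynkin shape, the extra affine node arising from the endpoint of the interval.

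I expect the main obstacle to be the remaining rows, where $\fg$ is of type $A$ (or is $D_4$ with the sequence $\mathfrak{s}^2$) yet the cluster algebra is of type $D^{(1)}$ or $E^{(1)}$. There the shape of $Q_{\text{GLS}}$ grows with the length of the interval and depends on the parity of $n$, and the appearance of a trivalent $D$- or $E$-type tree out of a type-$A$ configuration is a genuine coincidence of quiver combinatorics rather than a transparent match. Verifying that the ladder-with-forks subquiver $Q^{\text{ex}}$ closes up, after the prescribed mutations, into exactly the affine $D$ or $E$ diagram, with the correct branch lengths and with the single trivalent vertex (or, for $D_{n+1}^{(1)}$ with $n\geq5$, the two trivalent vertices) in the right places, is where the substantive case-by-case work lies.
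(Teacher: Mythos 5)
Your proposal is correct and takes essentially the same route as the paper: invoke Theorem~\ref{Thm : KKOP categorification} to get the monoidal categorification with initial exchange quiver $Q_{\text{GLS}}^{[a,b],\mathfrak{s}}$, then perform the mutations at the circled vertices in the indicated order until the full subquiver on the exchangeable nodes becomes an affine Dynkin quiver of the stated type --- which is exactly the paper's (one-line) proof, with the case-by-case quiver check left implicit just as in your plan. The only inaccuracy is your remark that the $D_n^{(1)}$ and $E_n^{(1)}$ rows are immediate once the frozen vertices are deleted: in those rows the exchangeable part of the GLS quiver still contains oriented $3$-cycles (for instance among $L\bigl((n-1)_2(n-1)_4\bigr)$, $L\bigl((n-1)_4\bigr)$ and their neighbours in the type-$D$ figures), which is precisely why circled mutation labels appear there too, so the depicted mutations are genuinely needed in every row of the table.
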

\begin{proof}
   By performing the mutations in the order depicted in 
   Example~\ref{Ex : Cs}, 
   we obtain affine-type seeds as stated in the theorem.
\end{proof}


\section{Cluster algebras of affine type}\label{Sec : affinecluster}
In this section, we consider a cluster algebra $\mathscr{A} = \mathscr{A}(\mathcal{S})$, 
where 
$\mathcal{S} = (\{x_i\}_{i \in K^{\text{ex}}}\cup \{f_j\}_{j \in K^{\text{fr}}}, 
\tilde{B} = (b_{ij})_{(i, j)\in K\times K^{\text{ex}}})$ 
is a seed such that the quiver corresponding to the principal part $B$ of $\tilde{B}$ 
is an acyclic Dynkin quiver of untwisted affine type. 
We refer to such a seed as an \emph{acyclic seed of untwisted affine type}. 

\subsection{Properties of cluster algebras of affine type}
Let $(\mathfrak{h}, \Pi, \Pi^{\vee})$ be a realization of the Cartan matrix 
associated with this Dynkin quiver, 
where $\mathfrak{h}$ is a complex vector space, 
$\Pi = \{\alpha_0, \ldots, \alpha_n\} \subset \mathfrak{h}^{*}$ is a set of \emph{simple roots}, 
and $\Pi^{\vee} = \{h_0, \ldots, h_n\} \subset \mathfrak{h}$ is a set of \emph{simple coroots}
(see \cite{Kac83}). 
We set the \emph{affine root system} $\Phi \subset \mathfrak{h}^*$, 
the \emph{root lattice} $Q \coloneq \displaystyle \bigoplus_{i=0}^n \Z\alpha_i$, 
and the \emph{positive root lattice} 
$Q^{+} \coloneq \displaystyle \bigoplus_{i=0}^n \Z_{\geq 0}\alpha_i$. 
For $\beta_1, \beta_2\in Q$, we write $\beta_1 \geq \beta_2$ if $\beta_1-\beta_2 \in Q^+$. 
We also define the set of \emph{positive roots} by $\Phi^+ \coloneq \Phi \cap Q^+$. 
In this paper, we only consider untwisted affine root systems. 
For an untwisted affine Dynkin diagram of type $X_n^{(1)}$ for $X=A, \ldots, G$, 
we fix the index $0$ so that the subgraph on $\{1, \ldots, n\}$ forms 
the finite Dynkin diagram of type $X_n$. 
Let $\Phi_{\mathrm{fin}}$ be the finite root system whose simple roots are 
$\{\alpha_1, \ldots, \alpha_n\}$, viewed as a subsystem of $\Phi$. 
Let $\delta \in \Phi^+$ be the \emph{null root}, characterized by the condition
\[\{\lambda \in Q \mid \langle h_i, \lambda\rangle = 0\text{ for every }0\leq i\leq n\} 
= \Z\delta. 
\]
For $0 \leq i \leq n$, we set the \emph{simple reflection} $s_i \in GL(\mathfrak{h}^*)$ by 
\[s_i \lambda = \lambda - \langle h_i, \lambda\rangle\alpha_i \quad 
\text{ for $\lambda$ in $\mathfrak{h}^*$}
\]
and define the \emph{Weyl group} by $W \coloneq \langle s_i \mid 0\leq i\leq n\rangle$. 

For the exchange quiver $\tilde{B}$, 
we set a Coxeter element $c = s_{i_0}\cdots s_{i_n} \in W$ satisfying  
$\{i_0, \ldots, i_n\} = \{0, \ldots, n\}$ and $k<l$ implies $b_{i_k i_l}\geq 0$. 
We call such a Coxeter element \emph{Coxeter element associated with $\tilde{B}$} . 
Since $\Phi$ is closed under the action of the Weyl group, 
we can consider the action of $c$ on $\Phi$. 
\begin{Def}[{\cite[Definition 3.1]{affine}}]\label{Def : Phi_c}
    Let $\Phi$ be a root system of untwisted affine type and $c$ be a Coxeter element. 
    We define 
    \begin{align*}
    \Lambda_c^{\text{re}} &= \{\beta, \delta - \beta \mid \beta \in \Phi_{\text{fin}}^+ 
    \text{ and the $c$-orbit of $\beta$ is finite}\}, \\
    \Phi_c^{\text{re}} &= -\Pi \cup 
    \{\beta \in \Phi^+ \mid \text{the $c$-orbit of $\beta$ is infinite}\} \cup 
    \Lambda_c^{\text{re}}, \\
    \Lambda_c &= \Lambda_c^{\mathrm{re}}\cup \{\delta\}, 
    \Phi_c = \Phi_c^{\text{re}} \cup \{\delta\}. 
    \end{align*}
\end{Def}
We note that $\Phi_c \cap \Z\delta = \{\delta\}$ since every element of $\Z\delta$ is fixed 
by the Weyl group action and, in particular, the $c$-orbit is finite. 
Also, we note that $\Lambda_c^{\text{re}} = \Lambda_{c^{-1}}^{\text{re}}$, 
$\Phi_c^{\text{re}} = \Phi_{c^{-1}}^{\text{re}}$, and 
$\Phi_c = \Phi_{c^{-1}}$. 

\begin{Prop}[{\cite[Proposition 3.1, Lemma 3.6]{coxeter}}]\label{Prop :  -alpha_i in Phi}
    Let $\Phi$ be a root system of untwisted affine type and $c$ be a Coxeter element. 
    Then, for any $\alpha_i \in \Pi$ and $n \in \Z_{\geq 0}$, 
    $n\delta -\alpha_i \in \Phi_c^{\text{re}}$. 
\end{Prop}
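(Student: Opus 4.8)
The plan is to separate the regimes $n=0$ and $n\geq 1$. For $n=0$ there is nothing to do: $-\alpha_i\in-\Pi\subseteq\Phi_c^{\text{re}}$ directly from Definition~\ref{Def : Phi_c}. So I would fix $n\geq 1$ and set $\beta\coloneq n\delta-\alpha_i$, and first check that $\beta$ is a positive real root. Since every coefficient of $\delta$ in the basis of simple roots is $\geq 1$, we have $\delta-\alpha_i\in Q^+$, hence $\beta=(n-1)\delta+(\delta-\alpha_i)\in Q^+\setminus\{0\}$. The image of $\beta$ in $Q/\Z\delta$ is a nonzero finite root (namely $-\alpha_i$ when $i\neq 0$, and $\theta$ when $i=0$, using $\alpha_0=\delta-\theta$); since the real roots of an untwisted affine system are exactly the $\gamma+k\delta$ with $\gamma\in\Phi_{\mathrm{fin}}\setminus\{0\}$, this shows $\beta$ is a real root, and lying in $Q^+$ it is a \emph{positive} real root. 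By Definition~\ref{Def : Phi_c} it then suffices to prove that the $c$-orbit of $\beta$ is infinite; I expect the $\Lambda_c^{\text{re}}$ term of $\Phi_c^{\text{re}}$ to intervene only in degenerate situations.

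The next step is to control the $c$-orbit. The structural input is that $c$ fixes $\delta$ and acts with finite order on $\mathfrak{h}^*/\C\delta$. Consequently there exist $m\geq 1$ and a linear functional $\partial\colon\mathfrak{h}^*\to\C$ with $\partial(\delta)=0$ such that $c^m(\gamma)=\gamma+\partial(\gamma)\,\delta$ for all $\gamma$; it follows that the $c$-orbit of $\gamma$ is infinite if and only if $\partial(\gamma)\neq 0$ (if $\partial(\gamma)\neq0$ the points $c^{mk}(\gamma)=\gamma+k\,\partial(\gamma)\,\delta$ are pairwise distinct, and if $\partial(\gamma)=0$ then $c^m$ already fixes $\gamma$). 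Up to a positive scalar $\partial$ is the defect of the tame hereditary algebra attached to the acyclic orientation of $\mathscr{Q}$ underlying $c$. Because $\partial$ annihilates $\delta$ we obtain $\partial(\beta)=-\partial(\alpha_i)$, \emph{independent of $n$}, so the whole question collapses to the single assertion $\partial(\alpha_i)\neq 0$ for each $i$.

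The crux is therefore to show $\partial(\alpha_i)\neq 0$ for every $i\in\{0,1,\dots,n\}$. Vanishing of $\partial(\alpha_i)$ would say that the simple module $S_i$ of the associated quiver is regular. For the Coxeter elements relevant here this never occurs: one checks that the admissible sequences of Example~\ref{Ex : Cs} have height functions taking only two consecutive values, so the induced orientation is bipartite and every vertex is a source or a sink; hence each $S_i$ is projective or injective, thus preprojective or preinjective, and $\partial(\alpha_i)\neq 0$. This yields $\partial(\beta)\neq 0$, an infinite $c$-orbit, and $\beta\in\Phi_c^{\text{re}}$, finishing the proof for the cases at hand.

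I expect this last step to be the genuine obstacle. For a general Coxeter element a finite simple root \emph{can} be regular (for instance in a non-bipartite type such as $\widetilde{A}_{2}$), in which case $\partial(\alpha_i)=0$ and $n\delta-\alpha_i$ becomes a ``large'' regular real root lying outside $\Phi_c^{\text{re}}$; so the statement really relies on $c$ being of bipartite type, which is exactly why the affine types occurring in Theorem~\ref{Thm : affine categorification} are of type $D$ or $E$ (whose affine Dynkin diagrams are trees admitting a bipartite orientation). The one point I would treat with extra care is the borderline case $n=1$: there, should the defect vanish, one must instead exhibit $\delta-\alpha_i$ as an element of $\Lambda_c^{\text{re}}$ via $\alpha_i$ (or $\theta$) itself, and verify that this membership is consistent with the orbit analysis above.
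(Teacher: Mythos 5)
The paper itself gives no proof of this Proposition --- it is imported wholesale from \cite{coxeter} --- so there is no internal argument to compare against; your proposal has to stand on its own, and its engine is indeed the right one (and surely the mechanism behind the cited reference): the positivity check on $n\delta-\alpha_i$, the existence of $m\geq 1$ and a linear functional $\partial$ with $\partial(\delta)=0$ and $c^m(\gamma)=\gamma+\partial(\gamma)\delta$, the equivalence ``infinite $c$-orbit $\Leftrightarrow$ $\partial(\gamma)\neq 0$'', the $n$-independence $\partial(n\delta-\alpha_i)=-\partial(\alpha_i)$, and the $n\leq 1$ fallback into $-\Pi$ and $\Lambda_c^{\text{re}}$ are all correct as you present them.

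The genuine gap is in your final step, in two respects. First, you justify $\partial(\alpha_i)\neq 0$ by observing that the admissible sequences of Example~\ref{Ex : Cs} have two-valued height functions, hence a bipartite orientation --- but that is the orientation of the \emph{finite} Dynkin diagram of $\mathfrak{g}$ coming from the Q-datum, which is the wrong quiver. The Coxeter element in this Proposition is attached to the exchange matrix of the \emph{affine}-type seed obtained after the mutations of Theorem~\ref{Thm : affine categorification}, and bipartiteness of that affine quiver neither follows from nor has anything to do with the height function of $\mathfrak{s}$; what actually licenses the hypothesis is the paper's explicit restriction, announced in Section~\ref{Sec : affinecluster}, to affine quivers of sink--source type. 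Second, your scope diagnosis is only half right. You are correct that the Proposition as literally stated (arbitrary Coxeter element, arbitrary untwisted affine type) fails for $n\geq 2$ when some $\alpha_i$ is regular --- in $A_2^{(1)}$ with $c=s_0s_1s_2$ one computes $c(\alpha_1)=\delta-\alpha_1$, so $n\delta-\alpha_1$ has a two-element orbit and lies in none of the three constituents of $\Phi_c^{\text{re}}$ of Definition~\ref{Def : Phi_c} once $n\geq 2$. But your proposed dividing line (``non-bipartite types such as $\widetilde{A}_2$'' versus ``types $D$ or $E$, whose affine diagrams are trees admitting a bipartite orientation'') is wrong: the same failure occurs \emph{inside} $D_4^{(1)}$. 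With the paper's labels ($\delta=\alpha_1+\alpha_2+2\alpha_3+\alpha_4+\alpha_0$, node $3$ central), take $c=s_1s_2s_3s_4s_0$; a direct computation gives $c(\alpha_3)=\delta-\alpha_3$ (the central vertex has exactly two incoming neighbours, whose marks $1+1$ equal the mark $2$ of node $3$, so the defect of $\alpha_3$ vanishes), whence $n\delta-\alpha_3\notin\Phi_c^{\text{re}}$ for all $n\geq 2$. So merely \emph{admitting} a bipartite orientation saves nothing; the precise hypothesis your argument needs --- and the one under which the Proposition is actually applied in this paper --- is that no simple root has finite $c$-orbit (equivalently, each defect $\delta_i-\sum_{j\to i}\delta_j$ is nonzero), which the sink--source assumption guarantees but which you must state explicitly rather than derive from $\mathfrak{s}$. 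With that hypothesis inserted, your proof closes; without it, the claim for $n\geq 2$ is simply false.
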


\begin{Def}[{\cite{FZ03}}]\label{Def : dvector}
    Let $\mathcal{S} = (\{x_i\}_{i \in K^{\text{ex}}}\cup \{f_j\}_{j \in K^{\text{fr}}}, 
    \tilde{B} = (b_{ij})_{(i, j)\in K\times K^{\text{ex}}})$ be a seed such that 
    $K^{\mathrm{ex}} = \{0, \ldots, n\}$ and 
    the quiver associated with the principal part of $\tilde{B}$ is 
    an acyclic Dynkin quiver of affine type. 
    Let $\mathscr{A}(\mathcal{S})$ be the corresponding cluster algebra and 
    $Q$ be the associated affine root lattice. 
    For a cluster variable $y$ that is not one of the frozen variables 
    $\{f_j\}_{j\in K^{\text{fr}}}$, 
    Proposition~\ref{Prop : Laurent} implies that 
    there exists a unique sequence ${\mathbf{d}} = (d_0, \ldots, d_n)\in \Z^{n+1}$ 
    and a polynomial $P(x_i, f_j\mid i \in K^{\mathrm{ex}}, j\in K^{\text{fr}})$ such that : 
    \begin{itemize}
        \item $y = x^{-{\bf{d}}}\times P(x_i, f_j)$, 
        \item $P(x_i, f_j)$ is not divisible by any $x_i$. 
    \end{itemize}
    Then we define the \emph{$\mathbf{d}$-vector} of $y$ by 
    \[
    \mathbf{d}(y) \coloneq \sum_{i = 0}^{n}d_i \alpha_i \in Q. 
    \]
\end{Def}
For example, for the cluster variable $x_i$, we have $\mathbf{d}(x_i) = -\alpha_i$.  

For a cluster algebra of finite type, it is well known that there is a bijection between 
the set of cluster variables and the set of almost positive roots 
through $\mathbf{d}$-vector (see \cite{FZ03}). 
For a cluster algebra of affine type, there is the following analogous fact. 
\begin{Prop}[{\cite[Theorem 1.2]{affine}}]\label{Prop : dvectorbij}
    Let $\mathcal{S} = (\{x_i\}_{i \in K^{\text{ex}}}\cup \{f_j\}_{j \in K^{\text{fr}}}, \tilde{B})$ be an acyclic seed of untwisted affine type 
    and $\mathscr{A}(\mathcal{S})$ be the associated cluster algebra. 
    Let $Q$ be the corresponding root lattice, $W$ be the Weyl group, and 
    $c\in W$ be a Coxeter element associated with $\tilde{B}$. 
    Then, $\mathbf{d}$-vectors provide a bijection between 
    $\{\text{cluster variables}\}\setminus \{f_j\mid j \in K^{\mathrm{fr}}\}$ 
    to $\Phi_c^{\mathrm{re}}$. 
\end{Prop}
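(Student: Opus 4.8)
The plan is to realize $\mathscr{A}(\mathcal{S})$ through the additive categorification of acyclic cluster algebras and reduce the claim to the representation theory of the tame hereditary algebra $H = \C\mathcal{Q}$, where $\mathcal{Q}$ is the acyclic affine Dynkin quiver underlying the principal part $B$ of $\tilde B$. By the cluster character theory of Caldero--Chapoton and Caldero--Keller, the non-frozen cluster variables of $\mathscr{A}(\mathcal{S})$ are in bijection with the indecomposable rigid objects of the cluster category of $\mathcal{Q}$: the $n+1$ initial variables $x_i$ correspond to the shifted projectives $P_i[1]$, while every other non-frozen cluster variable corresponds to a rigid indecomposable $H$-module. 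Moreover, the $\mathbf{d}$-vector equals $-\alpha_i$ for $x_i$ (as already recorded after Definition~\ref{Def : dvector}) and the dimension vector $\underline{\dim}\,M \in Q^+$ for the variable attached to a module $M$ --- the ``denominators equal dimension vectors'' theorem, valid for acyclic seeds. Thus it suffices to show that the assignment sending a rigid indecomposable to its dimension vector, together with $P_i[1]\mapsto -\alpha_i$, is a bijection onto $\Phi_c^{\mathrm{re}}$.

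The key dictionary is that the Coxeter element $c$ associated with $\tilde B$ is precisely the Coxeter transformation of $H$ on $Q \cong K_0(H)$, encoding the Auslander--Reiten translation $\tau$ at the level of dimension vectors. This lets me sort the rigid indecomposables into the three families of Definition~\ref{Def : Phi_c}. First, the preprojective and preinjective indecomposables are rigid with dimension vectors the positive real roots in infinite $c$-orbits, and conversely every such root is the dimension vector of a unique such module (for instance the roots $n\delta-\alpha_i$ of Proposition~\ref{Prop : -alpha_i in Phi} are preprojective or preinjective). Second, the shifted projectives account for the part $-\Pi$. Third, the regular rigid indecomposables live in the finitely many non-homogeneous tubes, and their dimension vectors are exactly the finite-$c$-orbit real roots $\beta$ and $\delta-\beta$ with $\beta\in\Phi_{\mathrm{fin}}^+$, namely $\Lambda_c^{\mathrm{re}}$; here the period of $\tau$ equals the tube rank, which is the mechanism rendering the $c$-orbit finite. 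Surjectivity onto $\Phi_c^{\mathrm{re}}$ then follows by checking that these three lists exhaust the three parts of the definition.

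For injectivity I would invoke that a real Schur root determines its indecomposable module uniquely, so distinct rigid indecomposables within one family have distinct dimension vectors, and cross-family coincidences are excluded since an infinite-orbit root cannot equal a finite-orbit root and neither can be a negative simple root. The main obstacle, and the genuinely affine phenomenon, is the regular part together with the exclusion of the imaginary direction $\Z\delta$. One must verify that the finite-$c$-orbit condition selects exactly the regular real roots below $\delta$ and no others, and --- crucially --- that no cluster variable has $\mathbf{d}$-vector $\delta$. The latter holds because any indecomposable whose dimension vector is a multiple of $\delta$ lies in a homogeneous tube (or is a full quasi-cycle of quasi-length equal to the tube rank in a non-homogeneous tube) and hence carries a self-extension; being non-rigid, it does not arise from a cluster variable, so $\delta$ is automatically omitted. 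Carrying out the tube-by-tube analysis that identifies the dimension vectors of the regular rigid modules with $\Lambda_c^{\mathrm{re}}$, and thereby confirming the absence of $\delta$, is where the real work lies; the preprojective, preinjective, and shifted-projective parts are comparatively routine once the Coxeter-transformation dictionary is established.
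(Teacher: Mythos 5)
The paper does not prove this proposition at all: it is imported as-is from Reading--Stella (\cite{affine}, Theorem 1.2), whose own argument is purely combinatorial, carried out inside their almost-positive-roots model via $c$-compatibility degrees and sortable-element/Cambrian technology, uniformly for all untwisted affine types. Your route through the cluster category of the tame hereditary algebra $H=\C\mathcal{Q}$ is therefore a genuinely different proof strategy, and it is viable in the present setting (the seed is skew-symmetric, so only types $A^{(1)}, D^{(1)}, E^{(1)}$ occur): the dictionary cluster variables $\leftrightarrow$ rigid indecomposables (Buan--Marsh--Reineke--Reiten--Todorov, Caldero--Keller), the acyclic denominator theorem $\mathbf{d}=\underline{\dim}$, the Dlab--Ringel trichotomy with defect detecting infinite $c$-orbits, and quasi-length $\leq p-1$ in non-homogeneous tubes matching $\Lambda_c^{\mathrm{re}}$ are all correct ingredients --- indeed the component structure of $\Lambda_c^{\mathrm{re}}$ recorded in Propositions~\ref{Prop : D_n Lambda_c} and \ref{Prop : E_n Lambda_c} visibly mirrors the tubes --- and your mechanism for excluding $\delta$ (any indecomposable with dimension vector in $\Z\delta$ carries a self-extension, hence is not rigid) is sound. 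Two steps deserve more than a wave. First, Caldero--Keller's bijection and the denominator theorem are proved for coefficient-free acyclic cluster algebras, whereas $\mathcal{S}$ here has frozen variables with arrows into the exchangeable part, and the $\mathbf{d}$-vector of Definition~\ref{Def : dvector} is taken in that setting; you need the bridging fact that $\mathbf{d}$-vectors with respect to the $x_i$ are insensitive to geometric coefficients (e.g.\ via the Fomin--Zelevinsky separation formula together with positivity, or the specialization $f_j\mapsto 1$, which by positivity causes no cancellation of denominators). Second, the equivalences ``infinite $c$-orbit $\Leftrightarrow$ nonzero defect'' and ``finite-orbit roots occurring in $\Lambda_c^{\mathrm{re}}$ (only levels $0$ and $1$ in $\delta$) $=$ dimension vectors of rigid regular indecomposables'' require the tube-by-tube verification you yourself flag as the real work; note in particular that regular real roots of level $\geq 2$ such as $\beta+\delta$ also have finite $c$-orbit, so the cutoff at quasi-length $\leq p-1$ is exactly what keeps them out, and this must be checked rather than inherited from the finite-orbit condition alone. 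In exchange for these extra verifications, your approach buys a module-theoretic explanation of $\Phi_c^{\mathrm{re}}$ and of the prime imaginary module at $\delta$ that the combinatorial proof leaves opaque, while Reading--Stella's proof has the advantage of covering all untwisted affine types (including non-simply-laced principal parts) without categorification.
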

For $\alpha \in \Phi_c^{\text{re}}$, 
the cluster variable whose $\mathbf{d}$-vector is $\alpha$ is denoted by $x[\alpha]$. 
\begin{Def}[{\cite{affine}}]\label{Def : c-compatible}
    Let $\Phi$ be a root system of untwisted affine type and $c$ be a Coxeter element. 
    Two distinct roots $\alpha, \beta \in \Phi_c$ are said to be \emph{$c$-compatible} 
    if they have the following properties : 
    
    \begin{itemize}
        \item If $\alpha, \beta \in \Phi_c^{\text{re}}$, 
        then the cluster variables $x[\alpha], x[\beta]$ are 
        in the same cluster. 
        \item If one of the roots is $\delta$, 
        then the other belongs to  $\Lambda_c^{\text{re}}$. 
    \end{itemize}
\end{Def}
\begin{Rem}
    In \cite{affine}, the authors define $c$-compatibility 
    by using $c$-compatibility degree, and 
    Definition~\ref{Def : c-compatible} is derived as a consequence. 
    See \cite[\S 4, 5]{affine}. 
\end{Rem}

\begin{Prop}[{\cite[Theorem 5.5]{affine}}]\label{Prop : pairwise compatible}
    Let $\alpha_1, \ldots, \alpha_k\in \Phi_c^{\text{re}}$. 
    If they are mutually $c$-compatible, 
    then cluster variables 
    $x[\alpha_1], \ldots, x[\alpha_k]$ are in the same cluster. 
\end{Prop}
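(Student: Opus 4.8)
The plan is to prove the statement as the \emph{flag} (clique) property of the cluster complex of $\mathscr{A}(\mathcal{S})$ in affine type: a finite subset of $\Phi_c^{\mathrm{re}}$ spans a face of the cluster complex exactly when its members are pairwise $c$-compatible. First I would use the $\mathbf{d}$-vector bijection of Proposition~\ref{Prop : dvectorbij} to pass from cluster variables to roots, so that the (non-frozen parts of the) clusters of $\mathscr{A}(\mathcal{S})$ correspond to distinguished $(n+1)$-element subsets of $\Phi_c^{\mathrm{re}}$. The assertion then becomes that any pairwise $c$-compatible family $\{\alpha_1, \ldots, \alpha_k\}$ is contained in one such subset. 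By Definition~\ref{Def : c-compatible} the hypothesis says precisely that each pair $x[\alpha_i], x[\alpha_j]$ occurs together in some cluster, so the real content is to upgrade this two-at-a-time information to a single common cluster.

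The engine for this is the $c$-compatibility degree $(\cdot \parallel \cdot)_c$ from \cite{affine}, a $\Z_{\geq 0}$-valued function on $\Phi_c \times \Phi_c$ that vanishes exactly on equal or $c$-compatible pairs; this is the function from which Definition~\ref{Def : c-compatible} is extracted, as noted in the Remark following it. I would establish the following facts, in the spirit of the finite-type argument of \cite{FZ03}: (i) the degree behaves predictably under the action of $c$ on $\Phi_c$, so that comparison of a general pair can be reduced to comparisons involving the \emph{initial} roots $-\Pi$ together with the families $n\delta - \alpha_i$ provided by Proposition~\ref{Prop :  -alpha_i in Phi}; (ii) for these initial roots, pairwise vanishing of the degree is read off directly from the acyclic initial exchange matrix, where the flag property is transparent; (iii) the maximal pairwise $c$-compatible sets of real roots have exactly $n+1$ elements and, under Proposition~\ref{Prop : dvectorbij}, coincide with the $\mathbf{d}$-vector images of the clusters. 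Granting (i)--(iii), any pairwise $c$-compatible family extends to a maximal one and hence lies in a single cluster, which is the Proposition.

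The main obstacle is the affine-specific combinatorics hidden in (i): unlike the finite-type case, where every almost positive root lies in the $c$-orbit of a negative simple root, $\Phi_c^{\mathrm{re}}$ contains infinitely many real roots sitting in \emph{infinite} $c$-orbits, so the reduction to the initial region is far more delicate. The hard step is to show that maximal pairwise $c$-compatible families are exactly the $c$-clusters, i.e.\ that pairwise compatibility, even across these infinite orbits, can never be locally consistent yet globally unrealizable by a single cluster; in other words that a pairwise-compatible family cannot get stuck short of a full cluster. Establishing this non-obstruction is the combinatorial core carried out in \cite[\S 5]{affine}. Since the present statement concerns only $\Phi_c^{\mathrm{re}}$ and excludes the null root $\delta$, I can sidestep the extra bookkeeping that $\delta$ forces in Definition~\ref{Def : c-compatible}, but the control of the infinite orbits is unavoidable and constitutes the genuine work.
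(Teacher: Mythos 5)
The paper gives no proof of this Proposition at all: it is imported verbatim from Reading--Stella, cited as \cite[Theorem 5.5]{affine}, so the only ``proof'' on the paper's side is that citation. Measured against a genuine proof, your proposal does not supply one either: items (i)--(iii) taken together \emph{are} the content of the cited theorem, and you explicitly hand the decisive step --- that a pairwise $c$-compatible family can always be completed to a full cluster, with no local-but-not-global obstruction --- back to \cite[\S 5]{affine}. Since that is precisely the statement to be established, the argument as written is circular: it locates the proof rather than giving one, and nothing in the sketch reduces the flag property to something independently verified.

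There is also a concrete inaccuracy in your diagnosis of where the affine difficulty sits. The rotation reduction in (i) is exactly what \emph{does} work on the infinite $c$-orbits: those are the transjective roots, and repeated application of $c^{\pm1}$ carries them into the initial region much as in finite type. What rotation can never reach is $\Lambda_c^{\mathrm{re}}$: by Definition~\ref{Def : Phi_c} these roots have \emph{finite} $c$-orbits, and such orbits never meet $-\Pi$. Compatibility inside this ``tube'' part is the genuinely new affine phenomenon --- which is why explicit component descriptions of $\Lambda_c^{\mathrm{re}}$, as in Propositions~\ref{Prop : D_n Lambda_c} and~\ref{Prop : E_n Lambda_c}, are needed and are exactly what the present paper uses downstream --- and your plan, which proposes to reduce everything to $-\Pi$ together with the roots $n\delta-\alpha_i$ of Proposition~\ref{Prop :  -alpha_i in Phi}, has no mechanism for these finite orbits. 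So even granting (ii), the reduction in (i) fails on $\Lambda_c^{\mathrm{re}}$, and the count in (iii) (maximal pairwise compatible sets of real roots have exactly $n+1$ elements) itself depends on that tube analysis rather than following from the acyclic initial matrix alone.
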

\begin{Def}[{\cite[Definition 6.1]{affine}}]\label{Def : cluster expansion}
    Let $\Phi$ be a root system of untwisted affine type and $c$ be a Coxeter element. 
    For an element $\gamma$ of the root lattice $Q$, 
    a \emph{$c$-cluster expansion} of $\gamma$ is an expression 
    \[
    \gamma = \sum_{\alpha \in \Phi_c}m_{\alpha}\alpha
    \]
    where the $m_{\alpha}$ are 
    nonnegative integers with $m_{\alpha}m_{\beta} = 0$ 
    whenever $\alpha$ and $\beta$ are distinct and not $c$-compatible. 
\end{Def}
\begin{Prop}[{\cite[Theorem 6.2]{affine}}]\label{Prop : cluster expansion}
    Let $\Phi$ be a root system of untwisted affine type and $c$ be a Coxeter element. 
    Then, every element $\gamma$ in $Q$ admits a unique $c$-cluster expansion. 
\end{Prop}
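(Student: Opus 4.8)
The plan is to recast the statement as the assertion that a certain collection of cones forms a complete, unimodular, simplicial fan in the real vector space $Q_{\mathbb{R}} \coloneq Q \otimes_{\Z} \mathbb{R}$, and then to read off existence and uniqueness of the expansion from the facts that every lattice point lies in a unique minimal cone of such a fan and has unique nonnegative integer coordinates in the rays of that cone. Concretely, to each maximal $c$-compatible subset $S \subset \Phi_c$ I associate the cone $\sigma_S \coloneq \sum_{\alpha \in S} \mathbb{R}_{\geq 0}\alpha$, and I claim that $\mathcal{F}_c \coloneq \{\sigma_S\}$ (together with its faces) is such a fan. Granting this, a $c$-cluster expansion of $\gamma$ in the sense of Definition~\ref{Def : cluster expansion} is exactly a way of writing $\gamma$ as a nonnegative integer combination of the rays of some $\sigma_S$: the support condition forces the roots occurring with positive coefficient to be pairwise $c$-compatible, hence to be contained in a single maximal compatible set and to span one cone of $\mathcal{F}_c$.

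For uniqueness, the key point is that every $c$-compatible set is linearly independent, so that each $\sigma_S$ is simplicial. When $S \subset \Phi_c^{\mathrm{re}}$, the bijection of Proposition~\ref{Prop : dvectorbij} together with Proposition~\ref{Prop : pairwise compatible} shows that $S$ consists of the $\mathbf{d}$-vectors of cluster variables lying in a single cluster, and I would prove that the $\mathbf{d}$-vectors of any one cluster form a $\Z$-basis of $Q$ by induction along mutations: they are $\{-\alpha_0, \ldots, -\alpha_n\}$ for the acyclic initial seed, and one checks via the $\mathbf{d}$-vector mutation rule that unimodularity is preserved. When $\delta \in S$, Definition~\ref{Def : c-compatible} forces $S \setminus \{\delta\} \subset \Lambda_c^{\mathrm{re}}$, and one verifies separately that $\delta$ together with a pairwise $c$-compatible family of regular roots again extends to a $\Z$-basis. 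Given simpliciality and the fan property that any two cones meet in a common face, a point $\gamma \in Q_{\mathbb{R}}$ determines its carrying minimal cone uniquely, and unimodularity makes the coordinates unique integers; this yields uniqueness, and for $\gamma \in Q$ the coefficients are automatically in $\Z_{\geq 0}$.

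For existence I must show that $\mathcal{F}_c$ is complete, i.e. that the cones $\sigma_S$ cover all of $Q_{\mathbb{R}}$. The cones indexed by clusters already tile $Q_{\mathbb{R}}$ away from a neighborhood of the ray $\mathbb{R}_{\geq 0}\delta$, and the essential affine feature is that this ray is a genuine accumulation locus of the real-root cones that is not itself covered by them. Here I would use the Coxeter dynamics: the positive real roots of infinite $c$-orbit are those swept out by the powers $c^{\pm 1}$, and under iteration they approach the direction of $\delta$ from either side (the roots $n\delta - \alpha_i \in \Phi_c^{\mathrm{re}}$ of Proposition~\ref{Prop :  -alpha_i in Phi} are precisely of the type realizing this approach), while the finite-orbit roots organize into the tubes of the associated affine quiver, whose quasi-simple roots lie in $\Lambda_c^{\mathrm{re}}$ and sum to $\delta$. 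The main obstacle is to prove that the cones built from $\delta$ and these regular roots fill the leftover region around $\mathbb{R}_{\geq 0}\delta$ with neither gap nor overlap beyond shared faces; this is the genuinely new affine phenomenon, and settling it requires the explicit description of $\Phi_c$ near $\delta$ together with the representation-theoretic (tube) structure of the affine quiver. Once completeness near $\delta$ is established, existence follows by locating $\gamma$ in whichever cone $\sigma_S$ contains it and expanding it in that unimodular basis.
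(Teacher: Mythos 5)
First, a point of reference: the paper does not prove this proposition at all. It is imported from Reading--Stella (\cite[Theorem 6.2]{affine}), and the only original content here is the remark following the statement, observing that the integer-lattice version with $\Z_{\geq 0}$-coefficients follows from the proof given in \cite{affine}, which is stated over $Q\otimes_{\Z}\mathbb{R}$. So your proposal should be measured against Reading--Stella's argument, and in outline it reconstructs it faithfully: they too prove that the cones spanned by $c$-compatible subsets of $\Phi_c$ form a complete simplicial fan whose maximal cones are spanned by $\Z$-bases of $Q$, and read the expansion off as cone coordinates. The problem is that the two load-bearing steps of your plan are not actually established, and one of them would fail as stated.

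The step that fails is the unimodularity induction along mutations. The $\mathbf{d}$-vector recurrence (Proposition~\ref{Prop : dvector mutation}) reads $d_j(\mu_k(y_k)) = -d_j(y_k) + \max\bigl(\sum_{b_{ik}>0}b_{ik}d_j(y_i), \sum_{b_{ik}<0}-b_{ik}d_j(y_i)\bigr)$, where the maximum is taken \emph{coordinatewise} in $j$. When the max switches sides between coordinates, the mutated vector is not of the form $-\mathbf{d}(y_k)$ plus a single integral combination of the other $\mathbf{d}$-vectors of the cluster, so the elementary column-operation argument that would preserve the determinant breaks down. This is exactly the notorious subtlety of denominator vectors (as opposed to $c$- or $g$-vectors, for which sign-coherence is available), and it is why Reading--Stella prove that maximal compatible sets are $\Z$-bases directly, via their compatibility-degree machinery and the explicit description of $\Lambda_c^{\mathrm{re}}$, rather than by mutation induction. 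Your parallel claim for cones containing $\delta$ (``one verifies separately'') is likewise asserted, not proved.

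The second gap you name yourself: completeness of the fan near the ray $\mathbb{R}_{\geq 0}\delta$ is deferred to ``the explicit description of $\Phi_c$ near $\delta$ together with the tube structure,'' but that \emph{is} the substance of Reading--Stella's proof, not a routine verification; without it you have neither existence (coverage of $Q_{\mathbb{R}}$) nor, strictly speaking, the fan property needed for uniqueness on cones containing $\delta$. Note also that the flag property you invoke --- pairwise compatibility implies joint membership in a cone --- is only quoted in this paper for real roots (Proposition~\ref{Prop : pairwise compatible}); for sets containing $\delta$, Definition~\ref{Def : c-compatible} only places the remaining roots in $\Lambda_c^{\mathrm{re}}$, and the fact that pairwise-compatible subsets of $\Lambda_c^{\mathrm{re}}\cup\{\delta\}$ span faces rests on the component decomposition of $\Lambda_c^{\mathrm{re}}$ (compare Propositions~\ref{Prop : D_n Lambda_c} and~\ref{Prop : E_n Lambda_c}). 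In short: your write-up is an accurate roadmap of the external proof being cited, but as a self-contained proof it has a broken step (determinant preservation under $\mathbf{d}$-vector mutation) and an open one (completeness at $\delta$).
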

For a $c$-cluster expansion $\gamma = \sum_{\alpha \in \Phi_c}m_{\alpha}\alpha$, 
we call it a \emph{real $c$-cluster expansion} if $m_{\delta}=0$. 
\begin{Rem}
    In \cite{affine}, the cluster expansion is defined 
    over $Q \otimes_{\mathbb{Z}} \mathbb{R}$ instead of $Q$, 
    allowing the coefficients $m_\alpha$ to range 
    over the nonnegative real numbers rather than nonnegative integers. 
    However, from the proof given in \cite{affine}, 
    one can see that Proposition~\ref{Prop : cluster expansion} still holds 
    under the setting of Definition~\ref{Def : cluster expansion}. 
\end{Rem}
\begin{Prop}[{\cite[Proposition 6.8, Proposition 6.2]{affine}}]
\label{Prop : imaginary cluster}
    Let $\gamma \in Q$ and $\gamma = \sum_{\alpha \in \Phi_c}m_{\alpha}\alpha$ 
    be the cluster expansion. 
    If $\gamma \in \sum_{\alpha \in \Lambda_c}\Z_{\geq 0}\alpha$, 
    then $m_{\alpha} = 0$ for any $\alpha \not\in \Lambda_c$. 
\end{Prop}

\begin{Prop}\label{Prop : m_delta>0}
    Let $\gamma \in Q$ and $\gamma = \sum_{\alpha \in \Phi_c}m_{\alpha}\alpha$ 
    be the cluster expansion. 
    Then, $m_{\delta} > 0$ if and only if 
    $\gamma \in \delta + \sum_{\alpha \in \Lambda_c}\Z_{\geq 0}\alpha$. 
\end{Prop}
\begin{proof}
    First, we assume $m_{\delta} > 0$. 
    By the definition of $c$-compatibility, 
    $m_{\alpha} > 0$ only if $\alpha \in \Lambda_c$. 
    Since $m_{\delta} > 0$, we obtain that 
    $\gamma -\delta \in \sum_{\alpha \in \Lambda_c}\Z_{\geq 0}\alpha$. 

    Next, we assume $\gamma \in \delta + \sum_{\alpha \in \Lambda_c}\Z_{\geq 0}\alpha$. 
    Let $\gamma - \delta = \sum_{\alpha \in \Phi_c}m^{\prime}_{\alpha}\alpha$ be the 
    cluster expansion of $\gamma - \delta$. 
    By Proposition~\ref{Prop : imaginary cluster}, 
    it can be written as 
    $\gamma - \delta = \sum_{\alpha \in \Lambda_c}m^{\prime}_{\alpha}\alpha$. 
    Since each element of $\Lambda_c\setminus\{\delta\}$ is $c$-compatible with 
    $\delta$, 
    $\gamma= (m^{\prime}_{\delta} + 1)\delta + 
    \sum_{\alpha \in \Lambda_c^{\mathrm{re}}}m^{\prime}_{\alpha}\alpha$ 
    is the cluster expansion of $\gamma$. 
\end{proof}

\begin{Prop}[{\cite[\S 7]{ClusterIV}}]\label{Prop : dvector mutation}
    Let $\mathcal{S}_0 = (\{x_i\}_{i \in K^{\text{ex}}}\cup \{f_j\}_{j \in K^{\text{fr}}}, \tilde{B}_0)$ be an acyclic seed of untwisted affine type 
    and $\mathscr{A}(\mathcal{S}_0)$ be the associated cluster algebra. 
    For a cluster variable $y \in \mathscr{A}(\mathcal{S}_0)$, 
    we write $\mathbf{d}(y) = \sum_{i=0}^{n}d_i(y)\alpha_i$. 
    For a seed $(\{y_i\}_{i \in K^{\text{ex}}}\cup \{f_j\}_{j \in K^{\text{fr}}}, 
    \tilde{B} = (b_{ij})_{(i, j) \in K\times K^{\mathrm{ex}}}$ and 
    $0 \leq j, k, l \leq n$, 
    \begin{align*}
    d_j(\mu_k(y_l)) = 
    \begin{cases}
    d_j(y_l) &\text{ if } k\neq l, \\
    -d_j(y_k) + \mathrm{max}(\sum_{i, b_{ik}>0}b_{ik}d_j(y_i), \sum_{i, b_{ik}<0}-b_{ik}d_j(y_i)
    &\text{ if } k=l. 
    \end{cases}
    \end{align*}
\end{Prop}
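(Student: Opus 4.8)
The plan is to reduce the statement to the single exchange relation at $k$ and then track, for each initial direction $j$, the lowest power of the initial variable $x_j$ occurring in the Laurent expansion. Throughout, all $\mathbf{d}$-vectors are taken with respect to the fixed initial seed $\mathcal{S}_0$. The case $k\neq l$ requires nothing: mutation at $k$ changes only the cluster variable in direction $k$, so $\mu_k(y_l)=y_l$ and therefore $d_j(\mu_k(y_l))=d_j(y_l)$ for every $j$. Thus I would concentrate on the case $k=l$.

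For $k=l$ I would start from the exchange relation in the seed $(\{y_i\}_{i\in K^{\mathrm{ex}}}\cup\{f_j\}_{j\in K^{\mathrm{fr}}},\tilde{B})$, namely
\[
\mu_k(y_k)\,y_k \;=\; \prod_{i:\,b_{ik}>0} y_i^{\,b_{ik}} \;+\; \prod_{i:\,b_{ik}<0} y_i^{\,-b_{ik}} \;=:\; M_+ + M_-.
\]
By Proposition~\ref{Prop : Laurent} every cluster variable is a Laurent polynomial in the $x_i$ with coefficients in $\mathbb{Z}[f_j]$. For each $j$ let $v_j(\cdot)$ be the order of the lowest power of $x_j$; this is additive on products and satisfies $v_j(f+g)\geq\min(v_j(f),v_j(g))$. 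By Definition~\ref{Def : dvector} one has $d_j(z)=-v_j(z)$ for each cluster variable $z$, and I extend $d_j:=-v_j$ to arbitrary Laurent polynomials. Applying $d_j$ to the displayed identity and using additivity on the two monomials gives $d_j(M_+)=\sum_{i:\,b_{ik}>0}b_{ik}\,d_j(y_i)$ and $d_j(M_-)=\sum_{i:\,b_{ik}<0}(-b_{ik})\,d_j(y_i)$, together with
\[
d_j(\mu_k(y_k))+d_j(y_k) \;=\; d_j(M_+ + M_-) \;\leq\; \max\bigl(d_j(M_+),\,d_j(M_-)\bigr).
\]

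The only nontrivial point is to promote this inequality to an equality, that is, to show that the lowest-order-in-$x_j$ terms of $M_+$ and $M_-$ do not cancel. I would obtain this from positivity: the principal part of $\tilde{B}$ is of untwisted affine, hence skew-symmetric, type, so by the positivity theorem for skew-symmetric cluster algebras every cluster variable expands with nonnegative coefficients. Then $M_+$ and $M_-$ are products of such positive Laurent polynomials and are themselves positive, so their lowest-$x_j$-order coefficients are nonzero polynomials with nonnegative coefficients. When $d_j(M_+)=d_j(M_-)$ the coefficient of the lowest power of $x_j$ in $M_+ + M_-$ is the sum of these two, which is again nonzero; when $d_j(M_+)\neq d_j(M_-)$ the term of smaller $v_j$ survives untouched. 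In either case $d_j(M_+ + M_-)=\max(d_j(M_+),d_j(M_-))$, and inserting this into the previous display produces exactly the claimed recursion. The main obstacle is precisely this no-cancellation step; the route taken in \cite[\S7]{ClusterIV}, which avoids positivity, is instead to specialize to principal coefficients and apply the separation-of-additions formula, using that each $F$-polynomial has constant term $1$ and is divisible by no $y_i$ to force the two leading terms to carry distinct coefficient monomials and so not to cancel.
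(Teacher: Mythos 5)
Your argument is correct, but note first that the paper contains no proof of this proposition to compare against: it is imported verbatim from \cite[\S 7]{ClusterIV}, where the recurrence is derived modulo a no-cancellation property that, in the generality stated there, rested on conjectures (positivity of Laurent expansions, properties of $F$-polynomials) that have since been established in the skew-symmetric case. So the right question is whether your blind proof is sound, and it is. The case $k\neq l$ is indeed vacuous since mutation fixes the other cluster variables. For $k=l$, your bookkeeping is right: by Proposition~\ref{Prop : Laurent} and Definition~\ref{Def : dvector} one has $d_j=-v_j$ on nonzero Laurent polynomials; $v_j$ is additive on products because the coefficient ring $\Z[x_i^{\pm 1}\ (i\neq j), f_j]$ is a domain; frozen variables have $v_j=0$, so the frozen factors of $M_\pm$ drop out consistently with the statement's sums over all $i\in K$; and the whole claim reduces to $v_j(M_+ + M_-)=\min\bigl(v_j(M_+),v_j(M_-)\bigr)$. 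You correctly isolate this no-cancellation step as the crux, and the appeal to positivity is legitimate: the principal part of every seed here is skew-symmetric by Definition~\ref{Def : seed}, and the Lee--Schiffler positivity theorem (extended to geometric coefficients, so that Laurent coefficients lie in $\Z_{\geq 0}[f_j]$) applies to any cluster variable with respect to any seed, which is exactly what you need since the $y_i$ live in an arbitrary seed while $v_j$ is taken in the initial one. Two remarks on what your route buys. First, your proof never uses acyclicity or affine type, so it actually establishes the recurrence for every skew-symmetric cluster algebra of geometric type --- strictly more than the proposition asserts; the acyclicity hypothesis matters in the paper only for the surrounding root-theoretic setup ($\Phi_c^{\mathrm{re}}$ and the bijection of Proposition~\ref{Prop : dvectorbij}), not for this recursion. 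Second, in the categorified situations where the paper actually applies the proposition, positivity is available internally: every cluster variable is $\iota([L(m)])$ for a real simple module $L(m)$, so Lemma~\ref{Lem : positivity} already yields nonnegative Laurent coefficients, and using it instead of Lee--Schiffler would have made your proof self-contained relative to the paper (at the cost of covering only the categorified cluster algebras). One caution about your closing sentence: the route you attribute to \cite[\S 7]{ClusterIV} (principal coefficients, separation of additions, $F$-polynomials with constant term $1$) invokes facts that were themselves conjectural in that reference and were proved later, for the skew-symmetric case, by Derksen--Weyman--Zelevinsky; so some input beyond \cite{ClusterIV} is unavoidable on any route, and positivity is arguably the cleanest choice.
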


In the following sections, we consider the setting that 
a quiver $B$ is a Dynkin quiver of type $D_n^{(1)}, E_n^{(1)}$ and 
of sink-source type i.e. each node of the quiver is a sink or a source. 
For a Dynkin diagram of these types, 
there are two choices of the orientation of the sink-source quiver. 
The corresponding Coxeter elements are inverses of each other. 
Since we have $\Lambda_c^{\text{re}} = \Lambda_{c^{-1}}^{\text{re}}$ for a Coxeter element $c$, 
the set $\Lambda_c^{\mathrm{re}}$ is independent of the choice of the orientation 
of the sink-source quiver.
We give the explicit description of $\Lambda_{c}^{\text{re}}$. 

We fix the label of simple roots of $D_n^{(1)}\ (n\geq 4)$ as 

\begin{tikzpicture}[every node/.style={inner sep=1.5pt}]
  \node (1) at (-30, 0){1};
  \node (3) [below right=of 1] {3};
  \node (2) [below left=of 3] {2};
  \node (4) [right=of 3] {4};
  \node (5) [right=of 4] {$\cdots$};
  \node (6) [right=of 5] {$n\!-\!1$};
  \node (7) [above right=of 6]{$n$};
  \node (8) [below right=of 6]{$0$};

  \draw (1)--(3)--(4)--(5)--(6)--(7);
  \draw (2)--(3);
  \draw (6)--(8);
\end{tikzpicture}
. 

Then $\delta = \alpha_1 +\alpha_2 +2\sum_{i=3}^{n-1}\alpha_i +\alpha_n+\alpha_0$. 

\begin{Prop}[{\cite{affine}}]\label{Prop : D_n Lambda_c}
    Let $B$ be an acyclic Dynkin quiver of type $D_n^{(1)}$ 
    and assume that $B$ is of sink-source type. 
    Then, $\Lambda_{c}^{\text{re}}$ can be described as a disjoint union of 
    three components 
    \begin{align*}
        I_1 &= \{\sum_{i=k}^{l}\beta_i, \delta-\sum_{i=k}^{l}\beta_i\mid 
        1 \leq k\leq l\leq n-3\}, \\
        I_2 &= \{\beta_{n-2}, \delta-\beta_{n-2}\}, \\
        I_3 &= \{\beta_{n-1}, \delta-\beta_{n-1}\}, 
    \end{align*}
    where 
    \begin{align*}
        \beta_i &= \alpha_{n-2i}+\alpha_{n+1-2i} \quad (1 \leq i<[(n-1)/2]), \\
        \beta_{[(n-1)/2]} &= \alpha_1+\alpha_2+\alpha_3, \\
        \beta_j &= \alpha_{2j+3-n}+\alpha_{2j+4-n}\quad ([(n-1)/2]<j\leq n-3), \\
        \beta_{n-2} &= \alpha_1+\alpha_3+\alpha_4+\cdots +\alpha_{n}, \\
        \beta_{n-1} &= \alpha_2+\alpha_3+\alpha_4+\cdots +\alpha_{n}. \\
    \end{align*}
    If $\gamma_1, \gamma_2\in \Lambda_{c}^{\text{re}}$ are in distinct components, 
    then they are $c$-compatible. 
\end{Prop}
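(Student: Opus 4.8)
The plan is to reduce the determination of $\Lambda_c^{\mathrm{re}}$ to a computation of the \emph{defect} of finite positive roots, and then to recognise the resulting set through the tubular structure of the associated tame hereditary algebra. Since $B$ is of sink-source type the Dynkin diagram is bipartite, so I may factor $c$ as the product of the simple reflections at the sinks followed by those at the sources; by the orientation independence recorded above ($\Lambda_c^{\mathrm{re}} = \Lambda_{c^{-1}}^{\mathrm{re}}$) it suffices to work with this one convenient factorisation. First I would use the standard semidirect-product decomposition $W \cong W_{\mathrm{fin}} \ltimes T$ of the affine Weyl group into the finite Weyl group and the translation lattice $T$: the image $\bar c$ of $c$ in $W_{\mathrm{fin}}$ is the finite Coxeter element, of some order $h$, and hence for any $\beta \in \Phi_{\mathrm{fin}}^+$ one has $c^{h}\beta = \beta + \partial(\beta)\,\delta$ for a linear functional $\partial$ (the defect), since $c^h - \mathrm{id}$ maps the root lattice into the radical $\Z\delta$. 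Therefore the $c$-orbit of $\beta$ is finite if and only if $\partial(\beta) = 0$, and by Definition~\ref{Def : Phi_c},
\[
\Lambda_c^{\mathrm{re}} = \{\beta,\ \delta - \beta \mid \beta \in \Phi_{\mathrm{fin}}^+,\ \partial(\beta) = 0\}.
\]

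Next I would compute the defect-zero locus $\{\beta \in \Phi_{\mathrm{fin}}^+ : \partial(\beta) = 0\}$ for the diagram fixed above, where $\delta = \alpha_1 + \alpha_2 + 2\sum_{i=3}^{n-1}\alpha_i + \alpha_n + \alpha_0$. These regular roots organise into the three tubes of $\widetilde{D}_n$, whose tubular type is $(2,2,n-2)$; the two rank-$2$ tubes produce the singleton families $I_2$ and $I_3$, and the rank-$(n-2)$ tube produces $I_1$. Concretely I would exhibit the quasi-simple roots of each tube. In the large tube these are $n-2$ positive roots summing to $\delta$, exactly one of which (the one involving $\alpha_0$) is non-finite, the remaining $n-3$ being $\beta_1,\ldots,\beta_{n-3}$; the finite regular roots of this tube are then precisely the consecutive arcs $\sum_{i=k}^{l}\beta_i$ avoiding the $\alpha_0$-node, and their $\delta$-complements (the complementary arcs, lying in the same tube) close up $I_1$. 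For the two rank-$2$ tubes the finite quasi-simples are $\beta_{n-2}$ and $\beta_{n-1}$, with $\delta-\beta_{n-2}$ and $\delta-\beta_{n-1}$ as partners. Matching this with the displayed formulas is then a direct verification that each $\beta_i$ and each consecutive sum is a genuine positive root with $\partial = 0$, that the three families are disjoint, and that together they exhaust the defect-zero locus; I would run the cases $n=4$ (type $(2,2,2)$) and $n=5$ (type $(2,2,3)$) as consistency checks on the interleaved definition of the $\beta_i$.

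Finally, for the compatibility assertion I would observe that each component $I_1,I_2,I_3$ is exactly one tube, so both members of any pair $\{\beta,\delta-\beta\}$ lie in the same component, and that two roots in \emph{distinct} components correspond to indecomposable regular modules in distinct tubes. Since modules in different tubes admit no nonzero homomorphisms or extensions in either direction, the $c$-compatibility degree of \cite{affine} between such roots vanishes; by Definition~\ref{Def : c-compatible} and the Remark following it this is precisely $c$-compatibility, so the associated cluster variables share a common cluster. The main obstacle I anticipate is the explicit identification of the quasi-simple roots and the verification that the listed $\beta_i$ and all their consecutive sums are honest positive roots of vanishing defect: this is where the double-fork shape of the $D_n^{(1)}$ diagram and the parity-dependent interleaving in the definition of the $\beta_i$ must be treated carefully. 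Everything else reduces to bookkeeping with $\delta$ together with the bijection and compatibility-degree computations of \cite[\S4,\,5]{affine}.
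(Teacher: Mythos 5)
The paper does not actually prove this proposition: it is stated as a quotation from Reading--Stella \cite{affine} (cf.\ also the facts imported from \cite{coxeter}), so there is no internal proof to compare against. Your reconstruction follows what is essentially the standard route underlying the cited result, and its skeleton is sound: since the image $\bar c$ of $c$ in $W_{\mathrm{fin}}\cong W/T$ has some finite order $h$, the power $c^h$ is a translation, translations act on level-zero weights by $\lambda \mapsto \lambda - (\lambda,\mu)\delta$, so $c^h - \mathrm{id}$ maps $Q$ into $\Z\delta$ and the finite-orbit condition of Definition~\ref{Def : Phi_c} becomes vanishing of a linear defect functional; the defect-zero positive roots are the regular ones, organized into tubes of ranks $(2,2,n-2)$, with the consecutive-arc structure producing $I_1$ and the two rank-$2$ tubes producing $I_2$, $I_3$; and orthogonality (equivalently Hom/Ext-vanishing) across distinct tubes forces the $c$-compatibility degree of \cite[\S 4, 5]{affine} to vanish, giving the last assertion. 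Note that since both $\gamma_1,\gamma_2$ lie in $\Lambda_c^{\mathrm{re}}$, only real--real pairs occur, so your aside about pairs involving $\delta$ is superfluous.

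Two caveats, neither fatal but both worth fixing. First, your claim that $\bar c$ \emph{is} the finite Coxeter element is false: the image of $s_0$ in $W_{\mathrm{fin}}$ is the reflection $s_\theta$ in the highest root, so $\bar c$ is a product of $n+1$ reflections of a rank-$n$ group — in type $A_1^{(1)}$ it is even the identity. Your argument only uses that $\bar c$ has finite order, so the conclusion stands, but the sentence should be corrected. Second, the genuine content of the displayed formulas — that for the bipartite orientation the quasi-simples of the rank-$(n-2)$ tube are exactly $\beta_1,\dots,\beta_{n-3}$ together with the unique non-finite quasi-simple $\delta - \sum_{i=1}^{n-3}\beta_i$ (supported on the fork at node $n-1$, e.g.\ $\alpha_0+\alpha_{n-1}+\alpha_n$ for $n=5,6$), and that $\beta_{n-2},\beta_{n-1}$ are the finite quasi-simples of the rank-$2$ tubes — is deferred as ``direct verification'' rather than carried out; this is precisely where the parity-dependent interleaving in the definition of the $\beta_i$ lives, so as written your text is a correct reduction to that computation rather than a complete proof. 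Spot checks (e.g.\ $n=4,5,6$) confirm the reduction is consistent, and the remaining verification is finite bookkeeping of the kind done in \cite{affine}.
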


In this paper, we fix the label of simple roots of $E_6^{(1)}$ as 

\begin{tikzpicture}[every node/.style={inner sep=1.5pt}]
  \node (1) at (-30, 0){1};
  \node (2) [right=of 1] {2};
  \node (3) [right=of 2] {3};
  \node (4) [above=of 3] {4};
  \node (5) [right=of 3] {5};
  \node (6) [right=of 5] {6};
  \node (7) [above=of 4]{0};

  \draw (1)--(2)--(3)--(5)--(6);
  \draw (3)--(4)--(7);
\end{tikzpicture}
, 

$E_7^{(1)}$ as 

\begin{tikzpicture}[every node/.style={inner sep=1.5pt}]
  \node (0) at (-30, 0){0};
  \node (1) [right=of 0] {1};
  \node (2) [right=of 1] {2};
  \node (3) [right=of 2] {3};
  \node (4) [above=of 3] {4};
  \node (5) [right=of 3] {5};
  \node (6) [right=of 5] {6};
  \node (7) [right=of 6] {7};

  \draw (0)--(1)--(2)--(3)--(5)--(6)--(7);
  \draw (3)--(4);
\end{tikzpicture}
, 

$E_8^{(1)}$ as 

\begin{tikzpicture}[every node/.style={inner sep=1.5pt}]
  \node (1) at (-30, 0){1};
  \node (2) [right=of 1] {2};
  \node (3) [right=of 2] {3};
  \node (4) [above=of 3] {4};
  \node (5) [right=of 3] {5};
  \node (6) [right=of 5] {6};
  \node (7) [right=of 6] {7};
  \node (8) [right=of 7] {8};
  \node (0) [right=of 8] {0};

  \draw (1)--(2)--(3)--(5)--(6)--(7)--(8)--(0);
  \draw (3)--(4);
\end{tikzpicture}
. 

\begin{Prop}[{\cite{affine}}]\label{Prop : E_n Lambda_c}
    Let $B$ be an acyclic Dynkin quiver of type $E_n^{(1)}$ 
    and assume that $B$ is of sink-source type. 
        Then, $\Lambda_{c}^{\text{re}}$ can be described as a disjoint union of 
    three components 
    \begin{align*}
        I_1 &= \{\sum_{i=k}^{l}\beta_i, \delta-\sum_{i=k}^{l}\beta_i\mid 
        1 \leq k\leq l\leq n-4\}, \\
        I_2 &= \{\sum_{i=k}^{l}\beta_i, \delta-\sum_{i=k}^{l}\beta_i\mid 
        n-3 \leq k\leq l\leq n-2\}, \\
        I_3 &= \{\beta_{n-1}, \delta-\beta_{n-1}\}, 
    \end{align*}
    where 
    \begin{itemize}
        \item $n=6$
    
    \begin{align*}
        \beta_1 &= \alpha_2+\alpha_3+\alpha_5+\alpha_6, \\
        \beta_2 &= \alpha_1+\alpha_2+\alpha_3+\alpha_4, \\
        \beta_3 &= \alpha_1+\alpha_2+\alpha_3+\alpha_5, \\
        \beta_4 &= \alpha_3+\alpha_4+\alpha_5+\alpha_6, \\
        \beta_5 &= \alpha_2+2\alpha_3+\alpha_4+\alpha_5, \\
    \end{align*}

    \item $n=7$

    \begin{align*}
        \beta_1 &= \alpha_1+\alpha_2+\alpha_3+\alpha_5, \\
        \beta_2 &= \alpha_3+\alpha_4+\alpha_5+\alpha_6+\alpha_7, \\
        \beta_3 &= \alpha_2+\alpha_3+\alpha_5+\alpha_6, \\
        \beta_4 &= \sum_{i=1}^6\alpha_i, \\
        \beta_5 &= \alpha_2+2\alpha_3+\alpha_4+\alpha_5, \\
        \beta_6 &= 
        \alpha_1+2\alpha_2+2\alpha_3+\alpha_4+\alpha_5+\alpha_6+\alpha_7, \\
    \end{align*}

    \item $n=8$

    \begin{align*}
        \beta_1 &= \alpha_1+\alpha_2+\alpha_3+\alpha_5+\alpha_6+\alpha_7, \\
        \beta_2 &= \alpha_2+2\alpha_3+\alpha_4+\alpha_5, \\
        \beta_3 &= \sum_{i=1}^6\alpha_i, \\
        \beta_4 &= \alpha_2+\alpha_3+\alpha_5+\alpha_6+\alpha_7+\alpha_8, \\
        \beta_5 &= \alpha_2+2\alpha_3+\alpha_4+2\alpha_5+2\alpha_6+\alpha_7, \\
        \beta_6 &= \alpha_1+\alpha_2+2\alpha_3+\alpha_4+2\alpha_5+
        \alpha_6+\alpha_7+\alpha_8, \\
        \beta_7 &= \alpha_1+2\alpha_2+3\alpha_3+2\alpha_4+2\alpha_5+2\alpha_6+2\alpha_7+
        \alpha_8. 
    \end{align*}
    \end{itemize}
    If $\gamma_1, \gamma_2\in \Lambda_{c}^{\text{re}}$ are in distinct components, 
    then they are $c$-compatible. 
    
\end{Prop}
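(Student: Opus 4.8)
The plan is to treat this exactly as the $D_n^{(1)}$ case (Proposition~\ref{Prop : D_n Lambda_c}), the only genuinely new ingredient being the root data of $E_n$. By Definition~\ref{Def : Phi_c}, every element of $\Lambda_c^{\text{re}}$ is either a positive finite root $\beta\in\Phi_{\text{fin}}^+$ whose $c$-orbit is finite, or the complement $\delta-\beta$ of such a root. Thus the statement decomposes into two tasks: first, to determine precisely which $\beta\in\Phi_{\text{fin}}^+$ have finite $c$-orbit and to match them against the families $I_1,I_2,I_3$; and second, to establish the cross-component $c$-compatibility.

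For the first task the main tool is the classical dichotomy for the tame hereditary algebra attached to the acyclic quiver $B$. Orienting $B$ as a sink-source quiver, the Coxeter element $c$ is the Coxeter transformation acting on the root lattice $Q$, and a real root of $\Phi$ has finite $c$-orbit if and only if it is \emph{regular}, i.e.\ lies in one of the exceptional tubes; the preprojective and preinjective roots are pushed off to infinity by $c^{\pm 1}$ and so have infinite orbit. For $E_n^{(1)}$ the tubular type is $(2,3,3)$, $(2,3,4)$, $(2,3,5)$ for $n=6,7,8$, so there are exactly three exceptional tubes, of ranks $n-3,3,2$. In a tube of rank $r$ with quasi-simple regular roots $q_1,\dots,q_r$ (satisfying $q_1+\dots+q_r=\delta$), the real roots lying strictly below $\delta$ are the sums of cyclically consecutive $q_i$'s, and these are grouped into the $\{\beta,\delta-\beta\}$ pairs of Definition~\ref{Def : Phi_c}. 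Choosing a linear transversal of the quasi-simples in each tube yields exactly the partial sums $\sum_{i=k}^{l}\beta_i$ recorded in $I_1,I_2,I_3$; the three components therefore correspond to the three exceptional tubes, and the numbers $n-4,2,1$ of listed roots $\beta_i$ are one less than the respective tube ranks.

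The substance of the proof is then the explicit identification of the quasi-simple regular roots of the sink-source orientation for each $n\in\{6,7,8\}$. Concretely I would (i) fix the source/sink bipartition of the labelling above and write $c=c_+c_-$, where $c_\pm$ is the product of the mutually commuting reflections at the sources, resp.\ the sinks; (ii) check that each candidate $\beta_i$ in the statement is a positive finite root with finite $c$-orbit of the expected period; and (iii) verify that no further positive finite root has finite orbit. A convenient way to organize (iii) is the translation decomposition $c=t_\lambda\bar c$ in $W\cong W_{\text{fin}}\ltimes Q^\vee$: every root has level $0$, so $t_\lambda$ acts on a finite root by adding a multiple of $\delta$, whence $c^{k}\beta\equiv\bar c^{\,k}\beta\pmod{\Z\delta}$ with $\bar c\in W_{\text{fin}}$ of finite order. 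Consequently the orbit of $\beta$ is finite if and only if the total $\delta$-drift accumulated over one period of $\bar c$ vanishes, a condition that can be checked root by root. I expect this verification---in particular the tubes of ranks $4$ and $5$ and the roots carrying coefficient $3$ on $\alpha_3$ for $E_8^{(1)}$---to be the main and most error-prone part of the argument.

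Finally, for the compatibility statement, suppose $\gamma_1,\gamma_2\in\Lambda_c^{\text{re}}$ lie in distinct components, hence in distinct exceptional tubes. By the definition of $c$-compatibility via the compatibility degree in \cite{affine} (see the remark after Definition~\ref{Def : c-compatible}) together with Proposition~\ref{Prop : pairwise compatible}, it suffices to show that the $c$-compatibility degree of $\gamma_1$ and $\gamma_2$ vanishes. I would deduce this from the orthogonality of distinct tubes: regular modules supported in different exceptional tubes admit no homomorphisms or extensions between them, so the form governing the compatibility degree sees no interaction and the degree is $0$; hence $x[\gamma_1]$ and $x[\gamma_2]$ lie in a common cluster. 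This is formally identical to the cross-component compatibility used for $D_n^{(1)}$ in Proposition~\ref{Prop : D_n Lambda_c}, and I would invoke it in the same way.
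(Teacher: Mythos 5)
The first thing to note is that the paper contains no proof of this statement at all: it is imported verbatim from Reading--Stella \cite{affine}, so there is no internal argument to compare against, and your sketch is best judged against the proof that underlies the cited result. On that standard your outline is sound, and the numerology checks out: for $E_n^{(1)}$ the exceptional tubes have ranks $n-3$, $3$, $2$ (star types $(2,3,3)$, $(2,3,4)$, $(2,3,5)$ for $n=6,7,8$); since the quasi-simples of a tube are positive roots summing to $\delta$, whose $\alpha_0$-coefficient is $1$, exactly one quasi-simple per tube carries $\alpha_0$, so the \emph{finite} roots in a rank-$r$ tube are precisely the sums $\sum_{i=k}^{l}\beta_i$ with $1\leq k\leq l\leq r-1$ of the remaining transversal quasi-simples, which is exactly the shape of $I_1$, $I_2$, $I_3$, with the complements $\delta-\sum_{i=k}^{l}\beta_i$ accounting for the rest of $\Lambda_c^{\mathrm{re}}$. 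Your route differs from \cite{affine} mainly in language: Reading--Stella work root-theoretically with orbits of affine Coxeter elements (the drift criterion you describe via $c=t_\lambda\bar c$ is essentially their mechanism, and is the same machinery behind the companion results quoted in Proposition~\ref{Prop : D_n Lambda_c}), whereas you phrase everything through tame hereditary module categories; the two are equivalent via Dlab--Ringel, and your version buys conceptual clarity at the price of importing that dictionary. Two caveats keep this from being a complete proof. First, the actual content of the proposition --- verifying that the listed $\beta_i$ are the correct quasi-simples for this paper's particular placement of the affine node (which differs among $n=6,7,8$), and that no further finite positive root has finite $c$-orbit --- is only promised in your steps (ii) and (iii), not carried out; you correctly identify this as the error-prone part, but as written the explicit lists are unverified. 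Second, your cross-component compatibility argument silently converts the combinatorial $c$-compatibility degree of \cite{affine} (see the Remark after Definition~\ref{Def : c-compatible}) into $\mathrm{Ext}$-vanishing between distinct tubes; that bridge is true but requires a categorification input (a cluster-category interpretation of the affine compatibility degree) that neither this paper nor your sketch supplies, whereas \cite{affine} proves the vanishing directly in root-theoretic terms. Neither caveat is a wrong step, but both sit exactly where the real work lives.
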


\subsection{Some lemmas in the context of monoidal categorification}
Let $\mathscr{A} = \mathscr{A}(\mathcal{S}_0)$ be a cluster algebra, 
where 
$\mathcal{S}_0 = (\{x_i\}_{i \in K^{\text{ex}}}\cup \{f_j\}_{j \in K^{\text{fr}}}, 
\tilde{B}_0 = (b_{ij})_{(i, j)\in K\times K^{\text{ex}}})$ 
is an acyclic seed of untwisted affine type. 
Let $\mathcal{C}$ be a monoidal subcategory of $\mathscr{C}_{\mathfrak{g}}$ 
which is a monoidal categorification of $\mathscr{A}$, 
and $\iota : K(\mathcal{C}) \overset{\sim}{\longrightarrow}\mathscr{A}$ be 
the isomorphism. 
Let $\mathcal{M}$ be a set of highest weight monomials 
which parametrizes $\Irr \mathcal{C}$. 
We write the highest weight monomials corresponding to initial cluster variables 
$x_i, f_j$ by $M_i, F_j$ respectively.

\begin{Lem}[{\cite[Proposition 2.2]{HL10}}]\label{Lem : positivity}
    For $m \in \mathcal{M}$, 
    $\iota([L(m)])$ is a Laurent polynomial with non-negative coefficients. 
\end{Lem}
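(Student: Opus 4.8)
The plan is to transport the positivity that is already available on the $q$-character side to the cluster-variable side. Recall from Proposition~\ref{q-ch} that $\chi_q \colon K(\mathcal{C}) \to \Z[Y_{i,p}^{\pm 1}]$ is an injective ring homomorphism, and that for every simple module $L(m)$ the $q$-character $\chi_q([L(m)])$ is a Laurent polynomial in the $Y_{i,p}$ with \emph{non-negative} integer coefficients; this is the positivity theorem for $q$-characters of simple modules in type $ADE$ (Nakajima, via quiver varieties), which applies here since $\fg$ is of type $ADE$. Thus $\iota([L(m)])$ and $\chi_q([L(m)])$ are two Laurent-polynomial realizations of the same element of $K(\mathcal{C})$, and the task is to show that passing from the $Y$-realization to the $(x,f)$-realization preserves non-negativity. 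I emphasize that this does \emph{not} follow from cluster positivity alone: a general simple $L(m)$ is not a cluster monomial, and writing $[L(m)]$ as a polynomial in the fundamental generators $[L(Y_{i,p})]$ (Proposition~\ref{Prop : polynomial}) involves subtractions, so the positivity must genuinely come from the representation-theoretic input.

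First I would make precise the relation between the two realizations. The claim is that there is a ring homomorphism $\Theta$ from the Laurent ring in the $Y_{i,p}$ to $\Z[x_i^{\pm 1}, f_j^{\pm 1}]$ which sends each $Y_{i,p}$ to a single Laurent \emph{monomial} of coefficient $1$ and which satisfies $\Theta \circ \chi_q = \iota$. Since the fundamental classes $[L(Y_{i,p})]$ generate $K(\mathcal{C})$ as a ring by Proposition~\ref{Prop : polynomial}, it suffices to construct $\Theta$ by specifying the images $\Theta(Y_{i,p})$ and then to verify the identity $\Theta(\chi_q([L(Y_{i,p})])) = \iota([L(Y_{i,p})])$ on these generators. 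The images are dictated by the $g$-vector and $F$-polynomial data of the cluster structure: $\Theta(Y_{i,p})$ is the Laurent monomial recording the $g$-vector of the corresponding module in the initial seed, arranged so that for the initial cluster-variable modules $M_i, F_j$ of Theorem~\ref{Thm : KKOP categorification} (whose $F$-polynomials are trivial and whose $g$-vectors are unit vectors) one recovers $\iota([M_i]) = x_i$ and $\iota([F_j]) = f_j$, as required by the definition of the monoidal seed.

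Granting such a $\Theta$, the lemma follows at once: $\iota([L(m)]) = \Theta(\chi_q([L(m)]))$ is obtained from the positive Laurent polynomial $\chi_q([L(m)])$ by substituting for each $Y_{i,p}$ a Laurent monomial of coefficient $1$, and substituting coefficient-$1$ monomials into a Laurent polynomial with non-negative coefficients again yields one with non-negative coefficients. The hard part will be the middle step, namely constructing $\Theta$ and verifying $\Theta \circ \chi_q = \iota$ on the generating fundamental classes; equivalently, matching the cluster combinatorics encoded in the GLS quiver $Q_{\mathrm{GLS}}^{[a,b],\mathfrak{s}}$ of Definition~\ref{Def : GLS} and the initial KR seed with the $\ell$-weight monomials appearing in the $q$-characters. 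In the present situation this can be checked against the explicit seeds recorded in Example~\ref{Ex : Cs}; in general it is exactly the compatibility between monoidal categorification and the $g$-vector/$F$-polynomial formalism, and this is where essentially all of the content resides, the final transfer of positivity being purely formal.
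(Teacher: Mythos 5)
Your bridge object $\Theta$ does not exist, so the argument collapses at its central step. Any ring homomorphism sending each $Y_{i,p}$ to a Laurent monomial with coefficient $1$ commutes with evaluation of all variables at $1$: composing $\Theta$ with the evaluation $x_i=f_j=1$ gives the evaluation $Y_{i,p}=1$, and since the $q$-character specializes to dimension, $\Theta(\chi_q([V]))\big|_{x=f=1}=\dim_\kk V$ for every $V$. But $\iota([V])\big|_{x=f=1}$ is in general much smaller: for any exchangeable cluster-variable module $M_i$ of the initial seed one has $\iota([M_i])=x_i$, which evaluates to $1$, while $\dim_\kk M_i>1$ (these are fundamental or KR modules). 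Already in the simplest toy case of type $A_1$, $\chi_q(L(Y_{1,0}))=Y_{1,0}+Y_{1,2}^{-1}$ is a sum of two coefficient-$1$ monomials and can never be carried to the single variable $x$ by any monomial substitution. The genuine relation between $q$-characters and the cluster structure (Hernandez--Leclerc) involves \emph{truncated} $q$-characters, and truncation is not induced by a substitution of variables; moreover that realization goes in the opposite direction from what you need --- the cluster variables are expressed as positive polynomials in the $Y$'s, so it transports positivity from the $(x,f)$-side to the $Y$-side, not conversely. Hence the part of your proof you call ``purely formal'' rests on a false intermediate object, and Nakajima's positivity of $\chi_q$ is not the right input here.

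The paper's proof is both simpler and uses strictly weaker representation-theoretic input. By the Laurent phenomenon (Proposition~\ref{Prop : Laurent}) one writes $\iota([L(m)])=P(x_i,f_j)/\prod_{i\in K^{\mathrm{ex}}}x_i^{d_i}$ with $P$ a polynomial; clearing denominators identifies $P$ with the class of the honest module $L(m)\otimes\bigotimes_{i\in K^{\mathrm{ex}}}L(M_i)^{\otimes d_i}$. Since the initial monoidal seed is a pairwise strongly commuting family of real simple modules, every monomial $x^{\mathbf a}f^{\mathbf b}$ is itself the class of a simple module (Proposition~\ref{Prop : commuting family}), so the coefficient $c_{\mathbf a,\mathbf b}$ of $P$ is the multiplicity of that simple module as a composition factor, hence non-negative. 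Your instinct that positivity ``cannot follow from cluster positivity alone'' is correct, but the representation-theoretic input needed is only this: the initial cluster monomials are classes of simple objects, and coefficients of a module class in the basis of simple classes are composition multiplicities. No $q$-characters, and no geometric positivity theorem, enter the argument.
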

\begin{proof}
    For $m \in \mathcal{M}$, by Proposition~\ref{Prop : Laurent}, we can write as 
    \[
    [L(m)] = \frac{P(x_i, f_j)}
    {\prod_{i \in K^{\text{ex}}}x_i^{d_i}}
    \]
    for some $d_i\in \Z_{\geq 0}$, where 
    $P = \sum_{\mathbf{a, b}} c_{{\mathbf{a, b}}}x^{{\mathbf{a}}}f^{{\mathbf{b}}}$ is a polynomial 
    in $x_i$ ($i\in K^{\text{ex}}$) and $f_j$ ($j \in K^{\text{fr}}$). 
    Here ${\mathbf{a}} \in \Z_{\geq 0}^{K^{\text{ex}}}, 
    {\mathbf{b}} \in \Z_{\geq 0}^{K^{\text{fr}}}$, 
    and $c_{{\mathbf{a, b}}} \in \Z$. 
    It is enough to show that $c_{{\mathbf{a, b}}}$ 
    is non-negative for each $\mathbf{a}\in \Z_{\geq 0}^{K^{\text{ex}}}$ 
    and $\mathbf{b}\in \Z_{\geq 0}^{K^{\text{fr}}}$. 
    Multiplying both sides by the denominator, we see that $P$ is equal to 
    \[
    \biggl[L(m)\otimes  \bigotimes_{i\in K^{\text{ex}}}L(M_i)^{\otimes{d_i}} \biggr]. 
    \]
    Moreover, since 
    $\{x_i\}_{i\in K^{\text{ex}}}\cup \{f_j\}_{j \in K^{\text{fr}}}$ 
    is a cluster, 
    $x^{{\mathbf{a}}}f^{{\mathbf{b}}}$ is a class of simple modules 
    for each $\mathbf{a}\in \Z_{\geq 0}^{K^{\text{ex}}}$ and 
    $\mathbf{b}\in \Z_{\geq 0}^{j \in K^{\text{fr}}}$. 
    This implies that $c_{{\mathbf{a, b}}}$ is equal to the multiplicity of 
    the simple module $x^{{\mathbf{a}}}f^{{\mathbf{b}}}$ 
    as a composition factor of the module $P$, so it is non-negative. 
\end{proof}
\begin{Lem}\label{Lem : dvector inequality}
    For $l \in \Z_{\geq0}$ and $m_1, \ldots, m_l\in \mathcal{M}$, 
    \[
    \sum_{k=1}^{l}\mathbf{d}\circ\iota([L(m_k)]) \geq 
    \mathbf{d}\circ \iota\biggl(\biggl[L\biggl(\prod_{k=1}^{l}m_k\biggr)\biggr]\biggr). 
    \]
\end{Lem}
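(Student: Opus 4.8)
The plan is to deduce the inequality entirely from the positivity of Lemma~\ref{Lem : positivity} together with the multiplicativity of the Grothendieck ring, after first making precise what $\mathbf{d}\circ\iota$ means on a general class $[L(m)]$. By Proposition~\ref{Prop : Laurent} every element of $\mathscr{A}$ lies in $\Z[x_i^{\pm1}, f_j]$, so $\iota([L(m)])$ has a unique expression $x^{-\mathbf{d}}P$ with $P$ a polynomial in the $x_i$ $(i\in K^{\mathrm{ex}})$ and $f_j$ $(j\in K^{\mathrm{fr}})$ divisible by no $x_i$, and I extend $\mathbf{d}$ by $\mathbf{d}(\iota([L(m)]))=\sum_i d_i\alpha_i$ exactly as in Definition~\ref{Def : dvector}. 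Equivalently, writing $v_i(f)$ for the lowest power of $x_i$ occurring in a Laurent polynomial $f$, the $i$-th component of $\mathbf{d}(f)$ is $-v_i(f)$.

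The core of the argument is that these valuations behave well under products and sums of Laurent polynomials \emph{with non-negative coefficients} — precisely the class to which Lemma~\ref{Lem : positivity} confines all the $\iota([L(m)])$. For such $f,g$ there is no cancellation among the lowest-degree-in-$x_i$ terms, so I would record $v_i(fg)=v_i(f)+v_i(g)$ and $v_i(f+g)=\min(v_i(f),v_i(g))$. Translating back, $\mathbf{d}$ is additive on products, $\mathbf{d}(fg)=\mathbf{d}(f)+\mathbf{d}(g)$, and satisfies $\mathbf{d}(f+g)=\max(\mathbf{d}(f),\mathbf{d}(g))$ componentwise, so in particular $\mathbf{d}(f+g)\geq\mathbf{d}(f)$ in the order on $Q$. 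I expect the only point requiring care to be this no-cancellation step: I would spell out that, with non-negative coefficients, the leading $x_i$-parts of $f$ and $g$ are themselves non-zero non-negative polynomials in the remaining variables, hence cannot vanish or combine to zero.

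With these two properties the proof is immediate. In $K(\mathcal{C})$ we have $\prod_{k=1}^{l}[L(m_k)]=\bigl[\bigotimes_k L(m_k)\bigr]$, and by Proposition~\ref{Prop : subquot} the module $L(\prod_k m_k)$ is a composition factor of $\bigotimes_k L(m_k)$; since all composition multiplicities are non-negative integers this yields
\[
\prod_{k=1}^{l}[L(m_k)] = \Bigl[L\Bigl(\prod_k m_k\Bigr)\Bigr] + \sum_s c_s[L(n_s)],\qquad c_s\in\Z_{\geq 0}.
\]
Applying $\iota$ and then $\mathbf{d}$, the product rule gives $\mathbf{d}\bigl(\prod_k\iota([L(m_k)])\bigr)=\sum_k\mathbf{d}(\iota([L(m_k)]))$, while the sum rule applied to the right-hand side — each of whose terms is a non-negative Laurent polynomial by Lemma~\ref{Lem : positivity} — gives $\mathbf{d}\bigl(\prod_k\iota([L(m_k)])\bigr)\geq\mathbf{d}(\iota([L(\prod_k m_k)]))$. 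Combining the two delivers the asserted inequality. The cases $l=0,1$ are trivial, the empty product corresponding to the trivial module with $\mathbf{d}=0$.
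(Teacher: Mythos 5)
Your proposal is correct and follows essentially the same route as the paper's proof: decompose $\prod_k[L(m_k)]=[L(\prod_k m_k)]+\sum_s c_s[L(n_s)]$ with $c_s\in\Z_{\geq 0}$ via Proposition~\ref{Prop : subquot}, then use the positivity of Lemma~\ref{Lem : positivity} to see that a sum of non-negative polynomials not divisible by $x_i$ is again not divisible by $x_i$, so that $\mathbf{d}$ can only grow when passing from a summand to the sum. Your explicit valuation formulation ($d_i=-v_i$, with $v_i(fg)=v_i(f)+v_i(g)$ and $v_i(f+g)=\min(v_i(f),v_i(g))$ under non-negativity) merely spells out the no-cancellation step that the paper invokes implicitly, so there is no substantive difference.
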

\begin{proof}
    By the definition of $\mathbf{d}$-vector, 
    $(LHS) = \mathbf{d}\circ\iota([\bigotimes_{k=1}^{l}L(m_k)])$. 
    By Proposition~\ref{Prop : subquot}, we can write 
    $[\bigotimes_{k=1}^{l}L(m_k)] 
    = [L(\prod_{k=1}^{l}m_k)] + 
    \sum_{m \in \mathcal{M}}a_m[L(m)]$  for $a_m \in \Z_{\geq 0}$. 
    For $i \in K^{\text{ex}}$, 
    if two polynomials $P_1, P_2\in \Z[x_i, f_j]$ have non-negative coefficients and 
    they are not divisible by $x_i$, 
    then $P_1 + P_2$ is also not divisible by $x_i$. 
    Combining this fact and Lemma~\ref{Lem : positivity}, 
    we have $\mathbf{d}\circ\iota([L(\prod_{k=1}^{l}m_k)] + 
    \sum_{m \in \mathcal{M}}a_m[L(m)]) \geq \mathbf{d}\circ\iota([L(\prod_{k=1}^{l}m_k)])$. 
\end{proof}
\begin{Lem}\label{Lem : mutation monomial}
    Let $\mathcal{S}=(\{y_i = [L(m_i)]\}_{i \in K}, 
    \tilde{B} = (b_{ij}))$ be a cluster of $\mathscr{A}$. 
    Then, for $k \in K^{\text{ex}}$, 
    the $k$-th exchangeable cluster variable of 
    $\mu_k(\mathcal{S})$ is either 
    \[
    \Bigl[L\Bigl(\frac{\prod_{b_{ik}>0}m_i^{b_{ik}}}{m_k}\Bigr)\Bigr] \text{ or } 
    \Bigl[L\Bigl(\frac{\prod_{b_{ik}<0}m_i^{-b_{ik}}}{m_k}\Bigr)\Bigr]. 
    \]
    In particular, if one of 
    $\frac{\prod_{b_{ik}>0}m_i^{b_{ik}}}{m_k}, 
    \frac{\prod_{b_{ik}<0}m_i^{-b_{ik}}}{m_k}$ 
    is not in $\mathcal{M}$, 
    then the highest weight monomial of the $k$-th exchangeable cluster variable of 
    $\mu_k(\mathcal{S})$ is the other. 
\end{Lem}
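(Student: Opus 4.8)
The plan is to realize the cluster $\mathcal{S}$ by an underlying monoidal seed and then to read off the highest weight monomial of the mutated cluster variable module from the short exact sequence supplied by admissibility, using the multiplicativity of the $q$-character. First I would observe that, since $\mathcal{C}$ is a monoidal categorification of $\mathscr{A}$ (Definition~\ref{Def : categorification}), the cluster $\mathcal{S}$ is the image under $\iota$ of a monoidal seed $\mathscr{S} = (\{M_i\}_{i \in K}, \tilde{B})$ reachable from the initial, completely admissible one; here each cluster variable module $M_i$ is the real simple module $L(m_i)$, as classes of simple modules are linearly independent in $K(\mathcal{C})$. By Definition~\ref{Def : admissible}, mutation at $k$ then produces a real simple module $M_k' = L(m_k')$ with $m_k' \in \mathcal{M}$, fitting into
\[
0 \to \bigotimes_{b_{ik}>0}M_i^{\otimes b_{ik}} \to M_k \otimes M_k' \to \bigotimes_{b_{ik}<0}M_i^{\otimes(-b_{ik})} \to 0,
\]
and $[M_k']$ is precisely the $k$-th exchangeable cluster variable of $\mu_k(\mathcal{S})$. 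It therefore remains to show $m_k' \in \{p_+/m_k,\ p_-/m_k\}$, where $p_+ \coloneq \prod_{b_{ik}>0}m_i^{b_{ik}}$ and $p_- \coloneq \prod_{b_{ik}<0}m_i^{-b_{ik}}$.

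Next I would compare highest $\ell$-weights. The highest weight monomial of $M_k \otimes M_k' = L(m_k) \otimes L(m_k')$ is the product $m_k m_k'$, and every monomial occurring in its $q$-character lies weakly below $m_k m_k'$ in the dominance order, since this holds for each tensor factor and the monomials of a tensor product are products of monomials of the factors. Applying $\chi_q$, which is a ring homomorphism by Proposition~\ref{q-ch}, to the exact sequence shows that the monomials of $M_k \otimes M_k'$ are exactly the union of those of the two outer terms, whose maximal monomials are $p_+$ and $p_-$ respectively. Since the maximal monomial $m_k m_k'$ of $M_k \otimes M_k'$ must occur in one of the two terms and is dominated by that term's maximal monomial, while conversely that maximal monomial (being a monomial of $M_k \otimes M_k'$) is dominated by $m_k m_k'$, antisymmetry of the order forces $m_k m_k' = p_+$ or $m_k m_k' = p_-$; dividing by $m_k$ yields the two stated possibilities. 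The ``in particular'' clause is then immediate: $m_k'$ lies in $\mathcal{M}$ because $M_k'$ is a simple module of $\mathcal{C}$, so if one of $p_+/m_k,\ p_-/m_k$ is not in $\mathcal{M}$, then $m_k'$ must equal the other.

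I expect the only delicate point to be the step establishing that $p_+$ (resp.\ $p_-$) really is an upper bound, in the dominance order, for \emph{all} $\ell$-weights occurring in the corresponding outer tensor product, and hence that the global maximum $m_k m_k'$ coincides with one of them rather than merely being comparable to both. This rests on the standard fact that every monomial of a simple module lies weakly below its highest weight, propagated to tensor products through the multiplicativity of $\chi_q$; once this is in place, the antisymmetry argument closes the identification without any residual case analysis.
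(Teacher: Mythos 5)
Your proof is correct, and its overall skeleton --- lift the cluster to a monoidal seed, invoke the exchange exact sequence of Definition~\ref{Def : admissible}, and compare highest weight monomials to conclude $m_k m_k^{\prime} \in \{p_+, p_-\}$ where $p_+ = \prod_{b_{ik}>0}m_i^{b_{ik}}$ and $p_- = \prod_{b_{ik}<0}m_i^{-b_{ik}}$ --- is the same as the paper's. Where you genuinely diverge is in the decisive identification step. The paper dispatches it with a single application of Proposition~\ref{Prop : subquot}: $L(m_k m_k^{\prime})$ is a simple subquotient of $L(m_k)\otimes L(m_k^{\prime})$, and since the exchange sequence (with outer terms simple, by Proposition~\ref{Prop : commuting family} applied to the strongly commuting family of the monoidal seed) shows that the composition factors of $L(m_k)\otimes L(m_k^{\prime})$ are exactly $L(p_+)$ and $L(p_-)$, the monomial $m_k m_k^{\prime}$ must equal $p_+$ or $p_-$. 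You instead argue through $q$-characters: every monomial of a simple module is dominated by its highest weight monomial (the Frenkel--Mukhin property), multiplicativity of $\chi_q$ (Proposition~\ref{q-ch}) propagates this to the tensor products, nonnegativity of $q$-character coefficients rules out cancellation in the Grothendieck-group identity, and antisymmetry of the Nakajima dominance order --- which rests on the multiplicative independence of the $A_{i,a}$ --- pins down $m_k m_k^{\prime}$. Both arguments are sound, but yours imports the dominance property, which the paper never states and which is a logically heavier input than Proposition~\ref{Prop : subquot}; the paper's route is shorter and stays entirely within its quoted toolkit, and it sidesteps the ``delicate point'' you flag, since no order-theoretic bookkeeping is needed once one knows the simple subquotients of the middle term. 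On the other hand, your opening paragraph --- justifying that an arbitrary cluster of $\mathscr{A}$ whose variables are classes of simple modules lifts to a monoidal seed reachable from the initial completely admissible one, so that mutation really produces a simple $M_k^{\prime}$ whose class is the exchanged cluster variable --- makes explicit a point the paper's proof leaves tacit, which is a genuine, if small, gain in completeness.
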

\begin{proof}
    We write the highest weight monomial of 
    the $k$-th exchangeable cluster variable of $\mu_k(\mathcal{S})$ 
    as $m_k^{\prime}$. 
    By Proposition~\ref{Prop : subquot}, $L(m_k m_k^{\prime})$ is a subquotient 
    of $L(m_k)\otimes L(m_k^{\prime})$. 
    Since $[L(m_k)\otimes L(m_k^{\prime})]
    =[L(\prod_{b_{ik}>0}m_i^{b_{ik}})]+[L(\prod_{b_{ik}<0}m_i^{-b_{ik}})]$ 
    holds by Definition~\ref{Def : admissible}, 
    we have 
    \[
    m_k m_k^{\prime} = \prod_{b_{ik}>0}m_i^{b_{ik}} \text{ or }
     \prod_{b_{ik}<0}m_i^{-b_{ik}}. 
    \]
\end{proof}

\section{Classification of real modules and imaginary modules}
\label{sec : classification}
In this section, we completely classify the real modules and imaginary modules of 
the categories 
$\mathscr{C}_{D_{n}}^{[1, 2n+1], \mathfrak{s}^1}(n\geq 4), 
\mathscr{C}_{D_{4}}^{[1, 11], \mathfrak{s}^2}, 
\mathscr{C}_{E_{n}}^{[1, 2n+1], \mathfrak{s}^1}(n=6,7,8), 
$ and $\mathscr{C}_{A_{n}}^{[1, 2n+2], \mathfrak{s}^1}(n\geq 5)$ 
in the proof of Theorem~\ref{Thm : affine categorification}. 
By the classifications, we can solve Conjecture~\ref{Conj : real vs monomial intro} 
for these categories.

\subsection{$\mathscr{C}_{D_{n}}^{[1, 2n+1], \mathfrak{s}^1}(n\geq 4)$ 
(The module category of type $D_{n}$ as a categorification of 
the $D_{n}^{(1)}$-type cluster algebra)}

Let $\mathfrak{g}$ be of type $D_{n}$ and $\mathcal{C}$ be the monoidal category $\mathscr{C}_{D_{n}}^{[1, 2n+1], \mathfrak{s}^1}$ 
for $n \geq 4$, 
where the admissible sequence $\mathfrak{s}^1$ is as in Example~\ref{Ex : Cs}. 
By Theorem~\ref{Thm : affine categorification}, 
this category admits a monoidal categorification of the cluster algebra of type $D_{n}^{(1)}$. 

As the initial monoidal seed, 
we take the seed constructed by Theorem~\ref{Thm : affine categorification}. 
By Lemma~\ref{Lem : mutation monomial}, 
this seed is explicitly given as follows : 
\begin{align*}
&n=4\\
&\raisebox{2.3em}{\scalebox{1}{\xymatrix@C=0.7ex@R=1ex{
 && \boxed{L(1_1 1_3)}\ar[rr]\ar[ddr]  && L(1_1)\ar[ddr]\\
 && \boxed{L(2_1 2_3)}\ar[rr]\ar[dr]  && L(2_1)\ar[dr]\\
 &\boxed{L\bigl(3_0 3_2 3_4\bigr)} 
&& L\bigl(3_2 3_4\bigr) \ar[ll] \ar[rr]
&&L(3_2)\ar[uulll]\ar[ulll]\ar[dlll]\\
 && \boxed{L\bigl(4_1 4_3\bigr)} \ar[ur]\ar[rr] && 
 L\bigl(4_1\bigr)\ar[ur]\\
}}}
\end{align*}
\begin{align*}
&n=2k\ (k\geq 3)\\
&\raisebox{2.3em}{\scalebox{0.7}{\xymatrix@C=0.7ex@R=1ex{
 && \boxed{L(1_1 1_3)}\ar[rr]  && L(1_1)\ar[ddl]\\
 && \boxed{L(2_1 2_3)}\ar[rr]  && L(2_1)\ar[dl]\\
 &\boxed{L(3_0 3_2)} 
 && L(3_2) \ar[ll] \\
 && \boxed{L(4_1 4_3)} \ar[rr] && L(4_1)\ar[ul]\ar[dl]\\
 &\boxed{L(5_0 5_2)} 
 && L(5_2) \ar[ll]\\
 & \vdots & \vdots &\vdots&\vdots \\
 && \boxed{L\bigl((2k-4)_1 (2k-4)_3\bigr)} \ar[rr] && L\bigl((2k-4)_1\bigr)\ar[dl]\\
 &\boxed{L\bigl((2k-3)_0 (2k-3)_2 \bigr)} 
&& L\bigl((2k-3)_2\bigr) \ar[ll] \\
 && \boxed{L\bigl((2k-2)_1 (2k-2)_3\bigr)} \ar[rr]\ar[dr]
 && L\bigl((2k-2)_1\bigr)\ar[ul]\ar[dr]\\
 &\boxed{L\bigl((2k-1)_0 (2k-1)_2 (2k-1)_4\bigr)} 
&& L\bigl( (2k-1)_2 (2k-1)_4\bigr) \ar[ll] \ar[rr]
&&L( (2k-1)_2)\ar[ulll]\ar[dlll]\\
 && \boxed{L\bigl((2k)_1 (2k)_3\bigr)} \ar[ur]\ar[rr] && 
 L\bigl((2k)_1\bigr)\ar[ur]\\
}}}
\end{align*}
\begin{align*}
&n=2k-1\ (k\geq 3)\\
&\raisebox{2.3em}{\scalebox{0.7}{\xymatrix@C=0.7ex@R=1ex{
 & \boxed{L(1_0 1_2)}
 && L(1_2)\ar[ll]\\
 & \boxed{L(2_0 2_2)}
 && L(2_2)\ar[ll]\\
 &&\boxed{L(3_1 3_3)} \ar[rr] && L(3_1) \ar[uul]\ar[ul]\ar[dl] \\
 & \boxed{L(4_0 4_2)} 
 && L(4_2)\ar[ll]\\
 &&\boxed{L(5_1 5_3)} \ar[rr] && L(5_1) \ar[ul]\\
 & \vdots & \vdots &\vdots&\vdots \\
 && \boxed{L\bigl((2k-5)_1 (2k-5)_3\bigr)} \ar[rr] && L\bigl((2k-5)_1\bigr)\ar[dl]\\
 &\boxed{L\bigl((2k-4)_0 (2k-4)_2 \bigr)} 
&& L\bigl((2k-4)_2\bigr) \ar[ll] \\
 && \boxed{L\bigl((2k-3)_1 (2k-3)_3\bigr)} \ar[rr]\ar[dr]
 && L\bigl((2k-3)_1\bigr)\ar[ul]\ar[dr]\\
 &\boxed{L\bigl((2k-2)_0 (2k-2)_2 (2k-2)_4\bigr)} 
&& L\bigl( (2k-2)_2 (2k-2)_4\bigr) \ar[ll] \ar[rr]
&&L( (2k-2)_2)\ar[ulll]\ar[dlll]\\
 && \boxed{L\bigl((2k-1)_1 (2k-1)_3\bigr)} \ar[ur]\ar[rr] && 
 L\bigl((2k-1)_1\bigr)\ar[ur]\\
}}}
\end{align*}
We set $I_{\text{fin}} = \{1, \ldots, n\}$ and $I_{\text{fin}} = I_0 \sqcup I_1$, 
where 
\[I_0 \coloneq 
\begin{cases}
    \{1, 2, 4, 6, \ldots, 2k-2, 2k\} \quad \text{for }n=2k\\
    \{3, 5, 7, \ldots, 2k-3, 2k-1\} \quad \text{for }n=2k-1
\end{cases}
\]
and 
\[
I_1 \coloneq 
\begin{cases}
    \{3, 5, 7, \ldots, 2k-3, 2k-1\} \quad \text{for }n=2k\\
    \{1, 2, 4, 6, \ldots, 2k-4, 2k-2\} \quad \text{for }n=2k-1
\end{cases}.
\]

Let $\mathcal{M} \coloneq \mathcal{M}_+^{[1, 2n+1], \mathfrak{s}^1}$ be the set of monomials in 
the indeterminates
\[
\{i_1, i_3\mid i \in I_0\}\cup 
\{j_0, j_2\mid j \in I_1\setminus \{n-1\}\}\cup 
\{(n-1)_0, (n-1)_2, (n-1)_4\}, 
\]
which parametrizes $\Irr \mathcal{C}$. 
For $i \in I_{\text{fin}}$, we define $F_i \in \mathcal{M}$ by 
\[
F_i = 
\begin{cases}
    i_1i_3 \quad &\text{if } i \in I_0\\
    i_0i_2 \quad &\text{if } i \in I_1\setminus \{n-1\}\\
    (n-1)_0(n-1)_2(n-1)_4 \quad &\text{if } i = n-1. 
\end{cases}
\]

We define the subset $\mathcal{M}^{\prime}$ of $\mathcal{M}$ by 
\[
\mathcal{M}^{\prime} \coloneq \{m \in \mathcal{M}\mid 
\text{
$m$ is not divisible by every $F_i$ for $i \in I_{\text{fin}}$
}
\}. 
\]
Let $K=[0, 2n], K^{\text{ex}}=[0,n]$, and $K^{\text{fr}}=[n+1, 2n]$. 
Let $\mathcal{S}_0 = (\{x_i\}_{i \in [0, 2n]}, \tilde{B}_0)$ 
be a seed, where $x_0, \ldots, x_{2n}$ are indeterminates and 
$\tilde{B}_0$ is the exchange matrix associated with the above quiver. 
We write the $n$ frozen variables $x_{n+1}, \ldots, x_{2n}$ 
as $f_1, \ldots, f_{n}$, respectively.
Then by Theorem~\ref{Thm : affine categorification}, there is an isomorphism 
\begin{align*}
  \iota : K(\mathcal{C}) \overset{\sim}{\longrightarrow}\mathscr{A} \quad; 
  &\quad [L(i_1)] \mapsto x_i  &&\text{ for } i \in I_0\\
  &\quad [L(j_2)] \mapsto x_j &&\text{ for } j \in I_1\\
  &\quad [L\bigl((n-1)_2(n-1)_4\bigr)] \mapsto x_0 &\\
  &\quad [L(F_i)] \mapsto f_i &&\text{ for } i \in I_{\text{fin}}. 
\end{align*}
We sometimes identify $K(\mathcal{C})$ with $\mathscr{A}$ by $\iota$. 

First, we combinatorially determine the $\mathbf{d}$-vector of each simple representation 
in $\mathscr{A}$.
Let $Q = \bigoplus_{i=0}^{n}\Z\alpha_i$ be the root lattice of $D_{n}^{(1)}$. 

For each $m \in \mathcal{M}$, there is a unique expression of the form
\begin{align*}
    m = &\prod_{i\in I_{\text{fin}}}F_i^{a_i} \times \prod_{i \in I_0}i_1^{p_i}i_3^{q_i} \times 
    \prod_{j \in I_1\setminus \{n-1\}}j_2^{p_j}j_0^{q_j} \\
    &\times \bigl((n-1)_2(n-1)_4\bigr)^{p}\bigl((n-1)_0(n-1)_4\bigr)^{q}
    \bigl((n-1)_0(n-1)_2\bigr)^{r}\\
    &\times (n-1)_0^s (n-1)_2^t (n-1)_4^u, 
\end{align*}
such that each of the following tuples contains at most one nonzero entry :  

\vspace{0.5em}
$(p_i, q_i)$ for each $i \in I_{\text{fin}}\setminus \{n\!-\!1\}$, 
$(p, q, r)$, $(s, t, u)$, $(p, s)$, $(q, t)$, $(r, u)$. 

\vspace{0.5em}

Then, we define a map $G : \mathcal{M} \to Q$ by 
\begin{itemize}
    \item $n=4$
    \begin{align*}
    G(m) = &\sum_{i=1,2,4}
    \bigl(p_i(-\alpha_i) + q_i\alpha_i\bigr) \\
    &+ p(-\alpha_0) 
    + q(\alpha_0 + \alpha_1+\alpha_2+2\alpha_3+\alpha_4) 
    + r\alpha_0\\
    &+ s(\alpha_0 + \alpha_3) + t(-\alpha_3) 
    + u(\alpha_1 + \alpha_2 + \alpha_3+\alpha_4), 
\end{align*}
\item $n>4$
\begin{align*}
    G(m) = &\sum_{i \in I_{\text{fin}}\setminus\{n\!-\!1\}}
    \bigl(p_i(-\alpha_i) + q_i\alpha_i\bigr) \\
    &+ p(-\alpha_0) + q(\alpha_0 + \alpha_{n-2} + 2\alpha_{n-1} + \alpha_{n}) 
    + r\alpha_0\\
    &+ s(\alpha_0 + \alpha_{n-1}) + t(-\alpha_{n-1}) 
    + u(\alpha_{n-2} + \alpha_{n-1} + \alpha_{n}). 
\end{align*}
\end{itemize}

Let $G' : \mathcal{M}' \to Q$ denote the restriction of $G$ to $\mathcal{M}'$. 
By definition, for $m \in \mathcal{M}^{\prime}$, 
$G^{\prime}(m) = G(\prod_{i \in {I_{\text{fin}}}}F_i^{a_i} \times m)$ for any $a_i$. 
\begin{Ex}
For $n>4$, 
    \begin{align*}
    G\bigl(1_1 1_3^4 2_1^3 (n-1)_0(n-1)_2^2(n-1)_4^3\bigr) 
    &= G\bigl(F_1 F_{n-1} \times 1_3^3 2_1^3 \times (n-1)_2(n-1)_4 \times (n-1)_4\bigr)\\
    &=3\alpha_1 + 3(-\alpha_2) + (-\alpha_0) + (\alpha_{n-2} + \alpha_{n-1} + \alpha_{n}). 
    \end{align*}
\end{Ex}
In this subsection, for $i \in I_{\mathrm{fin}}\setminus\{n-1\}$ and $a \in \Z$, we write 
    \[
    i^a \coloneq 
   \begin{cases}
        i_3^a \quad &\text{ if } i\in I_0, a\geq 0, \\
        i_1^{-a} \quad &\text{ if } i \in I_0, a < 0, \\
        i_0^a \quad &\text{ if } i \in I_1\setminus\{n\!-\!1\}, a \geq 0, \\
        i_2^{-a} \quad &\text{ if } i\in I_1\setminus\{n\!-\!1\}, a < 0. 
    \end{cases}
    \]
    for simplicity. 
    Then, 
    $G^{\prime}\biggl(\prod_{i \in I_{\text{fin}}\setminus\{n-1\}}i^{a_i}\biggr) 
    = \sum_{i \in I_{\text{fin}}\setminus\{n-1\}}a_i \alpha_i$ for $a_i \in \Z$. 

    For $\gamma = \sum_{i=0}^{n}a_i \alpha_i \in Q$, 
    we set a monomial $m_1 \in \mathcal{M}^{\prime}$ as 
    \[
m_1 \coloneq 
\begin{cases}
    ((n-1)_0 (n-1)_2)^{a_0-a_{n-1}}(n-1)_0^{a_{n-1}} \quad &\text{if } (a_0, a_{n-1})\in A,\\
    ((n-1)_0 (n-1)_4)^{a_{n-1}-a_0}(n-1)_0^{2a_0-a_{n-1}} \quad &\text{if } (a_0, a_{n-1})\in B,\\
    ((n-1)_0 (n-1)_4)^{a_0}(n-1)_4^{a_{n-1}-2a_0} \quad &\text{if } (a_0, a_{n-1})\in C,\\
    ((n-1)_2 (n-1)_4)^{-a_0}(n-1)_4^{a_{n-1}} \quad &\text{if } (a_0, a_{n-1})\in D,\\
    ((n-1)_2 (n-1)_4)^{-a_0}(n-1)_2^{-a_{n-1}} \quad &\text{if } (a_0, a_{n-1})\in E,\\
    ((n-1)_0 (n-1)_2)^{a_0}(n-1)_2^{-a_{n-1}} \quad &\text{if } (a_0, a_{n-1})\in F, \\
\end{cases}
\]
where $A, \ldots, F$ are as follows$\colon$

\begin{tikzpicture}[scale=1.2]

  \draw[->] (-3,0) -- (3,0) node[right] {$a_0$};
  \draw[->] (0,-3) -- (0,3) node[above] {$a_{n-1}$};

  \draw[thick]   (0,0) -- (1.5,3) node[above] {$a_{n-1}=2a_0$};
  \draw[thick]  (0, 0)  -- (3,3) node[right] {$a_{n-1}=a_0$};

  \node at (2.5,1){$A$};
  \node at (1.8,2.5) {$B$};
  \node at (0.5,2.5) {$C$};
  \node at (-2,2) {$D$};
  \node at (-2,-2) {$E$};
  \node at (2, -2) {$F$};

\end{tikzpicture}

Then, we can check that $\gamma$ can be written in the form 
\[
\gamma = G^{\prime}(m_1) + 
\sum_{\substack{i \in \{0, 1, \ldots, n\}\\i \neq 0, n-1}}b_i \alpha_i 
= G^{\prime}(m_1) + 
\sum_{\substack{i \in I_{\text{fin}}\\i \neq n-1}}b_i \alpha_i,  
\]
where $b_i \in \Z$ for $i \in I_{\text{fin}}\setminus\{n\!-\!1\}$. 
Then, by construction, the map 
\[
\sum_{i=0}^{n}a_i \alpha_i \longmapsto m_1 \times 
\prod_{\substack{i \in I_{\text{fin}}\\i \neq n-1}}i^{b_i}
\]
is the inverse map of $G^{\prime}$. 
As a result, the following Proposition holds. 

\begin{Prop}\label{Prop : D_n bij}
    $G' : \mathcal{M}' \to Q$ is bijective. 
\end{Prop}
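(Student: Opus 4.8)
The plan is to prove bijectivity by exhibiting an explicit two-sided inverse $\Psi \colon Q \to \mathcal{M}'$, exploiting the way the defining formula for $G$ separates the coordinates of $Q$. As a preliminary I would record that the exponents $a_i$ of the frozen factors $F_i$ do not appear in the formula for $G$, so that $G$ is constant on $F_i$-cosets and $\mathcal{M}'$ is exactly the set of normal forms with all $a_i = 0$; indeed, since each $F_i$ for $i \in I_0$ (resp.\ $i \in I_1 \setminus \{n-1\}$) is $i_1 i_3$ (resp.\ $i_0 i_2$) and the normal form allows at most one of the corresponding exponents to be nonzero, divisibility by $F_i$ is equivalent to $a_i \geq 1$, and similarly for $i = n-1$. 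Thus $G'$ is a genuine function on $\mathcal{M}'$.

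The structural observation driving the proof is the splitting $Q = (\Z\alpha_0 \oplus \Z\alpha_{n-1}) \oplus \bigoplus_{i \in I_{\text{fin}} \setminus \{n-1\}} \Z\alpha_i$ together with the triangularity of $G'$ with respect to it: the monomials $i^{b_i}$ for $i \in I_{\text{fin}} \setminus \{n-1\}$ satisfy $G'(\prod_i i^{b_i}) = \sum_i b_i \alpha_i$ and so contribute to the second summand only, whereas the $(n-1)$-part $m_1$ built from $(n-1)_0, (n-1)_2, (n-1)_4$ is the \emph{only} source of $\alpha_0$- and $\alpha_{n-1}$-components (while it may also feed into $\alpha_{n-2}, \alpha_n$, resp.\ into $\alpha_1, \alpha_2, \alpha_4$ when $n=4$). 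This forces the order of reconstruction: first determine $m_1$ from $(a_0, a_{n-1})$, then let the outer generators absorb the residual coefficients.

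Concretely, given $\gamma = \sum_{i=0}^n a_i \alpha_i$, I would define $m_1 = m_1(\gamma)$ by the case analysis on $(a_0, a_{n-1})$ into the six regions $A, \ldots, F$, and for each region verify two points: (i) $m_1 \in \mathcal{M}'$, i.e.\ the occurring exponents are nonnegative and satisfy the normal-form conditions; and (ii) the $\alpha_0$- and $\alpha_{n-1}$-coefficients of $G'(m_1)$ equal $a_0$ and $a_{n-1}$. For example, in region $A$ one has $r = a_0 - a_{n-1} \geq 0$ and $s = a_{n-1} \geq 0$ with all other exponents zero, so $G'(m_1) = (a_0 - a_{n-1})\alpha_0 + a_{n-1}(\alpha_0 + \alpha_{n-1}) = a_0\alpha_0 + a_{n-1}\alpha_{n-1}$, and the remaining five regions are entirely analogous. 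Note that each $m_1$ uses only two of the three letters $(n-1)_0, (n-1)_2, (n-1)_4$, so $F_{n-1} \nmid m_1$ automatically. Having fixed $m_1$, the residual $\gamma - G'(m_1)$ lies in $\bigoplus_{i \neq 0, n-1} \Z\alpha_i$; setting $b_i$ to be its $\alpha_i$-coefficient and $\Psi(\gamma) = m_1 \prod_{i \neq n-1} i^{b_i}$ yields $G'(\Psi(\gamma)) = \gamma$, proving surjectivity and that $\Psi$ is a right inverse.

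For injectivity it remains to check that $\Psi$ is a left inverse, which reduces to the claim that the six regions $A, \ldots, F$ are precisely the inverse of the map sending the $(n-1)$-part of a normal-form monomial to the pair $(\alpha_0\text{-coeff}, \alpha_{n-1}\text{-coeff})$ of its $G$-image, this map being a bijection onto $\Z^2$. I expect this to be the main obstacle, so I would make it transparent as follows: the normal-form constraints on the triples $(p,q,r)$ and $(s,t,u)$ select at most one "degree-two" generator and at most one "degree-one" generator, while the three pairwise constraints $(p,s)$, $(q,t)$, $(r,u)$ each forbid exactly the combination whose product is $F_{n-1}$, i.e.\ they encode the condition $F_{n-1} \nmid m$. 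This leaves exactly $3 \times 3 - 3 = 6$ admissible generator pairs, which match the six regions, and the content of the proposition at this point is exactly that these six cones tile the lattice $\Z^2_{(a_0, a_{n-1})}$ with unique nonnegative coordinates. Granting this tiling, the $(n-1)$-part of any $m \in \mathcal{M}'$ must coincide with $m_1(\gamma)$ for $\gamma = G'(m)$, after which the outer exponents $b_i$ are forced, giving $\Psi(G'(m)) = m$. The $n = 4$ case is handled by the identical argument, with $\alpha_{n-2}, \alpha_n$ replaced throughout by $\alpha_1, \alpha_2, \alpha_4$.
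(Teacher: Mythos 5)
Your proposal matches the paper's proof: the paper likewise proves bijectivity by exhibiting the explicit inverse $\gamma \mapsto m_1 \times \prod_{i \neq n-1} i^{b_i}$ via the identical six-region case analysis on $(a_0, a_{n-1})$, concluding that the map inverts $G'$ ``by construction.'' Your extra verification that the six cones tile $\Z^2$ in the $(a_0,a_{n-1})$-coordinates with unique nonnegative coordinates (equivalently, that the normal-form constraints single out exactly the six admissible generator pairs, each forbidden pairing multiplying to $F_{n-1}$) simply makes explicit the left-inverse step that the paper leaves implicit, and it is correct.
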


Since every element of $\mathscr{A}$ is a Laurent polynomial, 
we can define the $\mathbf{d}$-vector of any $y \in \mathscr{A}\setminus\{0\}$, 
not necessarily a cluster variable, in the same way as in Definition~\ref{Def : dvector}. 
In particular, we can consider the map 
\[
\mathbf{d}\circ \iota \circ [L(-)] : \mathcal{M} \to Q. 
\]
\begin{Thm}\label{Thm : D_n G=dL}
    $G = \mathbf{d}\circ \iota \circ [L(-)]$. 
\end{Thm}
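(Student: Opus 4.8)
The plan is to compare the two maps first on the cluster variable modules, where both sides are governed by mutation, and then to bootstrap to arbitrary simple modules.

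First I would reduce the statement to the subset $\mathcal{M}'$. Both maps are invariant under multiplication by the frozen monomials $F_i$: for $G$ this is built into its definition, since the exponents $a_i$ do not enter the formula (this is exactly the identity $G'(m)=G(\prod_i F_i^{a_i}\times m)$ noted before Proposition~\ref{Prop : D_n bij}), while for $\mathbf{d}\circ\iota\circ[L(-)]$ it follows from Proposition~\ref{Prop : frozenKR}. Indeed, since $L(F_i)$ strongly commutes with every simple module, $L(F_i\cdot m)\cong L(F_i)\otimes L(m)$, so $\iota([L(F_i m)])=f_i\cdot\iota([L(m)])$, and multiplying by a frozen variable leaves the $\mathbf{d}$-vector unchanged. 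Hence it suffices to prove $G'=\mathbf{d}\circ\iota\circ[L(-)]$ on $\mathcal{M}'$, which by Proposition~\ref{Prop : D_n bij} is parametrized bijectively by $Q$.

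Next I would prove the identity on the highest weight monomials of cluster variable modules by induction on the mutation distance from the initial seed. The base case is the displayed initial seed: one checks directly that $G(i_1)=-\alpha_i$ for $i\in I_0$, that $G(j_2)=-\alpha_j$ for $j\in I_1$, and that $G((n-1)_2(n-1)_4)=-\alpha_0$, matching $\mathbf{d}(x_i)=-\alpha_i$ and $\mathbf{d}(x_0)=-\alpha_0$. For the inductive step, suppose the identity holds throughout a cluster $(\{[L(m_i)]\},\tilde{B})$ and mutate at $k$. Lemma~\ref{Lem : mutation monomial} pins the new monomial $m_k'$ down by $m_k m_k'=\prod_{b_{ik}>0}m_i^{b_{ik}}$ or $\prod_{b_{ik}<0}m_i^{-b_{ik}}$, while Proposition~\ref{Prop : dvector mutation} supplies the matching tropical recurrence for the $\mathbf{d}$-vectors. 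The real content is then the purely combinatorial assertion that $G$ --- which by construction is additive over the atoms singled out by the canonical form --- obeys the same recurrence; this is where the non-additivity of $G$ across the fork node $n-1$ has to be controlled, and I would verify it from the explicit description of the canonical form, using that the $m_i$ of one cluster strongly commute so that the relevant products are simple.

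From cluster variables I would pass to arbitrary $m\in\mathcal{M}'$ via a two-sided estimate. Writing the canonical form $m=\prod(\text{atoms})$, each atom is the highest weight monomial of a cluster variable module already treated, and $G(m)=\sum_{\text{atoms}}G(\text{atom})$ by definition; Lemma~\ref{Lem : dvector inequality} (applied with the atoms as the factors) then gives the upper bound $\mathbf{d}\circ\iota([L(m)])\le\sum G(\text{atom})=G(m)$ in the order on $Q$. When $m$ is a cluster monomial the atoms pairwise strongly commute, so by Proposition~\ref{Prop : commuting family} their product is the simple module $\bigotimes L(\text{atom})$ and equality holds, the factors lying in a common cluster. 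The reverse inequality for the remaining $m$ --- equivalently, the assertion that no initial cluster variable $x_i$ cancels out of the Laurent expansion of $\iota([L(m)])$ beyond the order predicted by $G(m)$ --- is the decisive point; I would extract it from the positivity in Lemma~\ref{Lem : positivity} together with the explicit inverse of $G'$, reducing the fork contribution to a finite check on the six families of monomials $m_1$.

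The main obstacle I expect is precisely this last step. The modules $L(m)$ with $m\in\mathcal{M}'$ that are \emph{not} cluster monomials are exactly the prospective imaginary modules; for them additivity genuinely fails (as the fork computation $G((n-1)_2(n-1)_4)=-\alpha_0\neq G((n-1)_2)+G((n-1)_4)$ already shows), and the cluster-variable machinery together with Proposition~\ref{Prop : dvectorbij} does not apply directly. Establishing the matching lower bound for their $\mathbf{d}$-vectors, without yet knowing which of them is imaginary, is the heart of the theorem, and it is exactly what later allows the identification of $L(\delta)$ as a prime imaginary module.
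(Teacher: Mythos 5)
Your frozen-variable reduction and the subadditive upper bound $\mathbf{d}\circ\iota([L(m)])\le G(m)$ match the paper's proof, and you have correctly located where the difficulty lies --- but your proposal leaves that difficulty unresolved, and the mechanism the paper actually uses is absent from your plan. The paper never needs a direct lower bound on $\mathbf{d}\circ\iota([L(m)])$ for a prospective imaginary $m$. Instead it proves an \emph{unconditional additivity} statement: for every $m\in\mathcal{M}$ (cluster monomial or not) and all $a_i\in\Z$,
\[
\mathbf{d}\circ\iota\circ\Bigl[L\Bigl(m\prod_{\substack{i\in I_{\text{fin}}\\ i\neq n-1}}i^{a_i}\Bigr)\Bigr]
=\mathbf{d}\circ\iota\circ[L(m)]+\sum_{\substack{i\in I_{\text{fin}}\\ i\neq n-1}}a_i\alpha_i,
\]
obtained by a two-sided squeeze from Lemma~\ref{Lem : dvector inequality}: the upper bound is subadditivity for the factorization $m\cdot\prod i^{a_i}$, while the lower bound applies the same lemma to $\bigl(m\prod i^{a_i}\bigr)\cdot\prod(\text{complementary atoms})=m\prod F_i^{a_i}$, whose $\mathbf{d}$-vector equals that of $L(m)$ by Proposition~\ref{Prop : frozenKR}. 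This ``completion to frozens'' is the missing idea. It transfers exact knowledge from cluster monomials to \emph{all} simple modules, because every $m$ agrees, up to non-fork factors $\prod i^{a_i}$ and frozens, with some $(n-1)_0^{a}(n-1)_2^{b}(n-1)_4^{c}$, and the latter --- after a further twist by suitable $i^{a_i}$, e.g.\ via the cluster variable with monomial $(n-1)_4(n-2)_1 n_1$ --- is a genuine cluster monomial in one of the five explicit clusters $\mathcal{S}_0$, $\mu_0(\mathcal{S}_0)$, $\mu_{n-1}(\mathcal{S}_0)$, $\mu_{n-1}\mu_0(\mathcal{S}_0)$, $\mu_0\mu_{n-1}(\mathcal{S}_0)$, where the $\mathbf{d}$-vector is computed exactly by Proposition~\ref{Prop : dvector mutation}. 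Appealing to Lemma~\ref{Lem : positivity} ``together with the explicit inverse of $G'$'' is not a substitute: positivity is already the engine inside Lemma~\ref{Lem : dvector inequality}, and without choosing the complementary factorization it produces no lower bound; you yourself flag this step as an open obstacle, which means the proposal is incomplete precisely at the heart of the theorem.

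Your step 2 also diverges from the paper in a way that creates problems rather than solving them. An induction on mutation distance over the whole exchange graph of an affine-type cluster algebra is an infinite verification; at each step Lemma~\ref{Lem : mutation monomial} pins down the new monomial only up to an either/or dichotomy that you do not explain how to resolve (the paper resolves it in its five clusters because one of the two candidates fails to lie in $\mathcal{M}$); and the ``purely combinatorial assertion'' that $G$ obeys the tropical $\mathbf{d}$-vector recurrence along arbitrary mutations is essentially the theorem itself restricted to cluster variables, not a routine check on canonical forms. The paper needs none of this: the base cases are $m=i^{\pm1}$ (one mutation $\mu_i(\mathcal{S}_0)$ from the initial seed) together with the finitely many fork-node modules above, and all remaining cases follow from the additivity identity.
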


\begin{proof}
Note that for $m\in \mathcal{M}$ and $i \in I_{\text{fin}}$, 
the equality $G(m) = G(F_i m)$ holds by the definition of $G$, and 
\begin{align*}
    \mathbf{d}\circ \iota \circ [L(m)] &= \mathbf{d}(f_i\iota([(L(m)]))\\
    &= \mathbf{d}\circ\iota([L(F_i)][L(m)])\\
    &= \mathbf{d}\circ\iota([L(F_im)])
\end{align*}
holds by Proposition~\ref{Prop : frozenKR}. 
We can check that the statement holds 
for the case $m = i^{\pm 1}$ for $i \in I_{\text{fin}}\setminus\{n-1\}$ by considering 
the initial seed $\mathcal{S}_0$ and $\mu_i(\mathcal{S}_0)$. 

For $m \in \mathcal{M}$ and $a_i \in \Z$ for $i \in I_{\text{fin}}\setminus\{n-1\}$, 
we have 
\[
\mathbf{d}\circ \iota \circ [L(m \prod_{\substack{i \in I_{\text{fin}}\\i \neq n-1}}i^{a_i})]
\leq \mathbf{d}\circ \iota \circ [L(m)] 
+ \sum_{\substack{i \in I_{\text{fin}}\\i \neq n-1}}a_i\alpha_i
\]
and 
\[
\mathbf{d}\circ \iota \circ [L(m \prod_{\substack{i \in I_{\text{fin}}\\i \neq n-1}}i^{a_i})]
+\sum_{\substack{i \in I_{\text{fin}}\\i \neq n-1}}a_i(-\alpha_i)
\geq \mathbf{d}\circ \iota \circ 
[L(m \prod_{\substack{i \in I_{\text{fin}}\\i \neq n-1}}F_i^{a_i})]
=\mathbf{d}\circ \iota \circ [L(m)]
\]
by Lemma~\ref{Lem : dvector inequality}. 
This shows 
\[
\mathbf{d}\circ \iota \circ [L(m \prod_{\substack{i \in I_{\text{fin}}\\i \neq n-1}}i^{a_i})]
=\mathbf{d}\circ \iota \circ [L(m)]
+\sum_{\substack{i \in I_{\text{fin}}\\i \neq n-1}}a_i\alpha_i. 
\]
By the definition of $G$, 
for $m \in \mathcal{M}$ and $a_i \in \Z$ for $i \in I_{\text{fin}}\setminus\{n-1\}$, 
\[G(m \prod_{\substack{i \in I_{\text{fin}}\\i \neq n-1}}i^{a_i}) = 
G(m) +\sum_{\substack{i \in I_{\text{fin}}\\i \neq n-1}}a_i\alpha_i
\]
also holds.

By the above fact, it is enough to show that 
for any $a, b, c \in \Z_{\geq 0}$, 
there exist integers $a_i \in \Z$ for $i \in I_{\text{fin}}\setminus\{n-1\}$ 
such that 
the claim of this theorem holds for 
\[m = (n-1)_0^{a}(n-1)_2^{b}(n-1)_4^{c}
\prod_{\substack{i \in I_{\text{fin}}\\i \neq n-1}}i^{a_i}. 
\]

By Lemma~\ref{Lem : mutation monomial}, we have that the following simple modules are in the 
same monoidal cluster : 
\begin{align*}
\underline{n=4}\\
\mathcal{S}_0 &\ni& &3_2,& &3_2 3_4,& &3_0 3_2 3_4, \\
\mu_0(\mathcal{S}_0) &\ni& &3_2,& &3_0 3_2,& &3_0 3_2 3_4, \\
\mu_3(\mathcal{S}_0) &\ni& &3_4 1_1 2_1 4_1,& &3_2 3_4,& &3_0 3_2 3_4, \\
\mu_3\mu_0(\mathcal{S}_0) &\ni& &3_0,& &3_0 3_2,& &3_0 3_2 3_4, \\
\mu_0\mu_3(\mathcal{S}_0) &\ni& &3_4 1_1 2_1 4_1,& &3_0,& &3_0 3_2 3_4, \\
\end{align*}
\begin{align*}
\underline{n>4}\\
\mathcal{S}_0 &\ni& &(n\!-\!1)_2,& &(n\!-\!1)_2 (n\!-\!1)_4,& 
&(n\!-\!1)_0 (n\!-\!1)_2 (n\!-\!1)_4, \\
\mu_{0}(\mathcal{S}_0) &\ni& &(n\!-\!1)_2,& &(n\!-\!1)_0 (n\!-\!1)_2,& 
&(n\!-\!1)_0 (n\!-\!1)_2 (n\!-\!1)_4, \\
\mu_{n\!-\!1}(\mathcal{S}_0) &\ni& &(n\!-\!1)_4 (n\!-\!2)_1 n_1,& &(n\!-\!1)_2 (n\!-\!1)_4,& 
&(n\!-\!1)_0 (n\!-\!1)_2 (n\!-\!1)_4, \\
\mu_{n\!-\!1}\mu_{0}(\mathcal{S}_0) &\ni& &(n\!-\!1)_0,& &(n\!-\!1)_0 (n\!-\!1)_2,& 
&(n\!-\!1)_0 (n\!-\!1)_2 (n\!-\!1)_4, \\
\mu_{0}\mu_{n\!-\!1}(\mathcal{S}_0) &\ni& &(n\!-\!1)_4 (n\!-\!2)_1 n_1,& &(n\!-\!1)_0,& 
&(n\!-\!1)_0 (n\!-\!1)_2 (n\!-\!1)_4. \\
\end{align*}

By Proposition~\ref{Prop : dvector mutation}, 
we can calculate $\mathbf{d}$-vectors and 
we can check that the claim of the theorem holds for 
the above simple modules. 
Since modules in the same cluster strongly commute mutually, 
the claim of the theorem also holds for 
the simple modules corresponding to a cluster monomial of the above clusters. 

By the above process, for any $a, b, c\in \Z_{\geq 0}$, 
we can realize a simple module whose form is 
\[
L((n-1)_0^{a}(n-1)_2^{b}(n-1)_4^{c}
\prod_{\substack{i \in I_{\text{fin}}\\i \neq n-1}}i^{a_i})
\]
for some $a_i \in \Z$ as a cluster monomial. 
This completes the proof.


%
\end{proof}
For simplicity, 
for $\beta \in Q$, we write $L(\beta) \coloneq L({G^{\prime}}^{-1}(\beta))$. 
Then, we have that the $\mathbf{d}$-vector of $\iota([L(\beta)])$ is $\beta$. 

\begin{Prop}\label{Prop : D_n delta imaginary}
For $l \in \Z_{>0}$, the irreducible module $L(l\delta)$ is imaginary. 
\end{Prop}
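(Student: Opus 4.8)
The plan is to show that the tensor square $L(l\delta)\otimes L(l\delta)$ is not simple. First I would record the general criterion that a simple module $L(m)$ is real if and only if $[L(m)]^2=[L(m^2)]$ in $K(\mathcal{C})$: by Proposition~\ref{Prop : subquot} the module $L(m^2)$ is the head of $L(m)\otimes L(m)$ and occurs with multiplicity one, so $L(m)\otimes L(m)$ is simple exactly when it has no further composition factors. Applying this with $m={G'}^{-1}(l\delta)$, and using that the explicit formula for $G$ gives ${G'}(m^2)=2\,{G'}(m)=2l\delta$, so that $m^2={G'}^{-1}(2l\delta)$ and hence $L(m^2)=L(2l\delta)$, reality of $L(l\delta)$ would force the identity
\[
[L(l\delta)]^2=[L(2l\delta)]
\]
in $K(\mathcal{C})\cong\mathscr{A}$. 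Thus it suffices to prove that $[L(l\delta)]^2\neq[L(2l\delta)]$.

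Next I would locate the extra composition factor through the affine cluster combinatorics. Choose $\beta\in\Lambda_c^{\text{re}}$, for instance $\beta=\beta_{n-2}$ in Proposition~\ref{Prop : D_n Lambda_c}, so that $\delta=\beta+(\delta-\beta)$ with both $\beta$ and $\delta-\beta$ in $\Phi_c^{\text{re}}$; by Proposition~\ref{Prop : dvectorbij} these are the $\mathbf{d}$-vectors of cluster variables $x[\beta],x[\delta-\beta]$, whose corresponding real simple modules are $L(\beta)$ and $L(\delta-\beta)$. Since $\beta$ and $\delta-\beta$ lie in the \emph{same} component of $\Lambda_c^{\text{re}}$, they are never simultaneously in a cluster, so $x[\beta]$ and $x[\delta-\beta]$ are exchange partners; a suitable sequence of mutations then produces a two-term exchange relation of the shape
\[
\iota([L(\beta)])\,\iota([L(\delta-\beta)])=\iota([L(\delta)])+(\text{a nonzero monomial in the frozen variables}).
\]
Because the relevant classes have nonnegative Laurent coefficients (Lemma~\ref{Lem : positivity}), the $\mathbf{d}$-vector is additive on their product, so both the left-hand side and $\iota([L(\delta)])$ have $\mathbf{d}$-vector $\delta$, and positivity forces the frozen correction term to be genuinely present. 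Iterating yields a Chebyshev-type recursion of the form $[L(\delta)]\,[L(l\delta)]=[L((l+1)\delta)]+(\text{frozen})\,[L((l-1)\delta)]$, from which $[L(l\delta)]^2=[L(2l\delta)]+(\text{nonzero nonnegative class})$ follows, giving the desired inequality and hence the imaginarity of $L(l\delta)$ for every $l$.

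The hard part will be establishing the two-term exchange relation and the ensuing recursion in a type-uniform way. I would handle this by restricting to the rank-two sub-pattern generated by $x[\beta]$ and $x[\delta-\beta]$, which, after freezing the remaining exchangeable variables, is an affine cluster algebra of Kronecker type $A_1^{(1)}$; in that algebra the element of $\mathbf{d}$-vector $l\delta$ is the ``imaginary'' element, whose square is never a cluster monomial but instead expands with a strictly positive frozen remainder. The remaining subtlety is to check, using Lemma~\ref{Lem : dvector inequality} together with the additivity of $\mathbf{d}$, that this rank-two remainder does not cancel against other composition factors once read back inside $K(\mathcal{C})$; positivity (Lemma~\ref{Lem : positivity}) is precisely what excludes such cancellation, so that the extra factor survives and $[L(l\delta)]^2\neq[L(2l\delta)]$, as required.
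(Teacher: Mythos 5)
Your opening reduction is fine: since $\bigl({G'}^{-1}(l\delta)\bigr)^2={G'}^{-1}(2l\delta)$, reality of $L(l\delta)$ would indeed force $[L(l\delta)]^2=[L(2l\delta)]$, so it suffices to exhibit an extra composition factor in $L(l\delta)^{\otimes 2}$. The genuine gap is in how you try to produce that factor. First, the step ``$\beta$ and $\delta-\beta$ are never in the same cluster, so $x[\beta]$ and $x[\delta-\beta]$ are exchange partners'' is unjustified: incompatibility does not imply exchangeability (that requires compatibility degree one in both directions), and Proposition~\ref{Prop : D_n Lambda_c} only gives compatibility \emph{across} components, saying nothing about degrees within a component. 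Second, and more fundamentally, your two-term relation $\iota([L(\beta)])\,\iota([L(\delta-\beta)])=\iota([L(\delta)])+(\text{frozen monomial})$ cannot be a cluster exchange relation at all: an exchange relation expresses the product of an exchangeable pair as a sum of two \emph{cluster monomials}, whereas $[L(\delta)]$ is not a cluster monomial --- that is essentially the content of the proposition you are proving. So the relation would have to be extracted from the composition series of $L(\beta)\otimes L(\delta-\beta)$, a representation-theoretic computation your proposal never performs; positivity (Lemma~\ref{Lem : positivity}) and additivity of $\mathbf{d}$-vectors only pin down the $\mathbf{d}$-vector of the product, and cannot force the two-term shape, nor the Chebyshev recursion $[L(\delta)][L(l\delta)]=[L((l+1)\delta)]+F\,[L((l-1)\delta)]$, which is likewise asserted without proof. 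Finally, the ``freeze down to a rank-two Kronecker sub-pattern'' reduction is unavailable: freezing yields a sub-exchange-pattern only relative to a fixed seed containing both variables as exchangeable entries, and by your own observation no seed contains both $x[\beta]$ and $x[\delta-\beta]$; note moreover that in genuine type $A_1^{(1)}$ one has $\Lambda_c^{\mathrm{re}}=\emptyset$, so the pair $(\beta,\delta-\beta)$ has no counterpart there and the analogy cannot supply the missing identity.

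For comparison, the paper's proof avoids any product decomposition. It assumes $L(l\delta)$ real and runs a descent on the invariant $\fd$: by Proposition~\ref{Prop :  -alpha_i in Phi} all $p\delta-\alpha_1$ lie in $\Phi_c^{\mathrm{re}}$, and since ${G'}^{-1}(p\delta-\alpha_1)\,{G'}^{-1}(l\delta)={G'}^{-1}\bigl((p+l)\delta-\alpha_1\bigr)$, Propositions~\ref{Prop : subquot} and~\ref{Prop : delta inequality real} force $\fd\bigl(L(l\delta),L(p\delta-\alpha_1)\bigr)$ to strictly decrease as $p$ grows in steps of $l$, so it eventually vanishes; Proposition~\ref{Prop : real delta} then gives $L(l\delta)\otimes L(N\delta-\alpha_1)\cong L\bigl((l+N)\delta-\alpha_1\bigr)$, contradicting the primeness of this cluster-variable module (Propositions~\ref{Prop : dvectorbij} and~\ref{Prop : prime module}). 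If you want to salvage your route, you would need to actually establish the decomposition of $[L(\beta)][L(\delta-\beta)]$ by $q$-character or $\fd$-invariant computations; at that point the paper's $\fd$-descent is the more economical argument.
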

\begin{proof}
By Proposition~\ref{Prop :  -alpha_i in Phi}, 
$k\delta -\alpha_1\in \Phi^{\text{re}}_{c}$ for any $k \in \Z_{\geq 0}$. 
Assume that $L(l\delta)$ is real. 
For $p \in \Z_{\geq 0}$, if $\fd(L(l\delta), L(p\delta-\alpha_1))>0$ 
then 
\[
\fd(L(l\delta), 
L({G^{\prime}}^{-1}(l\delta){G^{\prime}}^{-1}(p\delta-\alpha_1))) < 
\fd(L(l\delta), 
L(p\delta-\alpha_1))
\]
by Proposition~\ref{Prop : subquot} and ~\ref{Prop : delta inequality real}. 
The description of ${G^{\prime}}^{-1}$ shows that 
${G^{\prime}}^{-1}(p\delta-\alpha_1){G^{\prime}}^{-1}(l\delta) 
= {G^{\prime}}^{-1}((p+l)\delta-\alpha_1)$, 
so we have 
\[
\fd(L(l\delta), L((p+l)\delta-\alpha_1)) < 
\fd(L(l\delta), L(p\delta-\alpha_1)). 
\]
By repeating this argument, we can show that there is some $N \in \Z_{>0}$ such that 
$\fd(L(l\delta), L(N\delta-\alpha_1)) = 0$. 
Since $L(l\delta)$ is real, 
Proposition~\ref{Prop : real delta} shows that 
$L(l\delta)$ and $L(N\delta-\alpha_1)$ commute strongly, 
so 
\[
L(l\delta) \otimes L(N\delta-\alpha_1)
\cong L((l+N)\delta-\alpha_1). 
\]
Since 
$(l+N)\delta -\alpha_1\in \Phi^{\text{re}}_{c}$, 
$L((l+N)\delta-\alpha_1)$ is a prime module 
by Proposition~\ref{Prop : dvectorbij} and Proposition~\ref{Prop : prime module}, 
but it contradicts the above equation. 
\end{proof}

\begin{Prop}\label{Prop : D_n compatible}
Let $\gamma_1, \gamma_2 \in \Phi_c$ be distinct roots and  
$m_1, m_2\in\Z_{\geq1}$. 
If $\gamma_1$ and $\gamma_2$ are $c$-compatible, 
then two irreducible modules $L(m_1\gamma_1)$, $L(m_2\gamma_2)$ commute strongly. 
\end{Prop}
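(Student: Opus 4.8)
The plan is to split the argument according to Definition~\ref{Def : c-compatible}: either both $\gamma_1,\gamma_2$ lie in $\Phi_c^{\mathrm{re}}$, or one of them equals $\delta$ and the other lies in $\Lambda_c^{\mathrm{re}}$. I would first treat the case $\gamma_1,\gamma_2\in\Phi_c^{\mathrm{re}}$. Here $c$-compatibility means, by Definition~\ref{Def : c-compatible} (see also Proposition~\ref{Prop : pairwise compatible}), that $x[\gamma_1]$ and $x[\gamma_2]$ belong to a common cluster; under $\iota$ this cluster corresponds to a monoidal seed, so $L(\gamma_1)$ and $L(\gamma_2)$ are real simple modules that strongly commute. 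Proposition~\ref{Prop : commuting family} then shows that $L(\gamma_1)^{\otimes m_1}\otimes L(\gamma_2)^{\otimes m_2}$ is simple. It remains to identify $L(\gamma_i)^{\otimes m_i}$ with $L(m_i\gamma_i)$: since $L(\gamma_i)$ is real, $L(\gamma_i)^{\otimes m_i}$ is simple with class the cluster monomial $x[\gamma_i]^{m_i}$, whose $\mathbf{d}$-vector is $m_i\gamma_i$, so Theorem~\ref{Thm : D_n G=dL} together with the explicit (ray-linear) form of $G'$ gives $L(\gamma_i)^{\otimes m_i}=L(m_i\gamma_i)$. Hence $L(m_1\gamma_1)\otimes L(m_2\gamma_2)$ is simple.

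For the case $\gamma_1=\delta$, $\gamma_2\in\Lambda_c^{\mathrm{re}}$, I would reduce to the first case through the invariant $\fd$, which is forced because $L(\delta)$ is imaginary (Proposition~\ref{Prop : D_n delta imaginary}) and so cannot be fed directly into Proposition~\ref{Prop : commuting family}. Say $\gamma_2$ lies in the component $I_j$ of Proposition~\ref{Prop : D_n Lambda_c}, and choose a component $I_{j'}\in\{I_2,I_3\}\setminus\{I_j\}$ together with its pair $\beta,\delta-\beta\in I_{j'}$. Using the explicit description of ${G'}^{-1}$ preceding Proposition~\ref{Prop : D_n bij}, I would verify the monomial identity
\[
{G'}^{-1}(m_1\delta)={G'}^{-1}(m_1\beta)\cdot{G'}^{-1}\bigl(m_1(\delta-\beta)\bigr),
\]
which by the homogeneity of ${G'}^{-1}$ along the relevant rays reduces to the case $m_1=1$. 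Proposition~\ref{Prop : subquot} then exhibits $L(m_1\delta)$ as a simple subquotient of $L(m_1\beta)\otimes L\bigl(m_1(\delta-\beta)\bigr)$.

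Finally, since $\beta$ and $\delta-\beta$ both lie in $I_{j'}\neq I_j$, Proposition~\ref{Prop : D_n Lambda_c} makes each of them $c$-compatible with $\gamma_2$, so the first case applies and shows that $L(m_1\beta)\otimes L(m_2\gamma_2)$ and $L\bigl(m_1(\delta-\beta)\bigr)\otimes L(m_2\gamma_2)$ are simple; as $L(m_2\gamma_2)$ is real, Proposition~\ref{Prop : real delta} yields $\fd\bigl(L(m_1\beta),L(m_2\gamma_2)\bigr)=\fd\bigl(L(m_1(\delta-\beta)),L(m_2\gamma_2)\bigr)=0$. Feeding the subquotient of the previous paragraph into Proposition~\ref{Prop : delta ineq sum} gives
\[
\fd\bigl(L(m_1\delta),L(m_2\gamma_2)\bigr)\le \fd\bigl(L(m_1\beta),L(m_2\gamma_2)\bigr)+\fd\bigl(L(m_1(\delta-\beta)),L(m_2\gamma_2)\bigr)=0,
\]
and Proposition~\ref{Prop : real delta}, applied with the real module $L(m_2\gamma_2)$, concludes that $L(m_1\delta)\otimes L(m_2\gamma_2)$ is simple.

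The main obstacle is the $\delta$ case, and concretely the monomial factorization above: it must be extracted from the piecewise formula for ${G'}^{-1}$, checking that no cancellation occurs (all exponents involved are nonnegative for the chosen $\beta$) and that a valid factorization is available for at least two of the three components $I_1,I_2,I_3$, so that a component $I_{j'}$ disjoint from $\gamma_2$'s component always exists. The real-root case is comparatively routine, relying only on the cluster structure and Proposition~\ref{Prop : commuting family}.
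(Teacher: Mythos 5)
Your proposal is correct and follows essentially the same route as the paper's proof: the real--real case via common (monoidal) clusters and Proposition~\ref{Prop : commuting family}, and the $\delta$ case via the monomial factorization ${G'}^{-1}(m_1\delta)={G'}^{-1}(\beta)^{m_1}{G'}^{-1}(\delta-\beta)^{m_1}$ with $\beta\in\{\beta_{n-2},\beta_{n-1}\}$ chosen in a component of $\Lambda_c^{\mathrm{re}}$ distinct from that of $\gamma_2$, combined with Propositions~\ref{Prop : subquot}, \ref{Prop : delta ineq sum}, and \ref{Prop : real delta}. The only cosmetic difference is that the paper bounds $\fd(L(\gamma_2),L(m_1\delta))$ first and then passes to $L(m_2\gamma_2)$ by realness, whereas you work with $L(m_2\gamma_2)$ throughout; the two reorganizations are equivalent.
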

\begin{proof}
When $\gamma_1, \gamma_2$ are both distinct from $\delta$, 
then the definition of $c$-compatibility shows that 
$L(\gamma_1)$ and $L(\gamma_2)$ are in the same monoidal cluster, 
so $L(\gamma_1)$ and $L(\gamma_2)$ commute strongly. 
Since $L(\gamma_1)$ and $L(\gamma_2)$ are both real modules, 
$L(m_1\gamma_1) \cong L(\gamma_1)^{\otimes m_1}$ and 
$L(m_2\gamma_2) \cong L(\gamma_2)^{\otimes m_2}$ hold. 
So in this case, 
$L(m_1\gamma_1)$ and $L(m_2\gamma_2)$ commute strongly. 

So, it is enough to consider the case that one of the $\gamma_1, \gamma_2$ is 
equal to $\delta$. 
Without loss of generality, 
we set that $\gamma_1 = \delta$. 
Then, $\gamma_2 \in \Lambda_c^{\text{re}}$ by the definition of 
$c$-compatibility. 

Let $I_1, I_2, I_3 \subset \Lambda_c^{\mathrm{re}}$ be subsets 
defined in Proposition~\ref{Prop : D_n Lambda_c}. 
If $\gamma_2 \not\in I_2$ in Proposition~\ref{Prop : D_n Lambda_c}, 
then $\gamma_2$ is $c$-compatible with $\beta_{n-2}, \delta - \beta_{n-2}$. 
Therefore, $\fd(L(\gamma_2), L(\beta_{n-2}))=0$ and 
$\fd(L(\gamma_2), L(\delta-\beta_{n-2}))=0$ hold. 
Since ${G^{\prime}}^{-1}(\beta_{n-2})^{m_1}
{G^{\prime}}^{-1}(\delta - \beta_{2n-2})^{m_1} 
= {G^{\prime}}^{-1}(m_1\delta)$ by the description of ${G^{\prime}}^{-1}$, 
$L(m_1\delta)$ is a subquotient of 
$L(\beta_{n-2})^{\otimes m_1}\otimes L(\delta-\beta_{n-2})^{\otimes m_1}$ and 
\[
\fd(L(\gamma_2), L(m_1\delta))\leq m_1\fd(L(\gamma_2), L(\beta_{n-2}))
+m_1\fd(L(\gamma_2), L(\delta-\beta_{n-2}))=0
\]
by Proposition~\ref{Prop : delta ineq sum}. 
It shows $L(m_1\delta)$ and $L(\gamma_2)$ commute strongly. 
Since $L(\gamma_2)$ is real, 
$L(m_2\gamma_2) \cong L(\gamma_2)^{\otimes m_2}$ holds. 
Then we have that $L(m_1\delta)$ and $L(m_2\gamma_2)$ commute strongly. 

If $\gamma_2 \in I_2$, then $\gamma_2 \not\in I_3$. 
By replacing $\beta_{n-2}$ with $\beta_{n-1}$ in the above argument, 
the statement also follows in this case.
\end{proof}

\begin{Thm}\label{Thm : D_n tensor factorization}
    Let $m \in \mathcal{M}^{\prime}$ and 
    $G^{\prime}(m) = \sum_{\alpha \in \Phi_c} m_{\alpha}\alpha$ be a 
    $c$-cluster expansion. 
    Then, 
    \begin{align*}
    L(m) &\cong \bigotimes_{\alpha \in \Phi_c}L(m_{\alpha}\alpha)\\
    &\cong \bigotimes_{\alpha \in \Phi_c^{\text{re}}}
    L(\alpha)^{\otimes m_{\alpha}} \otimes L(m_{\delta}\delta). 
    \end{align*}
\end{Thm}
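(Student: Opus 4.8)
The plan is to prove the tensor factorization by combining the $c$-cluster expansion (Proposition~\ref{Prop : cluster expansion}) with the strong-commutation result just established (Proposition~\ref{Prop : D_n compatible}), and then to verify that the $\mathbf{d}$-vectors add up correctly so that the tensor product is forced to be the simple module $L(m)$. The second isomorphism in the statement is merely a rewriting of the first: since every $\alpha \in \Phi_c^{\text{re}}$ is a real module (it corresponds to a cluster variable via Proposition~\ref{Prop : dvectorbij}, hence is prime and real), we have $L(m_\alpha \alpha) \cong L(\alpha)^{\otimes m_\alpha}$, while the single $\delta$-term $L(m_\delta \delta)$ is kept intact because $L(\delta)$ is imaginary. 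So the real work is entirely in the first isomorphism.

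\textbf{Key steps.} First I would write the $c$-cluster expansion $G'(m) = \sum_{\alpha \in \Phi_c} m_\alpha \alpha$, which exists and is unique by Proposition~\ref{Prop : cluster expansion}. By the defining property of a $c$-cluster expansion, any two distinct roots $\alpha, \beta$ with $m_\alpha, m_\beta \neq 0$ are $c$-compatible. Applying Proposition~\ref{Prop : D_n compatible} to each such pair, the family $\{L(m_\alpha \alpha)\}_{\alpha : m_\alpha \neq 0}$ is pairwise strongly commuting. The second step is to pass to the tensor product $N \coloneq \bigotimes_{\alpha} L(m_\alpha \alpha)$ and show it is simple. Here I would separate the real factors from the (at most one) imaginary factor $L(m_\delta \delta)$: the real factors $L(\alpha)^{\otimes m_\alpha}$ together form a pairwise strongly commuting family of real simple modules, so their tensor product is simple by Proposition~\ref{Prop : commuting family}; call it $N_{\mathrm{re}}$. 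Since $N_{\mathrm{re}}$ is a tensor product of real modules that strongly commute with $L(m_\delta\delta)$ (again by Proposition~\ref{Prop : D_n compatible}, as every $\alpha$ in the real part is $c$-compatible with $\delta$), and since strong commutation with a simple module means the tensor product is simple, I expect $N \cong N_{\mathrm{re}} \otimes L(m_\delta \delta)$ to be simple as well.

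\textbf{Identifying the simple factor.} The remaining task is to show that this simple tensor product $N$ is isomorphic to $L(m)$ itself, not merely to some simple module. For this I would compute the $\mathbf{d}$-vector of $\iota([N])$. Since $N$ is a tensor product of strongly commuting modules, $[N] = \prod_\alpha [L(m_\alpha\alpha)]$ in $K(\mathcal{C})$, and $\mathbf{d}\circ\iota([L(m_\alpha\alpha)]) = m_\alpha\alpha$ by Theorem~\ref{Thm : D_n G=dL} together with the identity $G'(G'^{-1}(\beta)) = \beta$. Because the modules strongly commute, the $\mathbf{d}$-vectors add exactly (no loss, since Lemma~\ref{Lem : dvector inequality} becomes an equality for strongly commuting products, the product being simple), giving $\mathbf{d}\circ\iota([N]) = \sum_\alpha m_\alpha\alpha = G'(m) = \mathbf{d}\circ\iota([L(m)])$ by Theorem~\ref{Thm : D_n G=dL}. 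Since $G'$ is a bijection (Proposition~\ref{Prop : D_n bij}) and $N$, $L(m)$ are both simple with the same $\mathbf{d}$-vector, they must coincide.

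\textbf{Main obstacle.} I expect the delicate point to be justifying that the $\mathbf{d}$-vectors add \emph{exactly} rather than merely satisfying the inequality of Lemma~\ref{Lem : dvector inequality}. The inequality only gives $\sum_\alpha m_\alpha\alpha \geq \mathbf{d}\circ\iota([N])$ a priori, so I cannot directly read off the $\mathbf{d}$-vector of the product from those of the factors without knowing the product is simple. The clean way around this is to establish simplicity of $N$ \emph{first} (via Propositions~\ref{Prop : commuting family} and the real-imaginary strong commutation), and only then invoke that for a simple tensor product $[N]$ equals the single class $[L(m')]$ for the unique $m'$ with $L(m') = N$; computing $\mathbf{d}\circ\iota([N])$ on one hand as $G'(m')$ and on the other as $\sum_\alpha m_\alpha\alpha$ via the explicit description of $G'^{-1}$ (which shows $\prod_\alpha G'^{-1}(m_\alpha\alpha) = G'^{-1}(\sum m_\alpha\alpha) = m$, matching highest weight monomials) then forces $m' = m$. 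Thus the genuine content is checking the multiplicativity $\prod_\alpha G'^{-1}(m_\alpha\alpha) = G'^{-1}(G'(m))$ at the level of monomials, which follows from the explicit piecewise formula for $G'^{-1}$ and the additivity of $G'$ on the $i^{a_i}$ generators.
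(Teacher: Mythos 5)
Your skeleton coincides with the paper's proof: establish simplicity of $N\coloneq\bigotimes_{\alpha\in\Phi_c}L(m_\alpha\alpha)$ from Proposition~\ref{Prop : D_n compatible}, then identify $N$ with $L(m)$ by a $\mathbf{d}$-vector computation and injectivity of $G^{\prime}$; your treatment of the simplicity step (Proposition~\ref{Prop : commuting family} for the real part, then an $\fd$-argument to append the single factor $L(m_\delta\delta)$) is fine and in fact spells out what the paper leaves implicit. Incidentally, the point you flag as the ``main obstacle'' is not one: additivity of $\mathbf{d}$-vectors under products of classes in $K(\mathcal{C})$ is automatic, since classes multiply in the Grothendieck ring and each $x_i$ is prime in the ambient polynomial ring; the inequality of Lemma~\ref{Lem : dvector inequality} concerns the simple head $L(\prod m_k)$, not the full tensor product. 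The genuine gap is elsewhere: the $\mathbf{d}$-vector map on all of $\mathcal{M}$ is $G$, not $G^{\prime}$, and $G$ is not injective, since $G(m)=G(F_i m)$. So ``$N$ and $L(m)$ are simple with the same $\mathbf{d}$-vector, hence coincide'' does not follow; from $G(m^{\prime})=G^{\prime}(m)$, where $m^{\prime}\coloneq\prod_\alpha {G^{\prime}}^{-1}(m_\alpha\alpha)$ is the highest weight monomial of $N$, one can only conclude $m^{\prime}=m\prod_i F_i^{a_i}$ for some $a_i\geq 0$. One must first show $m^{\prime}\in\mathcal{M}^{\prime}$. The paper devotes a paragraph to exactly this: $K(\mathcal{C})$ is a polynomial ring (Proposition~\ref{Prop : polynomial}), each $f_i$ is irreducible (Proposition~\ref{Prop : cl.var is irr}) hence prime, $f_i$ divides no $[L(m_\alpha\alpha)]$ (by Proposition~\ref{Prop : frozenKR}, $f_i\mid[L(m_\alpha\alpha)]$ would force $F_i\mid {G^{\prime}}^{-1}(m_\alpha\alpha)$), hence $f_i$ does not divide $[N]$. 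Your proposal never supplies this step.

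Your declared fallback --- proving the monomial identity $\prod_\alpha {G^{\prime}}^{-1}(m_\alpha\alpha)={G^{\prime}}^{-1}\bigl(\sum_\alpha m_\alpha\alpha\bigr)$ directly from ``the additivity of $G^{\prime}$'' --- is unsound as stated, because $G^{\prime}$ is not additive: for instance $G^{\prime}\bigl((n\!-\!1)_2\bigr)=-\alpha_{n-1}$ and $G^{\prime}\bigl((n\!-\!1)_4\bigr)=\alpha_{n-2}+\alpha_{n-1}+\alpha_n$, yet $G^{\prime}\bigl((n\!-\!1)_2(n\!-\!1)_4\bigr)=-\alpha_0$. (These two roots are not $c$-compatible, so this does not contradict the theorem, but it invalidates any blanket appeal to additivity; $G^{\prime}$ is only piecewise linear in the $(a_0,a_{n-1})$-coordinates, and a direct combinatorial proof would require showing that $c$-compatibility confines all summands to a common region of the six-region decomposition --- a nontrivial check you do not carry out, and note the paper verifies such identities only for specific pairs such as $\beta_{n-2},\,\delta-\beta_{n-2}$.) The paper deliberately avoids this combinatorics: once $m^{\prime}\in\mathcal{M}^{\prime}$ is known by the primality argument, Theorem~\ref{Thm : D_n G=dL} gives $G^{\prime}(m^{\prime})=\mathbf{d}\circ\iota([N])=\sum_\alpha m_\alpha\alpha=G^{\prime}(m)$, and injectivity of $G^{\prime}$ (Proposition~\ref{Prop : D_n bij}) yields $m^{\prime}=m$, so the multiplicativity you wanted to check by hand is obtained as a consequence rather than as an input.
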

\begin{proof}
The second isomorphism holds because $L(\alpha)$ is real and 
$L(m\alpha) \cong L(\alpha)^{\otimes m}$
for $\alpha \in \Phi_c^{\mathrm{re}}$ and $m \in \Z_{\geq 0}$. 
We prove the first isomorphism. 

By Proposition~\ref{Prop : D_n compatible}, 
$\bigotimes_{\alpha \in \Phi_c}L(m_{\alpha}\alpha)$ is simple. 
Then we have 
\[\bigotimes_{\alpha \in \Phi_c}L(m_{\alpha}\alpha)\cong 
L(\prod_{\alpha \in \Phi_c}{G^{\prime}}^{-1}(m_{\alpha}\alpha)).
\]
First, we show that 
$\prod_{\alpha \in \Phi_c}{G^{\prime}}^{-1}(m_{\alpha}\alpha) 
\in \mathcal{M}^{\prime}$. 
Since each frozen variable $f_i \in K(\mathcal{C})$ is an irreducible element 
and $K(\mathcal{C})$ is a polynomial ring 
by Proposition~\ref{Prop : polynomial}, 
$f_i \in K(\mathcal{C})$ is a prime element. 
Since ${G^{\prime}}^{-1}(m_{\alpha}\alpha)$ is not divided by 
each $F_i$, 
$[L(m_{\alpha}\alpha)]$ is not divided by each $f_i$. 
Then it follows that each $f_i$ does not divide 
$[L(\prod_{\alpha \in \Phi_c}{G^{\prime}}^{-1}(m_{\alpha}\alpha))]$, 
so we have $\prod_{\alpha \in \Phi_c}{G^{\prime}}^{-1}(m_{\alpha}\alpha) \in 
\mathcal{M}^{\prime}$ by Proposition~\ref{Prop : frozenKR}. 

From the fact above, 
$G^{\prime}(\prod_{\alpha \in \Phi_c}{G^{\prime}}^{-1}(m_{\alpha}\alpha))$ 
is well-defined. 
Then, we have 
\begin{align*}
    G^{\prime}(\prod_{\alpha \in \Phi_c}{G^{\prime}}^{-1}(m_{\alpha}\alpha)) 
    &= \mathbf{d}\circ \iota ([L(\prod_{\alpha \in \Phi_c}{G^{\prime}}^{-1}(m_{\alpha}\alpha))]\\
    &=\mathbf{d}\circ \iota([\bigotimes_{\alpha \in \Phi_c}L(m_{\alpha}\alpha)]\\
    &=\sum_{\alpha \in \Phi_c}\mathbf{d}\circ \iota ([L(m_{\alpha}\alpha)])\\
    &=\sum_{\alpha \in \Phi_c} m_{\alpha}\alpha\\
    &= G^{\prime}(m). 
\end{align*}
It shows that 
$L(\prod_{\alpha \in \Phi_c}{G^{\prime}}^{-1}(m_{\alpha}\alpha))
\cong L(m)$, and thus the claim is proved. 
\end{proof}
By Proposition~\ref{Prop : frozenKR}, 
the following corollary holds. 
\begin{Cor}\label{Cor : D_n factorization}
    Let $m \in \mathcal{M}$ 
    and  $m = \prod_{i=1}^{n}F_i^{a_i} \times m^{\prime}$ for 
    $a_i \in \Z_{\geq 0}, m^{\prime}\in \mathcal{M}^{\prime}$. 
    Let $G^{\prime}(m^{\prime}) = \sum_{\alpha \in \Phi_c} m_{\alpha}\alpha$ be a $c$-cluster expansion. 
    Then, we have 
    \begin{align*}
    L(m)&\cong \bigotimes _{i=1}^{n}L(F_i)^{\otimes a_i} \otimes 
    \bigotimes_{\alpha \in \Phi_c}L(m_{\alpha}\alpha)\\
    &\cong \bigotimes _{i=1}^{n}L(F_i)^{\otimes a_i} \otimes
    \bigotimes_{\alpha \in \Phi_c^{\text{re}}}
    L(\alpha)^{\otimes m_{\alpha}} \otimes L(m_{\delta}\delta). 
    \end{align*}
\end{Cor}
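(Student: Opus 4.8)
The plan is to deduce the statement from Theorem~\ref{Thm : D_n tensor factorization}, which already treats the case of a monomial in $\mathcal{M}'$, by splicing in the frozen contribution $\prod_{i=1}^{n}F_i^{a_i}$ at the level of modules via Proposition~\ref{Prop : frozenKR}. First I would apply Theorem~\ref{Thm : D_n tensor factorization} to the factor $m' \in \mathcal{M}'$, giving the isomorphism $L(m') \cong \bigotimes_{\alpha \in \Phi_c} L(m_{\alpha}\alpha)$. After this, it remains only to show that $L(m) \cong \bigotimes_{i=1}^{n} L(F_i)^{\otimes a_i} \otimes L(m')$, since the reformulation in terms of $\Phi_c^{\text{re}}$ then follows exactly as in Theorem~\ref{Thm : D_n tensor factorization}, using that $L(\alpha)$ is real and hence $L(m_{\alpha}\alpha)\cong L(\alpha)^{\otimes m_{\alpha}}$ for $\alpha \in \Phi_c^{\text{re}}$.

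The key point is that $N \coloneq \bigotimes_{i=1}^{n} L(F_i)^{\otimes a_i} \otimes L(m')$ is simple with highest weight monomial $m$. I would prove this by induction on $\sum_{i} a_i$. When this sum is $0$ the claim is trivial, since $N = L(m')$ and $m = m'$. Otherwise I choose some $i$ with $a_i > 0$; by the induction hypothesis the module $N'$ obtained by deleting one tensor factor of $L(F_i)$ is simple, hence lies in $\mathcal{C}$. By Proposition~\ref{Prop : frozenKR} the frozen KR module $L(F_i)$ strongly commutes with every simple module in $\mathcal{C}$, so $L(F_i)\otimes N'$ is again simple. To identify its highest weight monomial, I would use Proposition~\ref{Prop : subquot}: writing $n'$ for the highest weight monomial of $N'$, the simple module $L(F_i\, n')$ is a subquotient of $L(F_i)\otimes N'$, and since the latter is simple they must coincide, so the highest weight monomial multiplies. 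Tracking this through the induction shows that $N$ is simple with highest weight monomial $\prod_{i=1}^{n} F_i^{a_i}\cdot m' = m$, whence $N \cong L(m)$.

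Combining the two facts then yields both displayed isomorphisms. I expect no genuine obstacle here, as the corollary is essentially a packaging of Theorem~\ref{Thm : D_n tensor factorization} together with the strong commutativity of the frozen modules supplied by Proposition~\ref{Prop : frozenKR}. The only point requiring mild care is that strong commutation with \emph{all} simple modules propagates through the iterated tensor product, which is precisely what the induction above handles: at each step the partial product is simple and lies in $\mathcal{C}$, so $L(F_i)$ strongly commutes with it. Consequently the hardest genuine work has already been done in Theorem~\ref{Thm : D_n tensor factorization}, and this corollary reduces to the bookkeeping described above.
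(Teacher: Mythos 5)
Your proposal is correct and follows essentially the same route as the paper, which dispatches this corollary in one line as an immediate consequence of Theorem~\ref{Thm : D_n tensor factorization} together with Proposition~\ref{Prop : frozenKR}. Your induction on $\sum_i a_i$, using Proposition~\ref{Prop : subquot} to identify the highest weight monomial of the simple tensor product, just makes explicit the bookkeeping the paper leaves implicit.
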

\begin{Cor}\label{Cor : D_n using delta}
   Let $m \in \mathcal{M}$. 
   Then, $L(m)$ is a real module if and only if 
   $G(m) \in Q$ has a real $c$-cluster expansion. 
   It is also equivalent to $G(m) \not\in \delta + \sum_{\alpha\in \Lambda_c}\Z_{\geq 0}\alpha$. 
\end{Cor}
\begin{proof}
    If $G(m) = \sum_{\alpha \in \Phi_c} m_{\alpha}\alpha$ is 
    a real $c$-cluster expansion, 
    then 
    \[
    L(m) \cong \bigotimes _{i=1}^{n}L(F_i)^{\otimes a_i} \otimes 
    \bigotimes_{\alpha \in \Phi_c^{\text{re}}}L(m_{\alpha}\alpha)
    \]
    by Corollary~\ref{Cor : D_n factorization}. 
    Since $F_i(1\leq i\leq n)$, $L(m_{\alpha}\alpha)$ are mutually 
    strongly commuting and they are real, 
    $\bigotimes _{i=1}^{n}L(F_i)^{\otimes a_i} \otimes 
    \bigotimes_{\alpha \in \Phi_c^{\text{re}}}L(m_{\alpha}\alpha)$ 
    is a real module by Proposition~\ref{Prop : commuting family}. 

    If $G(m) = \sum_{\alpha \in \Phi_c} m_{\alpha}\alpha$ is 
    an imaginary $c$-cluster expansion, 
    then $[L(m_{\delta}\delta)]$ divides $[L(m)]$ by 
    Corollary~\ref{Cor : D_n factorization}. 
    Since $L(m_{\delta}\delta)$ is imaginary 
    by Proposition~\ref{Prop : D_n delta imaginary}, 
    we have $L(m)$ is imaginary. 

    The last part of this Proposition is a consequence of Proposition~\ref{Prop : m_delta>0}. 
\end{proof}
Since we can calculate both the map $G$ and the set 
$\delta + \sum_{\alpha\in \Lambda_c}\Z_{\geq 0}\alpha$ explicitly, 
Corollary~\ref{Cor : D_n using delta} gives a complete classification 
of real and imaginary modules in this category in terms of the highest weight monomials. 
As a corollary, we can prove that Conjecture~\ref{Conj : real vs monomial intro} 
holds for this case. 
\begin{Cor}\label{Cor : D_n cluster monomial}
    Let $m\in \mathcal{M}$. 
    Then, $L(m)$ is real if and only if $[L(m)]$ is a cluster monomial. 
\end{Cor}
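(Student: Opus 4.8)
The plan is to prove the two implications separately: the forward direction (cluster monomial $\Rightarrow$ real) is essentially formal, while the reverse direction rests on Corollary~\ref{Cor : D_n using delta} together with an identification of the combinatorially defined modules $L(\alpha)$, $\alpha \in \Phi_c^{\text{re}}$, with the genuine cluster variables $x[\alpha]$. For the forward direction, suppose $[L(m)]$ is a cluster monomial and write it as a monomial in the cluster variables of a single cluster. Each such cluster variable is $\iota([N])$ for a cluster variable module $N$ of the corresponding monoidal seed, and these modules are real and pairwise strongly commute. By Proposition~\ref{Prop : commuting family} the tensor product of the relevant powers is simple, and applying the same proposition to the doubled exponents shows it is real; since $\iota$ is a ring isomorphism this tensor product has class $[L(m)]$, so $L(m)$ is real.

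For the reverse direction the first task is to show that $\iota([L(\alpha)]) = x[\alpha]$ for every $\alpha \in \Phi_c^{\text{re}}$, where $L(\alpha) = L({G'}^{-1}(\alpha))$. I would start from the cluster variable $x[\alpha]$, which exists and is non-frozen by Proposition~\ref{Prop : dvectorbij} and is irreducible in $\mathscr{A}$ by Proposition~\ref{Prop : cl.var is irr}; hence its preimage $\iota^{-1}(x[\alpha]) = [L(m^\ast)]$ is a prime simple module. By Theorem~\ref{Thm : D_n G=dL} its $\mathbf{d}$-vector is $G(m^\ast) = \mathbf{d}(x[\alpha]) = \alpha$. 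Factoring $m^\ast = \prod_i F_i^{a_i}\, m^{\ast\prime}$ with $m^{\ast\prime} \in \mathcal{M}'$ and using that $G$ is unaffected by the frozen factors, Proposition~\ref{Prop : D_n bij} gives $m^{\ast\prime} = {G'}^{-1}(\alpha)$, so $L(m^{\ast\prime}) = L(\alpha)$. Since $\alpha \neq 0$ the factor $L(\alpha)$ is nontrivial, and by Proposition~\ref{Prop : frozenKR} any $L(F_i)$ with $a_i > 0$ would split off as a nontrivial strongly commuting tensor factor, contradicting primality of $L(m^\ast)$; thus all $a_i = 0$ and $\iota([L(\alpha)]) = x[\alpha]$.

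With this identification in hand, suppose $L(m)$ is real. By Corollary~\ref{Cor : D_n using delta}, $G(m)$ has a real $c$-cluster expansion $G(m) = \sum_{\alpha \in \Phi_c^{\text{re}}} m_\alpha \alpha$, so $m_\delta = 0$. Corollary~\ref{Cor : D_n factorization} then yields $[L(m)] = \prod_i f_i^{a_i} \prod_{\alpha} [L(\alpha)]^{m_\alpha} = \prod_i f_i^{a_i} \prod_\alpha x[\alpha]^{m_\alpha}$ by the previous step. The roots $\alpha$ with $m_\alpha > 0$ are pairwise $c$-compatible, so Proposition~\ref{Prop : pairwise compatible} places all the $x[\alpha]$ in a common cluster; since the frozen variables $f_i$ lie in every cluster, the product is a monomial in the variables of a single cluster, i.e.\ a cluster monomial.

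The step I expect to be the main obstacle is the identification $\iota([L(\alpha)]) = x[\alpha]$: a priori $L(\alpha)$ is only defined combinatorially through ${G'}^{-1}$ and merely shares a $\mathbf{d}$-vector with the cluster variable $x[\alpha]$, so one must rule out extraneous frozen factors before concluding that the two classes coincide. This is precisely where irreducibility of cluster variables (Proposition~\ref{Prop : cl.var is irr}) and the strong commutativity of frozen KR-modules (Proposition~\ref{Prop : frozenKR}) enter, via a primality argument; everything else reduces to combining the bijectivity of $G'$ (Proposition~\ref{Prop : D_n bij}) with the established factorization.
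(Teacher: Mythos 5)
Your proposal is correct and takes essentially the same route as the paper: the forward direction follows formally from the definition of monoidal categorification, and the converse combines Corollary~\ref{Cor : D_n using delta}, the tensor factorization of Corollary~\ref{Cor : D_n factorization}, and Proposition~\ref{Prop : pairwise compatible} to place all factors in a single cluster. The only difference is that you explicitly establish the identification $\iota([L(\alpha)]) = x[\alpha]$ for $\alpha \in \Phi_c^{\mathrm{re}}$ via the primality argument (Propositions~\ref{Prop : cl.var is irr}, \ref{Prop : prime module}, and \ref{Prop : frozenKR} to rule out frozen factors), a step the paper uses implicitly; making it explicit is a careful elaboration of the same argument, not a deviation.
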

\begin{proof}
    If $[L(m)]$ is a cluster monomial, 
    then it is clear that $L(m)$ is real by the definition of 
    monoidal categorification. 

    We assume that $L(m)$ is a real module. 
    Then, $G(m) = \sum_{\alpha \in \Phi_c} m_{\alpha}\alpha$ is 
    a real $c$-cluster expansion. 
    So we have 
    \[
    L(m) \cong \bigotimes _{i=1}^{n}L(F_i)^{\otimes a_i} \otimes
    \bigotimes_{\alpha \in \Phi_c^{\text{re}}}
    L(\alpha)^{\otimes m_{\alpha}}. 
    \]
    By Proposition~\ref{Prop : pairwise compatible}, 
    $\{L(F_i)\mid 1\leq i\leq n\}\cup 
    \{L(\alpha)\mid m_{\alpha}\neq 0\}$ are in the same monoidal cluster. 
    This shows that $[L(m)]$ is a cluster monomial in 
    $K(\mathcal{C})=\mathscr{A}$. 
\end{proof}

\begin{Ex}\label{Ex : Dn primeimaginary}
    By the proof of Proposition~\ref{Prop : D_n bij}, 
    \[
    {G^{\prime}}^{-1}(\delta)= 
    \begin{cases}
    3_03_4 \quad &\text{if }n=4, \\
    (n\!-\!1)_0(n\!-\!1)_41_32_3(n\!-\!2)_3
    \displaystyle\prod_{i=3, 5, \ldots, n-3}i_0^2
    \prod_{i=4, 6, \ldots, n-4}i_3^2 &\text{if }n=2k(k\geq3), \\
    (n\!-\!1)_0(n\!-\!1)_41_02_0(n\!-\!2)_3
    \displaystyle\prod_{i=3, 5, \ldots, n-4}i_3^2
    \prod_{i=4, 6, \ldots, n-3}i_0^2 &\text{if }n=2k-1(k\geq3). \\
    \end{cases}
    \]
    By Proposition~\ref{Prop : D_n delta imaginary}, 
    $L({G^{\prime}}^{-1}(\delta))$ is imaginary. 
    By Corollary~\ref{Cor : D_n using delta}, 
    if $L(m)$ is imaginary,  
    then $m$ is divisible by ${G^{\prime}}^{-1}(\delta)$. 
    So, $L(\delta)$ is a prime imaginary module. 
\end{Ex}


\subsection{$\mathscr{C}_{D_{4}}^{[1, 11], \mathfrak{s}^2}$ 
(The module category of type $D_{4}$ as a categorification of 
the $E_6^{(1)}$-type cluster algebra)}

Let $\mathfrak{g}$ be of type $D_4$ and $\mathcal{C}$ be the monoidal category 
$\mathscr{C}_{D_{4}}^{[1, 11], \mathfrak{s}^2}$ , 
where the admissible sequence $\mathfrak{s}^2$ is of Example~\ref{Ex : Cs}. 
By Theorem~\ref{Thm : affine categorification}, 
this category admits a monoidal categorification of the cluster algebra of type $D
E_{6}^{(1)}$. 

As the initial monoidal seed, 
we take the seed constructed by Theorem~\ref{Thm : affine categorification}. 
By Lemma~\ref{Lem : mutation monomial}, 
this seed (ignoring the arrows between frozen nodes) is explicitly given as follows : 

\begin{align*}
&\raisebox{2.3em}{\scalebox{1}{\xymatrix@C=0.7ex@R=1ex{
 & \boxed{L(1_0 1_2 1_4)} && L(1_2 1_4)\ar[ll]\ar[rr]
 &&L(1_2)\ar[ddlll]\\
 & \boxed{L(2_0 2_2 2_4)} && L(2_2 2_4)\ar[ll]\ar[rr] && 
 L(2_2) \ar[dlll]\\
 &&\boxed{L(3_1 3_3)} \ar[uur]\ar[ur]\ar[dr]\ar[rr] 
 && L(3_1) \ar[uur]\ar[ur]\ar[dr] \\
 & \boxed{L(4_0 4_2 4_4)}  && L(4_2 4_4)\ar[ll]\ar[rr]
 &&L(4_2)\ar[ulll]\\
}}}
\end{align*}

We set $I_{\text{fin}} = \{1, \ldots, 4\}$. 

Let $\mathcal{M} \coloneq \mathcal{M}_+^{[1, 11], \mathfrak{s}^2}$ 
be the set of monomials in 
the indeterminates
\[
    \{3_1, 3_3\}\cup 
\{i_0, i_2, i_4\mid i =1, 2, 4\}
\]
which parametrizes $\Irr \mathcal{C}$. 
For $i \in I_{\text{fin}}$, we define $F_i \in \mathcal{M}$ by 
\[
F_i = 
\begin{cases}
    3_13_3 \quad &\text{if } i =3\\
    i_0i_2i_4 \quad &\text{if } i =1, 2, 4
\end{cases}
\]

We define the subset $\mathcal{M}^{\prime}$ of $\mathcal{M}$ by 
\[
\mathcal{M}^{\prime} \coloneq \{m \in \mathcal{M}\mid 
\text{
$m$ is not divisible by every $F_i$ for $i \in I_{\text{fin}}$
}
\}. 
\]
Let $K=[0, 10], K^{\text{ex}}=[0,6]$, and $K^{\text{fr}}=[7, 10]$. 
Let $\mathcal{S}_0 = (\{x_i\}_{i \in [0, 10]}, \tilde{B}_0)$ 
be a seed, where $x_0, \ldots, x_{10}$ are indeterminates and 
$\tilde{B}_0$ is the exchange matrix associated with the above quiver. 
We write the $4$ frozen variables $x_7, \ldots, x_{10}$ 
as $f_1, \ldots, f_{4}$, respectively. 
Then by Theorem~\ref{Thm : affine categorification}, there is an isomorphism 
\begin{align*}
  \iota : K(\mathcal{C}) \overset{\sim}{\longrightarrow}\mathscr{A} \quad; 
  &\quad [L(1_2 1_4)] \mapsto x_1  \\
  &\quad [L(1_2)] \mapsto x_2 \\
  &\quad [L(3_1)] \mapsto x_3\\
  &\quad [L(2_2)] \mapsto x_4  \\
  &\quad [L(4_2)] \mapsto x_5  \\
  &\quad [L(4_2 4_4)] \mapsto x_6  \\
  &\quad [L(2_2 2_4)] \mapsto x_0  \\
  &\quad [L(F_i)] \mapsto f_i &&\text{ for } i \in I_{\text{fin}}. 
\end{align*}
We sometimes identify $K(\mathcal{C})$ with $\mathscr{A}$ by $\iota$. 

First, we combinatorially determine the $\mathbf{d}$-vector of each simple representation 
in $\mathscr{A}$.
Let $Q = \bigoplus_{i=0}^{6}\Z\alpha_i$ be the root lattice of $E_{6}^{(1)}$. 

For each $m \in \mathcal{M}$, there is a unique expression of the form as follows : 
\begin{align*}
    m = &\prod_{i\in I_{\text{fin}}}F_i^{a_i} \times 
    3_1^{p}3_3^q\\
    &\times \prod_{i=1, 2, 4}(i_2i_4)^{p_i}(i_0i_4)^{q_i}(i_0i_2)^{r_i}\\
    &\times \prod_{i=1, 2, 4}i_0^{s_i}i_2^{t_i}i_4^{u_i}
\end{align*}
such that each of the following tuples contains at most one nonzero entry :  

$(p, q)$, $(p_i, q_i, r_i), (s_i, t_i, u_i), 
(p_i, s_i), (q_i, t_i), (r_i, u_i)$ for each $i =1, 2, 4$
. 

Then, we define a map $G : \mathcal{M} \to Q$ by 

\begin{align*}
    G(m) = &p(-\alpha_3)+q\alpha_3\\
    &+ p_1(-\alpha_1) + q_1(\alpha_1 + 2\alpha_2 + \alpha_3) 
    + r_1\alpha_1\\
    &+ s_1(\alpha_1 + \alpha_2) + t_1(-\alpha_2) 
    + u(\alpha_2 + \alpha_3)\\
    &+ p_2(-\alpha_0) + q_2(\alpha_0 + \alpha_3 + 2\alpha_4) 
    + r_2\alpha_0\\
    &+ s_2(\alpha_0 + \alpha_4) + t_2(-\alpha_4) 
    + u_2(\alpha_3 + \alpha_4)\\
    &+ p_4(-\alpha_6) + q_2(\alpha_3 + 2\alpha_5 + \alpha_6) 
    + r_4\alpha_6\\
    &+ s_4(\alpha_5 + \alpha_6) + t_4(-\alpha_6) 
    + u_4(\alpha_3 + \alpha_5). \\ 
\end{align*}

Let $G' : \mathcal{M}' \to Q$ denote the restriction of $G$ to $\mathcal{M}'$. 
By definition, for $m \in \mathcal{M}^{\prime}$, 
$G^{\prime}(m) = G(\prod_{i \in {I_{\text{fin}}}}F_i^{a_i} \times m)$ for any $a_i$. 

In this subsection, 
we write 
    \[
    3^a \coloneq 
    \begin{cases}
        3_3^a \quad &\text{ if } a\geq 0, \\
        3_1^{-a} \quad &\text{ if } a < 0 \\
    \end{cases}
    \]
    for simplicity. 
    Then, 
    $G^{\prime}(3^{a}) 
    = a\alpha_3$ for $a \in \Z$. 

    For $\gamma = \sum_{i=0}^{6}a_i \alpha_i \in Q$, 
    we set monomials $m_1, m_2, m_4 \in \mathcal{M}^{\prime}$ as follows$\colon$

    For $i =1, 2, 4$, we define 
\[
m_i \coloneq 
\begin{cases}
    (i_0 i_2)^{x-y}i_0^{y} \quad &\text{if } (x, y)\in A_i,\\
    (i_0 i_4)^{y-x}i_0^{2x-y} \quad &\text{if }(x, y)\in B_i,\\
    (i_0 i_4)^{x}i_4^{y-2x} \quad &\text{if }(x, y)\in C_i,\\
    (i_2 i_4)^{-x}i_4^{y} \quad &\text{if }(x, y)\in D_i,\\
    (i_2 i_4)^{-x}i_2^{-y} \quad &\text{if }(x, y)\in E_i,\\
    (i_0 i_2)^{x}i_2^{-y} \quad &\text{if }(x, y)\in F_i, \\
\end{cases}
\]
where $A_i, \ldots, F_i$ are as follows$\colon$

\begin{tikzpicture}[scale=1]

  \draw[->] (-3,0) -- (3,0) node[above] {$a_1(=x)$};
  \draw[->] (0,-3) -- (0,3) node[above] {$a_2(=y)$};

  \draw[thick]   (0,0) -- (1.5,3) node[above] {$a_2=2a_1$};
  \draw[thick]  (0, 0)  -- (3,3) node[right] {$a_2=a_1$};

  \node at (2.5,1){$A_1$};
  \node at (1.8,2.5) {$B_1$};
  \node at (0.5,2.5) {$C_1$};
  \node at (-2,2) {$D_1$};
  \node at (-2,-2) {$E_1$};
  \node at (2, -2) {$F_1$};

\end{tikzpicture}
\begin{tikzpicture}[scale=1]

  \draw[->] (-3,0) -- (3,0) node[above] {$a_0(=x)$};
  \draw[->] (0,-3) -- (0,3) node[above] {$a_4(=y)$};

  \draw[thick]   (0,0) -- (1.5,3) node[above] {$a_4=2a_0$};
  \draw[thick]  (0, 0)  -- (3,3) node[right] {$a_4=a_0$};

  \node at (2.5,1){$A_2$};
  \node at (1.8,2.5) {$B_2$};
  \node at (0.5,2.5) {$C_2$};
  \node at (-2,2) {$D_2$};
  \node at (-2,-2) {$E_2$};
  \node at (2, -2) {$F_2$};

\end{tikzpicture}

\begin{tikzpicture}[scale=1]

  \draw[->] (-3,0) -- (3,0) node[above] {$a_6(=x)$};
  \draw[->] (0,-3) -- (0,3) node[above] {$a_5(=y)$};

  \draw[thick]   (0,0) -- (1.5,3) node[above] {$a_5=2a_6$};
  \draw[thick]  (0, 0)  -- (3,3) node[right] {$a_5=a_6$};

  \node at (2.5,1){$A_3$};
  \node at (1.8,2.5) {$B_3$};
  \node at (0.5,2.5) {$C_3$};
  \node at (-2,2) {$D_3$};
  \node at (-2,-2) {$E_3$};
  \node at (2, -2) {$F_3$};

\end{tikzpicture}

Then, we can check that $\gamma$ can be written in the form 
\[
\gamma = G^{\prime}(m_1 m_2 m_4) + b\alpha_3,  
\]
where $b \in \Z$. 
Then, by construction, the map 
\[
\sum_{i=0}^{6}a_i \alpha_i \longmapsto m_1 m_2 m_4 \times 
3^b
\]
is the inverse map of $G^{\prime}$. 
As a result, the following Proposition holds.

\begin{Prop}\label{Prop : D_4 bij}
    $G' : \mathcal{M}' \to Q$ is bijective. 
\end{Prop}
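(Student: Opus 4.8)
The plan is to prove bijectivity exactly as in the proof of Proposition~\ref{Prop : D_n bij}: by exhibiting the explicit inverse map already written down above and checking that it is a two-sided inverse of $G^{\prime}$. Write $\Psi : Q \to \mathcal{M}^{\prime}$ for the assignment $\sum_{i=0}^{6} a_i \alpha_i \mapsto m_1 m_2 m_4 \cdot 3^{b}$, where $m_1, m_2, m_4$ are the monomials determined by the region decomposition of the three coordinate planes and $b \in \Z$ is the residual exponent on the central node. Before anything else I would record that $G^{\prime}$ is well-defined: the uniqueness of the normal-form expression for $m \in \mathcal{M}$ (with the stated ``at most one nonzero entry'' constraints on the tuples $(p,q)$, $(p_i,q_i,r_i)$, $(s_i,t_i,u_i)$, $(p_i,s_i)$, $(q_i,t_i)$, $(r_i,u_i)$) gives a well-defined exponent vector, and for $m \in \mathcal{M}^{\prime}$ every frozen exponent $a_i$ vanishes. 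The whole argument then reduces to two checks: that $\Psi(\gamma)$ genuinely lies in $\mathcal{M}^{\prime}$, and that $G^{\prime} \circ \Psi = \id_Q$ together with $\Psi \circ G^{\prime} = \id_{\mathcal{M}^{\prime}}$.

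For membership in $\mathcal{M}^{\prime}$: each $m_i$ involves only two of the three generators $i_0, i_2, i_4$ (it is a product of one of $(i_0 i_2)$, $(i_0 i_4)$, $(i_2 i_4)$ with one of $i_0, i_2, i_4$, as dictated by the region), so $F_i = i_0 i_2 i_4$ cannot divide $m_i$; likewise $3^{b}$ is a power of a single one of $3_1, 3_3$, so $F_3 = 3_1 3_3$ does not divide it. Hence no $F_i$ divides $\Psi(\gamma)$, and the region decomposition is arranged so that the resulting exponents obey the normal-form constraints, whence $\Psi(\gamma) \in \mathcal{M}^{\prime}$.

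The structural fact that makes $\Psi$ an inverse is that $G^{\prime}$ splits into three independent blocks plus the central-node contribution. The coordinates group as $\{\alpha_1, \alpha_2\}$ (block $i=1$), $\{\alpha_0, \alpha_4\}$ (block $i=2$), $\{\alpha_5, \alpha_6\}$ (block $i=4$), which do not interact, together with $\alpha_3$, which receives contributions from all three blocks and from the $3$-part. Given $\gamma$, each pair of leg coordinates determines its block's exponents by which of the six regions it lies in; each block then contributes a determined amount to the $\alpha_3$-coordinate, and the leftover is absorbed by the single free exponent $b$ via $3^{b}$. A region-by-region computation verifies that $G^{\prime}(\Psi(\gamma)) = \gamma$, i.e. that $\gamma = G^{\prime}(m_1 m_2 m_4) + b\alpha_3$; conversely, since within each block the normal-form constraints single out a unique representative, the region attached to $m \in \mathcal{M}^{\prime}$ by its normal form is exactly the one reconstructed by $\Psi$ from $\gamma = G^{\prime}(m)$, giving $\Psi \circ G^{\prime} = \id$.

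The main obstacle, and the only genuine work, is the finite case analysis over the six regions $A_i, \ldots, F_i$ in each plane, checking both that the reconstructed monomial obeys the normal-form constraints and that $G^{\prime}$ returns the correct leg coordinates and the correct $\alpha_3$-contribution (with the boundary cases $a_2 = a_1$ and $a_2 = 2a_1$ treated by noting that the representative remains unique there). This is eased by the triality symmetry of the $D_4$ diagram permuting the outer nodes $1, 2, 4$: the three blocks are identical up to relabelling, so verifying a single block (say $i=1$) suffices, the other two following formally. Compared with Proposition~\ref{Prop : D_n bij}, where a single trivalent block sits at $\alpha_{n-1}$, here the same verification is carried out for three symmetric blocks meeting at $\alpha_3$.
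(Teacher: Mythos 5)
Your proposal takes essentially the same route as the paper: the paper's argument for Proposition~\ref{Prop : D_4 bij} consists precisely of constructing the inverse map $\sum_{i=0}^{6}a_i\alpha_i \mapsto m_1 m_2 m_4 \times 3^{b}$ from the six-region decomposition of the three leg-coordinate planes and verifying $\gamma = G^{\prime}(m_1 m_2 m_4) + b\alpha_3$, which is exactly your $\Psi$. Your additional remarks (membership of $\Psi(\gamma)$ in $\mathcal{M}^{\prime}$ because each $m_i$ omits one of $i_0, i_2, i_4$, the six admissible normal-form exponent patterns matching the six regions, and the triality reducing the check to a single block) are correct elaborations of the same verification the paper leaves as a routine check.
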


\begin{Thm}\label{Thm : D_4 G=dL}
    $G = \mathbf{d}\circ \iota \circ [L(-)]$. 
\end{Thm}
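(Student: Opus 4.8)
The plan is to follow the strategy of the proof of Theorem~\ref{Thm : D_n G=dL}, replacing the single ``cube'' at the trivalent node $n-1$ by the three ``arms'' of $E_6^{(1)}$ meeting at the central node $3$. First I would record the two reductions that hold verbatim in the present setting. By the very definition of $G$ one has $G(m)=G(F_i m)$ for every $i\in I_{\text{fin}}$, while Proposition~\ref{Prop : frozenKR} gives that $L(F_i)$ strongly commutes with $L(m)$, so $[L(F_i m)]=[L(F_i)][L(m)]$ and hence $\mathbf{d}\circ\iota([L(F_i m)])=\mathbf{d}(f_i\,\iota([L(m)]))=\mathbf{d}\circ\iota([L(m)])$, the last step because $f_i$ is frozen and does not occur in the denominator. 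Thus both sides of the asserted equality are invariant under multiplication by the $F_i$, and it suffices to treat $m\in\mathcal{M}^{\prime}$.

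Next I would single out node $3$ as the distinguished simple direction. Comparing $\mathcal{S}_0$ with its mutation $\mu_3(\mathcal{S}_0)$ and applying Lemma~\ref{Lem : mutation monomial} identifies the mutated variable with $L(3_3)$, and Proposition~\ref{Prop : dvector mutation} then yields $\mathbf{d}\circ\iota([L(3_1)])=-\alpha_3$ and $\mathbf{d}\circ\iota([L(3_3)])=\alpha_3$, matching $G(3^{-1})=-\alpha_3$ and $G(3^{1})=\alpha_3$. Using the factorization $F_3=3^{1}3^{-1}$ together with the two inequalities supplied by Lemma~\ref{Lem : dvector inequality}, exactly as in the $D_n$ case, I would upgrade this to the additivity $\mathbf{d}\circ\iota([L(m\cdot 3^{d})])=\mathbf{d}\circ\iota([L(m)])+d\alpha_3$ for all $m$ and $d\in\Z$, which agrees with $G(m\cdot 3^{d})=G(m)+d\alpha_3$. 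Since every $m\in\mathcal{M}^{\prime}$ factors as $m=3^{b}\,m_1 m_2 m_4$ with $m_i$ a monomial in the variables $i_0,i_2,i_4$ of the $i$-th arm (this is the normal form underlying Proposition~\ref{Prop : D_4 bij}), the additivity reduces the theorem to the case $m=m_1 m_2 m_4$.

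For a single arm the situation is a three-parameter analogue of the cube computation in the proof of Theorem~\ref{Thm : D_n G=dL}: the six possible normal forms of $m_i$ correspond to the six regions $A_i,\dots,F_i$, and for each region I would exhibit, via Lemma~\ref{Lem : mutation monomial}, a short mutation sequence confined to the two exchangeable vertices of that arm which places the relevant cluster variables (such as $L(i_0)$, $L(i_0 i_2)$, $L(i_0 i_4)$) in a common cluster; the $\mathbf{d}$-vectors are then read off from Proposition~\ref{Prop : dvector mutation} and checked against $G$. By the $S_3$ symmetry permuting the three arms, this computation need be carried out only once.

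The main obstacle is the passage from one arm to all three simultaneously. The point to verify is that any mutation performed inside one arm creates new arrows only incident to the frozen vertices $f_1,f_2,f_3,f_4$, and never between exchangeable vertices lying in distinct arms; consequently the three arm-local mutation sequences can be superposed to produce a single cluster that simultaneously contains the chosen cluster variables of all three arms. In that cluster $L(m_1 m_2 m_4)$ is a cluster monomial, hence real, the three factors strongly commute, and therefore $\mathbf{d}\circ\iota([L(m_1 m_2 m_4)])=\sum_{i}\mathbf{d}\circ\iota([L(m_i)])=\sum_i G(m_i)=G(m_1 m_2 m_4)$, where the final equality holds because $G$ is additive over the arm- and node-$3$ contributions. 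Combined with the two reductions above, this establishes $G=\mathbf{d}\circ\iota\circ[L(-)]$ on all of $\mathcal{M}$.
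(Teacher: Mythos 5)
Your overall strategy is the paper's: reduction modulo the frozen $F_i$, the base case $m=3^{\pm1}$ via $\mu_3(\mathcal{S}_0)$, additivity in the $\alpha_3$-direction via Lemma~\ref{Lem : dvector inequality}, per-arm mutation computations at the two exchangeable vertices of each arm, and the superposition of the three arm-local mutation sequences justified by the absence of arrows between distinct arms in $\mathcal{S}_0$ (the paper's operations $\psi_1,\psi_2,\psi_4$ chosen from $\{\phi,\mu_{v_i},\mu_{w_i},\mu_{w_i}\mu_{v_i},\mu_{v_i}\mu_{w_i}\}$). However, your final step contains a genuine error: you reduce to $m=m_1m_2m_4$ with each $m_i$ a monomial in $i_0,i_2,i_4$ only, and then assert that $L(m_1m_2m_4)$ is a cluster monomial in the superposed cluster. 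This fails whenever some $m_i$ involves $i_4$: the clusters reachable by mutations confined to the arm $\{v_i,w_i\}$ contain $L(i_4 3_1)$, not $L(i_4)$, so the cluster monomials one actually obtains are of the form $L(m_1m_2m_4\cdot 3_1^{\,j})$ with $j>0$. Your parenthetical list of ``relevant cluster variables'' is also wrong on this point: $L(i_0 i_4)$ is not a cluster variable at all, since by the definition of $G$ its $\mathbf{d}$-vector is $\alpha_1+2\alpha_2+\alpha_3$ (for $i=1$), a vector of squared length $4$, hence not a root and not in $\Phi_c^{\mathrm{re}}$, so Proposition~\ref{Prop : dvectorbij} excludes it.

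The gap is repairable with tools you already set up, and this is exactly how the paper proceeds: it states the reduction with a free power of node $3$, namely that it suffices to verify the claim for $m=\prod_{i=1,2,4}i_0^{a_i}i_2^{b_i}i_4^{c_i}\times 3^k$ for \emph{some} $k\in\Z$, realizes such an $L(m)$ (with the node-$3$ factor $3_1^{\,j}$ contributed by copies of $L(i_43_1)$) as a cluster monomial of the superposed cluster, checks the $\mathbf{d}$-vectors of the five listed variables per arm via Proposition~\ref{Prop : dvector mutation}, and then uses the additivity $\mathbf{d}\circ\iota([L(m\cdot 3^a)])=\mathbf{d}\circ\iota([L(m)])+a\alpha_3$ to strip the node-$3$ factor. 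A second, more minor inaccuracy: arm-local mutations do not create new arrows ``only incident to the frozen vertices'' --- they can create or modify arrows incident to the central exchangeable vertex $3$ --- but since vertex $3$ is never mutated and no arrows between exchangeable vertices of distinct arms ever arise, the commutation of the operations $\psi_1,\psi_2,\psi_4$ still holds, as the paper argues.
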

\begin{proof}
Note that for $m\in \mathcal{M}$ and $i \in I_{\text{fin}}$, 
the equalities $G(m) = G(F_i m)$ and 
$\mathbf{d}\circ \iota \circ [L(m)] = \mathbf{d}\circ \iota \circ [L(F_im)]$ hold, 
as in the proof of Theorem~\ref{Thm : D_n G=dL}. 
We can check that the statement holds 
for the case $m = 3^{\pm 1}$ by considering 
the initial seed $\mathcal{S}_0$ and $\mu_3(\mathcal{S}_0)$. 

For $m \in \mathcal{M}$ and $a \in \Z$, 
we have 
\[
\mathbf{d}\circ \iota \circ [L(m \times 3^a)]
\leq \mathbf{d}\circ \iota \circ [L(m)] 
+ a \alpha_3
\]
and 
\[
\mathbf{d}\circ \iota \circ [L(m \times 3^a)]
+a(-\alpha_3)
\geq \mathbf{d}\circ \iota \circ 
[L(m F_3^{|a|})]
=\mathbf{d}\circ \iota \circ [L(m)]
\]
by Lemma~\ref{Lem : dvector inequality}. 
This shows 
\[
\mathbf{d}\circ \iota \circ [L(m \times 3^a)]
=\mathbf{d}\circ \iota \circ [L(m)]
+a \alpha_3. 
\]
By the definition of $G$, 
for $m \in \mathcal{M}$ and $a\in \Z$, 
\[G(m \times 3^a) = 
G(m) +a \alpha_3
\]
also holds. 

By the above fact, it is enough to show that 
for any $a_i, b_i, c_i \in \Z_{\geq 0}$ for $i = 1, 2, 4$, 
there exist $k \in \Z$ 
such that 
the claim holds for 
\[
m = \prod_{i=1, 2, 4}i_0^{a_i}i_2^{b_i}i_4^{c_i}\times 3^k. 
\]

For $i=1, 2, 4$, we set 
\[
(v_i, w_i) \coloneq 
\begin{cases}
    (1, 2) \quad\text{ for } i=1, \\
    (0, 4) \quad\text{ for } i=2, \\
    (6, 5) \quad\text{ for } i=4.  
\end{cases}
\]

By Lemma~\ref{Lem : mutation monomial}, we have that the following simple modules are in the 
same monoidal cluster : 

\begin{equation}\label{eq : D_4}
\begin{aligned}
\mathcal{S}_0 &\ni& &i_2,& &i_2 i_4,& &i_0i_2i_4, \\
\mu_{v_i}(\mathcal{S}_0) &\ni& &i_2,& &i_0i_2,& &i_0i_2i_4, \\
\mu_{w_i}(\mathcal{S}_0) &\ni& &i_43_1,& &i_2i_4,& &i_0i_2i_4, \\
\mu_{w_i}\mu_{v_i}(\mathcal{S}_0) &\ni& &i_0,& &i_0i_2,& &i_0i_2i_4, \\
\mu_{v_i}\mu_{w_i}(\mathcal{S}_0) &\ni& &i_43_1,& &i_0,& &i_0i_2i_4. \\
\end{aligned}
\end{equation}

For $i = 1, 2, 4$, let $\psi_i$ be an operation chosen from the set 
$\{\phi, \mu_{v_i}, \mu_{w_i}, \mu_{w_i}\mu_{v_i}, \mu_{v_i}\mu_{w_i}\}$. 
For $i = 1, 2, 4$, let $\mathscr{L}_{\psi_i}$ denote the set of three simple modules 
listed in the right-hand side of \eqref{eq : D_4} corresponding to the cluster $\psi_i(\mathcal{S}_0)$.
Since there are no arrows in $\mathcal{S}_0$ 
between any vertex in $\{v_i, w_i\}$ and any vertex in $\{v_j, w_j\}$ 
for $i \neq j$, 
the result of applying the three operations $\psi_1, \psi_2$, and $\psi_4$ to $\mathcal{S}_0$ 
is independent of the order in which they are applied. 
This implies that for any choice of $\psi_1, \psi_2$ and $\psi_4$, 
the union $\mathscr{L}_{\psi_1}\cup\mathscr{L}_{\psi_2}\cup\mathscr{L}_{\psi_4}$ is 
contained in the monoidal seed $\psi_1\psi_2\psi_4(\mathcal{S}_0)$. 



By the calculation of $\mathbf{d}$-vectors, we can check that the claim of the theorem holds for 
all the simple modules listed in \eqref{eq : D_4}. 
By the above process, for any $a_i, b_i, c_i \in \Z_{\geq 0}$ for $i = 1, 2, 4$, 
we can realize a simple module whose form is 
\[
m = \prod_{i=1, 2, 4}i_0^{a_i}i_2^{b_i}i_4^{c_i}\times 3^k
\]
for some $k \in \Z$ 
as a cluster monomial. 
It completes the proof. 
\end{proof}

\begin{Prop}\label{Prop : D_4 delta imaginary}
    For $l \in \Z_{>0}$, the irreducible module $L(l\delta)$ is imaginary. 
\end{Prop}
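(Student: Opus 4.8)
The plan is to mimic the proof of Proposition~\ref{Prop : D_n delta imaginary} almost verbatim, replacing the $D_n^{(1)}$ data by the $E_6^{(1)}$ data of the present subsection and using the simple root $\alpha_1$. First I would record, via Proposition~\ref{Prop : -alpha_i in Phi}, that $p\delta - \alpha_1 \in \Phi_c^{\text{re}}$ for every $p \in \Z_{\geq 0}$, so that $L(p\delta - \alpha_1)$ is defined and, being the image under ${G'}^{-1}$ of a cluster variable (Proposition~\ref{Prop : dvectorbij}), is prime by Proposition~\ref{Prop : prime module}.

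The one computation specific to this case is the multiplicativity ${G'}^{-1}(p\delta - \alpha_1)\cdot {G'}^{-1}(l\delta) = {G'}^{-1}((p+l)\delta - \alpha_1)$ for $p, l \geq 1$, which I would check directly from the explicit description of ${G'}^{-1}$ preceding Proposition~\ref{Prop : D_4 bij}. The key point is that the marks of $\delta$ on all three legs of $E_6^{(1)}$ equal $(1,2)$ (reflecting the triality of $D_4/E_6$), so each leg of $p\delta$ sits on the boundary line $y = 2x$ and contributes $(i_0 i_4)^p$; subtracting $\alpha_1$ moves the first leg into region $C_1$, multiplying its monomial by $1_4^2$ and introducing a fixed factor $3_1$. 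Thus ${G'}^{-1}(p\delta - \alpha_1)$ depends on $p$ only through the exponents of $1_0 1_4, 2_0 2_4, 4_0 4_4$, which add under multiplication; in particular ${G'}^{-1}(l\delta) = (1_0 1_4)^l (2_0 2_4)^l (4_0 4_4)^l$, and the identity follows.

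Granting this, I would argue by contradiction as before. Assume $L(l\delta)$ is real. If $\fd(L(l\delta), L(p\delta - \alpha_1)) > 0$, then since $L((p+l)\delta - \alpha_1) = L({G'}^{-1}(p\delta - \alpha_1){G'}^{-1}(l\delta))$ is a simple subquotient of $L(l\delta) \otimes L(p\delta - \alpha_1)$ by Proposition~\ref{Prop : subquot} and the identity above, Proposition~\ref{Prop : delta inequality real} yields $\fd(L(l\delta), L((p+l)\delta - \alpha_1)) < \fd(L(l\delta), L(p\delta - \alpha_1))$. Iterating, the $\Z_{\geq 0}$-valued quantity $\fd(L(l\delta), L((p_0 + kl)\delta - \alpha_1))$ strictly decreases until it reaches $0$, so $\fd(L(l\delta), L(N\delta - \alpha_1)) = 0$ for some $N$. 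By Proposition~\ref{Prop : real delta}, $L(l\delta)$ and $L(N\delta - \alpha_1)$ then strongly commute, whence $L(l\delta) \otimes L(N\delta - \alpha_1) \cong L((l+N)\delta - \alpha_1)$; but $(l+N)\delta - \alpha_1 \in \Phi_c^{\text{re}}$, so the right-hand side is prime, contradicting this nontrivial factorization.

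The main obstacle — really the only nonformal step — is the multiplicativity check, since $G'$ is piecewise-defined and not a monoid homomorphism. I would be careful to choose $\alpha_1$ (a leaf of one leg) precisely so that all the arithmetic stays within the single region $C_1$ of that leg while the other two legs remain on the symmetric boundary $y = 2x$; a root whose subtraction would straddle two regions as $p$ grows would break the clean additivity and should be avoided.
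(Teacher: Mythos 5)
Your proposal is correct and takes exactly the route of the paper's own proof: the paper likewise reduces the statement to the identity ${G^{\prime}}^{-1}(p\delta-\alpha_1)\,{G^{\prime}}^{-1}(l\delta)={G^{\prime}}^{-1}((p+l)\delta-\alpha_1)$ for $p,l\in\Z_{>0}$ and then runs the argument of Proposition~\ref{Prop : D_n delta imaginary} unchanged (descent of $\fd$ via Proposition~\ref{Prop : delta inequality real}, strong commutation via Proposition~\ref{Prop : real delta}, contradiction with primeness via Propositions~\ref{Prop : dvectorbij} and \ref{Prop : prime module}). The only difference is that you carry out the multiplicativity check explicitly, obtaining ${G^{\prime}}^{-1}(p\delta-\alpha_1)=3_1\,1_4^2\,(1_01_4)^{p-1}(2_02_4)^{p}(4_04_4)^{p}$, which the paper asserts without computation ``by the description of ${G^{\prime}}^{-1}$''.
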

\begin{proof}
    Since ${G^{\prime}}^{-1}(p\delta-\alpha_1){G^{\prime}}^{-1}(l\delta)
    ={G^{\prime}}^{-1}((p+l)\delta-\alpha_1)$ for any $p, l\in \Z_{>0}$ by 
    the description of ${G^{\prime}}^{-1}$, 
    we can prove it in the same way as Proposition~\ref{Prop : D_n delta imaginary}. 
\end{proof}

\begin{Prop}\label{Prop : D_4 compatible}
    Let $\gamma_1, \gamma_2 \in \Phi_c$ be distinct roots and  
$m_1, m_2\in\Z_{\geq1}$. 
If $\gamma_1$ and $\gamma_2$ are $c$-compatible, 
then two irreducible modules $L(m_1\gamma_1)$, $L(m_2\gamma_2)$ commute strongly. 
\end{Prop}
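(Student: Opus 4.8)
The plan is to follow the proof of Proposition~\ref{Prop : D_n compatible} line by line, substituting the $E_6^{(1)}$ description of $\Lambda_c^{\mathrm{re}}$ from Proposition~\ref{Prop : E_n Lambda_c} (taken with $n=6$) for the $D_n^{(1)}$ one. If neither $\gamma_1$ nor $\gamma_2$ equals $\delta$, then by Definition~\ref{Def : c-compatible} the cluster variables $x[\gamma_1],x[\gamma_2]$ lie in a common cluster, so $L(\gamma_1)$ and $L(\gamma_2)$ belong to one monoidal seed and strongly commute; both are real (being cluster variable modules, by Proposition~\ref{Prop : dvectorbij}), hence $L(m_i\gamma_i)\cong L(\gamma_i)^{\otimes m_i}$, and Proposition~\ref{Prop : commuting family} shows $L(\gamma_1)^{\otimes m_1}\otimes L(\gamma_2)^{\otimes m_2}$ is simple. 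This disposes of the first case and, taking $m_1=m_2=1$, records that $\fd(L(\gamma),L(\gamma'))=0$ for any $c$-compatible pair of real roots, via Proposition~\ref{Prop : real delta}.

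It remains to treat $\gamma_1=\delta$, so that $\gamma_2\in\Lambda_c^{\mathrm{re}}=I_1\sqcup I_2\sqcup I_3$. The root $\gamma_2$ lies in exactly one component, hence is $c$-compatible with every element of the other two. I would choose a complementary pair $\{\eta,\delta-\eta\}$ lying inside a single component disjoint from $\gamma_2$; then $\eta$ and $\delta-\eta$ are both $c$-compatible with $\gamma_2$, so $\fd(L(\gamma_2),L(\eta))=\fd(L(\gamma_2),L(\delta-\eta))=0$ by the previous paragraph. Granting the monomial identity ${G^{\prime}}^{-1}(\eta)\,{G^{\prime}}^{-1}(\delta-\eta)={G^{\prime}}^{-1}(\delta)$, Proposition~\ref{Prop : subquot} makes $L(m_1\delta)$ a simple subquotient of $L(\eta)^{\otimes m_1}\otimes L(\delta-\eta)^{\otimes m_1}$, whence Proposition~\ref{Prop : delta ineq sum} gives $\fd(L(\gamma_2),L(m_1\delta))\le m_1\fd(L(\gamma_2),L(\eta))+m_1\fd(L(\gamma_2),L(\delta-\eta))=0$. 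Proposition~\ref{Prop : real delta} then yields that $L(m_1\delta)$ and $L(\gamma_2)$ strongly commute, and since $L(\gamma_2)$ is real we get $L(m_2\gamma_2)\cong L(\gamma_2)^{\otimes m_2}$, so the two strongly commute.

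The delicate point, and where the argument genuinely differs from type $D_n$, is the monomial identity ${G^{\prime}}^{-1}(\eta)\,{G^{\prime}}^{-1}(\delta-\eta)={G^{\prime}}^{-1}(\delta)$ together with the resulting choice of $\{\eta,\delta-\eta\}$. Using the piecewise description of ${G^{\prime}}^{-1}$ from just before Proposition~\ref{Prop : D_4 bij}, one sees that the product ${G^{\prime}}^{-1}(\eta)\,{G^{\prime}}^{-1}(\delta-\eta)$ can acquire a spurious frozen factor $F_3=3_13_3$ attached to the branch node $3$: the residual central factors $3^{b}$ of $\eta$ and of $\delta-\eta$ may carry opposite signs, and $3_13_3=F_3\neq 3^{0}$. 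This is exactly what occurs for the single pair $\{\beta_5,\delta-\beta_5\}$ making up $I_3$, so that component must be avoided. I expect to verify instead that the pairs $\{\beta_1,\delta-\beta_1\}\subset I_1$ and $\{\beta_3,\delta-\beta_3\}\subset I_2$ both satisfy the identity exactly, the point being that for each of them the central residual $b$ vanishes and no $F_3$ appears.

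Since $\gamma_2$ misses at least one of $I_1,I_2$, a good pair is always available: use $\{\beta_1,\delta-\beta_1\}$ when $\gamma_2\notin I_1$, and $\{\beta_3,\delta-\beta_3\}$ otherwise (then $\gamma_2\in I_1$, hence $\gamma_2\notin I_2$). The passage to $m_1$-th powers in the identity is harmless because ${G^{\prime}}^{-1}(\delta)^{m_1}={G^{\prime}}^{-1}(m_1\delta)$, exactly as already used in Proposition~\ref{Prop : D_4 delta imaginary}. The only real labor is therefore the finite monomial check of the two identities for $\beta_1$ and $\beta_3$; everything else transfers verbatim from the $D_n$ case.
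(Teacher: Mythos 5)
Your proposal is correct and follows the same template as the paper's own proof: reduce to the argument of Proposition~\ref{Prop : D_n compatible} via exact monomial identities
${G^{\prime}}^{-1}(\eta)^{m_1}\,{G^{\prime}}^{-1}(\delta-\eta)^{m_1}={G^{\prime}}^{-1}(m_1\delta)$
for pairs $\{\eta,\delta-\eta\}$ drawn from components of $\Lambda_c^{\mathrm{re}}$ disjoint from the one containing $\gamma_2$. The only substantive difference is the choice of pairs, and there your version is in fact more careful than the paper's. The paper takes $\{\beta_3+\beta_4,\delta-\beta_3-\beta_4\}\subset I_2$ and $\{\beta_5,\delta-\beta_5\}\subset I_3$ and asserts both identities; your suspicion about the $I_3$ pair is justified. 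Computing with the inverse map constructed in the proof of Proposition~\ref{Prop : D_4 bij} gives ${G^{\prime}}^{-1}(\beta_5)=3_1 1_4 2_4 4_4$ and ${G^{\prime}}^{-1}(\delta-\beta_5)=3_3 1_0 2_0 4_0$ (central residuals $b=-1$ and $b=+1$, exactly the opposite-sign mechanism you identified), so
\[
{G^{\prime}}^{-1}(\beta_5)^{m_1}\,{G^{\prime}}^{-1}(\delta-\beta_5)^{m_1}
=F_3^{m_1}\,{G^{\prime}}^{-1}(m_1\delta),
\]
and the paper's second displayed identity is off by the frozen factor $F_3^{m_1}$ as literally stated (the product is divisible by $F_3$, hence not even in $\mathcal{M}^{\prime}$). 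Your substitute pairs check out on the nose: ${G^{\prime}}^{-1}(\beta_1)=1_4 4_0$, ${G^{\prime}}^{-1}(\delta-\beta_1)=1_0 2_0 2_4 4_4$, ${G^{\prime}}^{-1}(\beta_3)=1_0 4_4$, ${G^{\prime}}^{-1}(\delta-\beta_3)=1_4 2_0 2_4 4_0$, with both products equal to ${G^{\prime}}^{-1}(\delta)=1_0 1_4 2_0 2_4 4_0 4_4$, and your case split ($\gamma_2\notin I_1$: use the $I_1$ pair; $\gamma_2\in I_1$, hence $\gamma_2\notin I_2$: use the $I_2$ pair) covers all of $\Lambda_c^{\mathrm{re}}$. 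For completeness: the paper's choice is also salvageable without changing pairs, since $L\bigl(F_3^{m_1}{G^{\prime}}^{-1}(m_1\delta)\bigr)\cong L(F_3)^{\otimes m_1}\otimes L(m_1\delta)$ by Proposition~\ref{Prop : frozenKR}; but passing from $\fd\bigl(L(\gamma_2),L(F_3)^{\otimes m_1}\otimes L(m_1\delta)\bigr)=0$ to $\fd\bigl(L(\gamma_2),L(m_1\delta)\bigr)=0$ requires additivity of $\fd$ over simple tensor products, which the paper does not quote among its tools — so your repair, which stays entirely within Propositions~\ref{Prop : subquot}, \ref{Prop : delta ineq sum} and \ref{Prop : real delta}, is the cleaner one.
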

\begin{proof}
    Let $\beta_3, \beta_4, \beta_5$ be the root as in Proposition~\ref{Prop : E_n Lambda_c} for 
    the case of $E_6^{(1)}$. 
    Since 
    \begin{align*}
        {G^{\prime}}^{-1}(\beta_3+\beta_4)^{m_1}
        {G^{\prime}}^{-1}(\delta-\beta_3-\beta_4)^{m_1}
        ={G^{\prime}}^{-1}(m_1\delta), \\
        {G^{\prime}}^{-1}(\beta_5)^{m_1}
        {G^{\prime}}^{-1}(\delta-\beta_5)^{m_1}
        ={G^{\prime}}^{-1}(m_1\delta)
    \end{align*}
    hold by the proof of Proposition~\ref{Prop : D_4 bij}, 
    we can prove it in the same way as Proposition~\ref{Prop : D_n compatible}. 
\end{proof}

Then, we can prove the following facts in the same way as 
Theorem~\ref{Thm : D_n tensor factorization}, Corollary~\ref{Cor : D_n factorization}, 
Corollary~\ref{Cor : D_n using delta}, 
and Corollary~\ref{Cor : D_n cluster monomial}. 

\begin{Thm}\label{Thm : D_4 tensor factorization}
    Let $m \in \mathcal{M}^{\prime}$ and 
    $G^{\prime}(m) = \sum_{\alpha \in \Phi_c} m_{\alpha}\alpha$ be a 
    $c$-cluster expansion. 
    Then, 
    \begin{align*}
    L(m) &\cong \bigotimes_{\alpha \in \Phi_c}L(m_{\alpha}\alpha)\\
    &\cong \bigotimes_{\alpha \in \Phi_c^{\text{re}}}
    L(\alpha)^{\otimes m_{\alpha}} \otimes L(m_{\delta}\delta). 
    \end{align*}
\end{Thm}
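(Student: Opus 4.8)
The plan is to reproduce verbatim the argument of Theorem~\ref{Thm : D_n tensor factorization}, transported to the $D_4 \to E_6^{(1)}$ situation, where the combinatorial inputs are now Proposition~\ref{Prop : D_4 bij} (bijectivity of $G^{\prime}$), Theorem~\ref{Thm : D_4 G=dL} (the identity $G = \mathbf{d}\circ\iota\circ[L(-)]$), and Proposition~\ref{Prop : D_4 compatible} (strong commutativity of $c$-compatible factors). The second isomorphism is immediate and requires no new work: for each $\alpha \in \Phi_c^{\text{re}}$ the module $L(\alpha)$ corresponds to a cluster variable and is therefore real, so $L(m_{\alpha}\alpha) \cong L(\alpha)^{\otimes m_{\alpha}}$, and collecting the real factors separately from the single $\delta$-factor yields the stated form. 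Thus essentially all the content lies in the first isomorphism.

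First I would form the candidate module $\bigotimes_{\alpha \in \Phi_c} L(m_{\alpha}\alpha)$ and show it is simple. Since $G^{\prime}(m) = \sum_{\alpha \in \Phi_c} m_{\alpha}\alpha$ is a $c$-cluster expansion, any two distinct roots $\alpha, \beta$ with $m_{\alpha}, m_{\beta} \neq 0$ are $c$-compatible, so Proposition~\ref{Prop : D_4 compatible} gives that $L(m_{\alpha}\alpha)$ and $L(m_{\beta}\beta)$ strongly commute; pairwise strong commutativity of the factors makes the whole tensor product simple, hence isomorphic to $L\bigl(\prod_{\alpha}{G^{\prime}}^{-1}(m_{\alpha}\alpha)\bigr)$.

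Next I would check that the monomial $\prod_{\alpha}{G^{\prime}}^{-1}(m_{\alpha}\alpha)$ lies in $\mathcal{M}^{\prime}$, i.e.\ is not divisible by any $F_i$. For this I use that $K(\mathcal{C})$ is a polynomial ring (Proposition~\ref{Prop : polynomial}), so each frozen class $f_i$ is prime; as each ${G^{\prime}}^{-1}(m_{\alpha}\alpha)$ is by construction not divisible by $F_i$, its class is not divisible by $f_i$, and primeness propagates non-divisibility to the product. Proposition~\ref{Prop : frozenKR} then translates this non-divisibility into membership in $\mathcal{M}^{\prime}$, so $G^{\prime}$ is defined on the product. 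Finally, using Theorem~\ref{Thm : D_4 G=dL} and additivity of $\mathbf{d}\circ\iota$ across a strongly commuting tensor product, I compute $G^{\prime}\bigl(\prod_{\alpha}{G^{\prime}}^{-1}(m_{\alpha}\alpha)\bigr) = \sum_{\alpha} m_{\alpha}\alpha = G^{\prime}(m)$, and injectivity of $G^{\prime}$ (Proposition~\ref{Prop : D_4 bij}) forces the two monomials to coincide, yielding $L(m) \cong \bigotimes_{\alpha}L(m_{\alpha}\alpha)$.

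The only genuinely $D_4$-specific ingredient feeding this argument is Proposition~\ref{Prop : D_4 compatible}, whose proof relies on the two explicit identities ${G^{\prime}}^{-1}(\beta_3+\beta_4)^{m_1}{G^{\prime}}^{-1}(\delta-\beta_3-\beta_4)^{m_1} = {G^{\prime}}^{-1}(m_1\delta)$ and ${G^{\prime}}^{-1}(\beta_5)^{m_1}{G^{\prime}}^{-1}(\delta-\beta_5)^{m_1} = {G^{\prime}}^{-1}(m_1\delta)$; since these are already established via the explicit description of ${G^{\prime}}^{-1}$ in Proposition~\ref{Prop : D_4 bij}, no new obstacle arises. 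The one step demanding care is therefore the $\mathcal{M}^{\prime}$-membership check, which rests on the polynomiality of $K(\mathcal{C})$ and the frozen-module factorization; everything else is a direct transcription of the $D_n$ proof.
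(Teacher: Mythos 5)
Your proposal is correct and coincides with the paper's own treatment: the paper proves Theorem~\ref{Thm : D_4 tensor factorization} by declaring it follows ``in the same way as Theorem~\ref{Thm : D_n tensor factorization},'' and your transcription — simplicity of $\bigotimes_{\alpha}L(m_{\alpha}\alpha)$ via Proposition~\ref{Prop : D_4 compatible}, the $\mathcal{M}^{\prime}$-membership check via primeness of the frozen classes $f_i$ in the polynomial ring $K(\mathcal{C})$ together with Propositions~\ref{Prop : polynomial} and~\ref{Prop : frozenKR}, and the final identification via Theorem~\ref{Thm : D_4 G=dL} plus injectivity of $G^{\prime}$ from Proposition~\ref{Prop : D_4 bij} — is exactly the $D_n$ argument with the correct $D_4$ substitutes. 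You also correctly identify Proposition~\ref{Prop : D_4 compatible} (resting on the two ${G^{\prime}}^{-1}$ identities for $\beta_3+\beta_4$ and $\beta_5$) as the only type-specific input, so no gap remains.
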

\begin{Cor}\label{Cor : D_4 factorization}
    Let $m \in \mathcal{M}$. 
    We can describe it as $m = \prod_{i=1}^{4}F_i^{a_i} \times m^{\prime}$ for 
    $a_i \in \Z_{\geq 0}, m^{\prime}\in \mathcal{M}^{\prime}$ uniquely. 
    Let $G^{\prime}(m^{\prime}) = \sum_{\alpha \in \Phi_c} m_{\alpha}\alpha$ be a $c$-cluster expansion. 
    Then, we have 
    \begin{align*}
    L(m)&\cong \bigotimes _{i=1}^{4}L(F_i)^{\otimes a_i} \otimes 
    \bigotimes_{\alpha \in \Phi_c}L(m_{\alpha}\alpha)\\
    &\cong \bigotimes _{i=1}^{4}L(F_i)^{\otimes a_i} \otimes
    \bigotimes_{\alpha \in \Phi_c^{\text{re}}}
    L(\alpha)^{\otimes m_{\alpha}} \otimes L(m_{\delta}\delta). 
    \end{align*}
\end{Cor}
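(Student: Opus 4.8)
The plan is to deduce the statement directly from Theorem~\ref{Thm : D_4 tensor factorization} by separating off the frozen contributions, following verbatim the argument given for Corollary~\ref{Cor : D_n factorization} in the type $D_n$ case.

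First I would verify the asserted unique decomposition $m = \prod_{i=1}^{4}F_i^{a_i}\times m'$ with $m'\in\mathcal{M}'$. The four monomials $F_1 = 1_0 1_2 1_4$, $F_2 = 2_0 2_2 2_4$, $F_3 = 3_1 3_3$ and $F_4 = 4_0 4_2 4_4$ are built from pairwise disjoint sets of indeterminates, so each $F_i$ divides $m$ independently of the others. Taking $a_i$ to be the largest exponent with $F_i^{a_i}\mid m$ and letting $m'$ be the resulting quotient $m\,\prod_i F_i^{-a_i}$ produces a monomial divisible by no $F_i$, hence $m'\in\mathcal{M}'$; the disjointness of the variable sets forces this to be the only such decomposition, which gives uniqueness.

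Next I would peel off the frozen factors at the level of modules. By Proposition~\ref{Prop : frozenKR} each frozen KR module $L(F_i)$ strongly commutes with \emph{every} simple module of $\mathcal{C}$. Hence, starting from the simple module $L(m')$ and tensoring on the left successively by copies of the $L(F_i)$, every intermediate tensor product is again simple: at each step the real module $L(F_i)$ strongly commutes with the current simple module, so the new tensor product is simple. Consequently $\bigotimes_{i=1}^{4}L(F_i)^{\otimes a_i}\otimes L(m')$ is simple, and its highest weight monomial is $\prod_i F_i^{a_i}\cdot m' = m$. Since simple modules are classified by their highest weight monomials, this yields $L(m)\cong\bigotimes_{i=1}^{4}L(F_i)^{\otimes a_i}\otimes L(m')$.

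Finally I would substitute the factorization of $L(m')$ supplied by Theorem~\ref{Thm : D_4 tensor factorization}, namely $L(m')\cong\bigotimes_{\alpha\in\Phi_c}L(m_\alpha\alpha)\cong\bigotimes_{\alpha\in\Phi_c^{\text{re}}}L(\alpha)^{\otimes m_\alpha}\otimes L(m_\delta\delta)$, into the isomorphism just obtained, which produces both displayed factorizations at once. There is no genuine obstacle here, as the substantive work has already been carried out in Theorem~\ref{Thm : D_4 tensor factorization}. The only point that deserves attention is that the iterative simplicity argument must remain valid even when $L(m')$ is imaginary (that is, when $m_\delta\neq 0$); this causes no difficulty, because Proposition~\ref{Prop : frozenKR} guarantees strong commutativity of each $L(F_i)$ with all simple modules, not only with the real ones.
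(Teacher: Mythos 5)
Your proposal is correct and takes essentially the same route as the paper, which likewise obtains this corollary by combining Theorem~\ref{Thm : D_4 tensor factorization} with Proposition~\ref{Prop : frozenKR} to split off the frozen KR factors. The details you add---uniqueness of the decomposition via the pairwise disjoint variable sets of $F_1,\ldots,F_4$, and the iterated-simplicity step (which implicitly uses Proposition~\ref{Prop : subquot} to identify the highest weight monomial of the simple tensor product)---are exactly the routine verifications the paper leaves implicit.
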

\begin{Cor}\label{Cor : D_4 using delta}
    Let $m \in \mathcal{M}$. 
   Then, $L(m)$ is a real module if and only if 
   $G(m) \in Q$ has a real $c$-cluster expansion. 
   It is also equivalent to $G(m) \not\in \delta + \sum_{\alpha\in \Lambda_c}\Z_{\geq 0}\alpha$. 
\end{Cor}
\begin{Cor}\label{Cor : D_4 cluster monomial}
    Let $m\in \mathcal{M}$. 
    Then, $L(m)$ is real if and only if $[L(m)]$ is a cluster monomial. 
\end{Cor}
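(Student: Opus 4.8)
The plan is to mirror the proof of Corollary~\ref{Cor : D_n cluster monomial}, since all the structural inputs needed in the $D_4$ case are already available as Corollaries~\ref{Cor : D_4 factorization} and~\ref{Cor : D_4 using delta}. For the forward implication I would argue directly from the definition of a monoidal categorification: if $[L(m)]$ is a cluster monomial, then under $\iota$ it is a product of cluster and frozen variables belonging to a single cluster, so by the strongly-commuting condition in Definition~\ref{Def : monoidal seed} together with Proposition~\ref{Prop : commuting family} its inverse image $L(m)$ is a tensor product of pairwise strongly commuting real modules and is therefore real.

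For the converse I would start from the assumption that $L(m)$ is real and apply Corollary~\ref{Cor : D_4 using delta}, which yields that the $c$-cluster expansion $G(m) = \sum_{\alpha \in \Phi_c} m_{\alpha}\alpha$ is \emph{real}, i.e.\ $m_{\delta} = 0$. Writing $m = \prod_{i=1}^{4} F_i^{a_i} \times m^{\prime}$ with $m^{\prime} \in \mathcal{M}^{\prime}$ and invoking Corollary~\ref{Cor : D_4 factorization}, I obtain
\[
L(m) \cong \bigotimes_{i=1}^{4} L(F_i)^{\otimes a_i} \otimes \bigotimes_{\alpha \in \Phi_c^{\text{re}}} L(\alpha)^{\otimes m_{\alpha}}.
\]
The remaining task is to recognise the right-hand side as a cluster monomial.

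Here I would use that, since the coefficients $m_{\alpha}$ come from a $c$-cluster expansion, the roots $\alpha \in \Phi_c^{\text{re}}$ with $m_{\alpha} \neq 0$ are pairwise $c$-compatible (Definition~\ref{Def : cluster expansion}). Hence Proposition~\ref{Prop : pairwise compatible} places the corresponding cluster variables $x[\alpha] = \iota([L(\alpha)])$ in a common cluster; adjoining the frozen variables $f_i = \iota([L(F_i)])$, which lie in every cluster, shows that all tensor factors are cluster or frozen variables of one and the same seed. Therefore $\iota([L(m)]) = \prod_{i=1}^{4} f_i^{a_i} \prod_{\alpha} x[\alpha]^{m_{\alpha}}$ is the associated cluster monomial in $\mathscr{A} = K(\mathcal{C})$.

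Once the $D_4$-specific Corollaries~\ref{Cor : D_4 using delta} and~\ref{Cor : D_4 factorization} are granted, the present statement is essentially bookkeeping, so I do not expect a serious obstacle at this level. The only point demanding care is the passage from a real $c$-cluster expansion to membership in a single cluster, which hinges precisely on the compatibility condition built into the cluster expansion and on Proposition~\ref{Prop : pairwise compatible}; the genuine substance lies one layer down, in checking that the supporting results (ultimately the imaginariness of $L(l\delta)$ from Proposition~\ref{Prop : D_4 delta imaginary} and the strong-commutation statement Proposition~\ref{Prop : D_4 compatible}) transport correctly to the $E_6^{(1)}$ combinatorics.
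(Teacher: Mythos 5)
Your proposal is correct and follows essentially the same route as the paper: the paper proves Corollary~\ref{Cor : D_4 cluster monomial} by declaring it analogous to Corollary~\ref{Cor : D_n cluster monomial}, whose proof is exactly your argument --- cluster monomials are real by the definition of monoidal categorification, and conversely a real $L(m)$ has a real $c$-cluster expansion by Corollary~\ref{Cor : D_4 using delta}, factorizes as in Corollary~\ref{Cor : D_4 factorization}, and the pairwise $c$-compatibility built into the expansion together with Proposition~\ref{Prop : pairwise compatible} places all factors (with the frozen variables adjoined) in one cluster. Your added remarks on the forward direction and on the role of Propositions~\ref{Prop : D_4 delta imaginary} and~\ref{Prop : D_4 compatible} are consistent with the paper's reduction and introduce no gap.
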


\begin{Ex}\label{Ex : D4 primeimaginary}
    By the proof of Proposition~\ref{Prop : D_4 bij}, 
    \[
    {G^{\prime}}^{-1}(\delta)= 
    1_01_42_02_44_04_4. 
    \]
    By Proposition~\ref{Prop : D_4 delta imaginary}, 
    $L({G^{\prime}}^{-1}(\delta))$ is imaginary. 
    By Corollary~\ref{Cor : D_4 using delta}, 
    if $L(m)$ is imaginary,  
    then $m$ is divisible by ${G^{\prime}}^{-1}(\delta)$. 
    So, $L(\delta)$ is a prime imaginary module. 
\end{Ex}

\subsection{$\mathscr{C}_{E_{n}}^{[1, 2n+1], \mathfrak{s}^1}(n=6,7,8)$ 
(The module category of type $E_{n}$ as a categorification of 
the $E_{n}^{(1)}$-type cluster algebra)}

Let $\mathfrak{g}$ be of type $E_n$ and $\mathcal{C}$ be the monoidal category 
$\mathscr{C}_{E_{n}}^{[1, 2n+1], \mathfrak{s}^1}$, 
where the admissible sequence $\mathfrak{s}^1$ is of Example~\ref{Ex : Cs}. 
By Theorem~\ref{Thm : affine categorification}, 
this category admits a monoidal categorification of the cluster algebra of type $E_n^{(1)}$. 

As the initial monoidal seed, 
we take the seed constructed by Theorem~\ref{Thm : affine categorification}. 
By Lemma~\ref{Lem : mutation monomial}, 
this seed (ignoring the arrows between frozen nodes) is explicitly given as follows : 

\begin{align*} 
&\raisebox{2.3em}{\scalebox{0.8}{\xymatrix@C=0.7ex@R=1ex{
&\underline{n=6}\\
 && \boxed{L(1_1 1_3)} \ar[rr] && L(1_1)\ar[dl]\\
 & \boxed{L(2_0 2_2)}  && L(2_2)\ar[ll]\\
 && \boxed{L(3_1 3_3)} \ar[rr]\ar[dr] && L(3_1)\ar[ul]\ar[dr]\ar[ddl]\\
 &\boxed{L\bigl(4_0 4_2 4_4\bigr)}
&& L\bigl( 4_2 4_4\bigr) \ar[ll] \ar[rr]
&&L( 4_2)\ar[ulll] \\
& \boxed{L(5_0 5_2)} && L(5_2)\ar[ll]\\
&& \boxed{L\bigl(6_1 6_3\bigr)} \ar[rr] && L\bigl(6_1\bigr)\ar[ul]\\
}}}
&\raisebox{2.3em}{\scalebox{0.8}{\xymatrix@C=0.7ex@R=1ex{
&\underline{n=7}\\
 & \boxed{L(1_0 1_2 1_4)} && L(1_2 1_4)\ar[ll]\ar[rr]
 &&L(1_2)\ar[dlll]\\
 && \boxed{L(2_1 2_3)} \ar[ur]\ar[rr] && L(2_1)\ar[dl]\ar[ur]\\
 & \boxed{L(3_0 3_2)} && L(3_2)\ar[ll]\\
 &&\boxed{L\bigl(4_1 4_3\bigr)} \ar[rr] 
&& L\bigl( 4_1\bigr) \ar[ul]\\
&& \boxed{L(5_1 5_3)} \ar[rr]&& L(5_1)\ar[uul]\ar[dl]\\
& \boxed{L\bigl(6_0 6_2\bigr)}  && L\bigl(6_2\bigr)\ar[ll]\\
&&\boxed{L(7_1 7_3)} \ar[rr]&& L(7_1) \ar[ul]\\
}}}
\end{align*}
\begin{align*}
&\raisebox{2.3em}{\scalebox{0.8}{\xymatrix@C=0.7ex@R=1ex{
&\underline{n=8}\\
 & \boxed{L(1_0 1_2)}  && L(1_2)\ar[ll]\\
 && \boxed{L(2_1 2_3)} \ar[rr] && L(2_1)\ar[ul]\ar[dl]\\
 & \boxed{L(3_0 3_2)}  && L(3_2)\ar[ll]\\
 &&\boxed{L\bigl(4_1 4_3\bigr)} \ar[rr] 
&& L\bigl( 4_1\bigr) \ar[ul]\\
&& \boxed{L(5_1 5_3)} \ar[rr]&& L(5_1)\ar[uul]\ar[dl]\\
& \boxed{L\bigl(6_0 6_2\bigr)}&& L\bigl(6_2\bigr)\ar[ll]\\
&&\boxed{L(7_1 7_3)} \ar[rr]\ar[dr]&& L(7_1) \ar[ul]\ar[dr]\\
&\boxed{L(8_0 8_2 8_4)}  &&L(8_2 8_4) \ar[ll]\ar[rr] &&L(8_2)\ar[ulll]\\
}}}
\end{align*}
We set $I_{\text{fin}} = \{1, \ldots, n\}$ and $I_{\text{fin}} = I_0 \sqcup I_1$, 
where 
\[I_0 \coloneq 
\begin{cases}
    \{1, 3, 6\} \quad \text{for }n=6\\
    \{2, 4, 5, 7\} \quad \text{for }n=7, 8\\
\end{cases}
\]
and 
\[
I_1 \coloneq 
\begin{cases}
    \{2, 4, 5\} \quad \text{for }n=6\\
    \{1, 3, 6\} \quad \text{for }n=7\\
    \{1, 3, 6, 8\}\quad \text{for }n=8
\end{cases}.
\]
We fix the index $k$ of the Dynkin diagrams to be 
\[
k \coloneq 
\begin{cases}
    4 \quad \text{for }n=6, \\
    1 \quad \text{for }n=7, \\
    8 \quad \text{for }n=8. \\
\end{cases}
\]

Let $\mathcal{M} \coloneq \mathcal{M}_+^{[1, 2n+1], \mathfrak{s}^1}$ be the set of monomials in 
the indeterminates
\[
\{i_1, i_3\mid i \in I_0\}\cup 
\{j_0, j_2\mid j \in I_1\}\cup 
\{k_4\}, 
\]
which parametrizes $\Irr \mathcal{C}$. 
For $i \in I_{\text{fin}}$, we define $F_i \in \mathcal{M}$ by 
\[
F_i = 
\begin{cases}
    i_1i_3 \quad &\text{if } i \in I_0, \\
    i_0i_2 \quad &\text{if } i \in I_1\setminus \{k\}, \\
    k_0 k_2 k_4 \quad &\text{if } i = k. 
\end{cases}
\]

We define the subset $\mathcal{M}^{\prime}$ of $\mathcal{M}$ by 
\[
\mathcal{M}^{\prime} \coloneq \{m \in \mathcal{M}\mid 
\text{
$m$ is not divisible by every $F_i$ for $i \in I_{\text{fin}}$
}
\}. 
\]
Let $K=[0, 2n], K^{\text{ex}}=[0,n]$, and $K^{\text{fr}}=[n+1, 2n]$. 
Let $\mathcal{S}_0 = (\{x_i\}_{i \in [0, 2n]}, \tilde{B}_0)$ 
be a seed, where $\tilde{B}_0$ is the exchange matrix associated with the above quiver. 
We write the $n$ frozen variables $x_{n+1}, \ldots, x_{2n}$ 
as $f_1, \ldots, f_{n}$, respectively.
Then by Theorem~\ref{Thm : affine categorification}, there is an isomorphism 
\begin{align*}
  \iota : K(\mathcal{C}) \overset{\sim}{\longrightarrow}\mathscr{A} \quad; 
  &\quad [L(i_1)] \mapsto x_i  &&\text{ for } i \in I_0\\
  &\quad [L(j_2)] \mapsto x_j &&\text{ for } j \in I_1\\
  &\quad [L\bigl(k_2 k_4\bigr)] \mapsto x_0 &\\
  &\quad [L(F_i)] \mapsto f_i &&\text{ for } i \in I_{\text{fin}}. 
\end{align*}
We sometimes identify $K(\mathcal{C})$ with $\mathscr{A}$ by $\iota$. 

First, we combinatorially determine the $\mathbf{d}$-vector of each simple representation 
in $\mathscr{A}$.
Let $Q = \bigoplus_{i=0}^{n}\Z\alpha_i$ be the root lattice of $E_{n}^{(1)}$. 

For each $m \in \mathcal{M}$, there is a unique expression of the form
\begin{align*}
    m = &\prod_{i\in I_{\text{fin}}}F_i^{a_i} \times \prod_{i \in I_0}i_1^{p_i}i_3^{q_i} \times 
    \prod_{j \in I_1\setminus \{k\}}j_2^{p_j}j_0^{q_j} \\
    &\times \bigl(k_2 k_4\bigr)^{p}\bigl(k_0 k_4\bigr)^{q}
    \bigl(k_0 k_2\bigr)^{r}\\
    &\times k_0^s k_2^t k_4^u, 
\end{align*}
such that each of the following tuples contains at most one nonzero entry :  

$(p_i, q_i)$ for each $i \in I_{\text{fin}}\setminus \{k\}$, 
$(p, q, r)$, $(s, t, u)$, $(p, s)$, $(q, t)$, $(r, u)$. 

Then, we define a map $G : \mathcal{M} \to Q$ by 

    \begin{align*}
    G(m) = &\sum_{i \in I_{\text{fin}}\setminus\{k\}}
    \bigl(p_i(-\alpha_i) + q_i\alpha_i\bigr) \\
    &+ p(-\alpha_0) 
    + q(\alpha_0 + 2\alpha_k + \alpha_{k+(-1)^{n+1}}) 
    + r\alpha_0\\
    &+ s(\alpha_0 + \alpha_k) + t(-\alpha_k) 
    + u(\alpha_k+\alpha_{k+(-1)^{n+1}}). 
\end{align*}

Let $G' : \mathcal{M}' \to Q$ denote the restriction of $G$ to $\mathcal{M}'$. 
By definition, for $m \in \mathcal{M}^{\prime}$, 
$G^{\prime}(m) = G(\prod_{i \in {I_{\text{fin}}}}F_i^{a_i} \times m)$ for any $a_i$. 

In this subsection, 
for $i \in I_{\text{fin}}\setminus\{k\}$ and $a \in \Z$, 
    we write 
    \[
    i^a \coloneq 
   \begin{cases}
        i_3^a \quad &\text{ if } i\in I_0, a\geq 0, \\
        i_1^{-a} \quad &\text{ if } i \in I_0, a < 0, \\
        i_0^a \quad &\text{ if } i \in I_1\setminus\{k\}, a \geq 0, \\
        i_2^{-a} \quad &\text{ if } i\in I_1\setminus\{k\}, a < 0, 
    \end{cases}
    \]
    for simplicity. 
    Then, 
    $G^{\prime}\biggl(\prod_{i \in I_{\text{fin}}\setminus\{k\}}i^{a_i}\biggr) 
    = \sum_{i \in I_{\text{fin}}\setminus\{k\}}a_i \alpha_i$ for $a_i \in \Z$. 

    For $\gamma = \sum_{i=0}^{n}a_i \alpha_i \in Q$, 
    we set a monomial $m_1 \in \mathcal{M}^{\prime}$ as 
    \[
m_1 \coloneq 
\begin{cases}
    (k_0 k_2)^{a_0-a_k}k_0^{a_k} \quad &\text{if } (a_0, a_k)\in A,\\
    (k_0 k_4)^{a_k-a_0}k_0^{2a_0-a_k} \quad &\text{if } (a_0, a_k)\in B,\\
    (k_0 k_4)^{a_0}k_4^{a_k-2a_0} \quad &\text{if } (a_0, a_k)\in C,\\
    (k_2 k_4)^{-a_0}k_4^{a_k} \quad &\text{if } (a_0, a_k)\in D,\\
    (k_2 k_4)^{-a_0}k_2^{-a_k} \quad &\text{if } (a_0, a_k)\in E,\\
    (k_0 k_2)^{a_0}k_2^{-a_k} \quad &\text{if } (a_0, a_k)\in F,\\
\end{cases}
\]
    where $A, \ldots, F$ are as follows$\colon$

    \begin{tikzpicture}[scale=1.2]

  \draw[->] (-3,0) -- (3,0) node[right] {$a_0$};
  \draw[->] (0,-3) -- (0,3) node[above] {$a_k$};

  \draw[thick]   (0,0) -- (1.5,3) node[above] {$a_k=2a_0$};
  \draw[thick]  (0, 0)  -- (3,3) node[right] {$a_k=a_0$};

  \node at (2.5,1){$A$};
  \node at (1.8,2.5) {$B$};
  \node at (0.5,2.5) {$C$};
  \node at (-2,2) {$D$};
  \node at (-2,-2) {$E$};
  \node at (2, -2) {$F$};

\end{tikzpicture}

Then, we can check that $\gamma$ can be written in the form 
\[
\gamma = G^{\prime}(m_1) + 
\sum_{\substack{i \in \{0, 1, \ldots, n\}\\i \neq 0, k}}b_i \alpha_i 
= G^{\prime}(m_1) + 
\sum_{\substack{i \in I_{\text{fin}}\\i \neq k}}b_i \alpha_i,  
\]
where $b_i \in \Z$ for $i \in I_{\text{fin}}\setminus\{k\}$. 
Then, by the construction, the map 
\[
\sum_{i=0}^{n}a_i \alpha_i \longmapsto m_1 \times 
\prod_{\substack{i \in I_{\text{fin}}\\i \neq k}}i^{b_i}. 
\]
    is the inverse map of $G^{\prime}$. 
    As a result, the following Proposition holds. 

\begin{Prop}\label{Prop : E_n bij}
    $G' : \mathcal{M}' \to Q$ is bijective. 
\end{Prop}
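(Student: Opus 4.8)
The plan is to show that the explicit map $\Psi : Q \to \mathcal{M}'$ constructed immediately before the statement, namely
\[
\Psi\Bigl(\sum_{i=0}^n a_i\alpha_i\Bigr) = m_1 \times \prod_{\substack{i \in I_{\text{fin}}\\ i \neq k}} i^{b_i},
\]
is a two-sided inverse of $G'$. This proceeds exactly as in the proof of Proposition~\ref{Prop : D_n bij}, with the trivalent vertex $k$ of the $E_n$ diagram playing the role of the vertex $n-1$ in the $D_n$ case; indeed the definition of $G$ restricted to the $k$-part and the six planar regions $A,\dots,F$ coincide with their $D_n$ counterparts after relabeling.

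First I would check that $\Psi$ is well-defined, i.e. that $\Psi(\gamma)\in\mathcal{M}'$. In each of the six regions, $m_1$ is a product involving at most two of the three indeterminates $k_0,k_2,k_4$, so it is never divisible by $F_k = k_0 k_2 k_4$; and each factor $i^{b_i}$ for $i\neq k$ is a power of a single indeterminate ($i_3$ or $i_1$ when $i\in I_0$, and $i_0$ or $i_2$ when $i\in I_1\setminus\{k\}$), hence not divisible by $F_i$. Thus $\Psi(\gamma)$ is divisible by no $F_i$ and lies in $\mathcal{M}'$.

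Next I would verify $G'\circ\Psi = \mathrm{id}_Q$. Using the already-established identity $G'\bigl(\prod_{i\neq k} i^{b_i}\bigr) = \sum_{i\neq k} b_i\alpha_i$, it suffices to confirm that in each region one has $G'(m_1) = a_0\alpha_0 + a_k\alpha_k$, the coefficient of $\alpha_{k+(-1)^{n+1}}$ vanishing so that this neighbor is recorded correctly by the $b_i$-part. Writing the $\alpha_0$-, $\alpha_k$-, and $\alpha_{k+(-1)^{n+1}}$-coefficients of $G$ on the $k$-part as $-p+q+r+s$, $2q+s-t+u$, and $q+u$ respectively, this is a direct case check: for instance in region $A$ one has $(r,s)=(a_0-a_k,\,a_k)$ with all other exponents zero, giving $\alpha_0$-coefficient $a_0$ and $\alpha_k$-coefficient $a_k$. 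The six regions partition the $(a_0,a_k)$-plane along the lines $a_k=a_0$ and $a_k=2a_0$, and each yields the correct pair, so $G'(\Psi(\gamma))=\gamma$.

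Finally, $\Psi\circ G' = \mathrm{id}_{\mathcal{M}'}$ follows from the uniqueness of the reduced expression of each $m\in\mathcal{M}$: the requirement that each of $(p,q,r)$, $(s,t,u)$, $(p,s)$, $(q,t)$, $(r,u)$ have at most one nonzero entry forces the $k$-part of $m$ into exactly one of the six regional forms, which $\Psi$ recovers, while the factors $i^{b_i}$ ($i\neq k$) are recovered from the exponents $p_i,q_i$. The one place demanding genuine care — the main obstacle — is matching these reduced monomials in $k_0,k_2,k_4$ against the six regions and checking that the correspondence is a bijection onto $\Z^2$; since this is formally identical to the verification already carried out at the trivalent vertex for Proposition~\ref{Prop : D_n bij}, I would invoke that parallel rather than repeat the bookkeeping. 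This establishes that $G'$ is bijective.
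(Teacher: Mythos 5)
Your overall strategy is exactly the paper's own: the proof of Proposition~\ref{Prop : E_n bij} consists precisely of the construction preceding the statement, namely exhibiting $\gamma \mapsto m_1 \times \prod_{i \neq k} i^{b_i}$ as a two-sided inverse of $G'$, in parallel with Proposition~\ref{Prop : D_n bij}. However, your verification of $G'\circ\Psi = \mathrm{id}_Q$ contains a concrete computational error that, as written, breaks the argument in three of the six regions. You correctly record the $\alpha_0$-, $\alpha_k$-, and $\alpha_{k+(-1)^{n+1}}$-coefficients of the $k$-part of $G$ as $-p+q+r+s$, $2q+s-t+u$, and $q+u$, but you then claim that $q+u$ vanishes for $m_1$ in every region. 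It does not: in region $B$ one has $q=a_k-a_0$ and $u=0$, so $q+u=a_k-a_0\neq 0$ in the interior; in region $C$ one has $q=a_0$, $u=a_k-2a_0$, so $q+u=a_k-a_0$; and in region $D$ one has $u=a_k\neq 0$. Only regions $A$, $E$, $F$ avoid the factors $k_0k_4$ and $k_4$, which are precisely the ones contributing $\alpha_{k+(-1)^{n+1}}$. Consequently $G'(m_1)\neq a_0\alpha_0+a_k\alpha_k$ in regions $B$, $C$, $D$, and if, as your phrasing suggests, the neighbor coordinate were ``recorded correctly by the $b_i$-part'' via $b_{k+(-1)^{n+1}}=a_{k+(-1)^{n+1}}$, then $G'(\Psi(\gamma))$ would differ from $\gamma$ in that coordinate, so your $\Psi$ would fail to be a right inverse. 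Note that the paper never asserts this vanishing, in either the $D_n$ or the $E_n$ case, so the parallel you invoke does not cover the claim.

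The repair is immediate and is what the paper's formulation actually does: the only properties of $m_1$ one needs are that the $\alpha_0$- and $\alpha_k$-coordinates of $G'(m_1)$ equal $a_0$ and $a_k$ — these being the two coordinates unreachable by the factors $i^{b_i}$ with $i \in I_{\text{fin}}\setminus\{k\}$ — and one then \emph{defines} the $b_i$ by $\sum_i b_i\alpha_i = \gamma - G'(m_1)$, which is legitimate because this difference is supported on $\{\alpha_i \mid i \in I_{\text{fin}},\, i\neq k\}$ and each $b_i$ may be an arbitrary integer (the convention $i^{b_i}$ handles both signs). In particular $b_{k+(-1)^{n+1}} = a_{k+(-1)^{n+1}} - (q+u)$, not $a_{k+(-1)^{n+1}}$. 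With this correction your remaining steps close correctly: well-definedness ($m_1$ involves at most two of $k_0,k_2,k_4$, so $\Psi(\gamma)$ is divisible by no $F_i$), and injectivity, since the $\alpha_0$- and $\alpha_k$-coordinates of $G'(m)$ depend only on the $k$-part of $m$, the reduced $k$-monomials correspond bijectively to pairs $(a_0,a_k)\in\Z^2$ via the six regional forms, and the remaining coordinates then determine the $b_i$ uniquely — the same bookkeeping as at the node $n-1$ in Proposition~\ref{Prop : D_n bij}.
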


\begin{Thm}\label{Thm : E_n G=dL}
    $G = \mathbf{d}\circ \iota \circ [L(-)]$. 
\end{Thm}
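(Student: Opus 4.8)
The plan is to follow the strategy of the proof of Theorem~\ref{Thm : D_n G=dL} verbatim, with the distinguished node $k$ now playing the role that $n-1$ played there. First I would observe that, exactly as in that proof, for any $m \in \mathcal{M}$ and $i \in I_{\text{fin}}$ one has $G(m) = G(F_i m)$ directly from the definition of $G$, while Proposition~\ref{Prop : frozenKR} gives $\mathbf{d}\circ\iota\circ[L(m)] = \mathbf{d}\circ\iota\circ[L(F_i m)]$ since each $F_i$ is a KR module commuting strongly with every simple module. This reduces the problem to monomials in $\mathcal{M}'$. I would then dispatch the base case $m = i^{\pm 1}$ for $i \in I_{\text{fin}} \setminus \{k\}$ directly, by computing the relevant $\mathbf{d}$-vectors in the initial seed $\mathcal{S}_0$ and its single mutation $\mu_i(\mathcal{S}_0)$ via Proposition~\ref{Prop : dvector mutation}.

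Next, using Lemma~\ref{Lem : dvector inequality} together with the two opposite inequalities it yields (as in the $D_n$ argument), I would deduce that
\[
\mathbf{d}\circ\iota\circ\Bigl[L\Bigl(m\prod_{i \neq k}i^{a_i}\Bigr)\Bigr]
= \mathbf{d}\circ\iota\circ[L(m)] + \sum_{i \neq k}a_i\alpha_i
\]
for all $a_i \in \Z$, and the same additivity holds tautologically for $G$ by its definition. This reduces the theorem to establishing $G = \mathbf{d}\circ\iota\circ[L(-)]$ only on monomials of the shape $k_0^a k_2^b k_4^c \prod_{i \neq k} i^{a_i}$, for arbitrary $a,b,c \in \Z_{\geq 0}$ and suitable $a_i \in \Z$.

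For the final and main step I would invoke Lemma~\ref{Lem : mutation monomial} to list, in each type $n = 6,7,8$, the simple modules occupying the clusters obtained from $\mathcal{S}_0$ by the mutations $\mu_0$, $\mu_k$, and their compositions $\mu_k\mu_0$ and $\mu_0\mu_k$ around the special triangle at node $k$. Since the local subquiver on $\{0,k\}$ together with the frozen module $L(F_k)=L(k_0 k_2 k_4)$ and the exchangeable module $L(k_2 k_4)\mapsto x_0$ has the same shape as the one at $n-1$ in the $D_n$ case, these mutations expose $k_0$, $k_2$, and $k_4$ individually as cluster variables in appropriate clusters. Computing their $\mathbf{d}$-vectors by Proposition~\ref{Prop : dvector mutation}, checking agreement with $G$, and using that modules in a common cluster strongly commute (so that their products are cluster monomials with additive $\mathbf{d}$-vectors), one realizes every $L(k_0^a k_2^b k_4^c \prod_{i\neq k} i^{a_i})$ as a cluster monomial whose $\mathbf{d}$-vector matches $G$. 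The main obstacle is purely bookkeeping: one must confirm that the local mutation pattern at node $k$ genuinely reproduces the five-cluster list of the $D_n$ argument for each of $n=6,7,8$ separately — in particular that the arrow configuration among $0$, $k$, $L(F_k)$, and the adjacent frozen nodes is the same — after which the $\mathbf{d}$-vector computations are routine.
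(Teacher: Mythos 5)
Your proposal matches the paper's proof essentially verbatim: the same reduction via $G(F_i m)=G(m)$ and Proposition~\ref{Prop : frozenKR}, the same base case $i^{\pm 1}$, the same additivity argument from Lemma~\ref{Lem : dvector inequality}, and the same five clusters $\mathcal{S}_0$, $\mu_0(\mathcal{S}_0)$, $\mu_k(\mathcal{S}_0)$, $\mu_k\mu_0(\mathcal{S}_0)$, $\mu_0\mu_k(\mathcal{S}_0)$ whose $\mathbf{d}$-vectors are checked via Proposition~\ref{Prop : dvector mutation}. One small imprecision: the mutation at $k$ produces the cluster variable module $L\bigl(k_4\,(k+(-1)^{n+1})_1\bigr)$ rather than $L(k_4)$ alone, but since your reduction only requires realizing $L\bigl(k_0^a k_2^b k_4^c \prod_{i\neq k} i^{a_i}\bigr)$ for \emph{some} $a_i \in \Z$, the extra factor is harmlessly absorbed, exactly as in the paper.
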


\begin{proof}
Note that for $m\in \mathcal{M}$ and $i \in I_{\text{fin}}$, 
the equalities $G(m) = G(F_i m)$ and 
$\mathbf{d}\circ \iota \circ [L(m)] = \mathbf{d}\circ \iota \circ [L(F_im)]$ hold, 
as in the proof of Theorem~\ref{Thm : D_n G=dL}. 
We can check that the statement holds 
for the case $m = i^{\pm 1}$ for $i \in I_{\text{fin}}\setminus\{k\}$ by considering 
the initial seed $\mathcal{S}_0$ and $\mu_i(\mathcal{S}_0)$. 

For $m \in \mathcal{M}$ and $a_i \in \Z$ for $i \in I_{\text{fin}}\setminus\{k\}$, 
we have 
\[
\mathbf{d}\circ \iota \circ [L(m \prod_{\substack{i \in I_{\text{fin}}\\i \neq k}}i^{a_i})]
\leq \mathbf{d}\circ \iota \circ [L(m)] 
+ \sum_{\substack{i \in I_{\text{fin}}\\i \neq k}}a_i\alpha_i
\]
and 
\[
\mathbf{d}\circ \iota \circ [L(m \prod_{\substack{i \in I_{\text{fin}}\\i \neq k}}i^{a_i})]
+\sum_{\substack{i \in I_{\text{fin}}\\i \neq k}}a_i(-\alpha_i)
\geq \mathbf{d}\circ \iota \circ 
[L(m \prod_{\substack{i \in I_{\text{fin}}\\i \neq k}}F_i^{a_i})]
=\mathbf{d}\circ \iota \circ [L(m)]
\]
by Lemma~\ref{Lem : dvector inequality}. 
This shows 
\[
\mathbf{d}\circ \iota \circ [L(m \prod_{\substack{i \in I_{\text{fin}}\\i \neq k}}i^{a_i})]
=\mathbf{d}\circ \iota \circ [L(m)]
+\sum_{\substack{i \in I_{\text{fin}}\\i \neq k}}a_i\alpha_i. 
\]
By the definition of $G$, 
for $m \in \mathcal{M}$ and $a_i \in \Z$ for $i \in I_{\text{fin}}\setminus\{k\}$, 
\[G(m \prod_{\substack{i \in I_{\text{fin}}\\i \neq k}}i^{a_i}) = 
G(m) +\sum_{\substack{i \in I_{\text{fin}}\\i \neq k}}a_i\alpha_i
\]
also holds. 

By the above fact, it is enough to show that 
for any $a, b, c \in \Z_{\geq 0}$, 
there exist integers $a_i \in \Z$ for $i \in I_{\text{fin}}\setminus\{k\}$ 
such that 
the claim of this theorem holds for 
\[m = k_0^{a} k_2^{b} k_4^{c}
\prod_{\substack{i \in I_{\text{fin}}\\i \neq k}}i^{a_i}. 
\]

By Lemma~\ref{Lem : mutation monomial}, we have that the following simple modules are in the 
same monoidal cluster : 
\begin{align*}
\mathcal{S}_0 &\ni& &k_2,& &k_2 k_4,& 
&k_0 k_2 k_4, \\
\mu_0(\mathcal{S}_0) &\ni& &k_2,& &k_0 k_2,& 
&k_0 k_2 k_4, \\
\mu_k(\mathcal{S}_0) &\ni& &k_4(k+(-1)^{n+1})_1,& &k_2 k_4,& 
&k_0 k_2 k_4, \\
\mu_k\mu_0(\mathcal{S}_0) &\ni& &k_0,& &k_0 k_2,& 
&k_0 k_2 k_4, \\
\mu_0\mu_k(\mathcal{S}_0) &\ni& &k_4(k+(-1)^{n+1})_1,& &k_0,& 
&k_0 k_2 k_4, \\
\end{align*}

By the calculation of $\mathbf{d}$-vectors, we can check that the claim of the theorem holds for 
the above simple modules. 
Since modules in the same cluster strongly commute mutually, 
the claim of the theorem also holds for 
the simple modules corresponding to a cluster monomial of the above clusters. 

By the above process, for any $a, b, c \in \Z_{\geq 0}$, 
we can realize a simple module whose form is 
\[
L(k_0^{a} k_2^{b} k_4^{c}
\prod_{\substack{i \in I_{\text{fin}}\\i \neq k}}i^{a_i})
\]
for some $a_i \in \Z$ as a cluster monomial. 
It completes the proof. 
\end{proof}

\begin{Prop}\label{Prop : E_n delta imaginary}
    For $l \in \Z_{>0}$, the irreducible module $L(l\delta)$ is imaginary. 
\end{Prop}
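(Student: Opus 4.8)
The plan is to reproduce the argument of Proposition~\ref{Prop : D_n delta imaginary} almost verbatim, the only type-specific ingredient being a multiplicativity property of the inverse map ${G^{\prime}}^{-1}$ along the ray $\{p\delta - \alpha_1\}_{p \geq 0}$. First I would observe, via Proposition~\ref{Prop :  -alpha_i in Phi}, that $p\delta - \alpha_1 \in \Phi_c^{\text{re}}$ for every $p \in \Z_{\geq 0}$. Hence by Proposition~\ref{Prop : dvectorbij} each such root is the $\mathbf{d}$-vector of a cluster variable, and by Proposition~\ref{Prop : prime module} the corresponding module $L(p\delta - \alpha_1)$ is prime (and real, being a cluster-variable module).

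The key combinatorial step is to extract from the explicit description of ${G^{\prime}}^{-1}$ constructed in the proof of Proposition~\ref{Prop : E_n bij} the identity
\[
{G^{\prime}}^{-1}(p\delta - \alpha_1)\,{G^{\prime}}^{-1}(l\delta) = {G^{\prime}}^{-1}\bigl((p+l)\delta - \alpha_1\bigr),
\]
exactly as in the $D_n$ and $D_4$ cases. This is where I expect the main effort to lie: one must compute the coordinates $(a_0, a_k)$ and the auxiliary integers $b_i$ attached to $\delta$ and to $p\delta-\alpha_1$, determine in which of the regions $A,\dots,F$ they fall, and check that adding $l$ copies of $\delta$ leaves one in the same region so that the piecewise formula for $m_1$ behaves multiplicatively while the $\prod_{i\neq k} i^{b_i}$ part simply adds. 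Since $\delta$ lies in the component where $m_1$ is a power of a single monomial supported at node $k$, and $(p+l)\delta-\alpha_1$ differs from $p\delta-\alpha_1$ only by shifting this factor, the identity should follow by direct inspection for each of $n=6,7,8$.

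With this identity established, the contradiction argument carries over without change. Assuming $L(l\delta)$ is real, whenever $\fd\bigl(L(l\delta), L(p\delta-\alpha_1)\bigr) > 0$ Proposition~\ref{Prop : subquot} together with Proposition~\ref{Prop : delta inequality real} gives $\fd\bigl(L(l\delta), L((p+l)\delta-\alpha_1)\bigr) < \fd\bigl(L(l\delta), L(p\delta-\alpha_1)\bigr)$; iterating produces some $N \in \Z_{>0}$ with $\fd\bigl(L(l\delta), L(N\delta-\alpha_1)\bigr) = 0$. Then Proposition~\ref{Prop : real delta} yields $L(l\delta)\otimes L(N\delta-\alpha_1) \cong L\bigl((l+N)\delta-\alpha_1\bigr)$, contradicting the primeness of the right-hand side noted above. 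The same reasoning works if one replaces $\alpha_1$ by any fixed simple root, so no case distinction among $n=6,7,8$ is needed beyond the verification of the displayed identity.
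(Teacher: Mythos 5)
Your proposal is correct and follows the paper's own proof essentially verbatim: the paper likewise reduces everything to the identity ${G^{\prime}}^{-1}(p\delta-\alpha_1)\,{G^{\prime}}^{-1}(l\delta)={G^{\prime}}^{-1}\bigl((p+l)\delta-\alpha_1\bigr)$, read off from the explicit description of ${G^{\prime}}^{-1}$ in Proposition~\ref{Prop : E_n bij}, and then invokes the contradiction argument of Proposition~\ref{Prop : D_n delta imaginary} (descent of $\fd$ via Propositions~\ref{Prop : subquot} and~\ref{Prop : delta inequality real}, strong commutation via Proposition~\ref{Prop : real delta}, and primeness of $L((l+N)\delta-\alpha_1)$ via Propositions~\ref{Prop : dvectorbij} and~\ref{Prop : prime module}). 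Your region-by-region verification of the identity is exactly the content the paper compresses into the phrase ``by the description of ${G^{\prime}}^{-1}$,'' so there is no substantive difference.
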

\begin{proof}
    Since ${G^{\prime}}^{-1}(p\delta-\alpha_1){G^{\prime}}^{-1}(l\delta)
    ={G^{\prime}}^{-1}((p+l)\delta-\alpha_1)$ for any $p, l\in \Z_{>0}$ by 
    the description of ${G^{\prime}}^{-1}$, 
    we can prove it in the same way as Proposition~\ref{Prop : D_n delta imaginary}. 
\end{proof}

\begin{Prop}\label{Prop : E_n compatible}
    Let $\gamma_1, \gamma_2 \in \Phi_c$ be distinct roots and  
$m_1, m_2\in\Z_{\geq1}$. 
If $\gamma_1$ and $\gamma_2$ are $c$-compatible, 
then two irreducible modules $L(m_1\gamma_1)$, $L(m_2\gamma_2)$ commute strongly. 
\end{Prop}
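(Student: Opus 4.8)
The plan is to mirror the proof of Proposition~\ref{Prop : D_n compatible}, adapting it to the three-component description of $\Lambda_c^{\mathrm{re}}$ in Proposition~\ref{Prop : E_n Lambda_c}. First I would dispose of the case where both $\gamma_1$ and $\gamma_2$ differ from $\delta$. There $c$-compatibility means that $x[\gamma_1]$ and $x[\gamma_2]$ lie in a common cluster, so $L(\gamma_1)$ and $L(\gamma_2)$ belong to one monoidal seed and strongly commute. As both are real, Proposition~\ref{Prop : commuting family} gives $L(m_i\gamma_i)\cong L(\gamma_i)^{\otimes m_i}$, and the strong commutativity of the tensor powers is then immediate.

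It remains to treat the case in which one of the roots, say $\gamma_1$, equals $\delta$; by Definition~\ref{Def : c-compatible} this forces $\gamma_2\in\Lambda_c^{\mathrm{re}}$. The goal is to show $\fd(L(\gamma_2),L(m_1\delta))=0$, after which Proposition~\ref{Prop : real delta} (using that $L(\gamma_2)$ is real) yields strong commutativity of $L(m_1\delta)$ and $L(\gamma_2)$, hence of $L(m_1\gamma_1)$ and $L(m_2\gamma_2)\cong L(\gamma_2)^{\otimes m_2}$. To produce this vanishing I would choose a root $\beta\in\Lambda_c^{\mathrm{re}}$ lying in a component of Proposition~\ref{Prop : E_n Lambda_c} other than the one containing $\gamma_2$. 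Using $\sum_{i=n-3}^{n-2}\beta_i\in I_2$ and $\beta_{n-1}\in I_3$ as the two candidates covers every case: if $\gamma_2\notin I_2$ I take the former, and if $\gamma_2\in I_2$ then $\gamma_2\notin I_3$ so I take the latter. Since each component contains both $\beta$ and $\delta-\beta$, and roots in distinct components are $c$-compatible, both $\beta$ and $\delta-\beta$ are $c$-compatible with $\gamma_2$, giving $\fd(L(\gamma_2),L(\beta))=0$ and $\fd(L(\gamma_2),L(\delta-\beta))=0$.

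The computational heart is the factorization ${G'}^{-1}(\beta)^{m_1}\,{G'}^{-1}(\delta-\beta)^{m_1}={G'}^{-1}(m_1\delta)$ for each chosen $\beta$, which I would extract from the explicit description of ${G'}^{-1}$ in the proof of Proposition~\ref{Prop : E_n bij}, exactly as was done for $E_6^{(1)}$ in Proposition~\ref{Prop : D_4 compatible}. Granting it, $L(m_1\delta)$ is a subquotient of $L(\beta)^{\otimes m_1}\otimes L(\delta-\beta)^{\otimes m_1}$ by Proposition~\ref{Prop : subquot}, so Proposition~\ref{Prop : delta ineq sum} gives
\[
\fd(L(\gamma_2),L(m_1\delta))\le m_1\fd(L(\gamma_2),L(\beta))+m_1\fd(L(\gamma_2),L(\delta-\beta))=0,
\]
completing the argument.

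The only genuine obstacle I anticipate is verifying the factorization identity for the two representatives $\beta$, which must be checked against the case-by-case formulas for the $\beta_i$ in Proposition~\ref{Prop : E_n Lambda_c} for each of $n=6,7,8$ together with the corresponding explicit inverse ${G'}^{-1}$; once this is in place the remainder is a direct transcription of the $D_n$ argument.
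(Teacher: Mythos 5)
Your proposal is correct and follows essentially the same route as the paper: the paper's proof likewise reduces to the argument of Proposition~\ref{Prop : D_n compatible}, using precisely the two representatives $\beta_{n-3}+\beta_{n-2}\in I_2$ and $\beta_{n-1}\in I_3$ from Proposition~\ref{Prop : E_n Lambda_c} and the factorization identities ${G^{\prime}}^{-1}(\beta)^{m_1}{G^{\prime}}^{-1}(\delta-\beta)^{m_1}={G^{\prime}}^{-1}(m_1\delta)$ extracted from the proof of Proposition~\ref{Prop : E_n bij}. The only difference is presentational: you spell out the subquotient and $\textfrak{d}$-invariant estimates in full, whereas the paper states the identities and defers the rest to the $D_n$ case.
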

\begin{proof}
    Let $\beta_{n-3}, \beta_{n-2}, \beta_{n-1}$ be the root as in Proposition~\ref{Prop : E_n Lambda_c} for 
    the case of $E_{n}^{(1)}$. 
    Since 
    \begin{align*}
        {G^{\prime}}^{-1}(\beta_{n-3}+\beta_{n-2})^{m_1}
        {G^{\prime}}^{-1}(\delta-\beta_{n-3}-\beta_{n-2})^{m_1}
        ={G^{\prime}}^{-1}(m_1\delta), \\
        {G^{\prime}}^{-1}(\beta_{n-1})^{m_1}{G^{\prime}}^{-1}(\delta-\beta_{n-1})^{m_1}
        ={G^{\prime}}^{-1}(m_1\delta)
    \end{align*}
    hold by the proof of Proposition~\ref{Prop : E_n bij}, 
    we can prove it in the same way as Proposition~\ref{Prop : D_n compatible}. 
\end{proof}

Then, we can prove the following facts in the same way as 
Theorem~\ref{Thm : D_n tensor factorization}, Corollary~\ref{Cor : D_n factorization}, 
Corollary~\ref{Cor : D_n using delta}, 
and Corollary~\ref{Cor : D_n cluster monomial}. 

\begin{Thm}\label{Thm : E_n tensor factorization}
    Let $m \in \mathcal{M}^{\prime}$ and 
    $G^{\prime}(m) = \sum_{\alpha \in \Phi_c} m_{\alpha}\alpha$ be a 
    $c$-cluster expansion. 
    Then, 
    \begin{align*}
    L(m) &\cong \bigotimes_{\alpha \in \Phi_c}L(m_{\alpha}\alpha)\\
    &\cong \bigotimes_{\alpha \in \Phi_c^{\text{re}}}
    L(\alpha)^{\otimes m_{\alpha}} \otimes L(m_{\delta}\delta). 
    \end{align*}
\end{Thm}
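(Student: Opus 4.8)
The plan is to follow the proof of Theorem~\ref{Thm : D_n tensor factorization} essentially verbatim, substituting the $E_n^{(1)}$-specific inputs proved above for their type-$D$ analogues. The second isomorphism requires no real work: for each $\alpha \in \Phi_c^{\text{re}}$ the module $L(\alpha)$ corresponds to a cluster variable (Proposition~\ref{Prop : dvectorbij} together with Theorem~\ref{Thm : E_n G=dL}), hence is real, so $L(m_\alpha\alpha) \cong L(\alpha)^{\otimes m_\alpha}$ for $m_\alpha \in \Z_{\geq 0}$; splitting off the unique $\delta$-summand of the $c$-cluster expansion then gives the displayed form. It therefore suffices to establish the first isomorphism.

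For the first isomorphism I would argue as follows. In a $c$-cluster expansion any two roots $\alpha, \beta \in \Phi_c$ occurring with nonzero coefficient are $c$-compatible, so Proposition~\ref{Prop : E_n compatible} shows that the family $\{L(m_\alpha\alpha)\}$ is pairwise strongly commuting. Consequently $\bigotimes_{\alpha \in \Phi_c} L(m_\alpha\alpha)$ is simple, and it is isomorphic to $L\bigl(\prod_{\alpha \in \Phi_c}{G^{\prime}}^{-1}(m_\alpha\alpha)\bigr)$.

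Next I would check that this product monomial lies in $\mathcal{M}^{\prime}$, copying the type-$D$ argument. Since $K(\mathcal{C})$ is a polynomial ring (Proposition~\ref{Prop : polynomial}), each frozen class $f_i$ is prime. Each factor ${G^{\prime}}^{-1}(m_\alpha\alpha)$ belongs to $\mathcal{M}^{\prime}$ and hence is not divisible by any $F_i$, so by Proposition~\ref{Prop : frozenKR} its class $[L(m_\alpha\alpha)]$ is not divisible by $f_i$; primeness of $f_i$ then forces the product to be indivisible by every $f_i$ as well, whence the product monomial lies in $\mathcal{M}^{\prime}$ and $G^{\prime}$ is defined on it. Applying Theorem~\ref{Thm : E_n G=dL} I would then compute
\[
G^{\prime}\Bigl(\prod_{\alpha}{G^{\prime}}^{-1}(m_\alpha\alpha)\Bigr)
= \mathbf{d}\circ\iota\Bigl(\bigl[\bigotimes_\alpha L(m_\alpha\alpha)\bigr]\Bigr)
= \sum_\alpha \mathbf{d}\circ\iota\bigl([L(m_\alpha\alpha)]\bigr)
= \sum_\alpha m_\alpha\alpha
= G^{\prime}(m),
\]
and conclude from the injectivity of $G^{\prime}$ (Proposition~\ref{Prop : E_n bij}) that $\prod_\alpha {G^{\prime}}^{-1}(m_\alpha\alpha) = m$, which yields $L(m) \cong \bigotimes_\alpha L(m_\alpha\alpha)$.

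The only step carrying genuine content is the simplicity of the full tensor product when the coefficient $m_\delta$ is nonzero, i.e. the invocation of Proposition~\ref{Prop : E_n compatible} for a pair $(\delta, \gamma)$ with $\gamma \in \Lambda_c^{\text{re}}$. That proposition was itself proved using the explicit decomposition of $\Lambda_c^{\text{re}}$ in Proposition~\ref{Prop : E_n Lambda_c} and the identities ${G^{\prime}}^{-1}(\beta)^{m}\,{G^{\prime}}^{-1}(\delta-\beta)^{m} = {G^{\prime}}^{-1}(m\delta)$, so once those are in hand no obstacle beyond the type-$D$ case remains and the rest of the argument is purely formal.
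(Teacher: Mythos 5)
Your proposal is correct and is essentially the paper's own proof: the paper establishes this theorem by repeating the argument of Theorem~\ref{Thm : D_n tensor factorization} with the type-$E$ inputs (Proposition~\ref{Prop : E_n compatible}, Theorem~\ref{Thm : E_n G=dL}, Proposition~\ref{Prop : E_n bij}, together with Propositions~\ref{Prop : polynomial} and~\ref{Prop : frozenKR}), which is exactly the substitution you carry out, including the indivisibility-by-$f_i$ step and the $G^{\prime}$-computation concluding by injectivity. The one point left implicit in both your write-up and the paper---that pairwise strong commutation yields simplicity of the full tensor product even with the single (possibly imaginary) factor $L(m_{\delta}\delta)$---is resolved the same way in either case, by adding the real factors one at a time via Propositions~\ref{Prop : delta ineq sum} and~\ref{Prop : real delta}.
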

\begin{Cor}\label{Cor : E_n factorization}
    Let $m \in \mathcal{M}$. 
    We can describe it as $m = \prod_{i=1}^{n}F_i^{a_i} \times m^{\prime}$ for 
    $a_i \in \Z_{\geq 0}, m^{\prime}\in \mathcal{M}^{\prime}$ uniquely. 
    Let $G^{\prime}(m^{\prime}) = \sum_{\alpha \in \Phi_c} m_{\alpha}\alpha$ be a $c$-cluster expansion. 
    Then, we have 
    \begin{align*}
    L(m)&\cong \bigotimes _{i=1}^{n}L(F_i)^{\otimes a_i} \otimes 
    \bigotimes_{\alpha \in \Phi_c}L(m_{\alpha}\alpha)\\
    &\cong \bigotimes _{i=1}^{n}L(F_i)^{\otimes a_i} \otimes
    \bigotimes_{\alpha \in \Phi_c^{\text{re}}}
    L(\alpha)^{\otimes m_{\alpha}} \otimes L(m_{\delta}\delta). 
    \end{align*}
\end{Cor}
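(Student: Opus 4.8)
The plan is to reduce this to the already-established Theorem~\ref{Thm : E_n tensor factorization} together with the strong commutativity of the frozen KR modules, in exactly the same way that Corollary~\ref{Cor : D_n factorization} was deduced in the $D_n$ case.

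First I would verify that the decomposition $m = \prod_{i=1}^n F_i^{a_i}\times m'$ with $a_i\in\Z_{\geq 0}$ and $m'\in\mathcal{M}'$ exists and is unique. This is a purely combinatorial observation: each $F_i$ is a monomial in the indeterminates carrying the Dynkin label $i$ alone (namely $i_1i_3$, $i_0i_2$, or $k_0k_2k_4$), so the monomials $F_1,\dots,F_n$ have pairwise disjoint supports. Consequently, for each $i$ one may take $a_i$ to be the largest exponent for which $F_i^{a_i}$ divides $m$; the quotient $m' = m/\prod_i F_i^{a_i}$ then has nonnegative exponents, lies in $\mathcal{M}$, and by maximality of the $a_i$ is divisible by no $F_i$, i.e. $m'\in\mathcal{M}'$. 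Disjointness of the supports also forces the $a_i$, and hence $m'$, to be unique.

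Next I would peel off the frozen factors. By Proposition~\ref{Prop : frozenKR} each $L(F_i)$ is a frozen KR module and therefore strongly commutes with every simple module in $\mathcal{C}$, in particular with every other $L(F_j)$ and with $L(m')$. Since KR modules are real, Proposition~\ref{Prop : commuting family} shows that $\bigotimes_{i=1}^n L(F_i)^{\otimes a_i}\otimes L(m')$ is a simple module. On the other hand, Proposition~\ref{Prop : subquot} tells us that the simple module $L\bigl(\prod_i F_i^{a_i}\cdot m'\bigr)=L(m)$ is a subquotient of this tensor product; a simple module occurring as a subquotient of a simple module must be isomorphic to it, so $L(m)\cong \bigotimes_{i=1}^n L(F_i)^{\otimes a_i}\otimes L(m')$. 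Finally I would invoke Theorem~\ref{Thm : E_n tensor factorization} for $m'\in\mathcal{M}'$, which gives $L(m')\cong\bigotimes_{\alpha\in\Phi_c}L(m_\alpha\alpha)\cong\bigotimes_{\alpha\in\Phi_c^{\text{re}}}L(\alpha)^{\otimes m_\alpha}\otimes L(m_\delta\delta)$ with $G'(m')=\sum_\alpha m_\alpha\alpha$. Substituting this into the previous isomorphism yields both displayed formulas.

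The content of the corollary is essentially bookkeeping on top of Theorem~\ref{Thm : E_n tensor factorization}, whose combinatorial input (the bijection $G'$, the $\mathbf{d}$-vector identification, and $c$-compatibility) does all the real work. The only genuine ingredient here is the strong commutation of the frozen modules with all simples, which is precisely Proposition~\ref{Prop : frozenKR}; once that is granted, identifying the tensor product of the strongly commuting family with the single simple module at the product highest weight monomial is forced by Proposition~\ref{Prop : subquot}. I therefore expect no substantial obstacle, the only point to state carefully being the disjoint-support argument underlying uniqueness of the $F_i$-decomposition.
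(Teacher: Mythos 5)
Your argument is correct and is essentially the paper's own: the paper deduces this corollary, exactly as in the $D_n$ case (Corollary~\ref{Cor : D_n factorization}), directly from Theorem~\ref{Thm : E_n tensor factorization} together with Proposition~\ref{Prop : frozenKR}, and your disjoint-support justification of the unique decomposition $m=\prod_{i=1}^{n}F_i^{a_i}\times m^{\prime}$ makes explicit the combinatorial fact the paper leaves implicit. One small imprecision: Proposition~\ref{Prop : commuting family} requires \emph{all} members of the family to be real, and $L(m^{\prime})$ may be imaginary, so that proposition only gives simplicity of $\bigotimes_{i=1}^{n}L(F_i)^{\otimes a_i}$; the simplicity of the full product $\bigotimes_{i=1}^{n}L(F_i)^{\otimes a_i}\otimes L(m^{\prime})$ instead follows by iterating Proposition~\ref{Prop : frozenKR} (each $L(F_i)$ strongly commutes with every simple module of the category, so tensoring a simple module with $L(F_i)$ again yields a simple module), after which your identification of the result with $L(m)$ via Proposition~\ref{Prop : subquot} goes through unchanged.
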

\begin{Cor}\label{Cor : E_n using delta}
    Let $m \in \mathcal{M}$. 
   Then, $L(m)$ is a real module if and only if 
   $G(m) \in Q$ has a real $c$-cluster expansion. 
   It is also equivalent to $G(m) \not\in \delta + \sum_{\alpha\in \Lambda_c}\Z_{\geq 0}\alpha$. 
\end{Cor}
\begin{Cor}\label{Cor : E_n clyster monomial}
    Let $m\in \mathcal{M}$. 
    Then, $L(m)$ is real if and only if $[L(m)]$ is a cluster monomial. 
\end{Cor}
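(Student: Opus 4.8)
The plan is to reproduce verbatim the argument used for type $D_n$ in Corollary~\ref{Cor : D_n cluster monomial}, since every ingredient it relies on has now been established in the $E_n^{(1)}$ setting. The forward implication requires no work: if $[L(m)]$ is a cluster monomial, then $L(m)$ is real directly from the definition of monoidal categorification, the preimage of a cluster monomial being a tensor product of a strongly commuting family of real modules, hence real by Proposition~\ref{Prop : commuting family}.

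For the converse I would assume $L(m)$ is real and first invoke Corollary~\ref{Cor : E_n using delta} to conclude that the $c$-cluster expansion $G(m) = \sum_{\alpha \in \Phi_c} m_\alpha \alpha$ is real, i.e. $m_\delta = 0$. Writing $m = \prod_{i=1}^n F_i^{a_i} \times m^{\prime}$ with $m^{\prime} \in \mathcal{M}^{\prime}$ and feeding the real expansion into Corollary~\ref{Cor : E_n factorization}, the term $L(m_\delta \delta)$ becomes trivial, leaving
\[
L(m) \cong \bigotimes_{i=1}^{n} L(F_i)^{\otimes a_i} \otimes \bigotimes_{\alpha \in \Phi_c^{\text{re}}} L(\alpha)^{\otimes m_\alpha},
\]
a tensor product built only from the frozen KR-modules $L(F_i)$ and the real modules $L(\alpha)$ attached to genuine real roots.

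The last step is to recognize this as a cluster monomial in $K(\mathcal{C}) = \mathscr{A}$. Because the expansion is a real $c$-cluster expansion, the condition $m_\alpha m_\beta = 0$ for $c$-incompatible $\alpha, \beta$ forces the roots $\alpha \in \Phi_c^{\text{re}}$ with $m_\alpha \neq 0$ to be pairwise $c$-compatible, so Proposition~\ref{Prop : pairwise compatible} places all the cluster variables $x[\alpha]$ in a single cluster; adjoining the frozen variables $f_i$, which lie in every cluster, keeps the whole collection in one cluster. Hence $[L(m)] = \prod_i f_i^{a_i} \prod_\alpha x[\alpha]^{m_\alpha}$ is a cluster monomial, completing the equivalence. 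I do not expect a genuine obstacle here: the substantive content (the explicit $\mathbf{d}$-vector map $G$ of Theorem~\ref{Thm : E_n G=dL}, the factorization of Corollary~\ref{Cor : E_n factorization}, and the imaginarity of $L(\delta)$ from Proposition~\ref{Prop : E_n delta imaginary}) is already in place, and the only point needing care is the translation between the combinatorial compatibility condition of the real expansion and the cluster-theoretic statement that the corresponding variables share a cluster.
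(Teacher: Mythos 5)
Your proposal is correct and follows essentially the same route as the paper: the paper proves Corollary~\ref{Cor : E_n clyster monomial} by declaring it identical to the argument for Corollary~\ref{Cor : D_n cluster monomial}, which is exactly the chain you give (definition of monoidal categorification for the forward direction; then realness $\Rightarrow$ real $c$-cluster expansion via Corollary~\ref{Cor : E_n using delta}, the factorization of Corollary~\ref{Cor : E_n factorization} with the $L(m_\delta\delta)$ factor trivial, and Proposition~\ref{Prop : pairwise compatible} to place the $x[\alpha]$ together with the frozen variables in a single cluster). No gaps.
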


\begin{Ex}
    By the proof of Proposition~\ref{Prop : E_n bij}, 
    \[
    {G^{\prime}}^{-1}(\delta)= 
    \begin{cases}
        1_32_0^23_3^24_04_45_0^26_3 \quad &\text{ if }n=6, \\
        1_01_42_3^23_0^44_3^25_3^36_0^27_3\quad &\text{ if }n=7, \\
        1_0^22_3^43_0^64_3^35_3^56_0^47_3^28_08_4\quad &\text{ if }n=8. \\
    \end{cases}
    \]
    By Proposition~\ref{Prop : E_n delta imaginary}, 
    $L({G^{\prime}}^{-1}(\delta))$ is imaginary. 
    By Corollary~\ref{Cor : E_n using delta}, 
    if $L(m)$ is imaginary,  
    then $m$ is divisible by ${G^{\prime}}^{-1}(\delta)$. 
    So, $L(\delta)$ is a prime imaginary module. 
\end{Ex}

\subsection{$\mathscr{C}_{A_{n}}^{[1, 2n+2], \mathfrak{s}^1}(n\geq 5)$ 
(The module category of type $A_{n}$ as a categorification of 
the $D_{n+1}^{(1)}$-type cluster algebra)}

Let $\mathfrak{g}$ be of type $A_{n}$ and $\mathcal{C}$ be the monoidal category $\mathscr{C}_{A_{n}}^{[1, 2n+2], \mathfrak{s}^1}$ 
for $n \geq 5$, 
where the admissible sequence $\mathfrak{s}^1$ is of Example~\ref{Ex : Cs}. 
By Theorem~\ref{Thm : affine categorification}, 
this category admits a monoidal categorification of the cluster algebra of type $D_{n+1}^{(1)}$. 

\begin{Rem}\label{rem : newDn+1}
  In this subsection, for simplicity, 
  we change the label of simple roots of $D_{n+1}^{(1)}$ 
  from \S\ref{Sec : affinecluster} as follows : 
  
  \begin{tikzpicture}[every node/.style={inner sep=1.5pt}]
  \node (1) at (-30, 0){0};
  \node (3) [below right=of 1] {2};
  \node (2) [below left=of 3] {1};
  \node (4) [right=of 3] {3};
  \node (5) [right=of 4] {$\cdots$};
  \node (6) [right=of 5] {$n\!-\!1$};
  \node (7) [above right=of 6]{$n$};
  \node (8) [below right=of 6]{$n+1$};

  \draw (1)--(3)--(4)--(5)--(6)--(7);
  \draw (2)--(3);
  \draw (6)--(8);
\end{tikzpicture}

Then, the null root is 
$\delta = \alpha_0+\alpha_1+2\sum_{i=2}^{n-1}\alpha_i + \alpha_n + \alpha_{n+1}$. 
Also, the set $\Lambda_c^{\mathrm{re}}$ of $D_{n+1}^{(1)}$ under this label is 
described as a disjoint union of three components 

\begin{align*}
        I_1 &= \{\sum_{i=k}^{l}\beta_i, \delta-\sum_{i=k}^{l}\beta_i\mid 
        1 \leq k\leq l\leq n-2\}, \\
        I_2 &= \{\beta_{n-1}, \delta-\beta_{n-1}\}, \\
        I_3 &= \{\beta_{n}, \delta-\beta_{n}\}, 
    \end{align*}
    where 
    \begin{align*}
        \beta_i &= \alpha_{2i}+\alpha_{1+2i} \quad (1 \leq i<[n/2]), \\
        \beta_{[n/2]} &= \alpha_{n-1}+\alpha_n+\alpha_{n+1}, \\
        \beta_j &= \alpha_{2n-2j}+\alpha_{2n-2j+1}\quad ([n/2]<j\leq n-2), \\
        \beta_{n-1} &= \alpha_1+\alpha_2+\cdots +\alpha_{n-2}+\alpha_{n-1}+\alpha_{n}, \\
        \beta_{n} &= \alpha_1+\alpha_2+\cdots +\alpha_{n-2}+\alpha_{n-1}+\alpha_{n+1}. \\
    \end{align*}
\end{Rem}

As the initial monoidal seed, 
we take the seed constructed by Theorem~\ref{Thm : affine categorification}. 
By Lemma~\ref{Lem : mutation monomial}, 
this seed (ignoring the arrows between frozen nodes) is explicitly given as follows : 

\begin{align*}
&n=2k-1(k\geq 3)\\
&\raisebox{2.3em}{\scalebox{0.7}{\xymatrix@C=0.7ex@R=1ex{
 && \boxed{L(1_2 1_4)}\ar[rr]\ar[dr]  && L(1_2)\ar[dr]\\
 & \boxed{L(2_1 2_3 2_5)}  && L(2_3 2_5)\ar[ll]\ar[rr] &&L(2_3)\ar[ulll]\ar[dlll]\\
 &&\boxed{L(3_2 3_4)} \ar[rr]\ar[ur] && L(3_2) \ar[ur]\ar[dl] \\
 & \boxed{L(4_1 4_3)} && L(4_3)\ar[ll]\\
 &&\boxed{L(5_2 5_4)} \ar[rr] && L(5_2) \ar[ul]\\
 & \vdots & \vdots &\vdots&\vdots \\
 && \boxed{L\bigl((2k-5)_2 (2k-5)_4\bigr)} \ar[rr] && L\bigl((2k-5)_2\bigr)\ar[dl]\\
 &\boxed{L\bigl((2k-4)_1 (2k-4)_3 \bigr)} 
&& L\bigl((2k-4)_3\bigr) \ar[ll] \\
 && \boxed{L\bigl((2k-3)_2 (2k-3)_4\bigr)} \ar[rr]\ar[dr]
 && L\bigl((2k-3)_2\bigr)\ar[ul]\ar[dr]\\
 &\boxed{L\bigl((2k-2)_1 (2k-2)_3 (2k-2)_5\bigr)} 
&& L\bigl( (2k-2)_3 (2k-2)_5\bigr) \ar[ll] \ar[rr]
&&L( (2k-2)_3)\ar[ulll]\ar[dlll]\\
 && \boxed{L\bigl((2k-1)_2 (2k-1)_4\bigr)} \ar[ur]\ar[rr] && 
 L\bigl((2k-1)_2\bigr)\ar[ur]\\
}}}
\end{align*}
\begin{align*}
&n=2k(k\geq 3)\\
&\raisebox{2.3em}{\scalebox{0.7}{\xymatrix@C=0.7ex@R=1ex{
 && \boxed{L(1_1 1_3)}\ar[drrr]  && L(1_3)\ar[ll]\\
 & \boxed{L(2_0 2_2 2_4)}\ar[rr]  && L(2_0 2_2)\ar[dl]\ar[ul] && 
 L(2_2) \ar[ll]\ar[ul]\ar[dl]\\
 &&\boxed{L(3_1 3_3)} \ar[urrr] && L(3_3) \ar[ll] \\
 &&& \boxed{L(4_2 4_4)} \ar[rr] && L(4_2)\ar[ul]\ar[dl]\\
 &&\boxed{L(5_1 5_3)}  && L(5_3) \ar[ll]\\
 && \vdots & \vdots &\vdots&\vdots \\
 &&& \boxed{L\bigl((2k-4)_2 (2k-4)_4\bigr)} \ar[rr] && L\bigl((2k-4)_2\bigr)\ar[dl]\\
 &&\boxed{L\bigl((2k-3)_1 (2k-3)_3 \bigr)} 
&& L\bigl((2k-3)_3\bigr) \ar[ll] \\
 &&& \boxed{L\bigl((2k-2)_2 (2k-2)_4\bigr)} \ar[rr]\ar[dr]
 && L\bigl((2k-2)_2\bigr)\ar[ul]\ar[dr]\\
 &&\boxed{L\bigl((2k-1)_1 (2k-1)_3 (2k-1)_5\bigr)} 
&& L\bigl( (2k-1)_3 (2k-1)_5\bigr) \ar[ll] \ar[rr]
&&L( (2k-1)_3)\ar[ulll]\ar[dlll]\\
 &&& \boxed{L\bigl((2k)_2 (2k)_4\bigr)} \ar[ur]\ar[rr] && 
 L\bigl((2k)_2\bigr)\ar[ur]\\
}}}
\end{align*}

We set $I_{\text{fin}} = \{1, \ldots, n\}$ and $I_{\text{fin}} = I_0 \sqcup I_1$, 
where 
\[I_0 \coloneq 
\begin{cases}
    \{2, 4, 6, \ldots, n-1\} \quad \text{for }n=2k-1\\
    \{1, 3, 5, \ldots, n-1\} \quad \text{for }n=2k
\end{cases}
\]
and 
\[
I_1 \coloneq 
\begin{cases}
    \{1, 3, 5, \ldots, n\} \quad \text{for }n=2k-1\\
    \{2, 4, 6, \ldots, n\} \quad \text{for }n=2k
\end{cases}.
\]

Let $\mathcal{M} \coloneq \mathcal{M}_+^{[1, 2n+2], \mathfrak{s}^1}$ be the set of monomials in 
the indeterminates
\[
\begin{cases}
    \{i_1, i_3\mid i \in I_0\}\cup 
\{j_2, j_4\mid j \in I_1\}\cup 
\{2_5, (n-1)_5\} \quad\text{for }n=2k-1, \\
\{i_1, i_3\mid i \in I_0\}\cup 
\{j_2, j_4\mid j \in I_1\}\cup 
\{2_0, (n-1)_5\} \quad\text{for }n=2k, 
\end{cases}
\]
which parametrizes $\Irr \mathcal{C}$. 
For $i \in I_{\text{fin}}\setminus\{2\}$, we define $F_i \in \mathcal{M}$ by 
\[
F_i = 
\begin{cases}
    i_1i_3 \quad &\text{if } i \in I_0\\
    i_2i_4 \quad &\text{if } i \in I_1\setminus \{n-1\}\\
    (n-1)_1(n-1)_3(n-1)_5 \quad &\text{if } i = n-1. 
\end{cases}
\]
Also, we define 
\[
F_2 =
\begin{cases}
    2_1 2_3 2_5 \quad \text{if }n=2k-1, \\
    2_0 2_2 2_4 \quad \text{if }n=2k. 
\end{cases}
\]

We define the subset $\mathcal{M}^{\prime}$ of $\mathcal{M}$ by 
\[
\mathcal{M}^{\prime} \coloneq \{m \in \mathcal{M}\mid 
\text{
$m$ is not divisible by every $F_i$ for $i \in I_{\text{fin}}$
}
\}. 
\]
Let $K=[0, 2n+1], K^{\text{ex}}=[0,n+1]$, and $K^{\text{fr}}=[n+2, 2n+1]$. 
Let $\mathcal{S}_0 = (\{x_i\}_{i \in [0, 2n+1]}, \tilde{B}_0)$ 
be a seed, where $\tilde{B}_0$ is the exchange matrix associated with the above quiver. 
We write the $n$ frozen variables $x_{n+2}, \ldots, x_{2n+1}$ 
as $f_1, \ldots, f_{n}$, respectively.
Then by Theorem~\ref{Thm : affine categorification}, there is an isomorphism 
\begin{align*}
  \iota : K(\mathcal{C}) \overset{\sim}{\longrightarrow}\mathscr{A} \quad; 
  &\quad [L(i_3)] \mapsto x_i  &&\text{ for } i \in I_0\\
  &\quad [L(j_2)] \mapsto x_j &&\text{ for } j \in I_1\\
  &\quad
  \begin{cases}
      [L(2_3 2_5)] \quad(n=2k-1)\\
      [L(2_0 2_2)] \quad(n=2k)
  \end{cases}
  \mapsto x_0\\
  &\quad [L\bigl((n-1)_3(n-1)_5\bigr)] \mapsto x_{n+1} &\\
  &\quad [L(F_i)] \mapsto f_i &&\text{ for } i \in I_{\text{fin}}. 
\end{align*}
We sometimes identify $K(\mathcal{C})$ with $\mathscr{A}$ by $\iota$. 

First, we combinatorially determine the $\mathbf{d}$-vector of each simple representation 
in $\mathscr{A}$.
Let $Q = \bigoplus_{i=0}^{n+1}\Z\alpha_i$ be the root lattice of $D_{n+1}^{(1)}$. 

For each $m \in \mathcal{M}$, there is a unique expression of the form as follows : 
\begin{align*}
\underline{n=2k-1}\\
    m = &\prod_{i\in I_{\text{fin}}}F_i^{a_i} \times 
    \prod_{\substack {i \in I_0\\i\neq 2, n-1}}i_3^{p_i}i_1^{q_i} \times 
    \prod_{j \in I_1}j_2^{p_j}j_4^{q_j} \\
    &\times (2_32_5)^p(2_12_5)^q(2_12_3)^r\\
    &\times 2_1^s 2_3^t 2_5^u\\
    &\times \bigl((n-1)_3(n-1)_5\bigr)^{p^{\prime}}\bigl((n-1)_1(n-1)_5\bigr)^{q^{\prime}}
    \bigl((n-1)_1(n-1)_3\bigr)^{r^{\prime}}\\
    &\times (n-1)_1^{s^{\prime}} (n-1)_3^{t^{\prime}}
    (n-1)_5^{u^{\prime}}, 
\end{align*}
\begin{align*}
\underline{n=2k}\\
    m = &\prod_{i\in I_{\text{fin}}}F_i^{a_i} \times 
    \prod_{\substack{i \in I_0\\i \neq n-1}}i_3^{p_i}i_1^{q_i} \times 
    \prod_{\substack{j \in I_1\\j\neq 2}}j_2^{p_j}j_4^{q_j} \\
    &\times (2_0 2_2)^p(2_0 2_4)^q(2_2 2_4)^r\\
    &\times 2_4^s 2_2^t 2_0^u\\
    &\times \bigl((n-1)_3(n-1)_5\bigr)^{p^{\prime}}\bigl((n-1)_1(n-1)_5\bigr)^{q^{\prime}}
    \bigl((n-1)_1(n-1)_3\bigr)^{r^{\prime}}\\
    &\times (n-1)_1^{s^{\prime}} (n-1)_3^{t^{\prime}}
    (n-1)_5^{u^{\prime}}, 
\end{align*}
such that each of the following tuples contains at most one nonzero entry :  

$(p_i, q_i)$ for each $i \in I_{\text{fin}}\setminus \{2, n\!-\!1\}$, 
$(p, q, r)$, $(s, t, u)$, $(p, s)$, $(q, t)$, $(r, u)$, 
$(p^{\prime}, q^{\prime}, r^{\prime})$, 
$(s^{\prime}, t^{\prime}, u^{\prime})$, 
$(p^{\prime}, s^{\prime})$, $(q^{\prime}, t^{\prime})$, $(r^{\prime}, u^{\prime})$. 

Then, we define a map $G : \mathcal{M} \to Q$ by 

\begin{align*}
    G(m) = &\sum_{\substack {i \in I_{\text{fin}}\\i\neq 2, n-1}}
    \bigl(p_i(-\alpha_i) + q_i\alpha_i\bigr) \\
    &+ p(-\alpha_0) + q(\alpha_0 + \alpha_1 + 2\alpha_2 + \alpha_3) 
    + r\alpha_0\\
    &+ s(\alpha_0 + \alpha_2) + t(-\alpha_2) 
    + u(\alpha_1 + \alpha_2 + \alpha_3)\\
    &+ p^{\prime}(-\alpha_{n+1}) 
    + q^{\prime}(\alpha_{n-2} + 2\alpha_{n-1} + \alpha_n + \alpha_{n+1}) 
    + r^{\prime}\alpha_{n+1}\\
    &+ s^{\prime}(\alpha_{n-1} + \alpha_{n+1}) + t^{\prime}(-\alpha_{n-1}) 
    + u^{\prime}(\alpha_{n-2} + \alpha_{n-1} + \alpha_n). 
\end{align*}

Let $G' : \mathcal{M}' \to Q$ denote the restriction of $G$ to $\mathcal{M}'$. 
By definition, for $m \in \mathcal{M}^{\prime}$, 
$G^{\prime}(m) = G(\prod_{i \in {I_{\text{fin}}}}F_i^{a_i} \times m)$ for any $a_i$. 

In this subsection, 
for $i \in I_{\text{fin}}\setminus\{2, n-1\}$ and $a \in \Z$, 
    we write 
    \[
    i^a \coloneq 
    \begin{cases}
        i_1^a \quad &\text{ if } i\in I_0, a\geq 0, \\
        i_3^{-a} \quad &\text{ if } i \in I_0, a < 0, \\
        i_4^a \quad &\text{ if } i \in I_1\setminus\{n\!-\!1\}, a \geq 0, \\
        i_2^{-a} \quad &\text{ if } i\in I_1\setminus\{n\!-\!1\}, a < 0, 
    \end{cases}
    \]
    for simplicity. 
    Then, 
    $G^{\prime}\biggl(\prod_{i \in I_{\text{fin}}\setminus\{2, n-1\}}i^{a_i}\biggr) 
    = \sum_{i \in I_{\text{fin}}\setminus\{2, n-1\}}a_i \alpha_i$ for $a_i \in \Z$. 

    For $\gamma = \sum_{i=0}^{n+1}a_i \alpha_i \in Q$, 
    we set monomials $m_1, m_2 \in \mathcal{M}^{\prime}$ as follows$\colon$

    If $n=2k-1$, then we define 
\[
m_1 \coloneq 
\begin{cases}
    (2_1 2_3)^{a_0-a_2}2_1^{a_2} \quad &\text{if } (a_0, a_2)\in A,\\
    (2_1 2_5)^{a_2-a_0}2_1^{2a_0-a_2} \quad &\text{if }(a_0, a_2)\in B,\\
    (2_1 2_5)^{a_0}2_5^{a_2-2a_0} \quad &\text{if }(a_0, a_2)\in C,\\
    (2_3 2_5)^{-a_0}2_5^{a_2} \quad &\text{if }(a_0, a_2)\in D,\\
    (2_3 2_5)^{-a_0}2_3^{-a_2} \quad &\text{if }(a_0, a_2)\in E,\\
    (2_1 2_3)^{a_0}2_3^{-a_2} \quad &\text{if }(a_0, a_2)\in F.\\
\end{cases}
\]
If $n=2k$, then we define 
\[
m_1 \coloneq 
\begin{cases}
    (2_2 2_4)^{a_0-a_2}2_4^{a_2} \quad &\text{if } (a_0, a_2)\in A,\\
    (2_0 2_4)^{a_2-a_0}2_4^{2a_0-a_2} \quad &\text{if }(a_0, a_2)\in B,\\
    (2_0 2_4)^{a_0}2_0^{a_2-2a_0} \quad &\text{if }(a_0, a_2)\in C,\\
    (2_0 2_2)^{-a_0}2_0^{a_2} \quad &\text{if }(a_0, a_2)\in D,\\
    (2_0 2_2)^{-a_0}2_2^{-a_2} \quad &\text{if }(a_0, a_2)\in E,\\
    (2_2 2_4)^{a_0}2_2^{-a_2} \quad &\text{if }(a_0, a_2)\in F.\\
\end{cases}
\]
Here, $A, \ldots, F$ are as follows. 

\begin{tikzpicture}[scale=1.2]

  \draw[->] (-3,0) -- (3,0) node[right] {$a_0$};
  \draw[->] (0,-3) -- (0,3) node[above] {$a_2$};

  \draw[thick]   (0,0) -- (1.5,3) node[above] {$a_2=2a_0$};
  \draw[thick]  (0, 0)  -- (3,3) node[right] {$a_2=a_0$};

  \node at (2.5,1){$A$};
  \node at (1.8,2.5) {$B$};
  \node at (0.5,2.5) {$C$};
  \node at (-2,2) {$D$};
  \node at (-2,-2) {$E$};
  \node at (2, -2) {$F$};

\end{tikzpicture}

We set 
\[
m_2 \coloneq 
\begin{cases}
    ((n-1)_1 (n-1)_3)^{a_{n+1}-a_{n-1}}(n-1)_1^{a_{n-1}} \quad 
    &\text{if } (a_{n+1}, a_{n-1})\in A^{\prime},\\
    ((n-1)_1 (n-1)_5)^{a_{n-1}-a_{n+1}}(n-1)_1^{2a_{n+1}-a_{n-1}} \quad 
    &\text{if }(a_{n+1}, a_{n-1})\in B^{\prime},\\
    ((n-1)_1 (n-1)_5)^{a_{n+1}}(n-1)_5^{a_{n-1}-2a_{n+1}} \quad 
    &\text{if }(a_{n+1}, a_{n-1})\in C^{\prime},\\
    ((n-1)_3 (n-1)_5)^{-a_{n+1}}(n-1)_5^{a_{n-1}} \quad 
    &\text{if }(a_{n+1}, a_{n-1})\in D^{\prime},\\
    ((n-1)_3 (n-1)_5)^{-a_{n+1}}(n-1)_3^{-a_{n-1}} \quad 
    &\text{if }(a_{n+1}, a_{n-1})\in E^{\prime},\\
    ((n-1)_1 (n-1)_3)^{a_{n+1}}(n-1)_3^{-a_{n-1}} \quad 
    &\text{if }(a_{n+1}, a_{n-1})\in F^{\prime}, \\
\end{cases}
\]
    where $A^{\prime}, \ldots, F^{\prime}$ are as follows. 

    \begin{tikzpicture}[scale=1.2]

  \draw[->] (-3,0) -- (3,0) node[right] {$a_{n+1}$};
  \draw[->] (0,-3) -- (0,3) node[above] {$a_{n-1}$};

  \draw[thick]   (0,0) -- (1.5,3) node[above] {$a_{n-1}=2a_{n+1}$};
  \draw[thick]  (0, 0)  -- (3,3) node[right] {$a_{n-1}=a_{n+1}$};

  \node at (2.5,1){$A^{\prime}$};
  \node at (1.8,2.5) {$B^{\prime}$};
  \node at (0.5,2.5) {$C^{\prime}$};
  \node at (-2,2) {$D^{\prime}$};
  \node at (-2,-2) {$E^{\prime}$};
  \node at (2, -2) {$F^{\prime}$};

\end{tikzpicture}

Then, we can check that $\gamma$ can be written in the form 
\[
\gamma = G^{\prime}(m_1 m_2) + 
\sum_{\substack{i \in \{0, 1, \ldots, n+1\}\\i \neq 0, 2, n-1, n+1}}b_i \alpha_i 
= G^{\prime}(m_1 m_2) + 
\sum_{\substack{i \in I_{\text{fin}}\\i \neq 2, n-1}}b_i \alpha_i, 
\]
where $b_i \in \Z$ for $i \in I_{\text{fin}}\setminus \{2, n-1\}$. 
By the construction, the map 
\[
\sum_{i=0}^{n+1}a_i \alpha_i \longmapsto m_1 m_2 \times 
\prod_{\substack{i \in I_{\text{fin}}\\i \neq 2, n-1}}i^{b_i}
\]
    is the inverse map of $G^{\prime}$. 
    As a result, the following Proposition holds. 

\begin{Prop}\label{Prop : A_n bij}
    $G' : \mathcal{M}' \to Q$ is bijective. 
\end{Prop}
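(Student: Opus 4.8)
The plan is to prove bijectivity by exhibiting the explicit two-sided inverse $\Psi \colon Q \to \mathcal{M}^{\prime}$ assembled in the discussion preceding the statement, exactly as in the proofs of Propositions~\ref{Prop : D_n bij} and~\ref{Prop : E_n bij}. The genuinely new feature here, compared with those single-fork cases, is that the affine diagram $D_{n+1}^{(1)}$ carries \emph{two} trivalent vertices, so the construction must simultaneously invert two independent fork contributions: one attached to the $A_n$-node $2$ (governing the coordinates $\alpha_0,\alpha_1,\alpha_2,\alpha_3$) and one attached to the $A_n$-node $n-1$ (governing $\alpha_{n-2},\alpha_{n-1},\alpha_n,\alpha_{n+1}$). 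Accordingly, the inverse is built from two monomials $m_1$ and $m_2$ together with a chain correction $\prod_{i \in I_{\text{fin}}\setminus\{2,n-1\}} i^{b_i}$.

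First I would record the already-established identity $G^{\prime}\bigl(\prod_{i\neq 2,n-1} i^{a_i}\bigr)=\sum_{i\neq 2,n-1} a_i\alpha_i$, which reduces everything to understanding the two forks. For the left fork I would check that the six regions $A,\dots,F$ partition the $(a_0,a_2)$-plane and that on each region the prescribed monomial $m_1$ satisfies $[G(m_1)]_{\alpha_0}=a_0$ and $[G(m_1)]_{\alpha_2}=a_2$; symmetrically the regions $A^{\prime},\dots,F^{\prime}$ and the monomial $m_2$ pin down $[G(m_2)]_{\alpha_{n+1}}=a_{n+1}$ and $[G(m_2)]_{\alpha_{n-1}}=a_{n-1}$. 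These two case analyses are formally identical to the single-fork computation in Proposition~\ref{Prop : D_n bij}, carried out once at each end of the diagram, and they must be run separately for the parities $n=2k-1$ and $n=2k$ since the monomial labelings differ. Having fixed $m_1$ and $m_2$, the residual discrepancy of $G(m_1m_2)$ from $\gamma$ lives entirely on the chain coordinates $\alpha_i$ with $i\in I_{\text{fin}}\setminus\{2,n-1\}$, so setting $b_i \coloneq a_i-[G(m_1m_2)]_{\alpha_i}$ and $\Psi(\gamma) \coloneq m_1 m_2 \prod_{i\neq 2,n-1} i^{b_i}$ gives $G^{\prime}\circ\Psi=\mathrm{id}_Q$ by construction.

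For the reverse identity $\Psi\circ G^{\prime}=\mathrm{id}_{\mathcal{M}^{\prime}}$ I would invoke the uniqueness of the canonical expression of $m\in\mathcal{M}^{\prime}$: the ``at most one nonzero entry'' constraints on the tuples $(p,q,r),(s,t,u),(p,s),\dots$ and their primed analogues make the fork exponents of $m$ uniquely determined, and the region formulas are designed precisely to recover these exponents from the coordinate pairs $(a_0,a_2)$ and $(a_{n+1},a_{n-1})$ of $\gamma=G^{\prime}(m)$, after which the chain exponents are read off as the $b_i$. The step requiring real care, and the main obstacle, is confirming that the two forks do not interfere: the left fork spills into $\alpha_1,\alpha_3$ (via the $q$- and $u$-terms) and the right fork into $\alpha_{n-2},\alpha_n$ (via the $q^{\prime}$- and $u^{\prime}$-terms), and for the smallest admissible value $n=5$ one has $3=n-2$, so both spillovers land on the same coordinate $\alpha_3$. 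I would verify that this causes no ambiguity: since node $3=n-2$ lies in $I_{\text{fin}}\setminus\{2,n-1\}$, its coefficient is freely adjustable through $b_3$, so the combined spillover is simply absorbed and $\Psi$ remains well defined; for all $n\geq 6$ the spillover targets $\{1,3\}$ and $\{n-2,n\}$ are disjoint and the independence is immediate. Once this is checked, both composites are the identity and $G^{\prime}$ is bijective.
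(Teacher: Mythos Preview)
Your proposal is correct and follows essentially the same route as the paper: the paper's proof is precisely the construction of the explicit inverse $\gamma \mapsto m_1 m_2 \prod_{i \neq 2,n-1} i^{b_i}$ immediately preceding the statement, after which it simply asserts ``by construction'' that this is the inverse of $G'$. Your write-up is in fact more thorough than the paper's, since you spell out both composites and explicitly address the $n=5$ overlap at $\alpha_3$, which the paper passes over in silence.
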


\begin{Thm}\label{Thm : A_n G=dL}
    $G = \mathbf{d}\circ \iota \circ [L(-)]$. 
\end{Thm}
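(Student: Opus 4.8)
The plan is to prove $G = \mathbf{d}\circ\iota\circ[L(-)]$ for the type-$A_n$ case ($n\geq 5$) by following exactly the same architecture as the proofs of Theorem~\ref{Thm : D_n G=dL}, Theorem~\ref{Thm : D_4 G=dL}, and Theorem~\ref{Thm : E_n G=dL}, adapted to the present combinatorial setup. The key structural difference here is that the affine root system is $D_{n+1}^{(1)}$, which has \emph{two} trivalent (fork) nodes in its Dynkin diagram rather than one: the fork near the vertices $\{0,1,2,3\}$ and the fork near $\{n-2,n-1,n,n+1\}$. Correspondingly, in the monomial parametrization there are \emph{two} distinguished exchangeable vertices, associated with the fundamental label $2$ (variables $2_*$) and the label $n-1$ (variables $(n-1)_*$), and the inverse map ${G'}^{-1}$ is built from \emph{two} independent piecewise-linear blocks $m_1, m_2$ (governed by the regions $A,\dots,F$ and $A',\dots,F'$) together with the monomials $i^{a_i}$ for the remaining labels $i \in I_{\text{fin}}\setminus\{2,n-1\}$.

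First I would record the two reductions that are purely formal and identical to the earlier cases. For any $m\in\mathcal{M}$ and any $i\in I_{\text{fin}}$, the equalities $G(m)=G(F_i m)$ and $\mathbf{d}\circ\iota\circ[L(m)] = \mathbf{d}\circ\iota\circ[L(F_i m)]$ hold; the first is immediate from the definition of $G$ (the frozen factors $F_i$ do not contribute), and the second follows from Proposition~\ref{Prop : frozenKR} since each $L(F_i)$ strongly commutes with every simple module, exactly as at the start of the proof of Theorem~\ref{Thm : D_n G=dL}. Next, using Lemma~\ref{Lem : dvector inequality} applied in the two opposite directions (once to $m\prod i^{a_i}$ and once to $m\prod F_i^{a_i}$), I would establish for $i\in I_{\text{fin}}\setminus\{2,n-1\}$ the additivity relation
\[
\mathbf{d}\circ\iota\circ\Bigl[L\bigl(m\textstyle\prod_{i\neq 2,n-1}i^{a_i}\bigr)\Bigr]
=\mathbf{d}\circ\iota\circ[L(m)] + \sum_{i\neq 2,n-1}a_i\alpha_i,
\]
and the parallel identity for $G$ follows directly from its definition. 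This reduces the theorem to verifying the claim on the generators attached to the two fork labels simultaneously, i.e.\ for monomials of the form $2_{*}^{\bullet}\,(n-1)_{*}^{\bullet}\prod_{i\neq 2,n-1}i^{a_i}$.

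The heart of the proof is then the explicit mutation computation realizing these fork-monomials as cluster monomials. For each of the two fork labels $\ell\in\{2,n-1\}$, I would write down the analogue of the five-cluster list appearing in Theorem~\ref{Thm : D_n G=dL} and display~\eqref{eq : D_4}: using Lemma~\ref{Lem : mutation monomial}, the seeds $\mathcal{S}_0$, $\mu_{v_\ell}(\mathcal{S}_0)$, $\mu_{w_\ell}(\mathcal{S}_0)$, $\mu_{w_\ell}\mu_{v_\ell}(\mathcal{S}_0)$, $\mu_{v_\ell}\mu_{w_\ell}(\mathcal{S}_0)$ each contain a triple of simple modules of the shapes $\ell_*$, $\ell_*\ell_*$, $\ell_*\ell_*\ell_*$ covering all six regions $A,\dots,F$ (resp.\ $A',\dots,F'$). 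The crucial point — and the mechanism that makes the \emph{two-fork} case work like the $D_4$ case of display~\eqref{eq : D_4} — is that the mutation vertices for the two forks are disjoint and non-adjacent in the exchange quiver (one reads this off the displayed initial quivers for $n=2k-1$ and $n=2k$), so the two families of mutations commute and the combined module $\mathscr{L}_{\psi_2}\cup\mathscr{L}_{\psi_{n-1}}$ lies in a single monoidal seed $\psi_2\psi_{n-1}(\mathcal{S}_0)$ for any choice of the operations $\psi_\ell$. Because modules in a common cluster strongly commute, their tensor products are simple cluster monomials, and Proposition~\ref{Prop : dvector mutation} lets me compute the $\mathbf{d}$-vectors directly and match them against $G$ on each of the (finitely many) seeds.

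The main obstacle I anticipate is bookkeeping rather than conceptual: I must verify the non-adjacency of the two fork mutation-regions and check that the six regions at each fork are genuinely exhausted by the five listed clusters, separately for the two parities $n=2k-1$ and $n=2k$ (the initial quivers and the placement of $2_5$ versus $2_0$ differ between them, as reflected in the two different displayed forms of ${G'}^{-1}$). Once the region-analysis and the commuting-mutation argument are in place, the proof concludes exactly as before: for arbitrary exponents $a,b,c\in\Z_{\geq 0}$ on the fork variables one realizes $L\bigl(2_*^{a}2_*^{b}2_*^{c}\,(n-1)_*^{a}(n-1)_*^{b}(n-1)_*^{c}\prod_{i\neq 2,n-1}i^{a_i}\bigr)$ as a cluster monomial, computes its $\mathbf{d}$-vector via Proposition~\ref{Prop : dvector mutation}, and checks equality with $G$; combined with the two additivity reductions this establishes $G=\mathbf{d}\circ\iota\circ[L(-)]$ on all of $\mathcal{M}$.
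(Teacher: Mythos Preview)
Your proposal is correct and follows essentially the same approach as the paper's own proof: the paper likewise reduces via the frozen-factor and additivity arguments to the two fork labels $2$ and $n-1$, lists the same five-cluster tables for each fork, invokes the non-adjacency of $\{0,2\}$ and $\{n-1,n+1\}$ in $\mathcal{S}_0$ to commute the two mutation families, and handles the parity $n=2k$ by the substitution $2_1,2_3,2_5\mapsto 2_4,2_2,2_0$. The only item you leave implicit that the paper states explicitly is the base-case verification $\mathbf{d}\circ\iota\circ[L(i^{\pm1})]=\pm\alpha_i$ for $i\in I_{\mathrm{fin}}\setminus\{2,n-1\}$ via $\mathcal{S}_0$ and $\mu_i(\mathcal{S}_0)$, which is needed as input to the two-sided application of Lemma~\ref{Lem : dvector inequality}.
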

\begin{proof}
Note that for $m\in \mathcal{M}$ and $i \in I_{\text{fin}}$, 
the equalities $G(m) = G(F_i m)$ and 
$\mathbf{d}\circ \iota \circ [L(m)] = \mathbf{d}\circ \iota \circ [L(F_im)]$ hold, 
as in the proof of Theorem~\ref{Thm : D_n G=dL}. 
We can check that the statement holds 
for the case $m = i^{\pm 1}$ for $i \in I_{\text{fin}}\setminus\{2, n-1\}$ by considering 
the initial seed $\mathcal{S}_0$ and $\mu_i(\mathcal{S}_0)$. 

For $m \in \mathcal{M}$ and $a_i \in \Z$ for $i \in I_{\text{fin}}\setminus\{2, n-1\}$, 
we have 
\[
\mathbf{d}\circ \iota \circ [L(m \prod_{\substack{i \in I_{\text{fin}}\\i \neq 2, n-1}}i^{a_i})]
\leq \mathbf{d}\circ \iota \circ [L(m)] 
+ \sum_{\substack{i \in I_{\text{fin}}\\i \neq 2, n-1}}a_i\alpha_i
\]
and 
\[
\mathbf{d}\circ \iota \circ [L(m \prod_{\substack{i \in I_{\text{fin}}\\i \neq 2, n-1}}i^{a_i})]
+\sum_{\substack{i \in I_{\text{fin}}\\i \neq 2, n-1}}a_i(-\alpha_i)
\geq \mathbf{d}\circ \iota \circ 
[L(m \prod_{\substack{i \in I_{\text{fin}}\\i \neq 2, n-1}}F_i^{a_i})]
=\mathbf{d}\circ \iota \circ [L(m)]
\]
by Lemma~\ref{Lem : dvector inequality}. 
This shows 
\[
\mathbf{d}\circ \iota \circ [L(m \prod_{\substack{i \in I_{\text{fin}}\\i \neq 2, n-1}}i^{a_i})]
=\mathbf{d}\circ \iota \circ [L(m)]
+\sum_{\substack{i \in I_{\text{fin}}\\i \neq 2, n-1}}a_i\alpha_i. 
\]
By the definition of $G$, 
for $m \in \mathcal{M}$ and $a_i \in \Z$ for $i \in I_{\text{fin}}\setminus\{2, n-1\}$, 
\[G(m \prod_{\substack{i \in I_{\text{fin}}\\i \neq 2, n-1}}i^{a_i}) = 
G(m) +\sum_{\substack{i \in I_{\text{fin}}\\i \neq 2, n-1}}a_i\alpha_i
\]
also holds. 

Let us consider the case $n=2k-1$. 
By the above fact, it is enough to show that 
for any $a, b, c, a^{\prime},  b^{\prime},  c^{\prime} \in \Z_{\geq 0}$, 
there exist $a_i \in \Z$ for $i \in I_{\text{fin}}\setminus\{2, n-1\}$ 
such that 
the claim holds for 
\[m = 2_1^a 2_3^b 2_5^c (n-1)_1^{a^{\prime}}(n-1)_3^{b^{\prime}}(n-1)_5^{c^{\prime}}
\prod_{\substack{i \in I_{\text{fin}}\\i \neq 2, n-1}}i^{a_i}. 
\]

By Lemma~\ref{Lem : mutation monomial}, we have that the following simple modules are in the 
same monoidal cluster : 

\begin{equation}\label{eq : A_2k-1 2}
\begin{aligned}
\mathcal{S}_0 &\ni& &2_3,& &2_3 2_5,& &2_1 2_3 2_5, \\
\mu_0(\mathcal{S}_0) &\ni& &2_3,& &2_1 2_3,& &2_1 2_3 2_5, \\
\mu_2(\mathcal{S}_0) &\ni& &2_5 1_2 3_2,& &2_3 2_5,& &2_1 2_3 2_5, \\
\mu_2\mu_0(\mathcal{S}_0) &\ni& &2_1,& &2_1 2_3,& &2_1 2_3 2_5, \\
\mu_0\mu_2(\mathcal{S}_0) &\ni& &2_5 1_2 3_2,& &2_1,& &2_1 2_3 2_5. \\
\end{aligned}
\end{equation}

Also, the following simple modules are in the same monoidal cluster : 

\begin{equation}\label{eq : A_2k-1 n-1}
\begin{aligned}
\mathcal{S}_0 &\ni& &(n\!-\!1)_3,& &(n\!-\!1)_3 (n\!-\!1)_5,& 
&(n\!-\!1)_1 (n\!-\!1)_3 (n\!-\!1)_5, \\
\mu_{n\!+\!1}(\mathcal{S}_0) &\ni& &(n\!-\!1)_3,& &(n\!-\!1)_1 (n\!-\!1)_3,& 
&(n\!-\!1)_1 (n\!-\!1)_3 (n\!-\!1)_5, \\
\mu_{n\!-\!1}(\mathcal{S}_0) &\ni& &(n\!-\!1)_5 (n\!-\!2)_2 n_2,& &(n\!-\!1)_3 (n\!-\!1)_5,& 
&(n\!-\!1)_1 (n\!-\!1)_3 (n\!-\!1)_5, \\
\mu_{n\!-\!1}\mu_{n\!+\!1}(\mathcal{S}_0) &\ni& &(n\!-\!1)_1,& &(n\!-\!1)_1 (n\!-\!1)_3,& 
&(n\!-\!1)_1 (n\!-\!1)_3 (n\!-\!1)_5, \\
\mu_{n\!+\!1}\mu_{n\!-\!1}(\mathcal{S}_0) &\ni& &(n\!-\!1)_5 (n\!-\!2)_2 n_2,& &(n\!-\!1)_1,& 
&(n\!-\!1)_1 (n\!-\!1)_3 (n\!-\!1)_5. \\
\end{aligned}
\end{equation}

Let $\psi_1, \psi_2$ be operations chosen from the set 
$\{\phi, \mu_0, \mu_2, \mu_2\mu_0, \mu_0\mu_2\}$ and 
$\{\phi, \allowbreak \mu_{n-1}, \allowbreak 
\mu_{n+1}, \allowbreak \mu_{n-1}\mu_{n+1}, \mu_{n+1}\mu_{n-1}\}$ respectively. 
Let $\mathscr{L}_{\psi_1}, \mathscr{L}_{\psi_2}$ denote the set of three simple modules 
listed in the right-hand side of \eqref{eq : A_2k-1 2}  and \eqref{eq : A_2k-1 n-1} 
corresponding to the cluster $\psi_1(\mathcal{S}_0), \psi_2(\mathcal{S}_0)$ respectively.
Since there are no arrows in $\mathcal{S}_0$ 
between any vertex in $\{0, 2\}$ and any vertex in $\{n-1, n+1\}$, 
the result of applying the operations $\psi_1, \psi_2$ to $\mathcal{S}_0$ 
is independent of the order in which they are applied. 
This implies that for any choice of $\psi_1$ and $\psi_2$, 
the union $\mathscr{L}_{\psi_1}\cup\mathscr{L}_{\psi_2}$ is 
contained in the monoidal seed $\psi_1\psi_2(\mathcal{S}_0)$. 


By the calculation of $\mathbf{d}$-vectors, we can check that the claim of the theorem holds for 
the above simple modules. 
By the above process, for any $a, b, c, a^{\prime},  b^{\prime},  c^{\prime}\in \Z_{\geq 0}$, 
we can realize a simple module whose form is 
\[
L(2_1^a 2_3^b 2_5^c (n-1)_1^{a^{\prime}}(n-1)_3^{b^{\prime}}(n-1)_5^{c^{\prime}}
\prod_{\substack{i \in I_{\text{fin}}\\i \neq 2, n-1}}i^{a_i})
\]
for some $a_i \in \Z$ as a cluster monomial. 
It completes the proof for the case $n=2k-1$. 

We can prove the case $n=2k$ by replacing $2_1, 2_3, 2_5$ with 
$2_4, 2_2, 2_0$ respectively. 
\end{proof}

\begin{Prop}\label{Prop : A_n delta imaginary}
    For $l \in \Z_{>0}$, the irreducible module $L(l\delta)$ is imaginary. 
\end{Prop}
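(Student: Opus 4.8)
The plan is to reproduce the proof of Proposition~\ref{Prop : D_n delta imaginary} line for line, the only type-specific ingredient being a multiplicativity property of ${G^{\prime}}^{-1}$ along the ray generated by $\delta$. First I would fix the simple root $\alpha_1$: since $n\geq 5$ we have $1\neq 2$ and $1\neq n-1$, so node $1$ is an ordinary index in $I_{\text{fin}}\setminus\{2,n-1\}$, handled by the additive factor $\prod_{i\neq 2,n-1}i^{b_i}$ in the description of ${G^{\prime}}^{-1}$ from Proposition~\ref{Prop : A_n bij}. By Proposition~\ref{Prop :  -alpha_i in Phi}, $p\delta-\alpha_1\in\Phi_c^{\text{re}}$ for every $p\in\Z_{\geq 0}$, so all the modules appearing below correspond to cluster variables.

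The key step is to verify the identity
\[
{G^{\prime}}^{-1}(p\delta-\alpha_1)\,{G^{\prime}}^{-1}(l\delta)
={G^{\prime}}^{-1}\bigl((p+l)\delta-\alpha_1\bigr)\qquad\text{for all }p,l\in\Z_{>0}.
\]
Here I would use that the null root of $D_{n+1}^{(1)}$ in the present labelling is $\delta=\alpha_0+\alpha_1+2\alpha_2+\cdots+2\alpha_{n-1}+\alpha_n+\alpha_{n+1}$, so that for any positive multiple $k\delta$, and equally for $k\delta-\alpha_1$ (subtracting $\alpha_1$ changes neither $a_0,a_2$ nor $a_{n+1},a_{n-1}$), the coordinate pairs are $(a_0,a_2)=(k,2k)$ and $(a_{n+1},a_{n-1})=(k,2k)$. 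These lie exactly on the boundary lines $a_2=2a_0$ and $a_{n-1}=2a_{n+1}$, where the piecewise definitions for the regions $B,C$ and $B',C'$ coincide and reduce to $m_1=(2_12_5)^{k}$, $m_2=((n-1)_1(n-1)_5)^{k}$ when $n=2k-1$ (respectively $m_1=(2_02_4)^{k}$ when $n=2k$), both visibly multiplicative in $k$. Since the remaining factor $\prod_{i\neq 2,n-1}i^{b_i}$ is additive in the root-lattice coordinates by construction, multiplying the two monomials precisely adds the two root-lattice vectors, which yields the displayed identity.

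With this identity established, the argument closes exactly as in the $D_n$ case: assuming $L(l\delta)$ is real, Propositions~\ref{Prop : subquot} and~\ref{Prop : delta inequality real} force $\fd(L(l\delta),L((p+l)\delta-\alpha_1))<\fd(L(l\delta),L(p\delta-\alpha_1))$ whenever the latter is positive, so after finitely many iterations $\fd(L(l\delta),L(N\delta-\alpha_1))=0$ for some $N\in\Z_{>0}$. Proposition~\ref{Prop : real delta} then gives $L(l\delta)\otimes L(N\delta-\alpha_1)\cong L((l+N)\delta-\alpha_1)$, which contradicts the primeness of the cluster-variable module $L((l+N)\delta-\alpha_1)$ guaranteed by Propositions~\ref{Prop : dvectorbij} and~\ref{Prop : prime module}.

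I expect the only genuine work to be the boundary bookkeeping in the key step. Because $D_{n+1}^{(1)}$ has two trivalent fork nodes, one must control the two piecewise monomials $m_1$ (at the $\alpha_2$ fork) and $m_2$ (at the $\alpha_{n-1}$ fork) simultaneously, and must treat the parities $n=2k-1$ and $n=2k$ separately, since they use different generators for the central index $2$. Once one observes that $\delta$ places both fork-coordinates precisely on the $B/C$ and $B'/C'$ boundaries, multiplicativity of ${G^{\prime}}^{-1}$ along $\Z_{>0}\delta-\alpha_1$ is immediate and the remainder of the proof is formal.
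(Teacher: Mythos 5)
Your proposal is correct and takes essentially the same route as the paper: the paper's proof consists precisely of noting the identity ${G^{\prime}}^{-1}(p\delta-\alpha_1)\,{G^{\prime}}^{-1}(l\delta)={G^{\prime}}^{-1}\bigl((p+l)\delta-\alpha_1\bigr)$ ``by the description of ${G^{\prime}}^{-1}$'' and then running the $\fd$-descent and primeness contradiction of Proposition~\ref{Prop : D_n delta imaginary} unchanged. Your boundary bookkeeping (both fork coordinates of $k\delta$ landing on the $B/C$ and $B^{\prime}/C^{\prime}$ lines, with the residual factor additive and the subtracted $\alpha_1$ absorbed into $1^{b_1}$) just makes explicit the verification the paper leaves to the reader.
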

\begin{proof}
    Since ${G^{\prime}}^{-1}(p\delta-\alpha_1){G^{\prime}}^{-1}(l\delta)
    ={G^{\prime}}^{-1}((p+l)\delta-\alpha_1)$ for any $p, l\in \Z_{>0}$ by 
    the description of ${G^{\prime}}^{-1}$, 
    we can prove it in the same way as Proposition~\ref{Prop : D_n delta imaginary}. 
\end{proof}

\begin{Prop}\label{Prop : A_n compatible}
    Let $\gamma_1, \gamma_2 \in \Phi_c$ be distinct roots and  
$m_1, m_2\in\Z_{\geq1}$. 
If $\gamma_1$ and $\gamma_2$ are $c$-compatible, 
then two irreducible modules $L(m_1\gamma_1)$, $L(m_2\gamma_2)$ commute strongly. 
\end{Prop}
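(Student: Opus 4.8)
The plan is to follow the same three-step scheme as in the proof of Proposition~\ref{Prop : D_n compatible}, adapting it to the root system $D_{n+1}^{(1)}$ that governs $\mathscr{C}_{A_n}^{[1,2n+2],\mathfrak{s}^1}$. First I would dispose of the case in which both $\gamma_1$ and $\gamma_2$ are distinct from $\delta$: here $c$-compatibility means, by Definition~\ref{Def : c-compatible}, that $x[\gamma_1]$ and $x[\gamma_2]$ lie in a common cluster, so $L(\gamma_1)$ and $L(\gamma_2)$ sit in one monoidal seed and hence strongly commute. Since both are real, $L(m_i\gamma_i)\cong L(\gamma_i)^{\otimes m_i}$, and Proposition~\ref{Prop : commuting family} yields that $L(m_1\gamma_1)$ and $L(m_2\gamma_2)$ strongly commute. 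This leaves the case where one root, say $\gamma_1$, equals $\delta$; then $\gamma_2\in\Lambda_c^{\text{re}}$ by Definition~\ref{Def : c-compatible}.

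For the null-root case I would exploit the decomposition $\Lambda_c^{\text{re}}=I_1\sqcup I_2\sqcup I_3$ from Proposition~\ref{Prop : D_n Lambda_c} (applied with $n$ replaced by $n+1$), together with the fact recorded there that roots lying in distinct components are automatically $c$-compatible. Concretely, I would choose whichever of the two-element components $I_2=\{\beta_{n-1},\delta-\beta_{n-1}\}$ or $I_3=\{\beta_{n},\delta-\beta_{n}\}$ does \emph{not} contain $\gamma_2$, writing its special root as $\beta$ and its partner as $\delta-\beta$. Then $\gamma_2$ is $c$-compatible with both $\beta$ and $\delta-\beta$, so the cluster argument above combined with Proposition~\ref{Prop : real delta} gives $\fd(L(\gamma_2),L(\beta))=0=\fd(L(\gamma_2),L(\delta-\beta))$. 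Using the monomial identity ${G^{\prime}}^{-1}(\beta)^{m_1}{G^{\prime}}^{-1}(\delta-\beta)^{m_1}={G^{\prime}}^{-1}(m_1\delta)$, Proposition~\ref{Prop : subquot} realizes $L(m_1\delta)$ as a subquotient of $L(\beta)^{\otimes m_1}\otimes L(\delta-\beta)^{\otimes m_1}$, whence Proposition~\ref{Prop : delta ineq sum} forces
\[
\fd(L(\gamma_2),L(m_1\delta))\le m_1\fd(L(\gamma_2),L(\beta))+m_1\fd(L(\gamma_2),L(\delta-\beta))=0.
\]
Since $L(\gamma_2)$ is real, Proposition~\ref{Prop : real delta} then gives strong commutation of $L(m_1\delta)$ with $L(\gamma_2)$, and reality once more yields $L(m_2\gamma_2)\cong L(\gamma_2)^{\otimes m_2}$, completing the argument.

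The step that really needs work, and the one I expect to be the main obstacle, is verifying the two factorization identities
\[
{G^{\prime}}^{-1}(\beta_{n-1})^{m_1}{G^{\prime}}^{-1}(\delta-\beta_{n-1})^{m_1}={G^{\prime}}^{-1}(m_1\delta),\qquad
{G^{\prime}}^{-1}(\beta_{n})^{m_1}{G^{\prime}}^{-1}(\delta-\beta_{n})^{m_1}={G^{\prime}}^{-1}(m_1\delta)
\]
from the explicit description of ${G^{\prime}}^{-1}$ built in the proof of Proposition~\ref{Prop : A_n bij}. Unlike the $D_n$ and $E_n$ cases, here there are two distinguished vertices (namely $2$ and $n-1$) whose inverse-image monomials are governed by the two separate case divisions $A,\dots,F$ and $A^{\prime},\dots,F^{\prime}$, so I would need to track the highest-weight monomials ${G^{\prime}}^{-1}(\beta_{n-1})$, ${G^{\prime}}^{-1}(\delta-\beta_{n-1})$, ${G^{\prime}}^{-1}(\beta_{n})$, ${G^{\prime}}^{-1}(\delta-\beta_{n})$ explicitly and check that multiplying the two members of each pair (each raised to the $m_1$) collapses, after cancellation, to ${G^{\prime}}^{-1}(m_1\delta)$. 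Once these monomial identities are in hand, the representation-theoretic part of the proof is verbatim the same as for Proposition~\ref{Prop : D_n compatible}.
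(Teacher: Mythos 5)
Your proposal is correct and follows essentially the same route as the paper: the paper's proof simply cites the two monomial identities ${G^{\prime}}^{-1}(\beta_{n-1})^{m_1}{G^{\prime}}^{-1}(\delta-\beta_{n-1})^{m_1}={G^{\prime}}^{-1}(m_1\delta)$ and ${G^{\prime}}^{-1}(\beta_{n})^{m_1}{G^{\prime}}^{-1}(\delta-\beta_{n})^{m_1}={G^{\prime}}^{-1}(m_1\delta)$ as consequences of the explicit description of ${G^{\prime}}^{-1}$ in the proof of Proposition~\ref{Prop : A_n bij}, and then declares the rest identical to Proposition~\ref{Prop : D_n compatible}, which is precisely the argument you spell out (component decomposition of $\Lambda_c^{\mathrm{re}}$, vanishing of $\fd$ via Propositions~\ref{Prop : real delta} and \ref{Prop : delta ineq sum}, and reality of $L(\gamma_2)$). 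The verification you flag as the main obstacle is indeed the only content-bearing step beyond the $D_n$ case, and it is exactly what the paper delegates to the case-by-case description of ${G^{\prime}}^{-1}$ at the two distinguished vertices.
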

\begin{proof}
    Let $\beta_{n-1}, \beta_n$ be the roots as in Remark~\ref{rem : newDn+1}. 
    Since 
    \begin{align*}
        {G^{\prime}}^{-1}(\beta_{n-1})^{m_1}{G^{\prime}}^{-1}(\delta-\beta_{n-1})^{m_1}
        ={G^{\prime}}^{-1}(m_1\delta), \\
        {G^{\prime}}^{-1}(\beta_{n})^{m_1}{G^{\prime}}^{-1}(\delta-\beta_{n})^{m_1}
        ={G^{\prime}}^{-1}(m_1\delta)
    \end{align*}
    holds by the proof of Proposition~\ref{Prop : A_n bij}, 
    we can prove it in the same way as Proposition~\ref{Prop : D_n compatible}. 
\end{proof}

Then, we can prove the following facts in the same way as 
Theorem~\ref{Thm : D_n tensor factorization}, Corollary~\ref{Cor : D_n factorization}, 
Corollary~\ref{Cor : D_n using delta}, 
and Corollary~\ref{Cor : D_n cluster monomial}. 

\begin{Thm}\label{Thm : A_n tensor factorization}
    Let $m \in \mathcal{M}^{\prime}$ and 
    $G^{\prime}(m) = \sum_{\alpha \in \Phi_c} m_{\alpha}\alpha$ be a 
    $c$-cluster expansion. 
    Then, 
    \begin{align*}
    L(m) &\cong \bigotimes_{\alpha \in \Phi_c}L(m_{\alpha}\alpha)\\
    &\cong \bigotimes_{\alpha \in \Phi_c^{\text{re}}}
    L(\alpha)^{\otimes m_{\alpha}} \otimes L(m_{\delta}\delta). 
    \end{align*}
\end{Thm}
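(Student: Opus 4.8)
The plan is to follow verbatim the template of Theorem~\ref{Thm : D_n tensor factorization}, substituting each ingredient by its type-$A_n$ counterpart. First I would dispose of the second isomorphism, which carries no new content: for $\alpha \in \Phi_c^{\mathrm{re}}$ the module $L(\alpha)$ is real, so $L(m_\alpha\alpha) = L({G'}^{-1}(m_\alpha\alpha)) \cong L(\alpha)^{\otimes m_\alpha}$, and the two right-hand sides coincide once the first isomorphism is established. Thus the entire argument reduces to proving $L(m) \cong \bigotimes_{\alpha\in\Phi_c} L(m_\alpha\alpha)$.

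For the first isomorphism I would proceed as follows. The roots $\alpha$ with $m_\alpha \neq 0$ in a $c$-cluster expansion are pairwise $c$-compatible, so by Proposition~\ref{Prop : A_n compatible} the simple modules $\{L(m_\alpha\alpha)\}$ pairwise strongly commute. I would then combine the real factors (those with $\alpha \in \Phi_c^{\mathrm{re}}$) into a single real simple module $R$ via Proposition~\ref{Prop : commuting family}, and attach the unique imaginary factor $L(m_\delta\delta)$: since $\fd(L(m_\alpha\alpha), L(m_\delta\delta)) = 0$ for each real $\alpha$ by Proposition~\ref{Prop : A_n compatible}, subadditivity (Proposition~\ref{Prop : delta ineq sum}) gives $\fd(R, L(m_\delta\delta)) = 0$, and Proposition~\ref{Prop : real delta} then shows $R \otimes L(m_\delta\delta)$, hence the whole product $\bigotimes_{\alpha\in\Phi_c} L(m_\alpha\alpha)$, is simple. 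Being a simple module, it must be isomorphic to $L\bigl(\prod_{\alpha\in\Phi_c} {G'}^{-1}(m_\alpha\alpha)\bigr)$ by Proposition~\ref{Prop : subquot}.

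It then remains to identify $\prod_{\alpha} {G'}^{-1}(m_\alpha\alpha)$ with $m$. I would first check this product lies in $\mathcal{M}'$: since $K(\mathcal{C})$ is a polynomial ring in the cluster and frozen classes (Proposition~\ref{Prop : polynomial}), each frozen class $f_i$ is prime; as no ${G'}^{-1}(m_\alpha\alpha)$ is divisible by any $F_i$, neither is their product, and membership in $\mathcal{M}'$ follows from Proposition~\ref{Prop : frozenKR}. With $G'$ defined on the product, the identity $G = \mathbf{d}\circ\iota\circ[L(-)]$ (Theorem~\ref{Thm : A_n G=dL}) yields $G'\bigl(\prod_\alpha {G'}^{-1}(m_\alpha\alpha)\bigr) = \sum_\alpha m_\alpha\alpha = G'(m)$, and the injectivity of $G'$ (Proposition~\ref{Prop : A_n bij}) forces the product to equal $m$, completing the proof.

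The only genuine subtlety, and the step I expect to be the main obstacle, is the simplicity of the full tensor product in the \emph{imaginary} case $m_\delta \neq 0$: Proposition~\ref{Prop : commuting family} applies only to families of real modules, so the $\fd$-bookkeeping via Propositions~\ref{Prop : delta ineq sum} and \ref{Prop : real delta} is essential here rather than a direct appeal to pairwise strong commutativity. This in turn rests on the explicit factorizations ${G'}^{-1}(\beta_{n-1})^{m_1}{G'}^{-1}(\delta-\beta_{n-1})^{m_1} = {G'}^{-1}(m_1\delta)$ and its $\beta_n$-analogue already recorded in Proposition~\ref{Prop : A_n compatible}, which are precisely what make $L(m_\delta\delta)$ strongly commute with every module indexed by $\Lambda_c^{\mathrm{re}}$. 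All remaining verifications are routine and parallel the $D_n$ case.
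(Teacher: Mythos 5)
Your proposal is correct and follows essentially the same route as the paper, which proves this theorem verbatim along the lines of Theorem~\ref{Thm : D_n tensor factorization}: pairwise strong commutation from Proposition~\ref{Prop : A_n compatible}, simplicity of the full tensor product, membership of $\prod_{\alpha}{G'}^{-1}(m_\alpha\alpha)$ in $\mathcal{M}'$ via primality of the frozen classes and Proposition~\ref{Prop : frozenKR}, and the final identification through Theorem~\ref{Thm : A_n G=dL} and the injectivity of $G'$. If anything, you are slightly more careful than the paper, which deduces simplicity of $\bigotimes_{\alpha\in\Phi_c}L(m_\alpha\alpha)$ directly from the compatibility proposition, whereas you make explicit the needed $\fd$-bookkeeping (Propositions~\ref{Prop : delta ineq sum} and \ref{Prop : real delta} applied to the real part $R$ and the imaginary factor $L(m_\delta\delta)$) that justifies this step when $m_\delta\neq 0$.
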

\begin{Cor}\label{Cor : A_n factorization}
    Let $m \in \mathcal{M}$. 
    We can describe it as $m = \prod_{i=1}^{n}F_i^{a_i} \times m^{\prime}$ for 
    $a_i \in \Z_{\geq 0}, m^{\prime}\in \mathcal{M}^{\prime}$ uniquely. 
    Let $G^{\prime}(m^{\prime}) = \sum_{\alpha \in \Phi_c} m_{\alpha}\alpha$ be a $c$-cluster expansion. 
    Then, we have 
    \begin{align*}
    L(m)&\cong \bigotimes _{i=1}^{n}L(F_i)^{\otimes a_i} \otimes 
    \bigotimes_{\alpha \in \Phi_c}L(m_{\alpha}\alpha)\\
    &\cong \bigotimes _{i=1}^{n}L(F_i)^{\otimes a_i} \otimes
    \bigotimes_{\alpha \in \Phi_c^{\text{re}}}
    L(\alpha)^{\otimes m_{\alpha}} \otimes L(m_{\delta}\delta). 
    \end{align*}
\end{Cor}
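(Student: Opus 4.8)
The plan is to derive this corollary from Theorem~\ref{Thm : A_n tensor factorization} together with Proposition~\ref{Prop : frozenKR}, in exact parallel with the derivation of Corollary~\ref{Cor : D_n factorization} from Theorem~\ref{Thm : D_n tensor factorization}. The entire content is to peel off the frozen factors $\bigotimes_{i=1}^{n}L(F_i)^{\otimes a_i}$ from $L(m)$, after which the factorization of the remaining factor $L(m^{\prime})$ is supplied by the theorem just established. First I would record that the decomposition $m = \prod_{i=1}^{n}F_i^{a_i}\times m^{\prime}$ with $a_i\in\Z_{\geq 0}$ and $m^{\prime}\in\mathcal{M}^{\prime}$ exists and is unique; this is immediate from the definition of $\mathcal{M}^{\prime}$ and the explicit monomial normal form fixed at the beginning of this subsection, in both parity cases $n=2k-1$ and $n=2k$.

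Next, set $F \coloneq \bigotimes_{i=1}^{n}L(F_i)^{\otimes a_i}$. Each $L(F_i)$ is the cluster variable module attached to a frozen index, hence a KR module that, by Proposition~\ref{Prop : frozenKR}, strongly commutes with every simple module of $\mathcal{C}$. In particular the family $\{L(F_i)\}_{i=1}^{n}$ is pairwise strongly commuting and consists of real modules, so Proposition~\ref{Prop : commuting family} shows that both $F$ and $F\otimes F$ are simple; thus $F$ is a real simple module whose highest weight monomial is $\prod_{i=1}^{n}F_i^{a_i}$.

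The key step is to show that $F$ strongly commutes with $L(m^{\prime})$. Since each $L(F_i)$ is real and strongly commutes with $L(m^{\prime})$, Proposition~\ref{Prop : real delta} gives $\fd(L(F_i), L(m^{\prime}))=0$; iterating the inequality of Proposition~\ref{Prop : delta ineq sum} over the tensor factors of $F$ then yields $\fd(F, L(m^{\prime}))\leq \sum_{i=1}^{n}a_i\,\fd(L(F_i), L(m^{\prime}))=0$. As $F$ is real, a second application of Proposition~\ref{Prop : real delta} shows $F$ and $L(m^{\prime})$ strongly commute, so $F\otimes L(m^{\prime})$ is simple. Its highest weight monomial is $\prod_{i=1}^{n}F_i^{a_i}\times m^{\prime}=m$, so by Proposition~\ref{Prop : subquot} and simplicity we get $F\otimes L(m^{\prime})\cong L(m)$. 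Substituting the factorization $L(m^{\prime})\cong\bigotimes_{\alpha\in\Phi_c}L(m_{\alpha}\alpha)$ from Theorem~\ref{Thm : A_n tensor factorization} gives the first displayed isomorphism, and the second follows from $L(m_{\alpha}\alpha)\cong L(\alpha)^{\otimes m_{\alpha}}$ for $\alpha\in\Phi_c^{\text{re}}$, which holds because each such $L(\alpha)$ is real.

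I do not expect a genuine obstacle here: every ingredient is type-independent, using only the abstract monoidal-categorification framework (Propositions~\ref{Prop : frozenKR}, \ref{Prop : commuting family}, \ref{Prop : real delta}, \ref{Prop : delta ineq sum}, \ref{Prop : subquot}) and the already-proved Theorem~\ref{Thm : A_n tensor factorization}. The one step deserving attention is the simplicity of $F\otimes L(m^{\prime})$ via the $\fd$-inequality chain, which is exactly where the frozen modules' universal strong-commutativity is used; the remaining bookkeeping of the monomial normal form in the two parities was already settled when $G$ and $\mathcal{M}^{\prime}$ were defined, so the argument of Corollary~\ref{Cor : D_n factorization} transfers verbatim.
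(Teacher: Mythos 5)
Your proposal is correct and follows essentially the same route as the paper, which derives this corollary from Theorem~\ref{Thm : A_n tensor factorization} by peeling off the frozen factors via Proposition~\ref{Prop : frozenKR}. Your detour through $\fd$-inequalities (Propositions~\ref{Prop : real delta} and \ref{Prop : delta ineq sum}) to show $F\otimes L(m^{\prime})$ is simple is valid but slightly more elaborate than needed, since Proposition~\ref{Prop : frozenKR} already lets one iterate directly: each $L(F_i)$ strongly commutes with \emph{every} simple module of the category, so tensoring the frozen modules onto $L(m^{\prime})$ one at a time preserves simplicity.
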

\begin{Cor}\label{Cor : A_n using delta}
    Let $m \in \mathcal{M}$. 
   Then, $L(m)$ is a real module if and only if 
   $G(m) \in Q$ has a real $c$-cluster expansion. 
   It is also equivalent to $G(m) \not\in \delta + \sum_{\alpha\in \Lambda_c}\Z_{\geq 0}\alpha$. 
\end{Cor}
\begin{Cor}\label{Cor : A_n clyster monomial}
    Let $m\in \mathcal{M}$. 
    Then, $L(m)$ is real if and only if $[L(m)]$ is a cluster monomial. 
\end{Cor}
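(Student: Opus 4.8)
The plan is to deduce this corollary directly from the factorization and reality criterion already established for type $A_n$, proceeding exactly as in the proof of Corollary~\ref{Cor : D_n cluster monomial}. The forward implication requires no work: if $[L(m)]$ is a cluster monomial, then its inverse image under $\iota$ is real by the very definition of a monoidal categorification, since the strongly commuting condition imposed on monoidal seeds forces every cluster monomial to correspond to a real simple module.

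For the converse, I would assume $L(m)$ is real and first invoke Corollary~\ref{Cor : A_n using delta} to conclude that $G(m)\in Q$ admits a real $c$-cluster expansion $G(m)=\sum_{\alpha\in\Phi_c^{\text{re}}}m_{\alpha}\alpha$ with $m_{\delta}=0$. Writing $m=\prod_{i=1}^{n}F_i^{a_i}\times m^{\prime}$ with $m^{\prime}\in\mathcal{M}^{\prime}$, Corollary~\ref{Cor : A_n factorization} then supplies the tensor factorization
\[
L(m)\cong \bigotimes_{i=1}^{n}L(F_i)^{\otimes a_i}\otimes\bigotimes_{\alpha\in\Phi_c^{\text{re}}}L(\alpha)^{\otimes m_{\alpha}}.
\]

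The final step is to recognize the right-hand side as a cluster monomial of $\mathscr{A}$. Because $m_{\delta}=0$, any two roots $\alpha,\beta\in\Phi_c^{\text{re}}$ appearing with $m_{\alpha},m_{\beta}\neq0$ are $c$-compatible by the definition of a $c$-cluster expansion, so Proposition~\ref{Prop : pairwise compatible} places all the cluster variables $x[\alpha]=\iota([L(\alpha)])$ in a single cluster; since the frozen variables $f_i=\iota([L(F_i)])$ lie in every cluster, the element $[L(m)]=\prod_{i=1}^{n}f_i^{a_i}\prod_{\alpha\in\Phi_c^{\text{re}}}x[\alpha]^{m_{\alpha}}$ is a cluster monomial, completing the proof.

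I do not expect a genuine obstacle at the level of this corollary: all of the substantive work has already been carried out in the $D_{n+1}^{(1)}$ setting, namely the bijectivity of $G^{\prime}$ (Proposition~\ref{Prop : A_n bij}), the identification $G=\mathbf{d}\circ\iota\circ[L(-)]$ (Theorem~\ref{Thm : A_n G=dL}), the imaginarity of $L(l\delta)$ (Proposition~\ref{Prop : A_n delta imaginary}), and the passage from $c$-compatibility to strong commutation (Proposition~\ref{Prop : A_n compatible}). The only point demanding mild care is the bookkeeping that separates frozen from exchangeable indices when applying Proposition~\ref{Prop : pairwise compatible}, as that proposition is phrased for real roots in $\Phi_c^{\text{re}}$ and one must verify that the frozen contributions $f_i$ do not disturb the single-cluster conclusion.
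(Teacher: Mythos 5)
Your proposal is correct and takes essentially the same route as the paper, which proves this corollary by transporting the proof of Corollary~\ref{Cor : D_n cluster monomial} verbatim to the $A_n$ setting: the forward direction from the definition of monoidal categorification, and the converse via Corollary~\ref{Cor : A_n using delta}, the factorization of Corollary~\ref{Cor : A_n factorization}, and Proposition~\ref{Prop : pairwise compatible}. Your final caution about the frozen variables is the same bookkeeping the paper silently absorbs, since each $f_i$ belongs to every cluster and so does not affect the single-cluster conclusion.
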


\begin{Ex}
    By the proof of Proposition~\ref{Prop : A_n bij}, 
    \[
    {G^{\prime}}^{-1}(\delta)= 
    \begin{cases}
    2_12_54_14_5 \quad &\text{if }n=5, \\
    2_12_53_4(n\!-\!2)_4(n\!-\!1)_1(n\!-\!1)_5
    \displaystyle\prod_{i=4, 6, \ldots, n-3}i_1^2
    \prod_{i=5, 7, \ldots, n-4}i_4^2&\text{if }n=2k-1(k\geq4), \\
    2_02_43_1(n\!-\!2)_4(n\!-\!1)_1(n\!-\!1)_5
    \displaystyle\prod_{i=4, 6, \ldots, n-4}i_4^2
    \prod_{i=5, 7, \ldots, n-3}i_1^2 &\text{if }n=2k(k\geq3). \\
    \end{cases}
    \]
    By Proposition~\ref{Prop : A_n delta imaginary}, 
    $L({G^{\prime}}^{-1}(\delta))$ is imaginary. 
    By Corollary~\ref{Cor : A_n using delta}, 
    if $L(m)$ is imaginary,  
    then $m$ is divisible by ${G^{\prime}}^{-1}(\delta)$. 
    So, $L(\delta)$ is a prime imaginary module. 
\end{Ex}

\section{Replacement of Q-datum}\label{Sec : Qdatum}
Let $\mathfrak{g}$ be a complex finite-dimensional simple Lie algebra 
of type ADE. 
The monoidal categories $\mathscr{C}$ argued in the above section 
are subcategories of $\mathscr{C}_{\mathscr{Q}_0}$ 
for some appropriate Q-data $\mathscr{Q}_0$. 
However, by Proposition~\ref{Prop : C_Q equiv}, 
for any Q-datum $\mathscr{Q}$ of $\mathfrak{g}$, 
$\mathscr{C}_{\mathscr{Q}}$ is equivalent to $\mathscr{C}_{\mathscr{Q}_0}$ 
as monoidal categories. 
In this section,  for $\mathscr{C}_{\mathscr{Q}}$ with some 
Q-datum $\mathscr{Q}$, 
we argue what the subcategory in $\mathscr{C}_{\mathscr{Q}}$ 
corresponding to $\mathscr{C} \subset \mathscr{C}_{\mathscr{Q}_0}$ is, 
and find imaginary modules in the corresponding category. 

\subsection{Quiver Hecke algebras}
Let $\mathfrak{g}$ be a finite-dimensional simple Lie algebra 
of type ADE and $I$ be the set of Dynkin indices. 
Let $\Bbbk$ be a field of characteristic $0$. 
We choose polynomials 
\[
Q_{i,j}(u, v) = \delta(i\neq j)
\sum_{\substack{(p, q)\in \Z_{\geq0}^2\\(\alpha_i, 
\alpha_i)p+(\alpha_j, \alpha_j)q=-2(\alpha_i, \alpha_j)}}
t_{i, j; p, q}u^pv^q \in \Bbbk[u, v]
\]
with $t_{i, j; p, q} \in \Bbbk$, 
$t_{i, j; p, q} = t_{j, i; q, p}$ and $t_{i, j; -a_{i, j}, 0}
\in \Bbbk^{\times}$. 
Let $S_n=\langle s_1, \ldots, s_{n-1}\rangle$ be the symmetric group 
on $n$ letters with the action of $S_n$ on $I^n$ by place permutation. 
Let $Q$ be the root lattice of $\mathfrak{g}$ and 
$Q^{+}\coloneq \sum_{i\in I}\Z_{\geq 0}\alpha_i$. 
For $\beta \in Q^{+}$ with $\mathrm{ht}(\beta)=n$, we set 
\[
I^{\beta} \coloneq \{\nu = (\nu_1, \ldots, \nu_n)\in I^n\mid 
\alpha_{\nu_1}+\cdots+\alpha_{\nu_n}=\beta\}. 
\]
\begin{Def}[{\cite{Ro08}, \cite{KL09}}]\label{Def : KLR}
    Let $\beta \in Q^{+}$ with $\mathrm{ht}(\beta)=n$. 
    The \emph{quiver Hecke algebra} $R(\beta)$ associated with the parameters 
    $\{Q_{i, j}(u, v)\}$ is the $\Bbbk$-algebra given by the following 
    generators and relations : 

    generators : $e(\nu) \ (\nu \in I^{\beta}),\quad x_k\ (k=1, \ldots, n),
    \quad \tau_m\ (m=1, \ldots, n-1)$

    relations : 
    \begin{itemize}
        \item $e(\nu)e(\nu^{\prime}) = \delta_{\nu, \nu^{\prime}}e(\nu), 
        \quad \sum_{\nu\in I^{\beta}}e(\nu) = 1, $
        \item $x_kx_m = x_mx_k, \quad x_ke(\nu)=e(\nu)x_k, $
        \item $\tau_me(\nu)=e(s_m\nu)\tau_m$, 
        \item $\tau_k\tau_m=\tau_m\tau_k \quad \mathrm{ if } | k-m | >1$, 
        \item $(\tau_{k+1}\tau_k\tau_{k+1}-\tau_k\tau_{k+1}\tau_k)e(\nu)$
        \[
        =\begin{cases}
            \dfrac{Q_{\nu_k, \nu_{k+1}}(x_k, x_{k+1})-Q_{\nu_{k+2}, \nu_{k+1}}(x_{k+2}, x_{k+1})}
            {x_k-x_{k+2}} \quad &\mathrm{if } \nu_k = \nu_{k+2}, \\
            0  &\mathrm{otherwise}. 
        \end{cases}
        \]
        \item $\tau_k^2e(\nu) = Q_{\nu_k, \nu_{k+1}}(x_k, x_{k+1})e(\nu)$, 
        \item $(\tau_kx_m -x_{s_k(m)}\tau_k)e(\nu) = 
        \begin{cases}
            -e(\nu) \quad &\mathrm{if }m=k, \nu_k = \nu_{k+1}, \\
            e(\nu) \quad &\mathrm{if } m=k+1, \nu_k = \nu_{k+1}, \\
            0 &\mathrm{otherwise. }
        \end{cases}$
    \end{itemize}

    The algebra $R(\beta)$ has the $\Z$-graded algebra structure defined by 
    \[
    \mathrm{deg }e(\nu) = 0, \mathrm{deg }x_ke(\nu) = (\alpha_{\nu_k}, \alpha_{\nu_k}), 
    \mathrm{deg }\tau_le(\nu) = -(\alpha_{\nu_l}, \alpha_{\nu_{l+1}}),  
    \]
    where $(\alpha_i, \alpha_i)=2$ for all $i \in I$. 
\end{Def}

Let $R$-gmod $\coloneq \bigoplus_{\beta\in Q^{+}}R(\beta)$-gmod, 
where $R(\beta)$-gmod is the category of finite-dimensional graded $R(\beta)$-modules. 
For $M \in R(\beta)$-gmod and $N\in R(\gamma)$-gmod, 
we can define their convolution product $M\circ N \in R(\beta + \gamma)$-gmod by 
\[
M\circ N \coloneq R(\beta+\gamma)e(\beta, \gamma)
\underset{R(\beta)\otimes R(\gamma)}{\otimes} (M\otimes N), 
\]
where $e(\beta, \gamma) = \sum_{\nu_1\in I^{\beta}, \nu_2\in I^{\gamma}}e(\nu_1, \nu_2)$. 
Using the $\Z$-grading structure and the convolution product, 
the Grothendieck group $K(R\mathrm{-gmod})$ has a $\Z[q, q^{-1}]$-algebra structure. 

The quiver Hecke algebra $R(\beta)$ is said to be \emph{symmetric} if $Q_{i,j}(u,v)$ is a polynomial 
in $u-v$ for any $i, j \in I$. 
In what follows, we assume that $R$ is symmetric. 

Let $U_q(\mathfrak{g})$ be the quantum group, a $\Q(q)$-algebra generated by 
$e_i, f_i \ (i \in I), \ q^h\ (h \in P^{\vee})$ with relations. 
We define $U_q^{+}(\mathfrak{g})$ to be the sub $\Q(q)$-algebra of $U_q(\mathfrak{g})$ 
generated by $e_i$'s. 
We define $(U_q^{+}(\mathfrak{g}))_{\Z}$ to be the sub $\Z[q, q^{-1}]$-algebra 
generated by $e_i^{(n)}$ for $i \in I$ and $n \in \Z_{\geq 0}$, 
where $e_i^{(n)} = e_i^n/[n]_q!$. 
These algebras have $Q^{+}$-grading where $e_i$ is homogeneous of degree $\alpha_i$. 

We define $A_q(\mathfrak{n}) \coloneq \bigoplus_{\mu \in Q^+}
\mathrm{Hom}_{\Q(q)}((U_q^+(\mathfrak{g})_{\mu}, \Q(q))$ 
to be the graded dual of $U_q^{+}(\mathfrak{g})$. 

There is a symmetric non-degenerate bilinear form 
\[
( \quad , \quad) : U_q^{+}(\mathfrak{g})\times U_q^{+}(\mathfrak{g})\to \Q(q), 
\]
so it induces a bijection $U_q^{+}(\mathfrak{g}) \to A_q(\mathfrak{n})$ (see \cite{KKKO18}). 
Via this bijection, we can endow $A_q(\mathfrak{n})$ with a $\Q(q)$-algebra structure 
isomorphic to $U_q^{+}(\mathfrak{g})$. 
Furthermore, we define $A_q(\mathfrak{n})_{\Z}\coloneq \bigoplus_{\mu \in Q^+}
\mathrm{Hom}_{\Z[q, q^{-1}]}((U_q^+(\mathfrak{g})_{\Z, \mu}, \Z[q, q^{-1}])\subset A_q(\mathfrak{n})$, 
which is a sub $\Z[q, q^{-1}]$-algebra of $A_q(\mathfrak{n})$. 
(For details, see \cite{KKKO18}. Note that the argument there is given for 
$U_q^{-}(\mathfrak{g})$, with $e_i$ in $U_q^{+}(\mathfrak{g})$ 
replaced by $f_i$ in $U_q^{-}(\mathfrak{g})$. )

\begin{Prop}[{\cite{KL09}, \cite{Ro08}}]\label{Prop : KLR iso}
    There exists a $\Z[q, q^{-1}]$-algebra isomorphism 
    \[
    ch : K(R\mathrm{-gmod}) \to A_q(\mathfrak{n})_{\Z}. 
    \]
\end{Prop}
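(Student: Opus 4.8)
The plan is to realize $\mathrm{ch}$ concretely as a $q$-graded character valued in the algebra of $I$-words, to show it intertwines the convolution product with the quantum shuffle product that governs $A_q(\mathfrak{n})$, and then to prove bijectivity by separating injectivity (characters of simple modules are independent) from a graded-rank count. For $\beta \in Q^{+}$ with $\mathrm{ht}(\beta)=n$, I would assign to $M \in R(\beta)\text{-gmod}$ its character
\[
\mathrm{ch}(M) \;=\; \sum_{\nu \in I^{\beta}} \bigl(\dim_q e(\nu)M\bigr)\,[\nu]
\;\in\; \bigoplus_{\nu \in I^{\beta}} \Z[q,q^{-1}]\,[\nu],
\]
where $\dim_q$ denotes graded dimension. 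Since each $e(\nu)$ is idempotent and $M \mapsto e(\nu)M$ is exact, this factors through $K(R(\beta)\text{-gmod})$, and assembling over all $\beta$ yields a $\Z[q,q^{-1}]$-linear map out of $K(R\text{-gmod})$. Through the nondegenerate bilinear form $(\,\cdot\,,\,\cdot\,)$ recalled above, $A_q(\mathfrak{n})$ is identified with the sub-$\Q(q)$-algebra of $I$-words equipped with the quantum shuffle product, and it is into this model that $\mathrm{ch}$ will be shown to map.

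Next I would check that $\mathrm{ch}$ is a $\Z[q,q^{-1}]$-algebra homomorphism. The key computation is that $R(\beta+\gamma)e(\beta,\gamma)$ is free as a left $R(\beta)\otimes R(\gamma)$-module on the minimal-length representatives of $S_{n+m}/(S_n\times S_m)$, so a Mackey-type filtration expresses the character of $M \circ N$ as a sum over shuffles of the $I$-words of $M$ and $N$. The weighting $q$-powers are accounted for exactly by the degrees $\deg \tau_l e(\nu) = -(\alpha_{\nu_l},\alpha_{\nu_{l+1}})$ in $R$, which reproduces the structure constants of the quantum shuffle product. Hence $\mathrm{ch}(M\circ N) = \mathrm{ch}(M)\ast\mathrm{ch}(N)$, and $\mathrm{ch}$ is multiplicative.

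For injectivity, I would use that each $R(\beta)\text{-gmod}$ has, up to grading shift, only finitely many simple objects, whose classes form a $\Z[q,q^{-1}]$-basis of $K(R(\beta)\text{-gmod})$. Since $\mathrm{ch}$ records all graded weight spaces $e(\nu)M$, and the characters of pairwise non-isomorphic simples are $\Z[q,q^{-1}]$-linearly independent (a simple is reconstructed from its cyclic highest word), $\mathrm{ch}$ is injective on $K(R\text{-gmod})$.

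The remaining and most delicate point is surjectivity onto the integral form $A_q(\mathfrak{n})_{\Z}$. One checks that the one-dimensional modules over $R(\alpha_i)$ map to the dual generators of $A_q(\mathfrak{n})$ and that the quiver Hecke relations — in particular the cubic braid relation built from $Q_{i,j}$ and the nilHecke relation $\tau_k^2 e(\nu)=Q_{\nu_k,\nu_{k+1}}(x_k,x_{k+1})e(\nu)$ — force their images to satisfy the quantum Serre relations, so that by multiplicativity the image is a subalgebra containing generators. The hard part will be to show that this image is \emph{exactly} $A_q(\mathfrak{n})_{\Z}$: that divided powers are realized (via the indecomposable summands of $R(n\alpha_i)$, whose endomorphism rings are nilHecke algebras) and that no denominators arise. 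I expect to control this by computing the graded dimension of $R(\beta)$ and comparing its Poincaré series, weight space by weight space, with the Gram matrix of $(\,\cdot\,,\,\cdot\,)$ on $U_q^{+}(\mathfrak{g})_{\beta}$; the equality of these $q$-dimensions pins down the $\Z[q,q^{-1}]$-rank of each graded component and upgrades the injective multiplicative map $\mathrm{ch}$ to the desired isomorphism onto $A_q(\mathfrak{n})_{\Z}$.
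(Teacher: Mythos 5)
This proposition is not proved in the paper at all: it is imported verbatim from Khovanov--Lauda and Rouquier (the citations \cite{KL09}, \cite{Ro08} in the statement), so there is no internal proof to compare against. What you have written is essentially a compressed reconstruction of the original Khovanov--Lauda argument: the graded character into the quantum shuffle algebra, multiplicativity via the Mackey filtration of $R(\beta+\gamma)e(\beta,\gamma)$ (note it is freeness as a \emph{right} $R(\beta)\otimes R(\gamma)$-module over the minimal coset representatives that feeds into $M\circ N$, not left as you wrote), injectivity from linear independence of characters of simples, and the graded-dimension formula for $R(\beta)$ matched against the Gram matrix of $(\,\cdot\,,\,\cdot\,)$. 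That is the right skeleton, and all the ingredients you name do appear in the source proof.

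Two steps, however, are underpowered as stated. First, linear independence of the characters of non-isomorphic simples is a genuine theorem, not a consequence of the parenthetical ``a simple is reconstructed from its cyclic highest word''; the known proofs run by induction on height using the restriction functors and crystal-operator formalism (Khovanov--Lauda's Theorem on injectivity of $\mathrm{ch}$, or the Kleshchev--Ram/Lauda--Vazirani analysis), and your sketch should either invoke that machinery or reproduce the induction. Second, and more seriously, your closing move --- ``equality of graded ranks pins down the $\Z[q,q^{-1}]$-rank of each component and upgrades the injective map to an isomorphism onto $A_q(\mathfrak{n})_{\Z}$'' --- fails over $\Z[q,q^{-1}]$: an injective map between free $\Z[q,q^{-1}]$-modules of equal finite rank need not be surjective (multiplication by $q+q^{-1}$ on $\Z[q,q^{-1}]$ already shows this), so the Poincar\'e-series comparison only yields an isomorphism after tensoring with $\Q(q)$. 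The integral statement is obtained in the sources by the dual route you only gesture at: one first proves $K(R\text{-pmod})\cong (U_q^{+}(\mathfrak{g}))_{\Z}$, with surjectivity coming from the divided powers $e_i^{(n)}$ realized by the indecomposable projectives of the nilHecke algebra $R(n\alpha_i)$, and then transports this to $R\text{-gmod}$ via the perfect pairing under which classes of simples are dual to classes of indecomposable projectives; that duality, not a rank count, is what lands the image exactly on $A_q(\mathfrak{n})_{\Z}$ with no denominators. With those two repairs your outline matches the actual proof in \cite{KL09}.
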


\subsection{Dual PBW bases and dual canonical bases}
Consider a reduced expression $\underline{w_0} = i_1\ldots i_l$ 
of the longest element $w_0$ of the Weyl group. 
For $i \in I$, there is an automorphism of 
$T_i : U_q(\mathfrak{g}) \to U_q(\mathfrak{g})$. 
(See \cite{Lusbook}. Note that, in this paper, we write $v, K_h$ in \cite{Lusbook} 
as $q^{-1}, q^{-h}$ and adopt $T_{i, 1}^{\prime\prime}$ for $T_i$. )

For $\mathbf{c} \in \Z_{\geq 0}^l$, 
we define 
\[
E_{\underline{w_0}}({\mathbf{c}})\coloneq 
T_{i_1}T_{i_2}\cdots T_{i_{l-1}}(e_{i_l}^{(c_l)})\cdots
T_{i_1}(e_{i_2}^{(c_2)})e_{i_1}^{(c_1)}. 
\]
\begin{Prop}[{\cite{Lusbook}}]\label{PBWbasis}
    For any reduced expression $\underline{w_0}$, 
    the set 
    \[
    \{E_{\underline{w_0}}({\mathbf{c}}) \mid \mathbf{c} \in \Z_{\geq 0}^l\}
    \]
    forms a $\Z[q, q^{-1}]$-basis of $(U_q^{+}(\mathfrak{g}))_{\Z}$. 
\end{Prop}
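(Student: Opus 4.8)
The plan is to follow Lusztig's original construction of PBW bases through the braid symmetries $T_i$. First I would attach to the reduced expression $\underline{w_0} = s_{i_1}\cdots s_{i_l}$ the associated convex order on the positive roots, setting $\beta_k \coloneq s_{i_1}\cdots s_{i_{k-1}}(\alpha_{i_k})$ for $1 \le k \le l$, so that $\beta_1, \ldots, \beta_l$ enumerates $\Phi^+$. The root vectors are $E_{\beta_k} \coloneq T_{i_1}\cdots T_{i_{k-1}}(e_{i_k})$, and since the divided powers commute with the $T_i$, the element in the statement factors as the ordered product
\[
E_{\underline{w_0}}(\mathbf{c}) = E_{\beta_l}^{(c_l)} E_{\beta_{l-1}}^{(c_{l-1})} \cdots E_{\beta_1}^{(c_1)}.
\]
The first thing to check is positivity and integrality: each $E_{\beta_k}$ lies in $(U_q^{+}(\mathfrak{g}))_{\Z}$ and is homogeneous of weight $\beta_k$. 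This rests on Lusztig's analysis of how $T_i$ acts on $U_q^{+}(\mathfrak{g})$, together with the inductive fact that the partial product $s_{i_1}\cdots s_{i_{k-1}}$ sends $\alpha_{i_k}$ to a positive root, so that no negative contributions arise; integrality follows because both the divided powers $e_i^{(n)}$ and the automorphisms $T_i$ preserve the $\Z[q,q^{-1}]$-form.

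Second, I would establish the spanning property. The key tool is the Levendorskii--Soibelman straightening relations: for $j < k$ there is a relation of the shape
\[
E_{\beta_j} E_{\beta_k} = q^{c_{jk}} E_{\beta_k} E_{\beta_j} + \sum_{\mathbf{d}} a_{\mathbf{d}}\, E_{\beta_{k-1}}^{(d_{k-1})} \cdots E_{\beta_{j+1}}^{(d_{j+1})},
\]
with $c_{jk} \in \Z$ (its precise value depending on the conventions fixed in the excerpt) and $a_{\mathbf{d}} \in \Z[q,q^{-1}]$, where the correction terms are ordered monomials supported on the intermediate roots $\beta_m$ with $j < m < k$. Iterating these relations rewrites any product of root vectors as a $\Z[q,q^{-1}]$-linear combination of the ordered monomials $E_{\underline{w_0}}(\mathbf{c})$; since the $E_{\beta_k}$ generate $U_q^{+}(\mathfrak{g})$ (the case $k=1$ already produces all the $e_i$), the ordered monomials span.

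Third, and this is the crux, I would prove linear independence, after which the basis statement follows. The cleanest route uses the nondegenerate symmetric bilinear form $(\ ,\ )$ on $U_q^{+}(\mathfrak{g})$: one shows the PBW monomials are orthogonal up to an explicitly invertible triangular change of coordinates, so their Gram matrix is nondegenerate. Equivalently, one argues by a graded dimension count, since the monomials $E_{\underline{w_0}}(\mathbf{c})$ of a fixed weight $\mu$ are indexed by the $\mathbf{c}$ with $\sum_k c_k \beta_k = \mu$, whose number equals $\dim_{\Q(q)} U_q^{+}(\mathfrak{g})_\mu$ (given by the Kostant partition function, as seen by specializing $q \to 1$ to $U(\mathfrak{n}^{+})$). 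I expect this independence step to be the main obstacle: spanning and integrality are essentially bookkeeping with the straightening relations and with the preservation of the integral form, whereas verifying that the spanning set has exactly the right size, or that the form is nondegenerate on it, is where the genuine structural input about $U_q^{+}(\mathfrak{g})$ enters. Finally, combining the $\Z[q,q^{-1}]$-spanning from the second step with the invertibility over $\Z[q,q^{-1}]$ of the transition matrix to any known integral basis upgrades the $\Q(q)$-basis to the integral statement, so that $\{E_{\underline{w_0}}(\mathbf{c}) \mid \mathbf{c} \in \Z_{\geq 0}^l\}$ is a $\Z[q,q^{-1}]$-basis of $(U_q^{+}(\mathfrak{g}))_{\Z}$.
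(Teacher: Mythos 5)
There is nothing in the paper to compare your argument against: Proposition~\ref{PBWbasis} is imported verbatim from \cite{Lusbook} and the paper supplies no proof, so your proposal can only be judged against Lusztig's own treatment. On that score your sketch is a correct reconstruction of the standard argument, and its core is exactly Lusztig's: after factoring $E_{\underline{w_0}}(\mathbf{c})$ into the ordered product of divided powers of the root vectors $E_{\beta_k}=T_{i_1}\cdots T_{i_{k-1}}(e_{i_k})$, Lusztig obtains $\Q(q)$-linear independence \emph{and} spanning in one stroke by showing that distinct PBW monomials attached to the same reduced word are exactly orthogonal under the bilinear form $(\;,\;)$, with explicitly nonzero norms, and then matching the count of monomials of weight $\mu$ against $\dim U_q^{+}(\mathfrak{g})_{\mu}$ via the Kostant partition function; the Levendorskii--Soibelman straightening you invoke for spanning is a legitimate alternative route but is not needed in his treatment.

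Two points in your write-up deserve flagging. First, $T_i$ does \emph{not} preserve $U_q^{+}(\mathfrak{g})$, let alone an integral form of it; what is true, and what \cite{Lusbook} proves, is that the $T_i$ preserve the $\Z[q,q^{-1}]$-form of the whole algebra $U_q(\mathfrak{g})$, while $E_{\beta_k}$ lands in $U_q^{+}(\mathfrak{g})$ only because $s_{i_1}\cdots s_{i_{k-1}}(\alpha_{i_k})$ is a positive root; integrality of the divided-power root vectors then follows by intersecting with the triangular decomposition of the integral form, not from ``$T_i$ preserves the $\Z[q,q^{-1}]$-form'' of the positive part as you wrote. Second, your final integrality upgrade is the one genuinely underpowered step: the straightening coefficients are a priori only in $\Q(q)$ once divided powers are involved, and appealing to ``invertibility of the transition matrix to any known integral basis'' risks circularity, since the usual integral basis (the canonical basis of Proposition~\ref{dualcanonical}, in dual form) is itself constructed \emph{from} the PBW basis by a unitriangular change of basis. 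Lusztig closes this gap directly: integrality of the transition between the PBW bases attached to different reduced words reduces, via braid moves as in Proposition~\ref{Prop : Rww}, to explicit rank-two computations, and $\mathcal{A}$-spanning of $(U_q^{+}(\mathfrak{g}))_{\Z}$ follows from there. As a sketch your proposal is sound, but this last step should cite or reproduce that rank-two reduction rather than a transition-matrix argument.
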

We call the basis \emph{PBW-basis}. 
Then, using the non-degenerate bilinear form $(\quad, \quad)$, 
we have the dual basis $\{E_{\underline{w_0}}^*(\mathbf{c})\}$ of $A_q(\mathfrak{n})_{\Z}$, 
called \emph{dual PBW-basis}. 

Let $\ \bar{ }\  : U_q(\mathfrak{g})\to U_q(\mathfrak{g})$ be the $\Q$-algebra involution defined by 
\[
e_i \mapsto e_i, \quad f_i \mapsto f_i, \quad q^h \mapsto q^{-h}
\]
for all $i \in I, h \in P^{\vee}$ and $q \mapsto q^{-1}$. 
Also, let $\bar{ } : A_q(\mathfrak{n})\to A_q(\mathfrak{n})$ be the involution defined by 
\[
\bar{f}(x) = \overline{f(\bar{x})}. 
\]
\begin{Prop}[{\cite{Lus90}}]\label{dualcanonical}
    There is a unique $\Z[q, q^{-1}]$-basis $\{B_{\underline{w_0}}^{\mathrm{up}}(\mathbf{c})\mid \mathbf{c}\in \Z_{\geq 0}^l\}$ 
    of $A_q(\mathfrak{n})_{\Z}$ which satisfies the following conditions : 
\begin{itemize}
    \item $B_{\underline{w_0}}^{\mathrm{up}}(\mathbf{c})$ are $\ \bar{ }\ $-invariant, 
    \item $B_{\underline{w_0}}^{\mathrm{up}}(\mathbf{c}) \in E_{\underline{w_0}}^*(\mathbf{c}) + 
    \sum_{\mathbf{c}^{\prime}<\mathbf{c}}q\Z[q] E_{\underline{w_0}}^*(\mathbf{c}^{\prime})$, 
    where $<$ denote the lexicographical order. 
\end{itemize}
    Furthermore, the set $\{B_{\underline{w_0}}^{\mathrm{up}}(\mathbf{c})\mid 
    \mathbf{c}\in \Z_{\geq 0}^l\}$ is independent of 
    the choice of reduced words. 
\end{Prop}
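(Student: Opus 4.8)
The plan is to deduce the statement from the standard ``Lusztig's lemma'' on the existence of bar-invariant bases, once a suitable triangularity of the bar involution relative to the dual PBW basis has been established. For a fixed reduced word $\underline{w_0}$, the existence and uniqueness of $B^{\mathrm{up}}_{\underline{w_0}}(\mathbf{c})$ is then a formal consequence; the genuinely deep content is the independence of the reduced word, which I would treat separately.

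First I would fix $\underline{w_0}$ and prove the bar-triangularity of $\{E^*_{\underline{w_0}}(\mathbf{c})\}$, namely that $\overline{E^*_{\underline{w_0}}(\mathbf{c})} \in E^*_{\underline{w_0}}(\mathbf{c}) + \sum_{\mathbf{c}' < \mathbf{c}} \Z[q, q^{-1}]\, E^*_{\underline{w_0}}(\mathbf{c}')$ with diagonal coefficient $1$. On the $U_q^{+}(\mathfrak{g})$ side this reduces to the corresponding statement for the PBW root vectors $E_{\underline{w_0}}(\mathbf{c})$, which follows from the fact that each Lusztig root vector $T_{i_1}\cdots T_{i_{k-1}}(e_{i_k})$ is, up to a power of $q$, bar-invariant, together with the Levendorskii--Soibelman straightening relations that express a product of PBW generators in the ``wrong'' order as a $\Z[q, q^{-1}]$-combination of ordered monomials strictly lower in the lexicographic order. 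Dualizing through the non-degenerate pairing $(\ ,\ )$ transfers this to $A_q(\mathfrak{n})_{\Z}$ and yields the claimed unitriangularity.

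Next I would invoke Lusztig's lemma \cite{Lus90}: given a free $\Z[q, q^{-1}]$-module with a basis indexed by a poset and a $\Z$-linear, $\Z[q,q^{-1}]$-semilinear involution (sending $q$ to $q^{-1}$) that is unitriangular in the above sense, there is a unique bar-invariant element of the form $E^*_{\underline{w_0}}(\mathbf{c}) + \sum_{\mathbf{c}'<\mathbf{c}} b_{\mathbf{c},\mathbf{c}'}\,E^*_{\underline{w_0}}(\mathbf{c}')$ with every $b_{\mathbf{c},\mathbf{c}'} \in q\Z[q]$. Existence is obtained by solving for the $b_{\mathbf{c},\mathbf{c}'}$ by induction down the poset, using that the transition matrix $R$ of the bar involution satisfies $\overline{R}\,R = 1$, which makes each equation of the shape $b - \overline{b} = (\text{known lower terms})$ uniquely solvable inside $q\Z[q]$. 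Uniqueness follows because the difference of two solutions is bar-invariant and lies in $\sum_{\mathbf{c}'<\mathbf{c}} q\Z[q]\,E^*_{\underline{w_0}}(\mathbf{c}')$, whose leading coefficient cannot be bar-invariant unless it vanishes. This produces $B^{\mathrm{up}}_{\underline{w_0}}(\mathbf{c})$ for the fixed word.

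The main obstacle is the final assertion that the basis is independent of $\underline{w_0}$. One route is to connect any two reduced words by braid moves (Tits' theorem), reduce to a single braid move, and settle the rank-two cases ($A_1\times A_1$ and $A_2$ in the simply-laced setting) by explicitly checking that the two dual PBW bases yield the same bar-invariant elements. The route I would favor, to avoid case analysis, is to identify $\{B^{\mathrm{up}}_{\underline{w_0}}(\mathbf{c})\}$ with the dual of Kashiwara's lower global basis: the $\Z[q]$-lattice $\mathcal{L} = \sum_{\mathbf{c}}\Z[q]\,E^*_{\underline{w_0}}(\mathbf{c})$ together with its reduction modulo $q$ is known to be independent of $\underline{w_0}$, and the global basis is characterized intrinsically by this lattice, the bar involution, and the integral form. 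Since the characterization is word-independent, so is the resulting basis, which coincides with the $B^{\mathrm{up}}_{\underline{w_0}}(\mathbf{c})$ above by the uniqueness just established. This word-independence of the lattice is the deep input, and I would cite \cite{Lus90} rather than reprove it.
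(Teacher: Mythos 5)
The paper itself contains no proof of this proposition: it is imported verbatim from \cite{Lus90}, so the only meaningful comparison is with the argument in the cited source, which your proposal reconstructs faithfully. Your three-step skeleton --- (i) bar-unitriangularity of the (dual) PBW basis, (ii) Lusztig's lemma to manufacture the unique bar-invariant elements with off-diagonal coefficients in $q\Z[q]$, and (iii) word-independence via the word-independence of the $\Z[q]$-lattice $\mathcal{L}=\sum_{\mathbf{c}}\Z[q]E^*_{\underline{w_0}}(\mathbf{c})$ together with the intrinsic characterization of the global basis --- is exactly the standard route, and your decision to cite rather than reprove the lattice statement in (iii) matches what the paper does wholesale by quoting the whole proposition.

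One justification in step (i) is wrong as stated: non-simple Lusztig root vectors are \emph{not} bar-invariant up to a power of $q$. Already in type $A_2$ the intermediate root vector is, up to normalization, $e_ie_j-q^{\pm1}e_je_i$, whose bar image is $e_ie_j-q^{\mp1}e_je_i$; this differs from every $q$-power multiple of the root vector by a nonzero multiple of the lexicographically lower PBW monomial $e_je_i$. The correct inductive input is the weaker statement that $\overline{E_{\beta}}$ equals $E_{\beta}$ plus a $\Z[q,q^{-1}]$-combination of ordered monomials in root vectors attached to roots strictly between the relevant ones in the convex order; this is proved by the same Levendorskii--Soibelman straightening you invoke, seeded by rank-two computations. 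Since your argument only ever uses the resulting unitriangularity, the flaw is local and repairable rather than structural. Two bookkeeping points to watch: dualizing through the pairing replaces the transition matrix of the bar involution by its inverse transpose, so the direction of the lexicographic order in the triangularity must be rechecked against the paper's conventions ($T''_{i,1}$ with $v=q^{-1}$); and Lusztig's lemma should be applied within each weight space $A_q(\mathfrak{n})_{\beta}$, where only finitely many exponent vectors $\mathbf{c}'<\mathbf{c}$ occur, so that the downward induction terminates.
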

The basis $\{B_{\underline{w_0}}^{\mathrm{up}}(\mathbf{c})\mid \mathbf{c}\in \Z_{\geq 0}^l\}$ is called \emph{dual canonical basis} 
or \emph{upper global basis}. 

\begin{Prop}[{\cite{Lus90}}]\label{Prop : Rww}
    Let $\underline{w_0}={i_1}\cdots {i_l}, 
    \underline{w_0}^{\prime}={i^{\prime}_1}\cdots {i^{\prime}_l}$ be reduced words. 
    Suppose that $\underline{w_0}^{\prime}$ is obtained from $\underline{w_0}$ by 
    \begin{enumerate}
        \item[(a)] replacing three consecutive entries $s_i, s_j, s_i$ in $\underline{w_0}$ with $c_{ij}=-1$ 
        by $s_j, s_i, s_j$, or by 
        \item[(b)] replacing two consecutive entries $s_i, s_j$ in $\underline{w_0}$ with $c_{i,j}=0$ 
        by $s_j, s_i$. 
    \end{enumerate}
    We define a map $R_{\underline{w_0}}^{\underline{w_0}^{\prime}} : \Z_{\geq 0}^l \to \Z_{\geq 0}^l$ 
    as follows : 
    \begin{enumerate}
        \item[(a)] It takes $\mathbf{c}\in \Z_{\geq 0}^l$ to $\mathbf{c}^{\prime}$, 
    which has the same coordinate as $\mathbf{c}$ except in the three consecutive position at which 
    $\underline{w_0}, \underline{w_0}^{\prime}$ differ. 
    If $(a, b, c)$ are the coordinates of $\mathbf{c}$ at those three position, 
    the coordinates of $\mathbf{c}^{\prime}$ at those positions are 
    \[
    (b+c-\mathrm{min}(a, c), \mathrm{min}(a, c), a+b-\mathrm{min}(a, c)). 
    \]
    \item[(b)] It takes $\mathbf{c}\in \Z_{\geq 0}^l$ to $\mathbf{c}^{\prime}$, 
    which has the same coordinate as $\mathbf{c}$ except in the two consecutive position at which 
    $\underline{w_0}, \underline{w_0}^{\prime}$ differ. 
    If $(a, b)$ are the coordinates of $\mathbf{c}$ at those two position, 
    the coordinates of $\mathbf{c}^{\prime}$ at those positions are $(b, a)$. 
    \end{enumerate}
    Then, the map $R_{\underline{w_0}}^{\underline{w_0}^{\prime}}$ is bijective and 
    \[
    B_{\underline{w_0}}^{\mathrm{up}}(\mathbf{c}) = 
    B_{\underline{w_0}^{\prime}}^{\mathrm{up}}
    (R_{\underline{w_0}}^{\underline{w_0}^{\prime}}(\mathbf{c})). 
    \]
\end{Prop}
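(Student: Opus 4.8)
The plan is to reduce the statement to a rank-two computation, exploiting the fact, already recorded in Proposition~\ref{dualcanonical}, that the dual canonical basis $\{B_{\underline{w_0}}^{\mathrm{up}}(\mathbf{c})\}$ is independent of the chosen reduced word \emph{as a subset} of $A_q(\mathfrak{n})_{\Z}$. Consequently $B_{\underline{w_0}}^{\mathrm{up}}$ and $B_{\underline{w_0}'}^{\mathrm{up}}$ are two parametrizations of one and the same set by $\Z_{\geq 0}^l$, so the assignment $\mathbf{c}\mapsto\mathbf{c}'$ determined by $B_{\underline{w_0}}^{\mathrm{up}}(\mathbf{c}) = B_{\underline{w_0}'}^{\mathrm{up}}(\mathbf{c}')$ is automatically a bijection, namely $(B_{\underline{w_0}'}^{\mathrm{up}})^{-1}\circ B_{\underline{w_0}}^{\mathrm{up}}$. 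The entire content of the proposition is therefore to identify this a~priori abstract bijection with the explicit piecewise-linear map $R_{\underline{w_0}}^{\underline{w_0}'}$; bijectivity of the latter is then a formal consequence. First I would invoke the unitriangularity of each $B_{\underline{w_0}}^{\mathrm{up}}(\mathbf{c})$ with respect to the dual PBW basis $\{E_{\underline{w_0}}^*(\mathbf{c})\}$ in the lexicographic order (Proposition~\ref{dualcanonical}, second bullet), so that matching $B_{\underline{w_0}}^{\mathrm{up}}(\mathbf{c})$ with $B_{\underline{w_0}'}^{\mathrm{up}}(\mathbf{c}')$ is governed by the transition matrix between the two dual PBW bases and the tracking of their leading terms.

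Next I would localize. Since $\underline{w_0}$ and $\underline{w_0}'$ coincide outside a window of two consecutive letters (case~(b)) or three consecutive letters (case~(a)), I would use the formula $E_{\underline{w_0}}(\mathbf{c}) = T_{i_1}\cdots T_{i_{l-1}}(e_{i_l}^{(c_l)})\cdots T_{i_1}(e_{i_2}^{(c_2)})\,e_{i_1}^{(c_1)}$ of Proposition~\ref{PBWbasis} together with the braid relations satisfied by the automorphisms $T_i$ to factor out the common prefix and suffix. The braid relation among the $T_i$ guarantees that the root vectors attached to positions outside the window agree for both words, while the factors inside the window differ only by the common operator $T_{i_1}\cdots T_{i_{p-1}}$; applying its inverse reduces the comparison to the rank-two parabolic generated by $e_i,e_j$. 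This parabolic is of type $A_1\times A_1$ in case~(b) and of type $A_2$ in case~(a).

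Then I would carry out the two rank-two computations. In case~(b) the hypothesis $c_{ij}=0$ forces $e_ie_j=e_je_i$, so the PBW monomials with exponents $(a,b)$ and $(b,a)$ for the orders $ij$ and $ji$ already coincide and equal the corresponding dual canonical element; no straightening is needed and the substitution $(a,b)\mapsto(b,a)$, manifestly an involution, follows at once. Case~(a) is the substantive one: in $U_q^+$ of type $A_2$ one must compare the PBW monomials built from $e_i,\ T_i(e_j),\ e_j$ (order $iji$) with those built from $e_j,\ T_j(e_i),\ e_i$ (order $jij$), re-expressing the non-simple root vector via the quantum Serre relation, and then match the bar-invariant unitriangular representatives on the two sides. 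This matching yields precisely the formula $(a,b,c)\mapsto(b+c-\min(a,c),\ \min(a,c),\ a+b-\min(a,c))$.

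The hard part will be this explicit $A_2$ analysis: computing the transition between the two PBW bases of $U_q^+(\mathfrak{sl}_3)$, isolating the leading dual PBW term of each dual canonical vector under the lexicographic order, and checking that the bar-invariant, unitriangular characterization of Proposition~\ref{dualcanonical} is compatible with the quoted piecewise-linear substitution. Once the $A_2$ case is settled, bijectivity of $R_{\underline{w_0}}^{\underline{w_0}'}$ follows either from the abstract bijectivity of $\mathbf{c}\mapsto\mathbf{c}'$ noted at the outset, or by verifying directly that composing the map with the analogous map for the reverse braid move returns the identity.
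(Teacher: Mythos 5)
The paper gives no proof of this proposition at all: it is quoted directly from Lusztig \cite{Lus90}, so there is no internal argument to measure yours against. Judged on its own terms, your outline is correct and is essentially a reconstruction of Lusztig's original proof. The localization step is exactly right: because $T_iT_jT_i=T_jT_iT_j$ (resp.\ $T_iT_j=T_jT_i$ in case (b)), the root vectors attached to positions outside the window agree for the two words, and the window factors are images under the common automorphism $T_{i_1}\cdots T_{i_{p-1}}$ of rank-two PBW monomials; hence the transition matrix between the two PBW bases is the rank-two one, which is a coordinate swap in type $A_1\times A_1$ and, in type $A_2$, is Lusztig's explicit computation (his description of the $A_2$ canonical basis as the monomials $e_i^{(a)}e_j^{(b)}e_i^{(c)}$ with $b\geq a+c$, the $iji$- and $jij$-families overlapping exactly on the locus $b=a+c$, which is what produces the piecewise-linear formula $(a,b,c)\mapsto(b+c-\min(a,c),\min(a,c),a+b-\min(a,c))$). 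Two points deserve care in a full write-up. First, your opening claim that bijectivity is ``automatic'' rests on Proposition~\ref{dualcanonical} (word-independence of the dual canonical basis), which in Lusztig's development is proved \emph{by means of} the very rank-two transition computations you defer to the end; within this paper's conventions that is harmless, since Proposition~\ref{dualcanonical} is itself quoted as known, and in any case bijectivity is immediate without it: the map in (a) is an involution (one checks $\min(b+c-m,\,a+b-m)=b$ for $m=\min(a,c)$, so applying the formula twice returns $(a,b,c)$), as is the swap in (b). Second, since the statement concerns the \emph{dual} bases in $A_q(\mathfrak{n})_{\Z}$ while your rank-two computation lives in $U_q^{+}$, you should record that the PBW vectors are orthogonal for the bilinear form, so dual PBW vectors are scalar multiples of PBW vectors and the mod-$q$ unitriangularity of the transition matrix passes to the dual side; with that remark included, the argument is complete in outline, with the acknowledged $A_2$ computation as the only substantive labor.
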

Let $\underline{w_0}, \underline{w_0}^{\prime}$ be reduced words. 
Since $\underline{w_0}^{\prime}$ can be obtained from $\underline{w_0}$
by repeating (a) and (b) 
in Proposition~\ref{Prop : Rww}, 
by composing the bijection $R$'s above, 
we can obtain the bijection $R_{\underline{w_0}}^{\underline{w_0}^{\prime}} : \Z_{\geq 0}^l \to \Z_{\geq 0}^l$ such that 
\[
    B_{\underline{w_0}}^{\mathrm{up}}(\mathbf{c}) = 
    B_{\underline{w_0}^{\prime}}^{\mathrm{up}}
    (R_{\underline{w_0}}^{\underline{w_0}^{\prime}}(\mathbf{c})). 
    \]

\begin{Prop}[{\cite{VV11}}]
    Under the isomorphism in Proposition~\ref{Prop : KLR iso}, 
    the set of isomorphism classes of simple $R$-modules corresponds to the dual canonical basis 
    up to grading shift. 
\end{Prop}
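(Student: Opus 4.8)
The plan is to deduce the correspondence from the characterization of the dual canonical basis recalled in Proposition~\ref{dualcanonical}: a $\Z[q,q^{-1}]$-basis of $A_q(\mathfrak{n})_{\Z}$ is the dual canonical basis precisely when its elements are invariant under the bar involution and are unitriangular with respect to the dual PBW basis $\{E^*_{\underline{w_0}}(\mathbf{c})\}$ (dual to the PBW basis of Proposition~\ref{PBWbasis}) with off-diagonal coefficients in $q\Z[q]$. Transporting these two conditions through the isomorphism $ch$ of Proposition~\ref{Prop : KLR iso}, it suffices to produce, for each $\mathbf{c}\in\Z_{\geq 0}^l$, a self-dual simple module whose class realizes this unitriangularity. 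First I would fix the reduced word $\underline{w_0}=i_1\cdots i_l$ (equivalently a convex total order on the positive roots $\Phi^+$ of $\mathfrak{g}$) and construct the attached cuspidal and proper standard modules in $R\mathrm{-gmod}$: the proper standard module $\bar\Delta(\mathbf{c})$ is a normalized convolution product of cuspidal modules taken in the order prescribed by $\underline{w_0}$, and one checks that $ch([\bar\Delta(\mathbf{c})])=E^*_{\underline{w_0}}(\mathbf{c})$, so that the standard classes match the dual PBW basis.

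Next I would record the two structural inputs. The bar invariance of $ch([L(\mathbf{c})])$ comes from the contravariant graded duality $(\,\cdot\,)^{\star}$ on $R(\beta)\mathrm{-gmod}$, induced by the anti-automorphism of $R(\beta)$ fixing the generators $e(\nu),x_k,\tau_m$; each simple module is isomorphic to its own dual after a suitable grading shift, and this duality induces exactly the bar involution on $K(R\mathrm{-gmod})\cong A_q(\mathfrak{n})_{\Z}$. The unitriangularity comes from the fact that $\bar\Delta(\mathbf{c})$ has simple head $L(\mathbf{c})$, that $\mathbf{c}\mapsto L(\mathbf{c})$ exhausts the simple modules up to grading shift, and that in the Grothendieck group $[L(\mathbf{c})]$ is congruent to $[\bar\Delta(\mathbf{c})]$ modulo standard classes of strictly larger label in the convex order. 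Granting these, both bullet conditions of Proposition~\ref{dualcanonical} hold as soon as the off-diagonal transition coefficients are shown to lie in $q\Z[q]$, and the uniqueness statement forces $ch([L(\mathbf{c})])=B^{\mathrm{up}}_{\underline{w_0}}(\mathbf{c})$.

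The main obstacle is exactly this positivity: proving that the transition coefficients lie in $q\Z[q]$ rather than merely in $\Z[q,q^{-1}]$, which is what distinguishes the genuine (dual) canonical basis from an arbitrary unitriangular renormalization. For this I would invoke the geometric realization of the symmetric quiver Hecke algebra due to Varagnolo--Vasserot~\cite{VV11}: for the quiver attached to $\mathfrak{g}$ and $\beta\in Q^+$, one identifies $R(\beta)$ with the equivariant graded Ext-algebra $\mathrm{Ext}^{\bullet}_{G_\beta}(\mathcal{L},\mathcal{L})$ of Lusztig's sheaf $\mathcal{L}=\bigoplus_{\nu}\mathcal{L}_\nu$ on the representation space $E_\beta$, the generators $e(\nu),x_k,\tau_m$ being produced from explicit correspondences. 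Under this identification the graded simple $R(\beta)$-modules correspond to the simple perverse (IC) summands of $\mathcal{L}$, Verdier duality realizes the self-duality above, and the decomposition theorem together with the purity of $\mathcal{L}$ yields the positivity of the graded multiplicities, hence the required coefficients in $q\Z[q]$. Matching this with Lusztig's construction~\cite{Lus90} of the canonical basis of $U_q^{+}(\mathfrak{g})$ by simple perverse sheaves, and dualizing via the bilinear form underlying Proposition~\ref{Prop : KLR iso}, then identifies the classes of the simple $R$-modules with the dual canonical basis of $A_q(\mathfrak{n})_{\Z}$ up to grading shift, completing the plan.
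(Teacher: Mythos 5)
The paper offers no internal proof of this statement: it is quoted verbatim from Varagnolo--Vasserot \cite{VV11}, so there is no argument of the author's to compare against, and the right benchmark is the literature proof. Measured against that, your outline is correct, but it is a hybrid of two historically distinct arguments rather than a reconstruction of \cite{VV11} itself. Your first two paragraphs (cuspidal modules, proper standard modules $\bar\Delta(\mathbf{c})$ categorifying the dual PBW basis $E^*_{\underline{w_0}}(\mathbf{c})$, self-duality of simples inducing the bar involution, unitriangularity of the decomposition matrix) are the later, finite-type algebraic theory of McNamara and Kato --- exactly the results the paper itself invokes two propositions further on --- and for the finite-type case relevant here that theory in fact yields the $q\Z[q]$ condition algebraically, making the geometric input optional. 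Varagnolo--Vasserot's own proof, by contrast, never routes through PBW unitriangularity: they identify $R(\beta)$ with the equivariant Ext-algebra of Lusztig's sheaf, match indecomposable projectives with the simple perverse summands via the decomposition theorem, conclude that the classes of projectives realize the canonical basis as in \cite{Lus90}, and obtain the statement for simples by dualizing along the pairing underlying Proposition~\ref{Prop : KLR iso}; this works for all symmetric Kac--Moody types, not just finite type. One step of yours deserves tightening: ``decomposition theorem together with purity yields the required coefficients in $q\Z[q]$'' is too quick, since positivity alone only places the graded multiplicities in $\Z_{\geq 0}[q,q^{-1}]$; forcing strictly positive powers of $q$ off the diagonal additionally uses the IC normalization (stalk degree bounds off the open stratum) together with the degree-zero, self-dual normalization of the simple head --- the same mechanism that makes the second bullet of Proposition~\ref{dualcanonical} hold for Lusztig's basis. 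With that point repaired, the uniqueness clause of Proposition~\ref{dualcanonical} closes the argument as you say.
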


\subsection{Cuspidal modules and Quantum affine Schur-Weyl duality}
Let $\underline{w_0}=s_{i_1}\cdots s_{i_l}$ be a reduced word. 
We define $\beta_k \coloneq s_{i_1}\cdots s_{i_{k-1}}(\alpha_{i_k})$ for $1 \leq k \leq l$. 
Then, $\beta_1, \ldots, \beta_l$ runs through all positive roots, each occurring exactly once. 
\begin{Prop}[{\cite{Mc15},\cite{Ka14}}]
Let $\underline{w_0}=s_{i_1}\cdots s_{i_l}$ be a reduced word. 
    For each $1 \leq k \leq l$, 
    there exists a simple module $S_k$ of $R(\beta_k)$ such that 
    \begin{itemize}
        \item for each $\mathbf{c}=(c_1, \ldots, c_l)\in \Z_{\geq 0}^l$, the image of the class of 
        $S_l^{\circ c_l}\circ \cdots \circ S_1^{\circ c_1}$ 
        under the isomorphism in Proposition~\ref{Prop : KLR iso} is 
        the dual PBW element $E_{\underline{w_0}}^*(\mathbf{c})$, 
        \item for each $\mathbf{c}\in \Z_{\geq 0}^l$, 
        the head of $S_l^{\circ c_l}\circ \cdots \circ S_1^{\circ c_1}$ is a simple module and 
        the image of the class of $\mathrm{hd}(S_l^{\circ c_l}\circ \cdots \circ S_1^{\circ c_1})$ 
        under the isomorphism in Proposition~\ref{Prop : KLR iso} is 
        the dual canonical element $B_{\underline{w_0}}^{\mathrm{up}}(\mathbf{c})$ 
        up to a grading shift. 
    \end{itemize}
\end{Prop}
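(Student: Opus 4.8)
The plan is to realize each $S_k$ as the \emph{cuspidal module} attached to the positive root $\beta_k$ with respect to the convex order on $\Phi^+$ determined by $\underline{w_0}$, namely $\beta_1 \prec \beta_2 \prec \cdots \prec \beta_l$, and to argue by induction along this order. This order is convex: if $\alpha, \gamma, \alpha+\gamma \in \Phi^+$ with $\alpha \prec \gamma$, then $\alpha \prec \alpha + \gamma \prec \gamma$. One declares a simple $R(\beta)$-module $S$ (with $\beta \in \Phi^+$) to be cuspidal if every nonzero restriction $\mathrm{Res}_{\gamma, \delta} S$, for $\gamma + \delta = \beta$ and $\gamma, \delta \in Q^+ \setminus \{0\}$ (here $\mathrm{Res}_{\gamma,\delta}$ denotes the standard restriction functor adjoint to the convolution $\circ$), forces $\gamma$ to lie in the span of roots $\prec \beta$ and $\delta$ in the span of roots $\succ \beta$. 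First I would establish, by induction along the convex order with base case a simple root (for $\alpha_i$ the cuspidal module is the one-dimensional $R(\alpha_i)$-module, realizing $\beta_1 = \alpha_{i_1}$ at the minimal end), the existence and uniqueness up to grading shift of a simple cuspidal module $S_k$ for each $\beta_k$; the mechanism is that convolution with previously constructed cuspidals, combined with the Mackey filtration governing $\mathrm{Res}\circ\mathrm{Ind}$, isolates a single new simple factor supported on $\beta_k$.

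Next, to prove the first bullet, I would show that the standard modules $\Delta(\mathbf{c}) \coloneq S_l^{\circ c_l}\circ \cdots \circ S_1^{\circ c_1}$ are carried by the isomorphism $\mathrm{ch}$ of Proposition~\ref{Prop : KLR iso} to the dual PBW elements $E_{\underline{w_0}}^*(\mathbf{c})$. Since $\mathrm{ch}$ is a $\Z[q,q^{-1}]$-algebra isomorphism sending convolution to the product of $A_q(\mathfrak{n})_\Z$, it suffices to identify each single cuspidal class $[S_k]$ with $E_{\underline{w_0}}^*(\mathbf{e}_k)$, where $\mathbf{e}_k \in \Z_{\geq 0}^l$ has $1$ in position $k$ and $0$ elsewhere. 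The cleanest route is to characterize the dual PBW basis intrinsically through the bilinear form $(\ ,\ )$ and Lusztig's braid operators $T_i$ (entering $E_{\underline{w_0}}(\mathbf{c})$ as in Proposition~\ref{PBWbasis}), and to match this characterization with the cuspidal condition: the restriction/support property of $S_k$ translates, under $\mathrm{ch}$, precisely into the vanishing pattern that pins down $E_{\underline{w_0}}^*$. One may reduce to the rank-one and rank-two cases and then propagate along the order using the compatibility of these classes with the braid symmetries $R_{\underline{w_0}}^{\underline{w_0}'}$ of Proposition~\ref{Prop : Rww}.

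For the second bullet I would prove a \emph{unitriangularity} statement: $\Delta(\mathbf{c})$ has a simple head $L(\mathbf{c}) \coloneq \mathrm{hd}(\Delta(\mathbf{c}))$ occurring with multiplicity one, while every other composition factor is some $L(\mathbf{c}')$ with $\mathbf{c}' < \mathbf{c}$ in the lexicographic order. Self-duality of simple modules makes $[L(\mathbf{c})]$ bar-invariant after a normalizing grading shift, so the classes $\{[L(\mathbf{c})]\}$ form a bar-invariant $\Z[q,q^{-1}]$-basis of $A_q(\mathfrak{n})_\Z$ that is unitriangular against $\{E_{\underline{w_0}}^*(\mathbf{c})\}$ with off-diagonal corrections in $q\Z[q]$ (the degree bound coming from graded multiplicities). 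These are exactly the two defining properties of the dual canonical basis in Proposition~\ref{dualcanonical}, so by the uniqueness asserted there we obtain $[\mathrm{hd}(\Delta(\mathbf{c}))] = B_{\underline{w_0}}^{\mathrm{up}}(\mathbf{c})$ up to a grading shift, as claimed.

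The main obstacle is the construction-and-uniqueness of the cuspidal modules together with the sharp unitriangularity estimate: one must control $\mathrm{Res}\circ\mathrm{Ind}$ through the shuffle/Mackey filtration finely enough to guarantee that the extremal restriction layer of each $\Delta(\mathbf{c})$ is one-dimensional and that all lower composition factors strictly decrease in the order. This is the technical heart of \cite{Mc15} (handled algebraically, via crystal-theoretic bounds on restrictions) and of \cite{Ka14} (handled geometrically, via purity of the relevant complexes on the quiver variety); matching the resulting triangularity with the $q\Z[q]$-bound of Proposition~\ref{dualcanonical} is precisely what upgrades the dual PBW statement to the dual canonical statement.
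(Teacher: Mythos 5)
The paper offers no proof of this proposition at all: it is quoted as a known result of \cite{Mc15} and \cite{Ka14}, so there is no internal argument to compare yours against. Your sketch is a faithful reconstruction of exactly the cited proof (McNamara's algebraic route: cuspidal modules for the convex order determined by $\underline{w_0}$, standard modules with simple heads, lexicographic unitriangularity, and bar-invariance matched against the uniqueness characterization of Proposition~\ref{dualcanonical}, with Kato's geometric purity argument as the alternative), the only point deserving extra care being the $q$-power normalization relating products of the classes $[S_k]$ to the divided-power dual PBW elements $E_{\underline{w_0}}^{*}(\mathbf{c})$, which is the grading-shift bookkeeping McNamara carries out explicitly.
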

We call these simple modules $\{S_1, \ldots, S_l\}$ \emph{cuspidal modules}. 
\begin{Prop}[{\cite{KKKII15}}]
    Let $(\mathscr{Q}, \underline{w_0})$ be a pair of $Q$-datum $\mathscr{Q}$ and $\mathscr{Q}$-adapted 
    reduced word $\underline{w_0}$. 
    Let $((i_s, p_s))_{s\in \Z}$ be a corresponding admissible sequence. 
    Then, there is an exact monoidal functor $\mathscr{F}_{\mathscr{Q}} : 
    R\mathrm{-gmod} \to \mathscr{C}_{\mathscr{Q}}$ 
    such that the cuspidal module $S_k$ is sent to the fundamental module $L(Y_{i_k, p_k})$. 
    Also, this functor sends a simple module to a simple module, and it induces the algebra isomorphism 
    \[
    \overline{\mathscr{F}_{\mathscr{Q}}} : 
    K(R\mathrm{-gmod})/(q-1)K(R\mathrm{-gmod}) \overset{\sim}{\to} K(\mathscr{C}_{\mathscr{Q}}). 
    \]
\end{Prop}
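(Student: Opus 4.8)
The plan is to realize $\mathscr{F}_{\mathscr{Q}}$ as an instance of the generalized quantum affine Schur--Weyl duality functor, whose input is a \emph{duality datum} assembled from the fundamental modules attached to $\mathscr{Q}$. First I would fix the $\mathscr{Q}$-adapted reduced word $\underline{w_0} = s_{i_1}\cdots s_{i_l}$ together with the associated admissible sequence $((i_s, p_s))_{s \in \Z}$, and take as the candidate datum the family of fundamental modules $\{L(Y_{i_k, p_k})\}_{1 \le k \le l}$ sitting at the nodes of the Auslander--Reiten quiver of $\mathrm{Rep}(\mathscr{Q})$. Because the spectral parameters $q^{p_k}$ obey the admissibility conditions of Definition~\ref{Def : admissible sequence}, each such module lies in $\mathscr{C}_{\mathscr{Q}}$.

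The crux of setting up the datum is an R-matrix computation: using the known denominator formulas for R-matrices between fundamental modules in type $ADE$, one evaluates the order of vanishing of $d_{L(Y_{i,p}),\,L(Y_{j,p'})}(z)$ at $z=1$, i.e.\ the invariant $\fd$ of Definition~\ref{Def:fd}. I would check that this pole data reproduces exactly the degrees $-(\alpha_i, \alpha_j)$ that govern the parameters $Q_{i,j}(u,v)$ of the \emph{symmetric} quiver Hecke algebra $R$ for the finite Cartan matrix of $\fg$. This identification is precisely where the restriction to simply-laced $\fg$ (so that $R$ is symmetric) is used, and it is the step I expect to demand the most care.

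Granting the datum, the general construction then produces an exact functor $\mathscr{F}_{\mathscr{Q}} : R\mathrm{-gmod} \to \mathscr{C}_{\mathscr{Q}}$ in which the KLR generators $x_k, \tau_m$ act through the renormalized R-matrices on the affinizations $L(Y_{i_k, p_k})_z$. Monoidality is built into the construction, since the convolution product $\circ$ on $R\mathrm{-gmod}$ is matched with $\otimes$ in $\mathscr{C}_{\mathscr{Q}}$, and exactness follows from the flatness of the defining bimodule. To identify images of cuspidal modules I would compare the convex ordering $\beta_1, \ldots, \beta_l$ attached to $\underline{w_0}$, with $\beta_k = s_{i_1}\cdots s_{i_{k-1}}(\alpha_{i_k})$, against the AR-quiver labelling: the functor carries the dual PBW generator $E_{\underline{w_0}}^*(\mathbf{e}_k)$, that is the cuspidal module $S_k$, to the $k$-th fundamental module $L(Y_{i_k, p_k})$.

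The deepest point is that $\mathscr{F}_{\mathscr{Q}}$ sends simples to simples and descends to the claimed isomorphism $\overline{\mathscr{F}_{\mathscr{Q}}}$. I would argue that $\mathscr{F}_{\mathscr{Q}}$ intertwines taking the head of a convolution product of cuspidals with taking the head of the corresponding R-matrix composition, so that the simple module with class the dual canonical element $B^{\mathrm{up}}_{\underline{w_0}}(\mathbf{c})$ maps to the simple in $\mathscr{C}_{\mathscr{Q}}$ with the matching $q$-character. On Grothendieck groups, $ch$ of Proposition~\ref{Prop : KLR iso} identifies $K(R\mathrm{-gmod})$ with $A_q(\mathfrak{n})_{\Z}$; specializing $q=1$ then matches the dual canonical basis with the classes of the simple modules of $\mathscr{C}_{\mathscr{Q}}$, which by Proposition~\ref{Prop : polynomial} freely generate $K(\mathscr{C}_{\mathscr{Q}})$. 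The main obstacle is exactly this last step: showing that $\overline{\mathscr{F}_{\mathscr{Q}}}$ is bijective and that $\mathscr{F}_{\mathscr{Q}}$ is exhaustive on simples requires controlling the image of $\mathscr{F}_{\mathscr{Q}}$ and verifying that the fundamental modules $L(Y_{i_k, p_k})$ generate all of $\mathscr{C}_{\mathscr{Q}}$, for which one must invoke the full PBW/global-basis comparison rather than any purely formal argument.
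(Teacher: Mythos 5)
The paper itself contains no proof of this Proposition: it is imported wholesale from Kang--Kashiwara--Kim \cite{KKKII15} (together with its extensions to general $Q$-data), so your sketch can only be measured against the proof in that literature. At the level of architecture it is faithful to it: one does build $\mathscr{F}_{\mathscr{Q}}$ from a duality datum of fundamental modules, the KLR parameters are read off from zeros of R-matrix denominators, monoidality matches $\circ$ with $\otimes$, and the identification of simples and of Grothendieck rings goes through the dual PBW/dual canonical comparison exactly as you indicate.

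There is, however, a concrete misstep in your setup. The duality datum is \emph{not} the full family $\{L(Y_{i_k,p_k})\}_{1\le k\le l}$ at all $l$ vertices of the Auslander--Reiten quiver: the general construction produces a quiver Hecke algebra whose vertex set is the index set of the datum, so a datum with $l$ members would yield a KLR algebra on $l$ vertices (and the graph of nonvanishing $\fd$ among all these fundamentals is essentially the AR/GLS quiver, with far more edges than the Dynkin diagram), whereas the source of $\mathscr{F}_{\mathscr{Q}}$ is the finite-type algebra $R$ of $\fg$ on $n$ vertices. The correct datum consists of the $n$ modules $\{L(Y_{i,q^{\xi(i)}})\}_{i\in I}$, one per Dynkin node with spectral parameter given by the height function; for these one checks $\fd\bigl(L(Y_{i,q^{\xi(i)}}),L(Y_{j,q^{\xi(j)}})\bigr)=1$ precisely when $c_{ij}=-1$ and $0$ otherwise, recovering the Cartan datum of $\fg$. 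That the cuspidal module $S_k$ attached to $\beta_k=s_{i_1}\cdots s_{i_{k-1}}(\alpha_{i_k})$ is then sent to $L(Y_{i_k,p_k})$ is a \emph{theorem} about the functor, proved inductively along the convex order via T-system/head computations --- it is not part of the input, as your construction would require. Two smaller corrections: exactness is not ``flatness of the defining bimodule'' in any formal sense --- it is a nontrivial theorem of \cite{KKKII15} valid because the datum's quiver is of finite ADE type; and the generation of $\mathscr{C}_{\mathscr{Q}}$ by the fundamental modules, which you flag as the main obstacle, holds by the very definition of $\mathscr{C}_{\mathscr{Q}}$ (Definition~\ref{Def : category}), with $K(\mathscr{C}_{\mathscr{Q}})$ polynomial on their classes by Proposition~\ref{Prop : polynomial}; once cuspidals map to fundamentals, bijectivity of $\overline{\mathscr{F}_{\mathscr{Q}}}$ at $q=1$ is the comparatively formal step, so the genuine difficulty sits earlier in the argument than where you place it.
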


\begin{Prop}[{\cite[Proposition 6.5]{PBW24}}]\label{Prop : monomial corresponding to canbase}
Let $(\mathscr{Q}, \underline{w_0})$ be a pair of $Q$-datum $\mathscr{Q}$ and $\mathscr{Q}$-adapted 
    reduced word $\underline{w_0}$. 
    Let $((i_s, p_s))_{s\in \Z}$ be a corresponding admissible sequence. 
    Then, for 
    \[A_q(\mathfrak{n})_{\Z} \overset{\sim}{\longrightarrow} K(R\mathrm{-gmod}) 
    \overset{\mathscr{F}_{\mathscr{Q}}}{\longrightarrow} K(\mathscr{C}_{\mathscr{Q}}), 
    \]
    a dual canonical element $B_{\underline{w_0}}^{\mathrm{up}}(\mathbf{c})$ for 
    $\mathbf{c} = (c_1, \ldots, c_l)$ is sent to $[L(\prod_{k=1}^{l}Y_{i_k, p_k}^{c_k})]$. 
\end{Prop}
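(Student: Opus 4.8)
The plan is to transport the statement of the proposition on cuspidal modules through the monoidal functor $\mathscr{F}_{\mathscr{Q}}$, reducing everything to the identification of a single simple head. First I would recall that, by that proposition, the class $[\mathrm{hd}(S_l^{\circ c_l}\circ \cdots \circ S_1^{\circ c_1})]$ equals $B_{\underline{w_0}}^{\mathrm{up}}(\mathbf{c})$ up to a power of $q$ under the isomorphism of Proposition~\ref{Prop : KLR iso}. Applying $\mathscr{F}_{\mathscr{Q}}$ and passing to $K(R\mathrm{-gmod})/(q-1)K(R\mathrm{-gmod}) \cong K(\mathscr{C}_{\mathscr{Q}})$ kills the grading shift, so the image of $B_{\underline{w_0}}^{\mathrm{up}}(\mathbf{c})$ along the displayed composite is exactly $[\mathscr{F}_{\mathscr{Q}}(\mathrm{hd}(S_l^{\circ c_l}\circ \cdots \circ S_1^{\circ c_1}))]$. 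Since $\mathscr{F}_{\mathscr{Q}}$ sends simple modules to simple modules, this is the class of a single simple object of $\mathscr{C}_{\mathscr{Q}}$, and the whole problem becomes showing that this simple object is $L(\prod_{k=1}^{l} Y_{i_k,p_k}^{c_k})$.

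Next I would use monoidality and exactness to pin down the ambient module. Because $\mathscr{F}_{\mathscr{Q}}$ is monoidal and sends the cuspidal module $S_k$ to the fundamental module $L(Y_{i_k,p_k})$, it carries the convolution product $S_l^{\circ c_l}\circ \cdots \circ S_1^{\circ c_1}$ to the ordered tensor product $\bigotimes_{k=l}^{1} L(Y_{i_k,p_k})^{\otimes c_k}$. Exactness then exhibits $\mathscr{F}_{\mathscr{Q}}(\mathrm{hd}(S_l^{\circ c_l}\circ \cdots \circ S_1^{\circ c_1}))$ as a simple quotient of this tensor product. By Proposition~\ref{Prop : subquot} the simple module $L(\prod_{k=1}^{l} Y_{i_k,p_k}^{c_k})$ is a subquotient of the same tensor product, and it is the unique constituent whose highest weight monomial is maximal, namely the product of the highest weight monomials of the factors, occurring with multiplicity one. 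So it suffices to show that this maximal constituent is in fact the head.

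For that identification I would argue triangularly rather than by hand. The pairs $(i_k,p_k)$ for $1\le k\le l$ are distinct, so by Proposition~\ref{Prop : polynomial} the classes $\{[L(\prod_k Y_{i_k,p_k}^{c_k})]\}_{\mathbf{c}\in \Z_{\geq 0}^l}$ form a $\Z$-basis of $K(\mathscr{C}_{\mathscr{Q}})$, and the change of basis from the tensor-product (standard) classes $[\bigotimes_k L(Y_{i_k,p_k})^{\otimes c_k}]$ to these simple classes is unitriangular for the dominance order on $\ell$-weight monomials. Combining this with the defining triangularity of $B_{\underline{w_0}}^{\mathrm{up}}(\mathbf{c})$ against the dual PBW basis $E_{\underline{w_0}}^*(\mathbf{c})$ from Proposition~\ref{dualcanonical}, together with the cuspidal identification $E_{\underline{w_0}}^*(\mathbf{c}) \mapsto [\bigotimes_k L(Y_{i_k,p_k})^{\otimes c_k}]$, the image of $B_{\underline{w_0}}^{\mathrm{up}}(\mathbf{c})$ expands in the simple basis as $[L(\prod_k Y_{i_k,p_k}^{c_k})]$ plus strictly lower terms. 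Since that image is a single simple class and the $[L(\mu)]$ are linearly independent, all lower coefficients must vanish, whence the class equals $[L(\prod_k Y_{i_k,p_k}^{c_k})]$, as desired.

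The hard part will be controlling the two orders simultaneously: the lexicographic order on exponent vectors $\mathbf{c}$ governing the dual canonical triangularity of Proposition~\ref{dualcanonical} and the dominance order on $\ell$-weight monomials governing the standard-to-simple unitriangularity must be shown compatible, so that the combined change of basis stays unitriangular and the leading coefficient survives. This compatibility is exactly where the hypothesis that $\underline{w_0}$ is $\mathscr{Q}$-adapted enters: the associated convex order on positive roots, read off the Auslander--Reiten quiver of $\mathscr{Q}$, forces the two orders to align, equivalently making the ordered tensor product $\bigotimes_k L(Y_{i_k,p_k})^{\otimes c_k}$ a cyclic highest weight module with simple head $L(\prod_k Y_{i_k,p_k}^{c_k})$. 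Verifying this convexity and cyclicity, drawing on the established theory of $\mathscr{Q}$-adapted words and standard modules, is the step I expect to require the most care.
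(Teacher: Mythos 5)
First, note that the paper itself contains no proof of this statement: it is quoted verbatim as \cite[Proposition 6.5]{PBW24}, so there is no in-paper argument to compare yours against; your proposal has to be judged on its own. Its skeleton is sound and uses exactly the ingredients assembled in the paper: the cuspidal-module proposition identifies the image of $B_{\underline{w_0}}^{\mathrm{up}}(\mathbf{c})$ under Proposition~\ref{Prop : KLR iso} with $[\mathrm{hd}(S_l^{\circ c_l}\circ\cdots\circ S_1^{\circ c_1})]$ up to grading shift, specializing $q=1$ kills the shift, and exactness and monoidality of $\mathscr{F}_{\mathscr{Q}}$ correctly place the image as a simple quotient of $\bigotimes_{k} L(Y_{i_k,p_k})^{\otimes c_k}$, of which $L(\prod_k Y_{i_k,p_k}^{c_k})$ is a multiplicity-one constituent by Proposition~\ref{Prop : subquot} and the $q$-character argument.

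The genuine gap is the step you yourself flag at the end: the compatibility of the lexicographic order on exponent vectors with the dominance order on $\ell$-weight monomials (equivalently, that the ordered tensor product has simple head $L(\prod_k Y_{i_k,p_k}^{c_k})$) is asserted and deferred, yet it is the entire nontrivial content of the proposition --- everything before it is formal bookkeeping. As written, your triangularity argument needs precisely that $L(\prod_k Y_{i_k,p_k}^{c_k})$ does not occur as a constituent of the tensor products attached to $\mathbf{c}'<\mathbf{c}$, and no proof of this is offered; invoking ``the established theory of standard modules'' is circular here, since that theory is where the quoted result lives. The gap is closable, and in fact more cheaply than by the convexity analysis you anticipate: fix the weight $\beta=\sum_k c_k\beta_k$ and induct on the (finite, totally ordered) set of $\mathbf{c}$ with that weight in lexicographic order. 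Write $[L(m(\mathbf{c}))]$ for the simple class which is the image of $B_{\underline{w_0}}^{\mathrm{up}}(\mathbf{c})$. Inverting the unitriangular transition of Proposition~\ref{dualcanonical} gives, at $q=1$,
\begin{equation*}
\Bigl[\bigotimes_{k} L(Y_{i_k,p_k})^{\otimes c_k}\Bigr]
= [L(m(\mathbf{c}))] + \sum_{\mathbf{c}'<\mathbf{c}} a_{\mathbf{c}'}\,[L(m(\mathbf{c}'))],
\end{equation*}
and by the inductive hypothesis $m(\mathbf{c}')=\prod_k Y_{i_k,p_k}^{c'_k}$ for all $\mathbf{c}'<\mathbf{c}$. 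Comparing the coefficient of $[L(\prod_k Y_{i_k,p_k}^{c_k})]$ on both sides --- it is exactly $1$ on the left by multiplicity one, and the terms indexed by $\mathbf{c}'$ cannot contribute because $\mathbf{c}\mapsto\prod_k Y_{i_k,p_k}^{c_k}$ is injective (the pairs $(i_k,p_k)$ are distinct) --- forces $m(\mathbf{c})=\prod_k Y_{i_k,p_k}^{c_k}$, with the base case handled by $B_{\underline{w_0}}^{\mathrm{up}}(\mathbf{c}_{\min})=E_{\underline{w_0}}^{*}(\mathbf{c}_{\min})$. This sidesteps the order-compatibility question entirely and uses only linear independence of simple classes (Proposition~\ref{Prop : polynomial}); you should either run this induction or cite the cyclicity results explicitly rather than leave the crux as an expectation.
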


\begin{Prop}
    Let $\mathscr{Q}, \mathscr{Q}^{\prime}$ be $Q$-data. 
    Then, isomorphism 
    \[
    \overline{\mathscr{F}_{\mathscr{Q}^{\prime}}}\circ\overline{\mathscr{F}_{\mathscr{Q}}}^{-1} : 
    K(\mathscr{C}_{\mathscr{Q}}) \overset{\sim}{\to} K(\mathscr{C}_{\mathscr{Q}^{\prime}})
    \]
    send classes of real modules to classes of real modules, 
    classes of imaginary modules to classes of imaginary modules, 
    and classes of prime modules to classes of prime modules. 
\end{Prop}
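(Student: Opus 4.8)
The plan is to show that the ring isomorphism $\Psi \coloneq \overline{\mathscr{F}_{\mathscr{Q}^{\prime}}}\circ\overline{\mathscr{F}_{\mathscr{Q}}}^{-1}$ restricts to a bijection between the sets of classes of simple modules, and that each of the three properties can be read off purely from the Grothendieck-ring structure together with this distinguished set of classes. First I would record that, by the cited properties of the functors $\mathscr{F}_{\mathscr{Q}}$ and $\mathscr{F}_{\mathscr{Q}^{\prime}}$ (each sends a simple module to a simple module and induces an algebra isomorphism on Grothendieck rings after specialising $q=1$), together with the correspondence between simple $R$-modules and the dual canonical basis, the map $\overline{\mathscr{F}_{\mathscr{Q}}}$ carries the $\Z$-basis of $K(R\mathrm{-gmod})/(q-1)K(R\mathrm{-gmod})$ given by simple classes onto a subset of the $\Z$-basis of $K(\mathscr{C}_{\mathscr{Q}})$ given by simple classes. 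Since a $\Z$-basis contained in another $\Z$-basis of the same free module must coincide with it, $\overline{\mathscr{F}_{\mathscr{Q}}}$ induces a bijection between the two sets of simple classes; the same holds for $\mathscr{Q}^{\prime}$. Composing, $\Psi$ is a ring isomorphism sending the class of each simple module in $\mathscr{C}_{\mathscr{Q}}$ to the class of a simple module in $\mathscr{C}_{\mathscr{Q}^{\prime}}$, inducing a bijection $\psi$ on isomorphism classes of simples; being a ring isomorphism it fixes the unit, so $\psi$ sends the trivial module to the trivial module.

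The next step is a multiplicity-one lemma: if a module $M$ in $\mathscr{C}_{\mathfrak{g}}$ satisfies $[M]=[S]$ in the Grothendieck ring for some simple $S$, then $M\cong S$, because then the multiplicity of every simple module in $M$ is prescribed to be $1$ for $S$ and $0$ otherwise. Combined with commutativity of $K(\mathscr{C}_{\mathfrak{g}})$ (Proposition~\ref{q-ch}) and exactness of the tensor product, this yields intrinsic descriptions of the three notions. Concretely, a simple $L$ is real iff $[L]^2$ is the class of a simple module, since $[L\otimes L]=[L]^2$ and, if this is a simple class, then $L\otimes L$ is simple by the lemma; and $L$ is prime iff $[L]$ admits no factorisation $[L]=[L_1][L_2]$ with $L_1,L_2$ nontrivial simple modules. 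For the latter I would note that any tensor factorisation $L\cong L_1\otimes L_2$ of a simple $L$ forces $L_1,L_2$ to be simple by exactness, whence $[L]=[L_1][L_2]$; conversely such an equality with $L_1,L_2$ simple gives $[L_1\otimes L_2]=[L]$, so $L_1\otimes L_2\cong L$ by the lemma.

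Finally, because $\Psi$ is a ring isomorphism that preserves the distinguished set of simple classes and the unit, each of these intrinsic conditions is preserved. Indeed $[L]^2$ is a simple class iff its image $\Psi([L]^2)=\Psi([L])^2=[\psi(L)]^2$ is a simple class, so $L$ is real iff $\psi(L)$ is real, and consequently $L$ is imaginary iff $\psi(L)$ is imaginary. Likewise a factorisation $[L]=[L_1][L_2]$ into nontrivial simple classes corresponds under $\Psi$ to a factorisation of $\Psi([L])$ into nontrivial simple classes, so $L$ is prime iff $\psi(L)$ is prime.

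I expect the main obstacle to be Step~1, namely pinning down that $\Psi$ genuinely restricts to a bijection of simple classes. The delicate points are verifying that the images of the simple $R$-module classes are precisely the simple classes in $\mathscr{C}_{\mathscr{Q}}$ (via the basis-inclusion argument, being careful that grading shifts become trivial modulo $q-1$), and establishing the multiplicity-one lemma, which is exactly what translates the monoidal notions ``simple tensor square'' and ``no nontrivial tensor factorisation'' into the purely ring-theoretic language on which $\Psi$ acts transparently.
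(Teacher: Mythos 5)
Your proof is correct and takes essentially the same route as the paper, whose entire proof is the observation that $\overline{\mathscr{F}_{\mathscr{Q}}}$ and $\overline{\mathscr{F}_{\mathscr{Q}^{\prime}}}$ are ring isomorphisms sending simple classes to simple classes; your basis argument for the bijectivity on simples, the multiplicity-one lemma, and the ring-theoretic characterizations of real and prime are precisely the details the paper leaves implicit. One point worth making explicit in the prime case: since the paper's definition of prime quantifies over factorizations in all of $\mathscr{C}_{\mathfrak{g}}$, you need that any tensor factorization of a simple module of $\mathscr{C}_{\mathscr{Q}}$ automatically has both factors again in $\mathscr{C}_{\mathscr{Q}}$ --- this follows from Proposition~\ref{Prop : polynomial} exactly as in the proof of Proposition~\ref{Prop : prime module} --- before your equivalence ``$L$ is prime iff $[L]$ admits no factorization into nontrivial simple classes inside $K(\mathscr{C}_{\mathscr{Q}})$'' can be transported along $\Psi$.
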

\begin{proof}\label{Prop : real to real}
    It follows from the fact that 
    $\overline{\mathscr{F}_{\mathscr{Q}}}, 
    \overline{\mathscr{F}_{\mathscr{Q}}^{\prime}}$ are ring homomorphisms and 
    send simple modules to simple modules. 
\end{proof}

\subsection{Examples of Q-data replacements}
In this subsection we use the results above to compute 
which imaginary modules arise, after changing the $Q$-datum, 
from the imaginary modules constructed in Section~\ref{sec : classification}. 
Although the computation can be carried out in principle for any type, 
we work out the case of type $D_4$ as a concrete example. 

Up to translation and permutation of nodes $1$, $2$, and $4$, 
there are four kinds of $Q$-data of type $D_4$ as follows:
\begin{align*}
    \mathscr{Q}_0 &: \xi(1)=1, \xi(2)=1, \xi(3)=0, \xi(4)=1, \\
    \mathscr{Q}_1 &: \xi(1)=-1, \xi(2)=1, \xi(3)=0, \xi(4)=1, \\
    \mathscr{Q}_2 &: \xi(1)=-1, \xi(2)=-1, \xi(3)=0, \xi(4)=1, \\
    \mathscr{Q}_3 &: \xi(1)=-1, \xi(2)=-1, \xi(3)=0, \xi(4)=-1. 
\end{align*}
Therefore, up to spectrum parameter shift and permutation of the nodes $1, 2, 4 \in I$, 
type $D_4$ admits four kinds of $\mathscr{C}_{\mathscr{Q}}$ :

\begin{tikzpicture}[scale=1]

\draw (-0.5,-3)--(1,-0.5) ;
\draw (-0.5,-3)--(1,-4.5);
\draw (3,-0.5)--(1,-0.5);
\draw (3,-4.5)--(1,-4.5);
\draw (3,-0.5)--(4.5,-3);
\draw (3,-4.5)--(4.5,-3);
\draw(0.7,-0.7)--(5.3,-0.7) node[above] 
{$\mathscr{C}^{\mathfrak{s}^2,[1,11]}$};
\draw(0.7,-0.7)--(0.7,-4.3);
\draw(5.3,-4.3)--(0.7,-4.3);
\draw(5.3,-4.3)--(5.3,-0.7);

  \node[font=\Large] at (0,0){$\underline{\mathscr{C}_{\mathscr{Q}_0}}$};
  \node at (0,-3){$3_0$};
  \node at (1,-1) {$1_1$};
  \node at (1,-2) {$2_1$};
  \node at (1,-4) {$4_1$};
  \node at (2,-3) {$3_2$};
  \node at (3,-1) {$1_3$};
  \node at (3,-2) {$2_3$};
  \node at (3,-4) {$4_3$};
  \node at (4,-3) {$3_4$};
  \node at (5,-1) {$1_5$};
  \node at (5,-2) {$2_5$};
  \node at (5,-4) {$4_5$};
  \node at (-0.5, -2.0) {$\mathscr{C}^{\mathfrak{s}^1,[1,9]}$};

\end{tikzpicture}
\begin{tikzpicture}[scale=1]
  \node[font=\Large] at (-1,0){${\underline{\mathscr{C}_{\mathscr{Q}_1}}}$};
  \node at (-2,0){ };
  \node at (-1,-1){$1_{-1}$};
  \node at (0,-3){$3_0$};
  \node at (1,-1) {$1_1$};
  \node at (1,-2) {$2_1$};
  \node at (1,-4) {$4_1$};
  \node at (2,-3) {$3_2$};
  \node at (3,-1) {$1_3$};
  \node at (3,-2) {$2_3$};
  \node at (3,-4) {$4_3$};
  \node at (4,-3) {$3_4$};
  \node at (5,-2) {$2_5$};
  \node at (5,-4) {$4_5$};

\end{tikzpicture}

\begin{tikzpicture}[scale=1]
  \node[font=\Large] at (-1,0){${\underline{\mathscr{C}_{\mathscr{Q}_2}}}$};
  \node at (2,0.9) { };
  \node at (-1,-1){$1_{-1}$};
  \node at (-1,-2){$2_{-1}$};
  \node at (0,-3){$3_0$};
  \node at (1,-1) {$1_1$};
  \node at (1,-2) {$2_1$};
  \node at (1,-4) {$4_1$};
  \node at (2,-3) {$3_2$};
  \node at (3,-1) {$1_3$};
  \node at (3,-2) {$2_3$};
  \node at (3,-4) {$4_3$};
  \node at (4,-3) {$3_4$};
  \node at (5,-4) {$4_5$};

\end{tikzpicture}
\begin{tikzpicture}[scale=1]
  \node[font=\Large] at (-1,0){${\underline{\mathscr{C}_{\mathscr{Q}_3}}}$};
  \node at (-2,0.9) { };
  \node at (-2.3,0){ };
  \node at (-1,-1){$1_{-1}$};
  \node at (-1,-2) {$2_{-1}$};
  \node at (-1,-4) {$4_{-1}$};
  \node at (0,-3){$3_0$};
  \node at (1,-1) {$1_1$};
  \node at (1,-2) {$2_1$};
  \node at (1,-4) {$4_1$};
  \node at (2,-3) {$3_2$};
  \node at (3,-1) {$1_3$};
  \node at (3,-2) {$2_3$};
  \node at (3,-4) {$4_3$};
  \node at (4,-3) {$3_4$};
\end{tikzpicture}

where $\mathscr{C}_{\mathscr{Q}_i}$ is the smallest monoidal category 
which is closed under tensor products, subquotients, and extensions, 
and containing fundamental modules of the above and a trivial module. 
(See Definition~\ref{Def : category}. )

Up to spectral parameter shift, 
the categories $\mathscr{C}^{\mathfrak{s}^1,[1,9]}$ and $\mathscr{C}^{\mathfrak{s}^2,[1,11]}$ 
can be seen as subcategories of $\mathscr{C}_{\mathscr{Q}_0}$. 
For each $\mathscr{Q}_i$, 
we fix $\mathscr{Q}_i$-adapted reduced words as follows : 
\begin{align*}
    i=0 : \mathbf{i}_0 = 312431243124, \\
    i=1 : \mathbf{i}_1 = 132413241324, \\
    i=2 : \mathbf{i}_2 = 123412341234, \\
    i=3 : \mathbf{i}_3 = 124312431243. 
\end{align*}

In $\mathscr{C}^{\mathfrak{s}^1,[1,9]}, \mathscr{C}^{\mathfrak{s}^2,[1,11]} \subset 
\mathscr{C}_{\mathscr{Q}_0}$, 
there are prime imaginary modules $L(3_03_4), L(1_11_52_12_54_14_5)$. 
(See Example~\ref{Ex : Dn primeimaginary}, \ref{Ex : D4 primeimaginary}. )
By the map 
\[
\mathscr{F}_{\mathscr{Q}_0} : A_q(\mathfrak{n})_{\Z} \to K(\mathscr{C}_{\mathscr{Q}_0})
\]
in Proposition~\ref{Prop : monomial corresponding to canbase}, 
the classes of $L(3_03_4), L(1_12_14_11_52_54_5)$ come from 
$B^{up}_{\mathbf{i}_0}(1,0,0,0,0,0,0,0,1,0,0,0)$ and 
$B^{up}_{\mathbf{i}_0}(0,1,1,1,0,0,0,0,0,1,1,1)$. 

By considering how to obtain $\mathbf{i}_1, \mathbf{i}_2, \mathbf{i}_3$ from $\mathbf{i}_0$ by 
using braid relations, we can calculate 
\begin{align*}
    B^{up}_{\mathbf{i}_0}(1,0,0,0,0,0,0,0,1,0,0,0) &= 
    B^{up}_{\mathbf{i}_1}(0,1,0,0,0,0,0,0,0,1,0,0) \\
    &= B^{up}_{\mathbf{i}_2}(0,0,1,0,0,0,0,0,0,0,1,0)\\
    &= B^{up}_{\mathbf{i}_3}(0,0,0,1,0,0,0,0,0,0,0,1), 
\end{align*}
\begin{align*}
    B^{up}_{\mathbf{i}_0}(0,1,1,1,0,0,0,0,0,1,1,1) &= 
    B^{up}_{\mathbf{i}_1}(1,0,0,0,1,1,0,0,0,0,1,1) \\
    &= B^{up}_{\mathbf{i}_2}(1,1,0,0,0,0,1,1,0,0,0,1)\\
    &= B^{up}_{\mathbf{i}_3}(1,1,1,0,0,0,0,0,1,1,1,0), 
\end{align*}
by Proposition~\ref{Prop : Rww}. 
Then, by proposition~\ref{Prop : monomial corresponding to canbase}, 
we find new prime imaginary modules 
\begin{align*}
&L(3_03_4), L(1_{-1}1_13_22_54_5) \in \mathscr{C}_{\mathscr{Q}_1}, \\
&L(3_03_4), L(1_{-1}2_{-1}3_24_34_5)\in \mathscr{C}_{\mathscr{Q}_2}, \\
&L(3_03_4), L(1_{-1}2_{-1}4_{-1}1_32_34_3)\in \mathscr{C}_{\mathscr{Q}_3}. 
\end{align*}

Also when we set $\mathbf{i}^{\prime} = 132431432434$ as \cite{Le03}, 
similarly we can get 
\[
B^{up}_{\mathbf{i}_0}(1,0,0,0,0,0,0,0,1,0,0,0) = 
B^{up}_{\mathbf{i}^{\prime}}(0,1,0,0,0,0,1,0,0,0,1,0). 
\]
So, the prime imaginary module $L(3_03_4)$ corresponds to the imaginary vector 
found in \cite{Le03}.

\bibliographystyle{amsplain}
\bibliography{refs}

\end{document}